\newcommand\blfootnote[1]{
  \begingroup
  \renewcommand\thefootnote{}\footnote{#1}
  \addtocounter{footnote}{-1}
  \endgroup
}
\newtheorem{theorem}{Theorem}
\newtheorem{lemma}[theorem]{Lemma}
\newtheorem{definition}[theorem]{Definition}
\theoremstyle{definition}
\newtheorem{example/}[theorem]{Example}
\newtheorem{remark}[theorem]{Remark}
\newtheorem{remarknn/}{Remark}
\newenvironment{example}
  {\begin{example/}}
  {\end{example/}}
\numberwithin{theorem}{section}
\newcommand*{\MH}{{\mathcal{M}_{\mathrm{H}}}}
\newcommand*{\Md}{{\mathcal{M}_{\mathrm{dec}}}}
\newcommand*{\spec}{{\mathrm{Sp}}}
\newcommand*{\dH}{{d_{\mathrm{H}}}}
\newcommand*{\dist}{{\mathrm{dist}}}
\newcommand*{\cl}[1]{{\mathrm{Cl}\left(#1\right )}}
\newcommand*{\dd}{{\,\mathrm{d}}}
\newcommand{\Uv}{\mathbf{U}}
\newcommand{\Vv}{\mathbf{V}}
\newcommand{\Xv}{\mathbf{X}}
\newcommand{\Yv}{\mathbf{Y}}
\title{Adversarial dynamical systems characterize when\\data-driven learning succeeds or fails}
\author{Matthew J. Colbrook$^{1*}$, Igor Mezić$^{2\dagger}$, Alexei Stepanenko$^{1}$\\
\footnotesize{$^1$ DAMTP, University of Cambridge, Cambridge, CB3 0WA, UK.\vspace{-1mm}}\\
\footnotesize{$^2$ University of California, Santa Barbara, CA 93106, USA.}
}
\date{}
\begin{document}

\twocolumn[
  \begin{@twocolumnfalse}
	\maketitle

\begin{abstract}
Many systems resist analytical modeling, making data-driven inference of dynamics important. Yet data-driven methods can fail to converge or generalize, leaving open a central question: \textit{When can system behavior be learned reliably from data, and when is such learning impossible?} We answer this question using adversarial dynamical systems to identify the boundary between accessible and inaccessible regimes. In Koopman operator learning, a leading framework for representing nonlinear dynamics through linear spectral objects, we design optimal data-driven spectral algorithms with convergence and certification guarantees under conditions arising broadly in physical systems. This yields a convergence theory for Koopman-operator approximations and resolves a longstanding open problem in Koopman spectral analysis. Conversely, by constructing adversarial systems, we prove matching impossibility results: without these conditions, no single-sequence limiting procedure can guarantee learning, regardless of data quality. These results sharply characterize when data-driven spectral learning can succeed and when it must fail. We validate the framework on oscillators, chaotic fluid flows and Arctic sea ice concentration forecasting. In the latter, we uncover hidden modes of Arctic sea ice decline, deliver long-range forecasts with geographic error bounds, and outperform state-of-the-art dynamical and deep learning models at substantially lower computational cost, enabling real-time deployment on standard CPUs.
\vspace{6mm}
\end{abstract}
\end{@twocolumnfalse}
]

\blfootnote{\scriptsize$^*$ m.colbrook@damtp.cam.ac.uk, $^\dagger$ mezic@ucsb.edu}

\setcounter{section}{1}

Models across science often involve systems that evolve
over time, known as dynamical systems. These systems have long been used to understand, predict, and control complex behavior across physics, chemistry, biology, and medicine. Yet in many fields such as climate science, neuroscience, and robotics, systems are often too complex for direct analysis, or their governing equations are unknown. Machine learning (ML) has transformed the analysis of complex data \cite{lecun2015deep}, with breakthroughs in protein structure prediction \cite{jumper2021highly} (see also the 2024 Nobel Prize in Chemistry \cite{abriata2024nobel}), image classification \cite{he2016deep}, and drug/material discovery \cite{zhavoronkov2019deep}. The emerging field of \textit{data-driven dynamical systems} seeks to combine ML with time-series data to uncover underlying structure and principles without requiring explicit models \cite{kevrekidis2003equation,mezic2005spectral,schmidt2009distilling,berry2015nonparametric,brunton2016discovering,kutz2016dynamic,giannakis2019data,brunton2020machine,giannakis2021learning,bury2021deep,das2021reproducing}.

Current ML techniques often struggle to converge or generalize, with limited guarantees of their trustworthiness. This hinders their effectiveness in critical applications and poses a central challenge: \textit{When can system behavior be learned reliably from data, and when is such learning impossible?} 
We address this with:

$\bullet$ \textbf{Adversarial dynamical systems:}
We present examples in data-driven dynamical systems for which no sequence of learning algorithms—probabilistic or otherwise—can solve, even with unlimited data.
By carefully altering a system's behavior in a way that respects both its structure and the data, we design adversarial systems that block reliable learning. These are not rare edge cases: success is fundamentally limited to 50\% and they arise in well-studied classes. Even for smooth systems on simple low-dimensional surfaces, tasks such as learning finite-dimensional linearized representations (e.g., via autoencoders) remain unsolvable.

This parallels adversarial attacks in machine learning, where small perturbations expose vulnerabilities and drive the development of robust methods. Similarly, our adversarial systems reveal fundamental learning limits across broad classes of dynamical systems and offer principles for trustworthy algorithms. They may also shed light on phenomena such as hallucinations in large language models (LLMs), as discussed below.

$\bullet$ \textbf{Optimal algorithms with learning guarantees:} Building on insights from these adversarial systems (see challenges \textbf{(C1)} and \textbf{(C2)} below), we develop provably optimal algorithms with guaranteed convergence and error bounds under broad conditions. Unlike traditional approaches that rely on sequences of algorithms, our methods reach fundamental limits and enable reliable extrapolation, crucial for trustworthy AI. Our models are also trained on CPUs at a fraction of the cost of deep learning approaches, far exceeding the scales at which recent claims of efficiency have been made \cite{gibneychina}.

$\bullet$ \textbf{A universal framework for classification:} We establish a rigorous yet practical mathematical foundation that clarifies when and why learning succeeds or fails. Matching lower and upper bounds on difficulty reveal the core challenges and offer a comprehensive classification of problem complexity.

\vspace{1mm}

These results offer a unified, rigorous, and practical basis for understanding when data-driven models can or cannot succeed, advancing the broader goal of trustworthy ML. They are applicable across fields from climate science and neuroscience to engineering and control (see Discussion), where reliable prediction and mechanistic insight are essential.

\begin{SCfigure*}[\sidecaptionrelwidth]
\centering
\includegraphics[width=11.7cm,trim={5mm 0 10mm 3mm},clip]{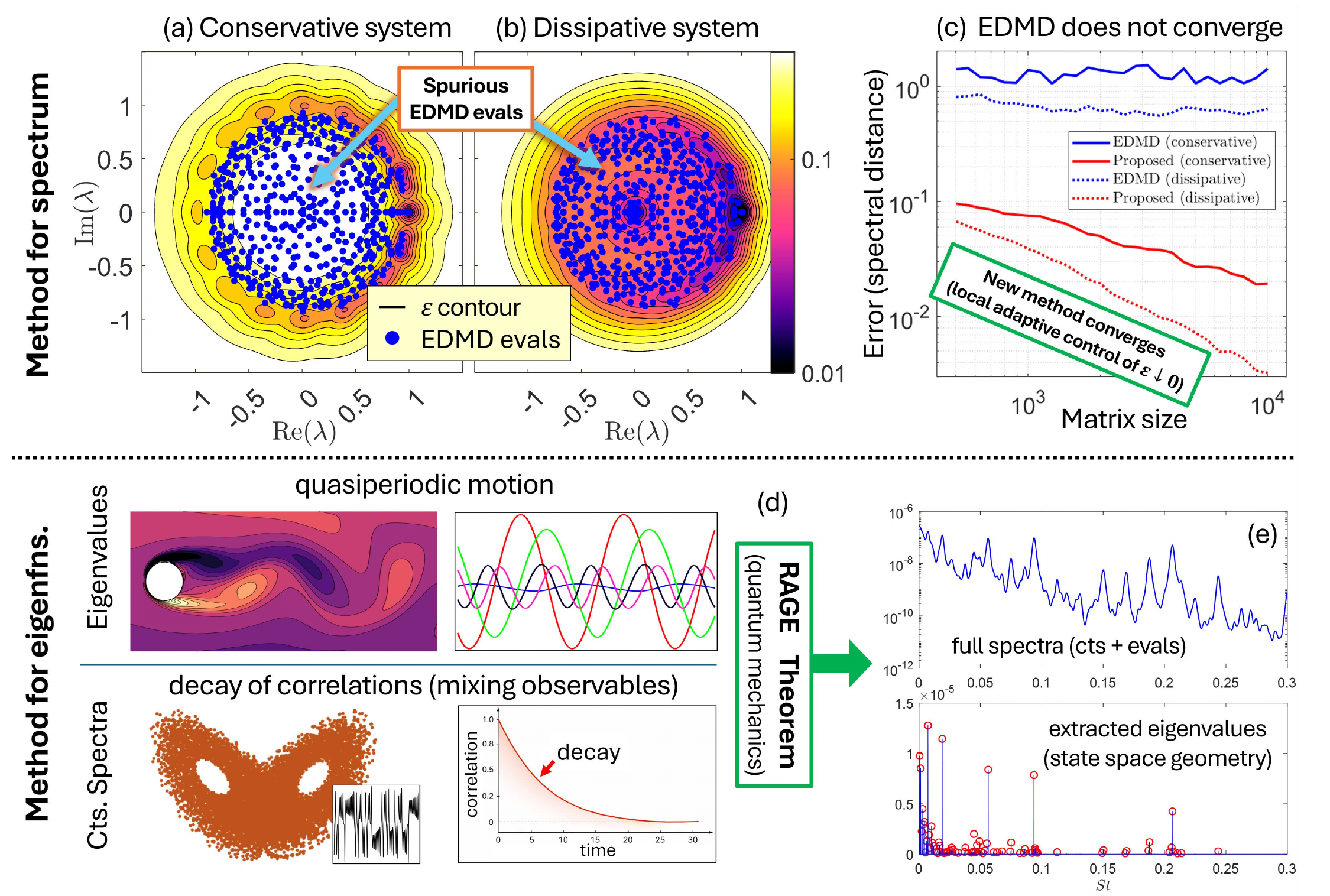}\vspace{-3mm}
\caption{\scriptsize\textbf{Convergent general-purpose methods for Koopman learning.}
\textit{Top:} The method (Supplementary Algorithm 1) is benchmarked on the Duffing oscillator in conservative (a) and dissipative (b) regimes. The state-of-the-art EDMD method fails to converge (c), generating spurious eigenvalues (blue). In contrast, our approach converges reliably by adaptively computing temporally coherent observables $\phi_\varepsilon$ and associated scalars $\lambda$ that approximately satisfy $\phi_\varepsilon(x_n)=\lambda^n\phi_\varepsilon(x_0)$ ($x_n$ is the state of the system at time $n$) up to a controlled tolerance $\varepsilon$. The contours of $\varepsilon$ show where approximate, near-eigenvalue behavior occurs, providing a more robust picture of dynamics. The tolerance $\varepsilon$ is adjusted locally based on data availability and the number of observables used, yielding certified error bounds. The reported error corresponds to the maximum $\varepsilon$ across outputs (averaged over 10 random realizations) and provably bounds the distance to the true spectrum (see Equation (16) of the Supplementary Information).
\textit{Bottom:}
The method (Supplementary Algorithm 6) is applied to the $Re=19000$ cavity flow to extract Koopman eigenvalues in a regime exhibiting signatures of continuous spectrum. Observables are separated according to their long-time behavior (d): quasiperiodic components correspond to discrete eigenvalues, whereas mixing components spread across the continuous spectrum and progressively lose finite-dimensional representation. This distinction is reflected in the full spectral distribution and in the extracted eigenvalues shown in (e).}\label{fig:EDMD_not_converge}
\end{SCfigure*}

We apply this framework to Koopman operators, a major research focus that addresses nonlinearity by acting on an \textit{infinite-dimensional} space of measurements rather than the system's state. Introduced nearly a century ago by Koopman and von Neumann \cite{koopman1931hamiltonian,koopman1932dynamical}, Koopman operators now play a central role in data-driven dynamical systems \cite{mezic2005spectral,budivsic2012applied,giannakis2019data,brunton2021modern,mezicAMS,colbrook2023multiverse}. Their spectral properties (e.g., eigenfunctions and eigenvalues) decompose complex behavior into simpler components like trends, oscillations, or decay, allowing for the use of linear methods in prediction, estimation, and control. This enables explainable, robust, and cost-efficient ML. Notable successes include robot control \cite{haggerty2023control}, climate analysis \cite{froyland2021spectral}, neural network training \cite{orvieto2023resurrecting}, disease modeling \cite{proctor2015discovering}, brain analysis \cite{brunton2016extracting}, non-autonomous systems \cite{froyland2024revealing}, and interpretable AI \cite{lusch2018deep}.

However, Koopman theory faces substantial practical challenges. Spectral approximation in infinite-dimensional settings is frequently non-convergent, even with perfect data \cite{kutz2016dynamic,korda2018convergence,colbrook2023multiverse}. The most widely used approach, Dynamic Mode Decomposition (DMD) \cite{schmid2010dynamic}, and its variants, including extended DMD (EDMD) \cite{williams2015data}, often generate spurious eigenvalues and fail to converge (\cref{fig:EDMD_not_converge}, top). Koopman operators are typically non-self-adjoint (non-Hermitian) and may possess continuous rather than purely discrete spectra \cite{mezic2005spectral}, limiting the applicability of classical spectral approximation techniques \cite{babuvska1991eigenvalue,chatelin2011spectral}. Although recent advances \cite{korda2020data,giannakis2021learning,das2021reproducing,colbrook2021rigorousKoop} address specific spectral properties under certain assumptions, they rely on multiple limiting procedures and do not provide a unified approach for convergence in all cases.

Koopman theory thus provides an ideal setting to explore our central question. We present a complete treatment of sharp algorithms and computational difficulty for Koopman spectra. In particular, we introduce general-purpose, provably convergent methods for learning Koopman spectral properties (Supplementary Algorithms 1–5, top panel of \cref{fig:EDMD_not_converge}) that avoid spurious eigenvalues through explicit local minimization of spectral errors. We further adapt tools from quantum mechanics to separate spectral components with distinct physical signatures (Supplementary Algorithms 6–7, bottom panel of \cref{fig:EDMD_not_converge}). These methods perform well across low- and high-dimensional systems, including challenging cases where the system's behavior spans a continuous range of frequencies, rather than discrete periodic behavior. This includes practical applications such as forecasting Arctic sea ice (\cref{fig:sea_ice_evals,fig:sea_ice_decay,fig:sea_ice_forecast2}).

This final example is motivated by Arctic amplification, where near-surface Arctic temperatures are rising faster than the global average \cite{rantanen2022arctic}. Sea ice loss has major impacts on polar bear habitats, Indigenous communities, shipping, and the Atlantic Meridional Overturning Circulation (AMOC). 
Concurrently, extreme weather events (e.g., wildfires, floods, heatwaves, and severe winters) have intensified in recent decades, affecting billions worldwide. The link between Arctic sea ice loss and Northern Hemisphere extreme weather remains an active area of research and debate \cite{cohen2020divergent,cohen2021linking,coumou2018influence}. While regional effects appear likely, identifying geographically significant regions and their influence is particularly challenging. Forecasting Arctic sea ice beyond two months remains a major challenge \cite{wayand2019year}.

Our algorithms uncover a family of hidden Koopman modes linked to sea ice decline and identify the associated geographic regions with error bounds. These patterns support accurate long-term predictions and reveal how different parts of the system are connected. By building data-driven decompositions from these and other validated modes, we achieve state-of-the-art Arctic sea ice forecasts (\cref{fig:sea_ice_forecast2}). Notably, such modes can influence tipping behavior, including patterns related to the AMOC \cite{PhysRevFluids.9.123801,lohmann2024multistability}.

The adversarial dynamics we construct may also shed light on hallucinations in LLMs. LLMs generate trajectories (sentences) over a state space (words) via one-step-ahead prediction, a process linked to Koopman operators \cite{mezic2023operator,zekri2024large,orvieto2023resurrecting}. Our adversarial systems often have Koopman operators with a continuous spread of frequencies, characteristic of chaotic dynamics. This enables short-term accuracy but causes long-term unpredictability due to sensitivity to initial conditions, mirroring how small prompt changes in LLMs can cause divergent outputs. Our results may therefore help explain inherent limitations of autoregressive architectures in maintaining long-term accuracy.

\subsubsection*{Multiple limits: Bad, good, or insufficient data?}

Learning systems from data requires two elements: quantities we can measure using sensors and their corresponding time series data. In ML, one often studies sequences of algorithms indexed by $n$, where $n$ might represent the size of the dataset or the complexity of the model (for example, the width or depth of a neural network). A simple illustration is estimating the average energy $E$ of an oscillating system based on measurements of its instantaneous energy $e(n)$ at each time step $n$. If the system is ergodic (meaning that averages along a single long-term trajectory reflect the behavior of the entire system), then $E$ can be estimated iteratively by:
\begin{equation}
\label{ergodic_energy_example}
\Gamma_{n+1}=\frac{e(1)+\cdots+ e(n+1)}{n+1}=\frac{n}{n+1}\Gamma_n+\frac{e(n+1)}{n+1}.
\end{equation}
This running average $\Gamma_n$ becomes more accurate as more data is collected. A classical result known as Birkhoff's ergodic theorem \cite{mane2012ergodic} guarantees that $\Gamma_n$ converges to the true average energy $E$ as $n\rightarrow\infty$, i.e., as we collect more data.

Learning often assumes that increasing the amount of data or model complexity will improve performance. However, our results show that for many key problems in dynamical systems, this assumption is false: no algorithm $\smash{\Gamma_n}$ can succeed by taking a single limit, regardless of how that limit is defined (e.g., sample size, model complexity, simulation length).

Instead, some problems only become solvable when multiple data limits are applied in a specific order. For example, computing long-term structures in a system, such as ergodic partitions \cite{mezic1994geometrical}, requires not just averaging over time (as in \cref{ergodic_energy_example}) but also subsequently computing the averages for a growing number of measured quantities (instead of just the energy $e$). These two types of data increase, longer observations and richer measurements, typically cannot be combined or chosen adaptively from the data itself. This leads to a hierarchy of difficulty, where each level corresponds to the number and type of data limits needed for reliable learning.

\subsubsection*{Dynamical setup}

A dynamical system describes how a system changes over time. Suppose we track a collection of variables, denoted $x$, that represent the system's state at a given moment. The set of all possible states is called the state space, written as $\mathcal{X}$, so that $x\in\mathcal{X}$. The system evolves in discrete time steps, meaning the next state $x_{n+1}$ depends on the current state $x_n$ via a rule:
\begin{equation}\setlength\abovedisplayskip{5pt}\setlength\belowdisplayskip{6pt}
x_{n+1} = F(x_n), \qquad n= 0,1,2,\ldots.
\end{equation}
Here, $F:\mathcal{X}\rightarrow\mathcal{X}$ is an \textit{unknown} function that may come from sampling a continuous-time process.

To study a system, we measure its physical properties like temperature or velocity, represented by functions $g:\mathcal{X}\rightarrow\mathbb{C}$, called ``observables''. One should think of $g(x)$ as a quantity we can measure from state $x$. The \textit{Koopman operator}, $\mathcal{K}_F$ (or $\mathcal{K}$), captures how these measurements evolve. It acts on an observable $g$ by composing it with the system's evolution:
\begin{equation}\setlength\abovedisplayskip{6pt}\setlength\belowdisplayskip{6pt}
[\mathcal{K}g](x) = [g\circ F](x)=g(F(x)),\quad x\in\mathcal{X}.
\label{eq:KoopmanOperator} 
\end{equation}
This means $\mathcal{K}g$ gives the value of the observable one step into the future: $[\mathcal{K}g](x_n)= g(x_{n+1})$. The key property of $\mathcal{K}$ is \textit{linearity}: for any observables $f$ and $g$ and scalars $\alpha$ and $\beta$, $\mathcal{K}(\alpha f+\beta g)=\alpha\mathcal{K}(f)+\beta \mathcal{K}(g)$. Linearity is powerful since it allows us to analyze the system through spectral properties of $\mathcal{K}$ (e.g., eigenvalues and eigenfunctions). The trade-off for this global linearization is that $\mathcal{K}$ acts on an \textit{infinite-dimensional} space of observables. One can think of $\mathcal{K}$ as an infinite matrix, corresponding to the infinite number of observables $g$.

The goal is to learn the spectral properties of the Koopman operator from \textit{snapshot data}, discrete sample pairs of the system's behavior:
\begin{equation}\setlength\abovedisplayskip{5pt}\setlength\belowdisplayskip{5pt}
\label{eq:snapshot_data}
\left\{\left(x^{(m)},y^{(m)}=F(x^{(m)})\right):m=1,\ldots,M\right\}.
\end{equation}
Here, each $y^{(m)}$ represents the state one time step ahead of $x^{(m)}$. Such data can arise from experiments or simulations, and observations of either long or short trajectories. We shall see examples of each of these throughout the paper.

\subsubsection*{Separation of variables \& spectra in nonlinear systems}
Spectral properties of Koopman operators contain valuable information about the system.
For example, a complex number $\lambda\in\mathbb{C}$ is called an almost eigenvalue if, for a tolerance $\varepsilon>0$, there exists a normalized observable $\phi_\varepsilon$ with $\|\mathcal{K} \phi_\varepsilon-\lambda \phi_\varepsilon\|\leq \varepsilon$. (Here, $\|\cdot\|$ measures an observable's energy.) These observables, called \textit{approximate eigenfunctions}, are physically relevant because, assuming $\|\mathcal{K}\|\leq 1$, they exhibit approximate temporal coherence \cite{giannakis2019data,valva2023consistent}:
\begin{equation}\setlength\abovedisplayskip{5pt}\setlength\belowdisplayskip{5pt}
\label{eq:approx_coherency}
\phi_\varepsilon(x_n)=\lambda^n \phi_\varepsilon(x_0)+ \mathcal{O}(n\varepsilon)\text{ as }\varepsilon\downarrow 0\text{ for }n=1,2,\ldots.
\end{equation}
That is, $\lambda$ approximately describes the oscillatory behavior and decay (or growth) of the measurement $\phi_\varepsilon(x)$ over time through its powers $\lambda^n$. Smaller values of $\varepsilon$ correspond to longer timescales where this approximation is valid. For $\varepsilon = 0$, $\phi_\varepsilon$ becomes an exact eigenfunction of $\mathcal{K}$ with eigenvalue $\lambda$.

Approximate eigenfunctions also encode key dynamical features of the system, such as the global stability of equilibria. The contours (or level sets) of these functions highlight key structures in the system's dynamics, such as regions that behave independently over long times, pathways along which the system evolves, and surfaces where states settle into long-term behavior at the same rate \cite{budivsic2012applied,mauroy2013isostables}. The \textit{approximate point spectrum} of $\mathcal{K}$, denoted $\spec_{\mathrm{ap}}(\mathcal{K})$, is the set of all scalars $\lambda$ for which $\varepsilon$ can be made arbitrarily small, and forms the most fundamental spectral property of Koopman operators.

Just as a matrix is diagonalized by its eigenvalues and eigenvectors, a nonlinear system can be ``diagonalized'' by its Koopman spectra. Koopman eigenfunctions act as fundamental components of behavior, revealing persistent patterns (like oscillations or trends) in the system. For a vector of observations ${\bf g} \in \mathbb{C}^N$, the Koopman mode associated with an eigenvalue is the projection of ${\bf g}$ onto the corresponding eigenspace. Under suitable conditions, this yields a spectral expansion for the time evolution of the observables \cite{mezic2005spectral}:
\begin{equation}\setlength\abovedisplayskip{4pt}\setlength\belowdisplayskip{4pt}
\label{KMD_example}
{\bf g}(x_n)=\sum_{j=1}^\infty \lambda_j^n \phi_j(x_0) {\bf g}_j.
\end{equation}
Here, ${\bf g}_j\in\mathbb{C}^N$ is the $j$th Koopman mode, associated with eigenvalue $\lambda_j$ and eigenfunction $\phi_j$, which one may think of as an expansion coefficient. This decomposition is conceptually and operationally similar to separation of variables: the eigenvalues describe time evolution through the powers $\lambda_j^n$, while the Koopman modes capture how each pattern is expressed in space and the regions where this dynamical behavior occurs.

When the observable ${\bf g}$ is real-valued, the eigenvalues, eigenfunctions, and Koopman modes in \cref{KMD_example} appear in complex-conjugate pairs. To illustrate how this affects time evolution, consider one such pair:
$
\lambda_1^n \phi_1(x_0) {\bf g}_1+
{\overline{\lambda_1}}^n \overline{\phi_1(x_0)} \overline{{\bf g}_1}.
$
Writing
$
\lambda_1= re^{i\theta}$ and $\phi_1(x_0) {\bf g}_1= \mathbf{R}e^{i\mathbf{\Theta}}, 
$
the pair becomes
\begin{equation}\setlength\abovedisplayskip{5pt}\setlength\belowdisplayskip{5pt}
\lambda_1^n \phi_1(x_0) {\bf g}_1+
{\overline{\lambda_1}}^n \overline{\phi_1(x_0)} \overline{{\bf g}_1}=
2\mathbf{R} r^n \cos(n\theta+\mathbf{\Theta}).
\end{equation}
This expression reveals key dynamical features: $r^n$ controls exponential growth ($r > 1$), decay ($r < 1$), or neutral evolution ($r = 1$); the angle $\theta$ sets the oscillation frequency; and the Koopman mode (e.g., through $\mathbf{R}$) encodes spatial structure.

Methods for learning $\spec_{\mathrm{ap}}(\mathcal{K})$ and spectral expansions from data face significant challenges, including spurious eigenvalues (\cref{fig:EDMD_not_converge}), missing critical spectral components, and numerical instabilities. Recent advances mitigate these issues \cite{colbrook2021rigorousKoop}, but there is a critical need to develop a deeper theoretical understanding of the conditions under which such spectral computations are feasible. Equally important is identifying scenarios where these computations are fundamentally impossible. We shall see that addressing these questions guides the development of robust and reliable methods.

\subsection*{Results}

\noindent
We first present three illustrative examples that demonstrate the reliability, advantages, and broad applicability of our general-purpose learning algorithms: (i) a classic oscillator where previous methods diverge but ours converges, (ii) a fluid flow with continuous spectrum where we extract meaningful eigenvalues, and (iii) an Arctic sea ice dataset where we uncover physically interpretable modes with error bounds and achieve state-of-the-art forecasts. We then examine the underlying theoretical foundations, showing that our algorithms optimally match fundamental limits revealed by adversarial dynamical systems. Indeed, analyzing the reasons behind these fundamental limitations enabled us to pinpoint the essential algorithmic properties required for reliable convergence. Finally, we provide a unified classification of the complexity of these learning problems, clarifying precisely when learning is possible and when it is fundamentally impossible.

\subsubsection*{Overcoming lack of convergence in current methods}
We begin with a simple low-dimensional system. \cref{fig:EDMD_not_converge} (top) examines the classical Duffing oscillator in two distinct regimes: the undamped case (conservative, panel (a)) and the damped case (dissipative, panel (b)).

EDMD, widely regarded as the state-of-the-art, constructs a finite $N\times N$ matrix approximation of the Koopman operator by projecting onto $N$ trial functions (a `dictionary' of observables). We use a common and effective choice of dictionary: radial basis functions centered via $k$-means clustering \cite{williams2015data}. In contrast to computing eigenvalues of an approximation, our method (Supplementary Algorithm 1) searches for approximate eigenfunctions where $\varepsilon$ in \cref{eq:approx_coherency} is locally minimized, with guaranteed convergence to the spectrum as $N$ increases. The number of observables $N$ grows with the number of snapshots $M$. Details of the experimental setup are in the Methods.

Panel (c) shows that EDMD does not converge as the matrix dimension $N$ increases, instead generating numerous spurious eigenvalues due to truncation from the infinite-dimensional Koopman operator to a finite observable subspace. In contrast, our algorithm (using the same data and dictionary) converges reliably across dynamical regimes. Importantly, even without access to ground truth, it provides a principled error bound and enables validation of the chosen observable dictionary, a critical practical consideration given that computations necessarily involve finitely many observables. In the Methods, we further demonstrate the same behavior across a broad range of examples.

\subsubsection*{Eigenvalue extraction in a chaotic flow}

\cref{fig:EDMD_not_converge} (bottom) shows the application of our algorithms to $Re=19000$ cavity flow, a regime exhibiting signatures of both discrete and continuous Koopman spectral components. In such cases, the Koopman representation involves an integral over a continuous spectrum in addition to the discrete sum in \cref{KMD_example} \cite{mezic2005spectral,colbrook2024rigged}. Discrete Koopman eigenvalues typically signal quasiperiodic motion, whereas broadband spectral content is indicative of chaotic dynamics \cite{swinney1978transition}. The combined spectral structure reflects the underlying geometry of the attractor in state space. Systems displaying both discrete and continuous spectral components are often described as skew-periodic \cite{broer1993mixed}, with one component evolving periodically and the other exhibiting phase-modulated chaotic behavior.

Our algorithm (see Methods) successfully extracts eigenvalues and separates spectral components using a two-limit approach: adaptively increasing a time lag for autocorrelations to detect spectrally localized observables and expanding projections onto increasing finite-dimensional subspaces. This is grounded in linking quasiperiodic dynamics and the corresponding eigenvalues (the so-called RAGE theorem \cite{fillman2017purely}, see \cref{RAGE_statement}). This foundation ensures that the separation of the continuous spectrum and eigenvalues is both mathematically rigorous and practically optimal. 

Physically, the evolution of Koopman spectra with increasing Reynolds number illustrates a transition, in which a chaotic state arises after one or two bifurcations (sudden qualitative changes in the system's behavior) from a stable steady flow. The Koopman spectrum offers a powerful tool for analyzing bifurcations, quantifying energy in both quasiperiodic and continuous components, and links to the geometry of the state space and the flow domain (Supplementary Figures 6--8).

\subsubsection*{Physical modes for Arctic sea ice concentration}

\begin{SCfigure*}[\sidecaptionrelwidth]
\centering
\includegraphics[width=13.3cm]{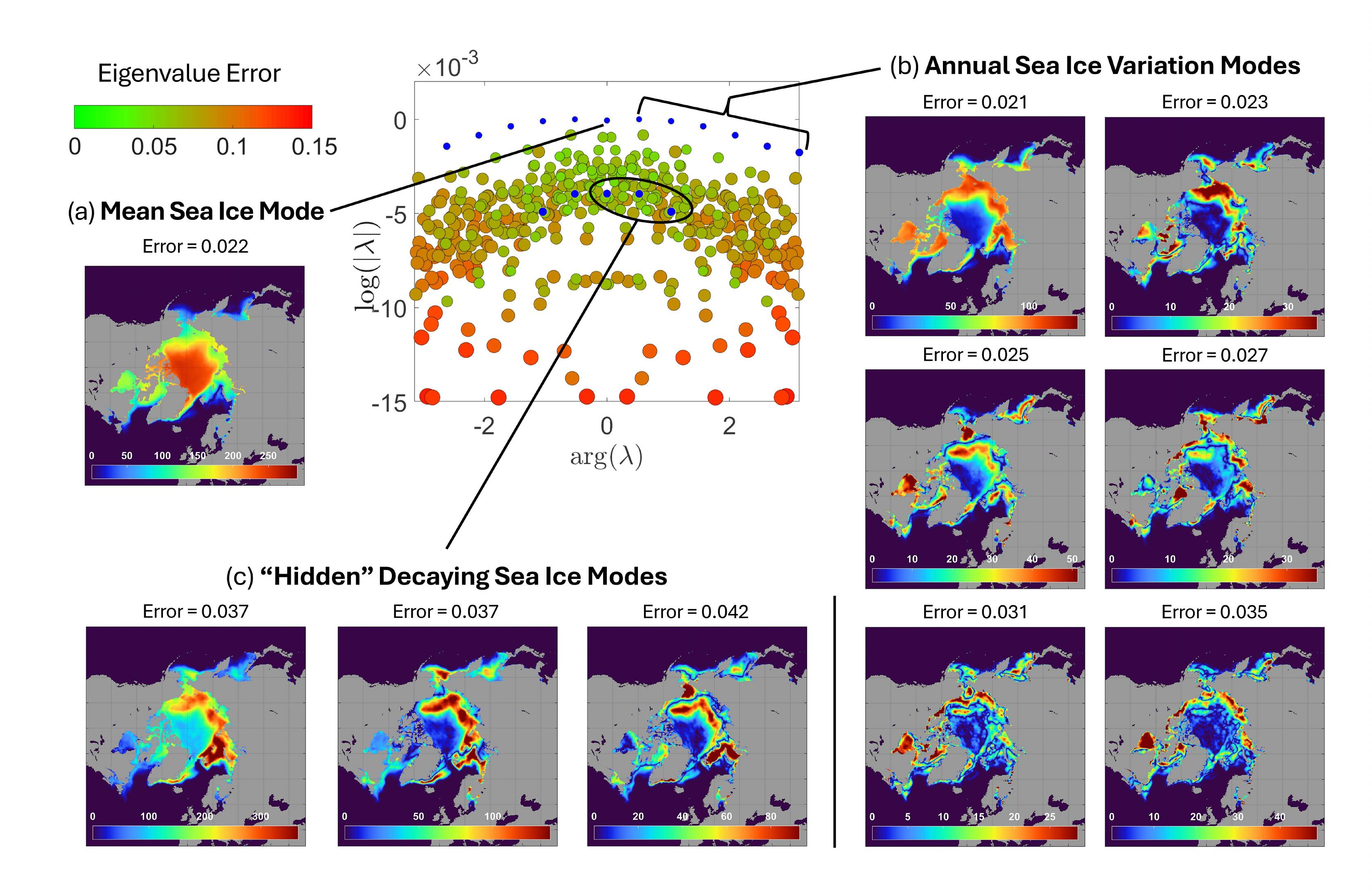}
\caption{\scriptsize\textbf{Verified eigenvalues and Koopman modes of Arctic sea ice (1979--2021) and detection of hidden modes for long-time forecasts.} For each EDMD eigenvalue, our proposed method computes an associated error bound (colorbar). The displayed size of each eigenvalue is proportional to this error. While many EDMD eigenvalues are spurious, we identify 17 reliable ones (shown in blue) with small associated errors. The Koopman modes (the ${\bf g}_j$ in \cref{KMD_example}, where we plot their absolute value) are categorized into three groups: (a) the mean sea ice concentration, corresponding to a stationary mode with eigenvalue \(\lambda = 1\); (b) annual variation in sea ice concentration, with \(\lambda = \exp(m\pi i / 6)\) for \(m = \pm 1, \pm 2, \ldots, \pm 5, 6\), representing periodic variation across the months; and (c) ``hidden'' decaying modes with \(|\lambda| < 1\) (revealed by our error bounds), representing long-term sea ice loss. The spatial structure of each mode indicates the geographic regions where these behaviors occur, and the corresponding Koopman eigenfunction time series shows a clear trend over the analysis interval. The hidden modes with nonzero $\mathrm{arg}(\lambda)$ can be interpreted as seasonal patterns (group (b)) modulated by the decaying mode with zero argument, providing insight into evolving Arctic sea ice dynamics.}\label{fig:sea_ice_evals}
\end{SCfigure*}

\begin{figure}
\centering
\includegraphics[width=0.93\linewidth,trim=4mm 4mm 4mm 4mm,clip]{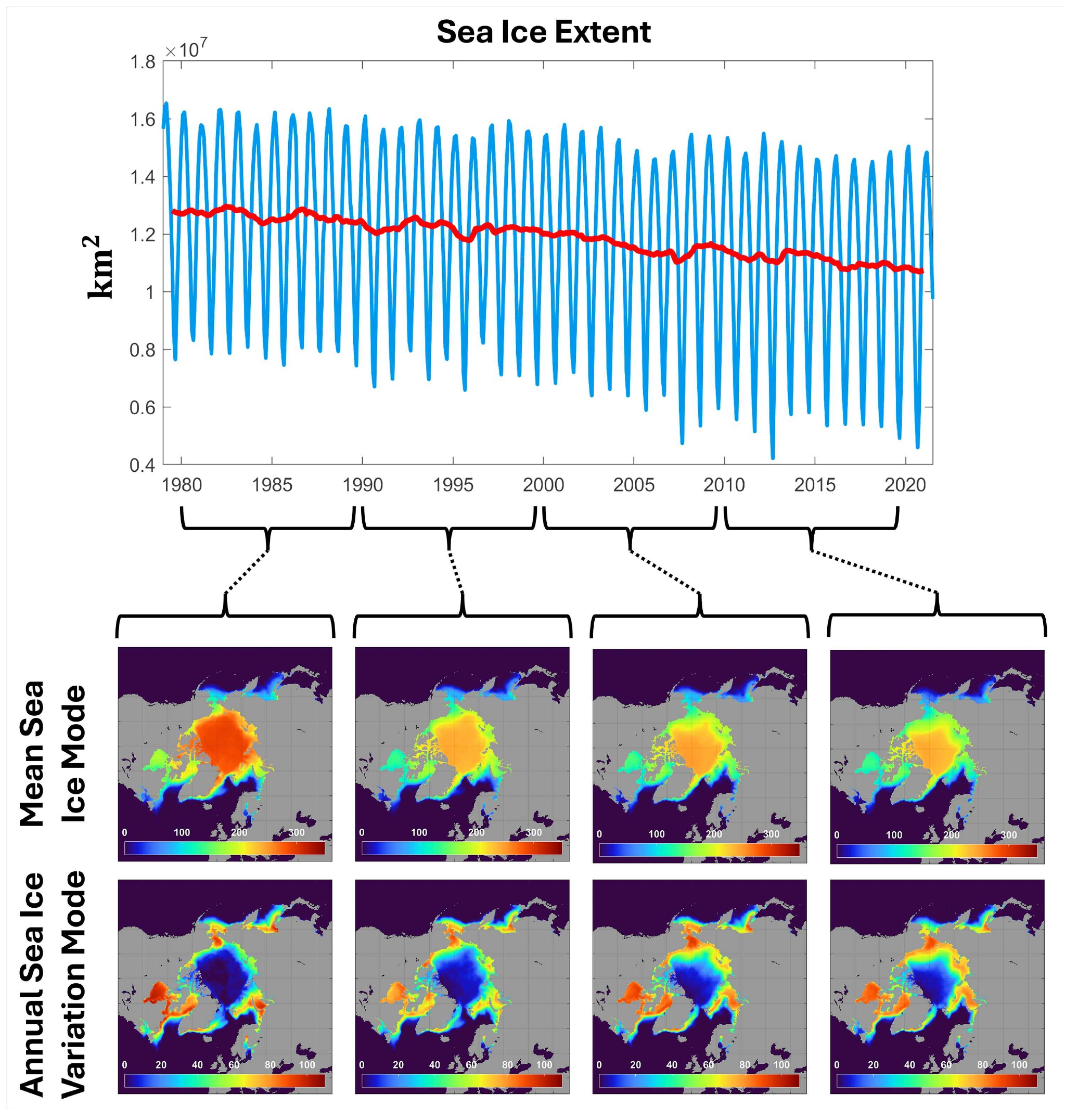}
\caption{\scriptsize\textbf{Sea ice decay and time-windowed Koopman modes.}
Top: Sea ice extent over the past several decades. The red curve shows the moving 12-month mean. Bottom: The absolute value of Koopman modes corresponding to the mean sea ice and annual variations over ten-year windows. The maximum error (relative residual) of these modes is 0.048. Mean and annual modes reveal declining sea ice, reduced winter extent, and amplified seasonal contrast in marginal seas, consistent with a slow decaying mode (\cref{fig:sea_ice_evals}).}
\label{fig:sea_ice_decay}
\end{figure}

We consider monthly Arctic sea ice concentration satellite data from 1979--2021. The data are defined on a $432 \times 432$ grid of 625 km$^2$ cells. Details on data collection and processing are provided in the Methods.
As discussed in the introduction, Arctic sea ice is a critical component of the climate system, notorious for its complex dynamics. In this setting, Koopman eigenvalues reveal the timescales of changes in sea ice cover, while the corresponding modes indicate the spatial pattern of these changes. Notably, Koopman methods require no model: measurements of the concentration suffice to compute the eigenvalues and modes.

We first analyze the entire dataset and approximate Koopman eigenvalues using EDMD (\cref{fig:sea_ice_evals}). Similar to \cref{fig:EDMD_not_converge}, our method provides error bounds for eigenpairs, shown in the plot. While many EDMD eigenvalues are spurious, several (shown in blue) have small errors and correspond to key dynamics: the annual mean sea ice ($\lambda=1$); yearly growth-melt cycle captured by a fundamental monthly mode ($\lambda\approx \exp(\pi i/6)$); and decaying modes ($|\lambda|<1$). The small errors of the hidden modes indicate strong coherency and forecasting power (see \cref{eq:approx_coherency}).

The decaying modes, ``hidden'' behind a sea of spurious modes but nonetheless revealed by our error bounds, are particularly interesting as they reflect dissipative dynamics. The corresponding Koopman eigenfunction time series (computed separately over 1980--1999 and 2000--2019 to assess robustness) suggests a connection to long-term sea-ice decline associated with climate change (Supplementary Figure 9). This connection is likely nuanced: dissipative Koopman eigenfunctions exhibit reduction (as well as oscillation) as time increases, whereas observed sea-ice loss rates over recent decades are known to be non-monotonic. Since the full sea-ice state is expressed as a sum of modes, the near-monotonic behavior of $\phi_{\mathrm{decay}}$ does not contradict non-monotonic trend patterns in \cref{fig:sea_ice_decay}.

\begin{figure*}[th!]
\centering
\includegraphics[width=0.24\linewidth]{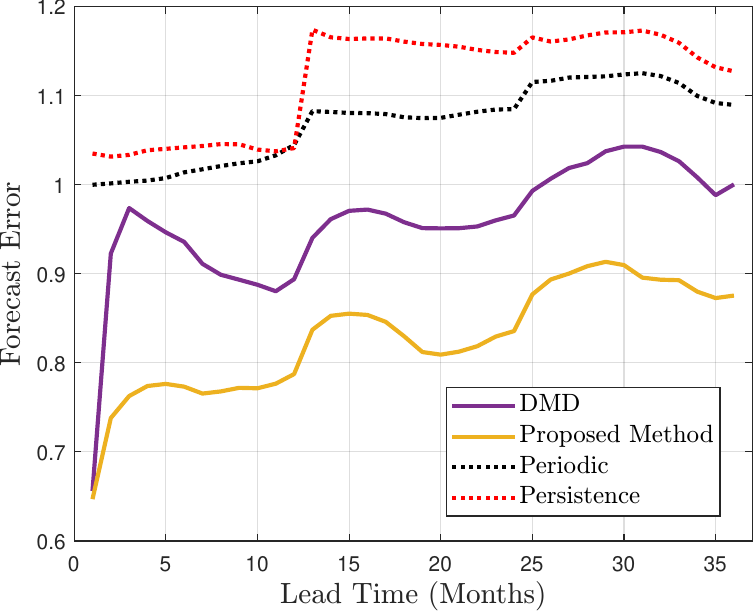}\hfill
\includegraphics[width=0.25\linewidth]{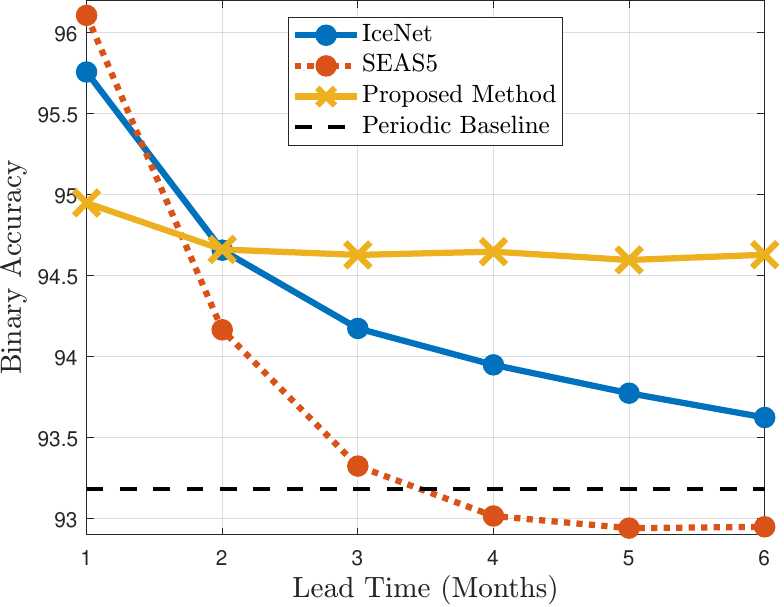}\hfill
\includegraphics[width=0.23\linewidth]{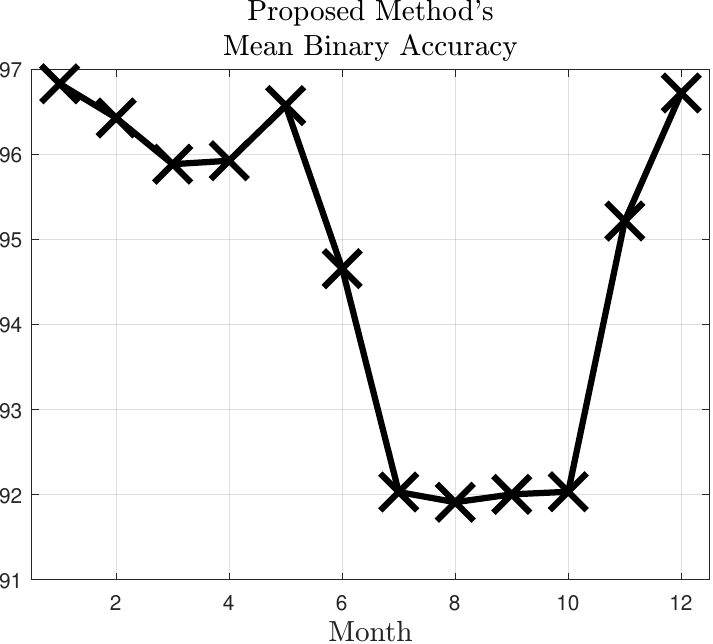}\hfill
\includegraphics[width=0.23\linewidth]{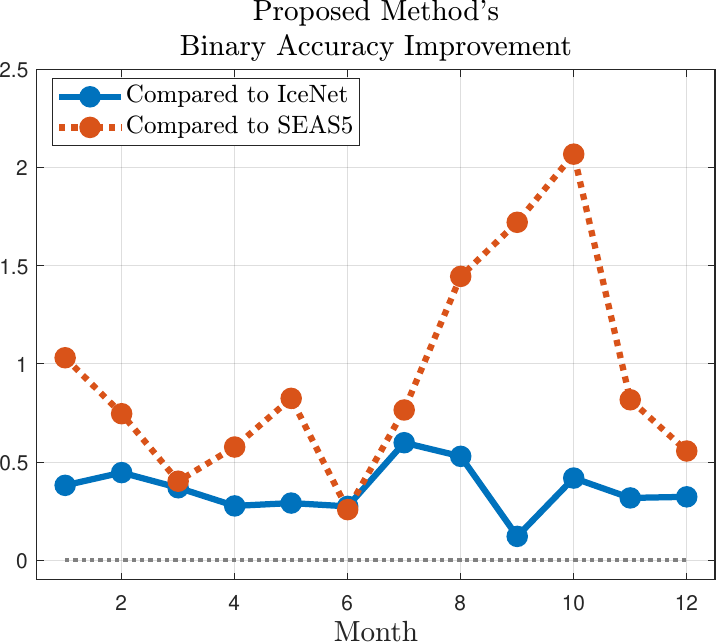}
\caption{\scriptsize\textbf{Comparison with benchmarks.}
Left: Forecast error for sea ice concentration at every grid point. Relative error of forecasted anomalies (relative to the periodic monthly climatological variance). We consider three-year forecasts initialized at each month from 2005 to 2015 and plot the average error for each lead time. The proposed method consistently outperforms DMD.
Middle left: Mean binary accuracy (see text for definition) over the test years 2012--2020, shown for IceNet, SEAS5, and our proposed method that avoids spurious Koopman eigenvalues. Our proposed method achieves better accuracy for lead times greater than one month, with very little increase in errors at larger lead times. Moreover, this is achieved using orders of magnitude fewer trainable parameters and substantially less computational cost.
Middle right: Mean binary accuracy of our proposed method over all lead times and years.
Right: Improvements in binary accuracy over IceNet and SEAS5.}
\label{fig:sea_ice_forecast2}
\end{figure*}

Hidden modes with nonzero $\mathrm{arg}(\lambda)$ can be interpreted as seasonal patterns modulated by this long-term trend. In other words, these represent seasonal oscillations that gradually decay over decades. Similar behavior has also been reported for Koopman eigenfunctions computed from sea-surface temperature data \cite{froyland2024revealing}. Our modes capture both the oscillation and the long-term decay. If $\phi_{\mathrm{decay}}$ is the decaying eigenfunction with zero complex argument and $\phi_{\mathrm{var}}$ an annual variation mode with argument $\pi/6$, then other decaying eigenfunctions are approximately given by $\phi_{\mathrm{decay}} \times [\phi_{\mathrm{var}}]^j$. This structure arises from the multiplicative property of Koopman eigenfunctions: if $\mathcal{K}g_1 = \lambda_1 g_1$ and $\mathcal{K}g_2 = \lambda_2 g_2$, then $\mathcal{K}(g_1 g_2) = \lambda_1 \lambda_2 g_1 g_2$, which follows from \cref{eq:KoopmanOperator}. For example, the canonical correlations between these decaying modes and the underlying modulated monthly dynamics are $0.9807$ and $0.9798$ for modes with arguments $\pi/6$ and $\pi/3$, respectively. In the climate science literature, such seasonally modulated product modes are sometimes referred to as ``combination modes'' \cite{stuecker2015nino,froyland2021spectral}.

The mean decay time, $-1/\log(|\lambda|)$, for these modes is 233 months, consistent with decay times observed in the Antarctic region \cite{hogg2020exponentially}. Notably, this decay rate was not observed in the Arctic region using DMD methods in \cite{hogg2020exponentially}, which we attribute to the challenges of extracting reliable eigenpairs in the absence of error bounds. Several studies indicate nonlinear trends in the decline of sea ice \cite{stroeve2012arctic}, which further advocates the use of Koopman operator techniques to disentangle the complex nonlinear dynamics. 

We compute Koopman modes (${\bf g}_j$ in \cref{KMD_example} where ${\bf g}$ is the vector of sea ice concentrations), which capture spatial and temporal patterns in sea ice concentration not easily discernible by conventional methods. Although the modes share units with the input data, they may take values above 100\% due to the non-orthogonal expansion in \cref{KMD_example}. Each mode highlights geographic regions where sea ice exhibits oscillatory, growing, or decaying behavior, as indicated by its corresponding eigenvalue. For example, modes with one-year oscillations reflect the seasonal cycle, eigenvalues near one capture the mean state, and multi-year or slowly varying modes indicate long-term trends, with the associated modes highlighting the regions where these changes occur.

The hidden decaying mode is concentrated in the Barents and Kara Seas, implying that the decrease in sea ice concentration over these timescales is localized to those regions. This is consistent with observations in \cite{hogg2020exponentially}. Indeed, there are links between sea ice reduction in these regions and extreme weather, such as severe winters in central Eurasia \cite{mori2014robust}. Importantly, our method provides error estimates, giving confidence that these identified modes are real and not artifacts (unlike many spurious modes given by EDMD).

These hidden modes also capture seasonally-modulated reemergence of correlations (Supplementary Figure 10), revealing `memory' in the climate system, whereby sea ice anomalies occurring during the growth season reemerge in the following melt season despite a loss of correlation in the intervening winter months \cite{blanchard2011persistence}. For studies of this phenomenon using kernel methods, see \cite{bushuk2017seasonality,bushuk2014reemergence}. Such long-lived modes and their geographic foci are critical, as they could influence large-scale climate patterns (e.g., ocean circulation changes) and have practical implications for shipping routes and climate resilience. Further work is needed to perform data-driven prediction of changes due to tipping points such as the greater mixing between the Barents Sea and North Atlantic \cite{lind2018arctic}. Modes in sea ice can significantly influence the onset or prevention of tipping behavior of AMOC patterns \cite{PhysRevFluids.9.123801,lohmann2024multistability}.

Sea ice decay is commonly assessed by sea ice extent, the area covered by grid cells with sea ice concentration exceeding $15\%$, shown in \cref{fig:sea_ice_decay}. We compute the Koopman modes corresponding to the mean sea ice concentration and the annual variation over ten-year periods (1980–1989, 1990–1999, 2000–2009, 2010–2019) and plot their absolute values. The mean mode reveals a clear decline in overall sea ice and reduced winter extent. Annual modes show a geographic shift in seasonal variability, with marginal seas (particularly the Beaufort, Kara, and nearby coastal regions) exhibiting increased amplitude, indicating stronger seasonal contrast in these regions. The overall decline in concentration suggests a slow decaying mode, consistent with the spectrum in \cref{fig:sea_ice_evals}.

\subsubsection*{Arctic sea ice forecasting}

We now address the problem of forecasting, focusing first on the challenging task of reconstructing sea ice concentration at active grid points. Forecasts are initialized monthly from 2005 to 2015, each with a 3-year horizon. Training data consists of observations from 1979 up to the month preceding initialization, yielding 132 distinct forecast trajectories. To obtain accurate forecasts, we truncate the Koopman mode decomposition using spectral approximations and error bounds computed by our algorithm (\cref{KMD_example2} in Methods). We compare our method to EDMD with delay embedding, which has previously shown strong performance in Arctic sea ice forecasting \cite{hogg2020exponentially}. We also benchmark against two baselines: the monthly climatology at each grid point (referred to as the periodic baseline) and the monthly persistence model. Forecast errors are measured relative to the climatological variance of the periodic baseline (see Methods) and averaged over all forecast trajectories. \Cref{fig:sea_ice_forecast2} (a) shows the results. Our approach outperforms EDMD, particularly at long lead times, consistent with its explicit minimization of $\varepsilon$ in \cref{eq:approx_coherency}.

We next consider a binary classification problem and compare our approach to IceNet \cite{andersson2021seasonal}, a deep learning model that provides state-of-the-art six-month sea ice forecasts. A point is classified as open water if the sea ice concentration is below $15\%$, the standard threshold for defining the ice edge. Following the setup in \cite{andersson2021seasonal}, we evaluate monthly forecasts from 2012 to 2020, with lead times from 1 to 6 months (chosen to align with the lead times for which IceNet produces forecasts). Binary accuracy is defined as the percentage of predicted classes matching the observations. We also compare against SEAS5 \cite{johnson2019seas5}, a leading dynamical model from the European Centre for Medium-Range Weather Forecasts. \cref{fig:sea_ice_forecast2} (b) shows the mean binary accuracy across lead times. Beyond one-month forecasts, the proposed verified Koopman model consistently outperforms both benchmarks, with significantly greater accuracy as lead time increases. This robustness arises from the method's direct minimization of $\varepsilon$, yielding longer coherency and predictive timescales (see \cref{eq:approx_coherency}).

This improvement is achieved at a fraction of the computational cost and with significantly fewer parameters than deep learning approaches. IceNet uses $4.4\times 10^7$ trainable weights and requires over a day to train on a NVIDIA Quadro P4000 GPU, while the Koopman approach relies on an observable space of at most 510 dimensions (260100 parameters) and is trained on a laptop in under a second.

We found no clear relationship between prediction accuracy and El Niño events. \cref{fig:sea_ice_forecast2} (c,d) presents average accuracy by calendar month and the improvement achieved by our method. Accuracy declines markedly during summer, consistent with the well-known ``spring predictability barrier,'' which affects forecasting models owing to the influence of melt-season ice thickness. These months also exhibit the largest performance gains relative to SEAS5. Finally, even as $\varepsilon \downarrow 0$ with increasing data, predictability remains fundamentally constrained by atmospheric chaos and observational noise \cite{olonscheck2019arctic}.

\vspace{3mm}

These practical advantages motivate a return to our central question: \textit{When can system behavior be learned reliably from data, and when is such learning impossible?} To address this, we present our theoretical results, with full proofs provided in the Supplementary Information.

\subsubsection*{Adversarial systems reveal challenges}
We first construct adversarial dynamical systems that expose two fundamental challenges in computing Koopman spectra. These establish \textit{lower bounds} on problem complexity. Even with infinite data, no algorithm can guarantee learning certain system behaviors, providing an explicit demonstration of inherent limits in data-driven dynamical system learning.

While it is sometimes possible to design algorithms that work for a single known system, this does not reflect the goals of general-purpose learning or complexity classification. Instead, our focus is on identifying the minimal assumptions under which an algorithm can reliably learn across a broad class of systems, a core aim in both computational complexity and ML. To illustrate, let $\Omega_{\mathbb{D}}$ be the class of systems that are continuous, measure-preserving\footnote{\scriptsize{A system is measure-preserving if it preserves a volume on the state space $\mathcal{X}$ during its evolution, e.g., an idealized frictionless system. Such systems are ubiquitous, including classical Hamiltonian systems \cite{arnold1989mathematical}, physical systems in equilibrium \cite{hill1986introduction}, and the post-transient behavior of many systems \cite{mezic2005spectral}.}}, and invertible on the unit disk in two dimensions. Let $\Omega_{[0,1]}$ be the class of smooth, invertible $F$ on the interval $[0,1]$ with uniform bounds on their derivatives (not necessarily measure-preserving).

Our results (Theorems 2.6 and 2.9 of the Supplementary Information) show that for either of these classes (denoted collectively as $\Omega$) no deterministic algorithms $\Gamma_n$ exist that, using snapshot data, converge to $\spec_{\mathrm{ap}}(\mathcal{K}_F)$ for all $F$ in the class $\Omega$ as $n\rightarrow\infty$. Furthermore, for any probabilistic learning algorithms, the probability of convergence cannot exceed 50\%. These impossibility results are \textit{universal}, applying to any type of algorithm and regardless of what $n$ represents. Hence, for \textit{any} algorithm, simply increasing the number of data points $M\rightarrow\infty$ will not lead to convergence, as this would correspond to an instance of the sequence $\Gamma_n$. These constructive results reveal fundamental challenges that occur across data-driven dynamical systems:
\begin{itemize}[leftmargin=0.15cm]\setlength{\itemsep}{-1pt}
\item[]\textbf{(C1)}: For systems in $\Omega_{\mathbb{D}}$, the challenge lies in determining when enough data has been collected to approximate the action of $\mathcal{K}$ on a given observable, e.g., by approximating the averages in \cref{ergodic_energy_example}. The convergence rate is problem dependent \cite{kachurovskii1996rate}: no universal rate exists \cite[page 14]{krengel2011ergodic}.
\item[]\textbf{(C2)}: For systems in $\Omega_{[0,1]}$, the difficulty stems from the non-normality of $\mathcal{K}$ (non-orthogonality of its eigenfunctions). This is a well-known challenge in spectral approximation more generally \cite{trefethen2005spectra}, and manifests itself in the Koopman context as the difficulty in distinguishing data corresponding to transient dynamics from post-transient dynamics.
\end{itemize}
Moreover, these challenges:
\begin{itemize}[leftmargin=0.23cm]\setlength{\itemsep}{0pt}
\item cover randomized algorithms, e.g., random trajectory sampling, or training with probability distributions over data, as in stochastic gradient descent and other ML methods;
\item hold whatever the distribution of data;
	\item hold for any type of computer, e.g., digital computation (Turing machines)  or exact arithmetic (BSS machines);
	\item hold even if we consider smoother $F$ and allow our algorithms to sample the derivatives of $F$ as well as \cref{eq:snapshot_data}.
\end{itemize}

The system classes for which we construct adversaries include widely studied examples such as measure-preserving flows and smooth interval exchange maps. The mechanisms can also be embedded in higher dimensions and other state spaces: the impossibility result holds for any class of systems satisfying \textbf{(C1)} or \textbf{(C2)}. These mechanisms are not limited to Koopman spectral estimation and reflect general challenges in data-driven dynamical systems.

\subsubsection*{A universal algorithm with learning guarantees}

Learning from the above results, we now show that Koopman spectra can be computed from trajectory data, provided two key conditions are satisfied. Together with the other algorithms discussed below, these show \textit{upper bounds} on problem complexity. This result also resolves the fundamental open problem of data-driven computation of $\spec_{\mathrm{ap}}(\mathcal{K}_F)$.

To address challenge \textbf{(C1)}, we assume the system is measure-preserving (this can be relaxed in many dissipative cases). To address \textbf{(C2)}, we require some control over the smoothness of the dynamical map $F$. Specifically, we assume $F$ has a known modulus of continuity $\alpha$. This function controls the distance between $F(x)$ and $F(y)$ by the distance between states $x$ and $y$. Although such a function always exists, the impossibility result above shows that without knowledge of $\alpha$, one cannot compute Koopman spectra in a single limit.

Let $\Omega_{\mathcal{X}}^{\alpha,m}$ be the class of systems satisfying both conditions. For such systems, we have developed deterministic learning algorithms $\Gamma_n$ that reliably approximate the system's dynamics using snapshot data (Theorem 2.3 and Algorithm 1 of the Supplementary Information). These algorithms converge to $\spec_{\mathrm{ap}}(\mathcal{K}_F)$ for all $F$ in the class $\Omega_\mathcal{X}^{\alpha,m}$ as $n\rightarrow\infty$. They also provide explicit error bounds that verify the accuracy of the approximation.

Our analysis of \textbf{(C1)} and \textbf{(C2)} directly yields a provably convergent algorithm, in contrast to EDMD, which does not converge in general. The central idea (see Methods) is to use the modulus of continuity to adaptively select the dictionary size $N$ as a function of the available data $M$, within an averaging framework similar to \cref{ergodic_energy_example}. Unlike EDMD, we do not compute eigenvalues of a finite-dimensional matrix. Instead, the adaptive procedure constructs a correlation matrix from which we evaluate the error metric shown in \cref{fig:EDMD_not_converge}. We then identify local minimizers of this metric and, using the measure-preserving structure of the system (or resolvent bounds, e.g., in dissipative settings), relate it to the distance between a point $z\in\mathbb{C}$ and the spectrum. This distance is computed together with an associated approximate eigenfunction satisfying the coherency condition \cref{eq:approx_coherency}, with $\varepsilon$ equal to the estimated spectral distance.

\cref{fig:EDMD_not_converge} demonstrates the convergence of this algorithm applied to the Duffing oscillator (see also the further systems in the Methods). Moreover, the error bounds achieved by the algorithm enabled improved sea ice forecasts in \cref{fig:sea_ice_forecast2}.

\subsubsection*{To infinity and beyond}
Surprisingly, spectral properties can still be learned in the presence of \textbf{(C1)} and \textbf{(C2)} by adjusting the approach. Instead of requiring a single data limit as $n \to \infty$, we consider separate \textit{successive} limits for key parameters. Each is tied to a different type of data, such as increasing dataset size, measurement resolution, or dictionary complexity.

For instance, for EDMD (see Methods) without the modulus of continuity, one must first take the number of data samples $M$ to infinity to approximate correlations and only then the number of observables $N$ to infinity (even then, EDMD may fail to converge\footnote{Convergence of EDMD eigenvalues can be established along a subsequence, provided the weak limit of the associated eigenfunctions is non-zero \cite{korda2018convergence}. Verifying this condition, however, entails multiple limiting procedures in the sense of the SCI described below.}). The convergence rate of time-averaged quantities such as correlations depends sensitively on the system's mixing properties and the regularity of the observable.\footnote{\scriptsize As another example, Bandtlow et al. \cite{bandtlow2023edmd} construct an EDMD-based method that provably converges with $N = N(M)$ for analytic expanding maps, using analyticity and spectral gaps to unify the two limits.} While polynomial rates hold for certain strongly mixing systems \cite{kachurovskii1996rate,mezic2002ergodic}, Birkhoff's theorem guarantees only convergence without any uniform rate and time averages can converge arbitrarily slowly \cite[Chapter 2]{krengel2011ergodic}. This non-uniformity prevents the formulation of a universal convergence rate or a general-purpose $N(M)$ strategy, unless we can control \textbf{(C1)}.

To further understand how assumptions on a system's structure influence our ability to learn from data, we establish results under three scenarios (see Theorem 2.3 of the Supplementary Information):

-- First, let $\Omega_{\mathcal{X}}^{\alpha}$ denote systems whose dynamics have a known smoothness quantified by a modulus of continuity $\alpha$. For these systems, there exist learning algorithms that depend on two parameters, $n_1$ and $n_2$, and reliably use snapshot data to approximate the dynamics. These algorithms converge in two successive data limits (${n_1\rightarrow\infty}$ and then ${n_2\rightarrow\infty}$) to the spectrum $\spec_{\mathrm{ap}}(\mathcal{K}_F)$ for all systems $F$ in $\Omega_\mathcal{X}^{\alpha}$.

-- Second, let $\Omega_{\mathcal{X}}^{m}$ denote measure-preserving systems (this assumption can be relaxed using resolvent bounds) with continuous dynamics. Algorithms again exist that converge to $\spec_{\mathrm{ap}}(\mathcal{K}_F)$, but require two successive data limits for all systems in this class. The necessity of these successive limits arises naturally: in \cref{EDMD1,EDMD2}, challenge \textbf{(C1)} implies that convergence rates of the underlying sums cannot be uniformly controlled \cite{krengel2011ergodic}, a phenomenon well known in mixing systems, among others. One must therefore first take the large-data limit ($M \to \infty$), followed by the dictionary limit ($N \to \infty$); in general, these limits cannot be combined or reversed. Our analysis further shows that, for \textit{any} algorithm (not only EDMD), no universal rule linking $M$ and $N$ guarantees convergence when both grow simultaneously.

-- Finally, consider the most general scenario, $\Omega_{\mathcal{X}}$, consisting of all continuous systems without further assumptions.\footnote{\scriptsize{While the continuity of $F$ can be relaxed, it is often assumed because discontinuities can lead to pathologies \cite{mane2012ergodic}.}}
Remarkably, reliable algorithms still exist, but they require three successive data limits to converge to the spectrum $\spec_{\mathrm{ap}}(\mathcal{K}_F)$ for all $F$ in $\Omega_\mathcal{X}$: increasing the number of snapshots ($M\rightarrow\infty$), expanding the dimensionality of the subspaces ($N\rightarrow\infty$), and refining coherence estimates into spectra (take the regularization parameter $\varepsilon \downarrow 0$ in \cref{eq:approx_coherency}). Without assumptions on the underlying system, no simpler two-step algorithm can achieve guaranteed convergence (Theorem 2.15 of the Supplementary Information). \textbf{(C1)} and \textbf{(C2)} each cost a limit, illustrating the intrinsic difficulty of learning general dynamical systems.

\subsubsection*{Unified complexity classifications of learning}

\begin{figure}
\centering
\includegraphics[width=1\linewidth]{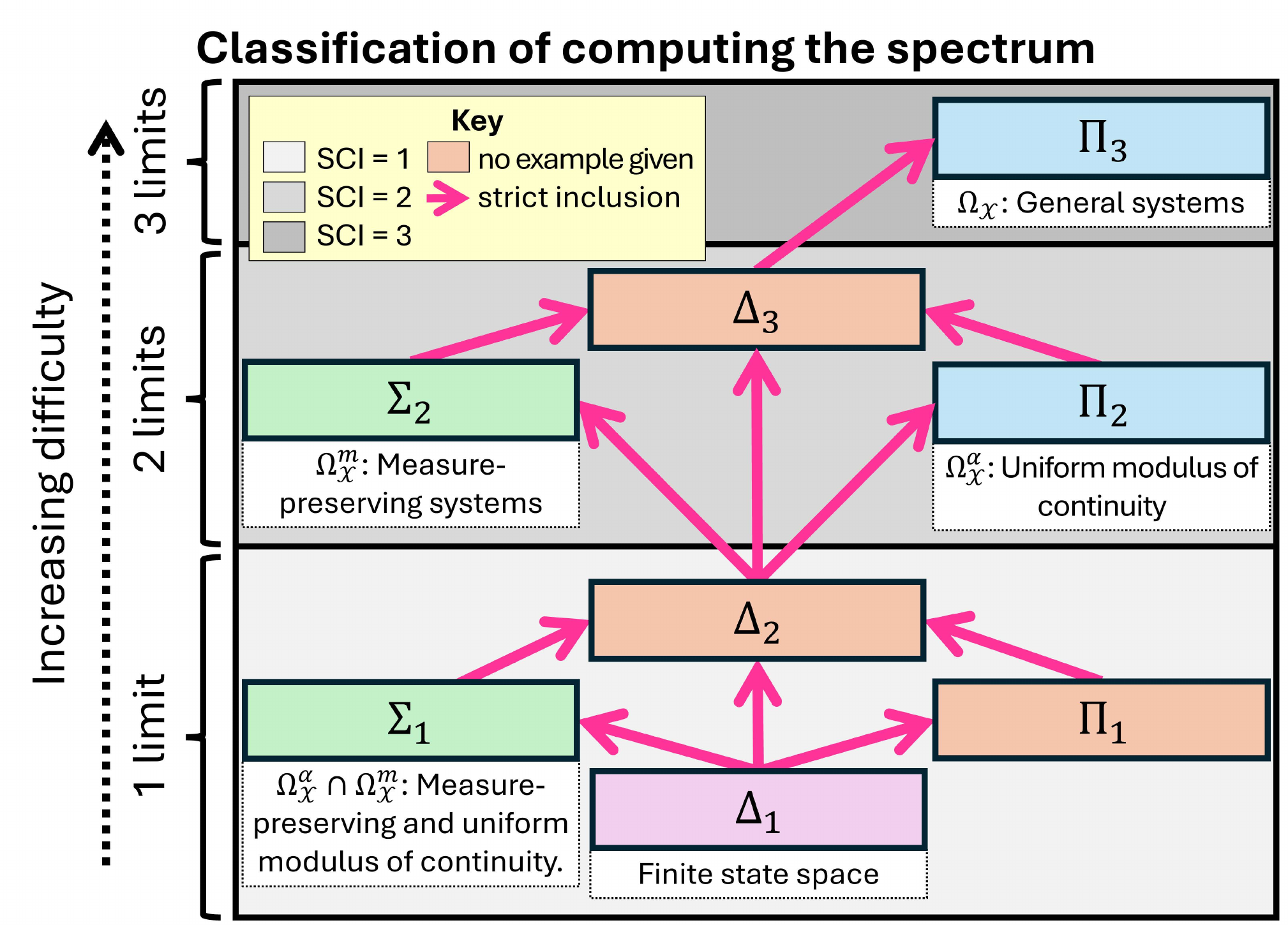}
\caption{\scriptsize\textbf{Classifications for learning Koopman spectra from trajectory data.} Each SCI level indicates that solving the problem requires more complex, layered procedures. Each `limit' reflects an extra data refinement. Our results establish both upper bounds (via convergent algorithms) and lower bounds (through adversarial dynamical systems) on the computational complexity of these problems. The class $\Delta_{m+1}$ comprises problems with $\mathrm{SCI}\leq m$. The $\Sigma$ and $\Pi$ classes characterize how verification occurs in the final limit when learning Koopman operator spectra: $\Sigma$ corresponds to convergence from within, and $\Pi$ to convergence from above. For finite state spaces, the problem lies in $\Delta_1$, since the Koopman operator reduces to a finite-dimensional matrix whose spectrum can be computed by a single convergent algorithm with explicit rates.}
\label{fig:SCI1}
\end{figure}

The Solvability Complexity Index (SCI) \cite{Hansen_JAMS,ben2015can} formalizes how many limits are needed to learn properties from data. For the full class $\Omega_\mathcal{X}$, computing $\spec_{\mathrm{ap}}(\mathcal{K}_F)$ requires three separate limits, which cannot be reduced or combined: no method can succeed with only two. For structured subclasses like $\Omega_{\mathcal{X}}^{\alpha,m}$, $\Omega_{\mathcal{X}}^{\alpha}$, or $\Omega_{\mathcal{X}}^{m}$, the above shows that fewer limits suffice.

\begin{table*}[th!]
\centering\setlength{\tabcolsep}{2pt}
\caption{\scriptsize\textbf{Convergence results for Koopman operators in the SCI hierarchy.} This table summarizes convergence results from the Koopman literature, interpreted through the lens of the Solvability Complexity Index (SCI). ``N/C'' denotes non-convergence without additional strong assumptions (e.g., requiring observables to lie within a finite-dimensional invariant subspace), ``n/a'' indicates the method is not applicable to the spectral problem, and  ``m.p.'' stands for measure-preserving systems. A superscript $^*$ indicates the SCI bound improves by one if errors in approximations of correlations (see \cref{EDMD1,EDMD2}) are controlled, e.g., through known variational bounds on $F$. This reduction depends on system properties \cite{kachurovskii1996rate}. The column $\mathcal{K}g$ refers to approximating $\mathcal{K}$'s action on observables $g$. Note that computing the spectral measure (distribution of the system's behavior across different frequencies) may not yield the spectrum due to spurious eigenvalues. Upper bounds typically assume access to a dictionary with projections converging strongly to the identity; this construction is either specified (e.g., the entries ``compactification methods'' and ``diffusion maps'' have the distinct advantage of learning a well-conditioned dictionary) or required as an additional input by the user (e.g., EDMD and generator EDMD) which may or may not increase the SCI. In our upper bounds in \cref{fig:SCI1}, we construct a dictionary. \textit{N.B. These results are upper bounds on the SCI. Many of these bounds are not sharp, meaning they overestimate the number of limits required.}}\label{tab:SCI_table}\scriptsize
\begin{tabular}{|l|c|l|l|l|l|}
\hline
\multicolumn{1}{|c|}{\multirow{2}{*}{\textbf{Algorithm}}} & \multicolumn{1}{c|}{\multirow{2}{*}{\textbf{Comments/Assumptions}}} & \multicolumn{4}{c|}{\textbf{Spectral Problem's Corresponding SCI Upper Bound}}\\ \cline{3-6} 
\multicolumn{1}{|c|}{}& \multicolumn{1}{c|}{}& \multicolumn{1}{c|}{$\mathcal{K}g$} & \multicolumn{1}{c|}{\textit{Spectrum}} & \multicolumn{1}{c|}{\textit{Spectral Measure (if m.p.)}} & \multicolumn{1}{c|}{\textit{Spectral Type (if m.p.)}} \\
\hline
Extended DMD \cite{williams2015data} & general $L^2$ spaces & $\mathrm{SCI}\leq 2^*$ & N/C & N/C & n/a\\\hdashline[0.5pt/1pt]
\multirow{2}{*}{Residual DMD \cite{colbrook2021rigorousKoop}} & \multirow{2}{*}{general $L^2$ spaces} & \multirow{2}{*}{$\mathrm{SCI}\leq 2^*$} & \multirow{2}{*}{$\mathrm{SCI}\leq 3^*$} & \multirow{2}{*}{$\mathrm{SCI}\leq 2^*$} & varies, see \cite{colbrook2019computing}\\
&  & & & & e.g., a.c. density: $\mathrm{SCI}\leq 2^*$\\
\hdashline[0.5pt/1pt]
\multirow{2}{*}{Measure-preserving EDMD \cite{colbrook2023mpedmd}} & \multirow{2}{*}{m.p. systems} & \multirow{2}{*}{$\mathrm{SCI}\leq 1$} & \multirow{2}{*}{N/C} & $\mathrm{SCI}\leq 2^*$ (general) & \multirow{2}{*}{n/a}\\
&  & & & $\mathrm{SCI}\leq 1$ (delay-embedding) & \\
\hdashline[0.5pt/1pt]
Hankel DMD \cite{arbabi2017ergodic} & m.p. ergodic systems & $\mathrm{SCI}\leq 2^*$ & N/C & N/C & n/a\\\hdashline[0.5pt/1pt]
Christoffel--Darboux kernel \cite{korda2020data} & m.p. ergodic systems & $\mathrm{SCI}\leq 3$ & n/a & $\mathrm{SCI}\leq 2$ & e.g., a.c. density: $\mathrm{SCI}\leq 2$\\\hdashline[0.5pt/1pt]
\multirow{2}{*}{Generator EDMD \cite{klus2020data}} & cts.-time, samples $\nabla F$ & \multirow{2}{*}{$\mathrm{SCI}\leq 2$} & \multirow{2}{*}{N/C} & \multirow{2}{*}{$\mathrm{SCI}\leq 2$}
& \multirow{2}{*}{n/a}\\
& (otherwise additional limit) &&&&\\
\hdashline[0.5pt/1pt]
Compactification \cite{das2021reproducing} & cts.-time, m.p. ergodic systems & $\mathrm{SCI}\leq 4$ & N/C & $\mathrm{SCI}\leq 4$ & n/a\\\hdashline[0.5pt/1pt]
Resolvent compactification \cite{valva2023consistent} & cts.-time, m.p. ergodic systems & $\mathrm{SCI}\leq 5$ & N/C & $\mathrm{SCI}\leq 5$ & n/a\\\hdashline[0.5pt/1pt]
Diffusion maps \cite{berry2015nonparametric} & cts.-time, m.p. ergodic systems & $\mathrm{SCI}\leq 3$ & n/a & n/a & n/a \\
\hline
\end{tabular}
\end{table*}

This framework provides a systematic way to assess the complexity of data-driven problems. Applied to Koopman operators, \cref{fig:SCI1} summarizes the difficulty of learning their spectra based on our results. Each classification includes both an upper bound (a convergent algorithm), and a lower bound, established by constructing adversarial dynamical systems to prove that fewer limits are insufficient. We can also relate existing algorithms to the SCI hierarchy by summarizing convergence results from the Koopman literature and their corresponding implicit upper bounds in \cref{tab:SCI_table}. Each algorithm relies on specific system assumptions and, in some cases, uses more data limits than necessary for convergence.

Beyond \cref{tab:SCI_table}, Ulam's method is widely used to approximate eigenvalues of transfer operators, which describe the evolution of probability densities and are dual to Koopman operators. It typically requires two successive limits: one for Monte Carlo approximation and another for increasing matrix size \cite{froyland2007ulam}, but convergence is not always reliable \cite[Section 2.6]{blank2002ruelle}. Adding a third limit via noise smoothing can improve convergence \cite{blank2002ruelle}, though adaptive noise selection sometimes reduces this back to two. Similar SCI classifications apply to other data-driven dimensionality reduction methods \cite{klus2018data}. For multiple limits in control theory, see \cite[Theorem 3]{peitz2019koopman}.

These examples illustrate that multiple-limit phenomena are central to many data-driven methods in dynamical systems. While methods provide SCI upper bounds, a key challenge is determining whether these bounds are optimal. This raises fundamental questions: \textit{Can convergence be achieved with fewer limits? If not, what assumptions make it easier?} To address these, we have derived lower bounds that show how system properties and the quantities being computed shape optimal algorithm design. When upper and lower bounds match, the algorithm is provably \textit{optimal} for the problem.

\vspace{3mm}

\textit{In short, we now have a characterization of when data-driven spectral learning can succeed and when it cannot.}

\subsubsection*{Learning eigenpairs and latent spaces is hard}

\begin{figure}
\centering
\includegraphics[width=1\linewidth]{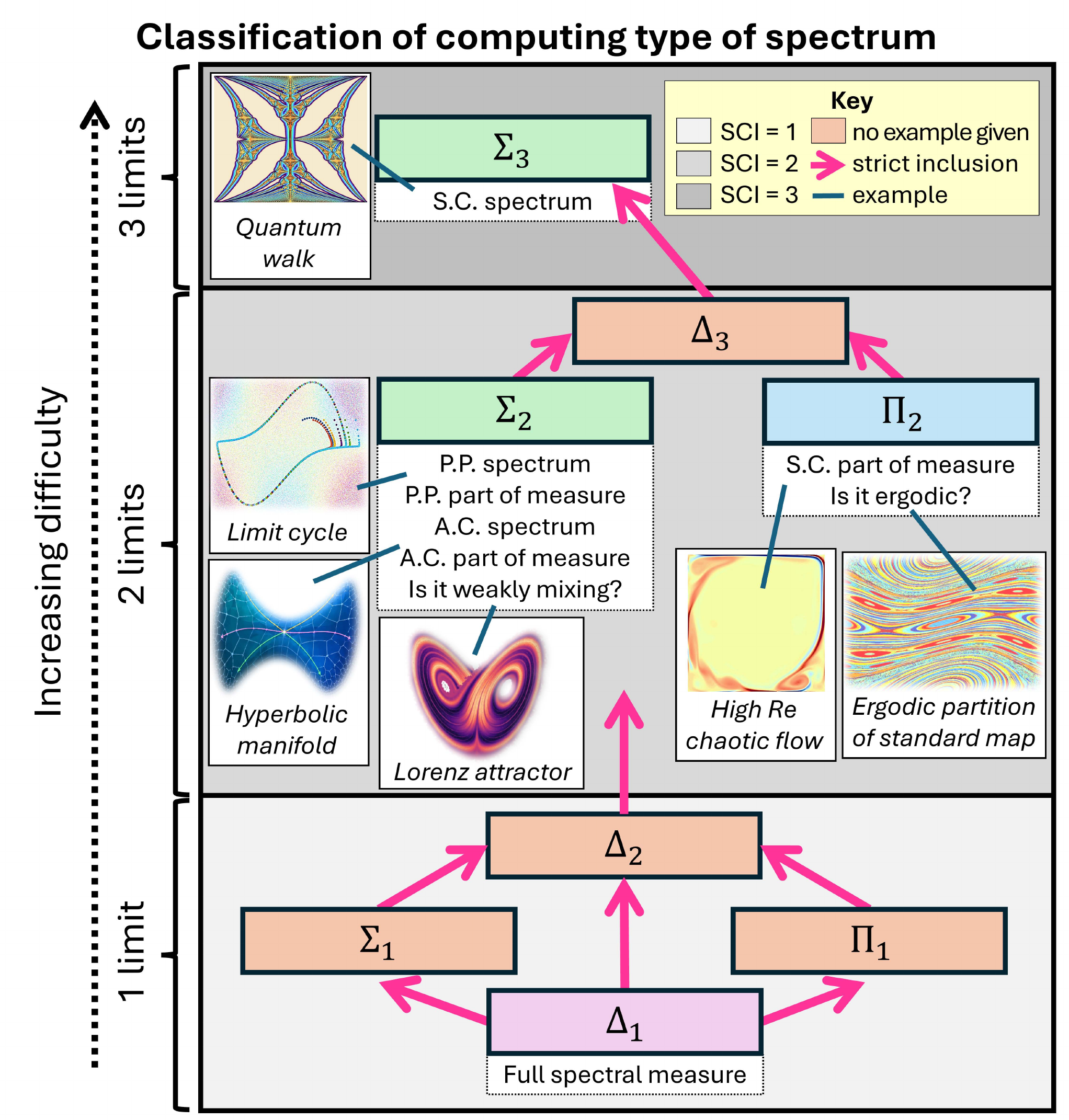}
\caption{\scriptsize\textbf{Classifications for learning spectral types (eigenvalues (p.p.), absolutely continuous (a.c.) and singular continuous (s.c.)) of Koopman operators for measure-preserving invertible systems from trajectory data.} Each classification comprises an upper bound (convergent algorithms) and a lower bound (established by constructing adversarial dynamical systems). The SCI is the number of data limits needed to solve a problem. For each learning objective, we provide a representative system to emphasize that the classification fundamentally depends on the nature of the underlying dynamics. These examples illustrate how different dynamical behaviors influence the complexity of learning spectral types from data. The various classes $\Delta_m$, $\Sigma_m$, and $\Pi_m$ are described in \cref{fig:SCI1}.\vspace{-5mm}}
\label{fig:SCI2}
\end{figure}

As a final problem, we consider the complexity of determining the spectral type, i.e., eigenvalues versus continuous components (bottom panel of \cref{fig:EDMD_not_converge}), of measure-preserving systems. Spectral types distinguish between recurrent patterns (periodic or quasiperiodic oscillations) and more chaotic or mixing behavior, based on how the system spreads energy over frequencies \cite[page 45]{katznelson2004introduction}. They play a key role in applications such as fluid mechanics \cite{mezic2013analysis}, anomalous transport \cite{zaslavsky2002chaos}, and analysis of trajectory invariants and exponents \cite{kantz2004nonlinear}, and are particularly important in reduced-order modeling \cite{mezic2004comparison,mezic2005spectral}. We discussed their role in the above cavity flow problem.

Identifying eigenvalues and eigenfunctions of Koopman operators ($\varepsilon=0$ in \cref{eq:approx_coherency}) reveals coordinates in which nonlinear dynamics evolve linearly, enabling the expansion in \cref{KMD_example}. Non-unit eigenvalues ($\lambda \neq 1$) encode time-varying structure. Let $\Omega_p$ denote the class of smooth, invertible, measure-preserving systems on the torus with uniformly bounded derivatives. This class lies within $\Omega_{\mathcal{X}}^{\alpha,m}$; consequently, our universal one-limit algorithm applies and provably computes the full spectrum—without requiring any a priori distinction between discrete and continuous components.

Strikingly, even for this well-structured class, no single-limit learning algorithm (deterministic or probabilistic with success probability exceeding 50\%) based on trajectory data can determine whether the Koopman operator $\mathcal{K}_F$ admits a non-unit eigenvalue for $F$ in $\Omega_p$. Likewise, no such algorithm can converge to the set of eigenvalues (Theorem 2.12 of the Supplementary Information).

However, both problems become computable through two interdependent data limits (Supplementary Algorithms 6 and 7): adaptively adjusting the time lag in autocorrelation estimates according to data size, and increasing projections onto finite-dimensional subspaces defined by a dictionary. We demonstrated this strategy on a complex fluid flow (see also Methods). This two-limit procedure is provably optimal: no single-limit algorithm can resolve these questions. The corresponding SCI classifications are summarized in \cref{fig:SCI2}.

These results help explain the challenges in finding finite-dimensional representations, such as autoencoders or latent spaces, in which the dynamics appear linear \cite{lusch2018deep}. In particular, it is fundamentally impossible to determine or verify the existence of Koopman eigenvalues using only a single limit. A constructive alternative is to compute approximate eigenfunctions associated with $\spec_{\mathrm{ap}}(\mathcal{K}_F)$. This problem admits a single-limit algorithm with verification (\cref{fig:SCI1}), providing a sharp and practically implementable route for spectral analysis. We applied this to the Arctic sea ice dataset above.

\subsection*{Discussion}

We developed a powerful technique, the construction of adversarial dynamical systems (see \S~\textit{Adversarial systems} and \cref{fig:proof1}), to establish impossibility results in data-driven dynamical systems. We identified conditions (\textbf{(C1)}, \textbf{(C2)}) under which no sequence of randomized algorithms (e.g.,  based on randomly sampled trajectories) can succeed with probability greater than 50\%. These failures are not rare pathological cases, but reflect fundamental barriers intrinsic to the problem.
These barriers, in turn, guided the development of a suite of algorithms (Supplementary Algorithms 1–7) that achieve optimal performance and are provably convergent. The resulting methods produce trustworthy, verifiable outputs and include what we term multi-data-limit methods (see \cref{fig:SCI1,fig:SCI2}). By linking dynamical systems theory with the foundations of computation, we establish a computational complexity framework for data-driven dynamical systems. This framework clarifies the inherent limitations of using finite data to analyze complex dynamics while simultaneously identifying algorithmic possibilities with broad applicability. It provides a synthesis in ML between foundational theory and concrete application.

We demonstrated the power of our framework by solving a long-standing problem: computing Koopman spectra from data without spurious eigenvalues or missing components. For example, we show that there exists a constructive computational procedure for learning $\mathrm{Sp}_{\mathrm{ap}}(\mathcal{K}_F)$ from data for general continuous systems. Our algorithms succeed where existing methods (e.g., EDMD) struggle, recovering accurate spectra in both low- and high-dimensional systems, including cases with continuous spectra and a real-world application to Arctic sea ice. 

Arctic amplification has accelerated sea ice loss, with major consequences for ecosystems, local communities, and extreme weather. A key challenge is identifying the geographic patterns driving these changes. Our algorithms uncover hidden Koopman modes linked to sea ice decline (\cref{fig:sea_ice_evals}), offering dynamic and spatial insight supported by \textit{verification}. These decaying modes are physically meaningful and may guide future measurements. Using spectral approximations and error bounds to truncate spurious components, we built forecast models that achieve state-of-the-art accuracy at a fraction of the cost of deep learning models (\cref{fig:sea_ice_forecast2} and surrounding discussion). Applications include optimizing shipping routes, reducing environmental risks, and informing early-warning systems. Understanding sea ice loss is also vital given potential links to extreme events such as wildfires, floods, heatwaves, and cold spells. The Koopman framework further enables systematic comparisons across climate models and supports the construction of interpretable response formulas and reaction coordinates near critical transitions, with broad implications for climatology \cite{lucarini2023theoretical}.

Our approach extends beyond Koopman operators, provided appropriate domain-specific ``sudden change'' lemmas can be established (see Methods). In each case, an analog of the ``ball of learnability'' applies. This perspective is relevant to a range of methods, including SINDy \cite{brunton2016discovering}, neural ODEs \cite{chen2018neural}, Fourier neural operators \cite{LiKALBSA21}, LSTMs \cite{vlachas2018data}, and PDE-net \cite{long2018pde}, as well as to other areas of scientific computing with ML. For example, recent work has shown that linear elliptic PDEs can be learned from input-output pairs \cite{boulle2023elliptic}, analogous to snapshot-based learning. Whether similar approaches extend to hyperbolic or nonlinear PDEs remains open, but our proof techniques may offer insight into this challenge.

This paper initiates a broader effort to explore the limits of robust learning and to develop a theory of necessary and sufficient conditions. Several promising directions remain. 
First, we have assumed access to full-state observations in \cref{eq:snapshot_data}, whereas many applications involve only partial measurements of the state $x$. Extending our upper and lower bounds to such settings is a natural next step.
Second, we focused on discrete-time systems, reflecting how data is typically collected. It would be valuable to study continuous-time dynamics and the sampling conditions needed for reliable learning. Our results should carry over under generic time discretizations, but a formal analysis remains open.
Third, Koopman-based methods have shown promise in control problems across domains such as 
power grids \cite{netto2018robust},
robotics \cite{haggerty2023control},
fluid dynamics \cite{arbabi2018data}, 
chemistry \cite{narasingam2019koopman}, and
biology \cite{hasnain2020steady}, where the linearity of $\mathcal{K}$ enables tractable control design \cite{strasser2026overview}. Our development of provably convergent, error-bounded algorithms for Koopman spectral properties opens the door to significant advances in nonlinear control. Fourth, it is natural to seek lower bounds complementing the upper bounds in \cref{tab:SCI_table} for problems that do not rely on spectral computations. These may have different lower bounds than those we established, but we anticipate that the adversarial dynamical systems framework can be extended to cover them.

ML in data-driven dynamical systems is rapidly expanding, and this momentum shows no signs of slowing. Across nearly every area, key challenges are being reexamined through big data and deep learning. With this surge of interest and innovation, it is crucial for the community to grasp not only what is possible but also what is fundamentally impossible. This prevents the pursuit of unattainable algorithms or methods, safeguards against potentially catastrophic errors, and reveals the conditions under which learning is feasible: upper and lower bounds inform and sharpen one another. This process led us to our convergent algorithms. Such classifications are essential if we are to fully harness the power of ML in dynamical systems.

\subsection*{Methods}

\subsubsection*{Computing Koopman modes and EDMD}
For a dictionary of observables $\{g_j\}_{j=1}^N$ and snapshot pairs $(x^{(m)},y^{(m)})$ in \cref{eq:snapshot_data}, EDMD approximates two correlation matrices
$
G_{ij}=\langle g_j,g_i\rangle$ and $A_{ij}=\langle \mathcal{K}g_j,g_i\rangle$ ($i,j=1,\ldots,N$)
from data averages using $\mathcal{K}g_j(x^{(m)})=g_j(y^{(m)})$, analogously to \cref{ergodic_energy_example}:\vspace{-2mm}
\begin{align}\setlength\abovedisplayskip{3pt}\setlength\belowdisplayskip{3pt}
G_{ij}\approx \hat{G}_{ij} &=\frac{1}{M}\sum_{m=1}^M g_j(x^{(m)})\overline{g_i(x^{(m)})},\quad i,j=1,\ldots,N,\label{EDMD1}\\
A_{ij}\approx \hat{A}_{ij} &=\frac{1}{M}\sum_{m=1}^M g_j(y^{(m)})\overline{g_i(x^{(m)})},\quad i,j=1,\ldots,N.\label{EDMD2}\vspace{-2mm}
\end{align}
(Here, the $L^2$ inner product $\langle \cdot,\cdot \rangle$ is a generalized dot product that measures how strongly two observables are correlated.) The Koopman approximation is the $N\times N$ matrix $\hat{G}^{-1}\hat{A}$ and EDMD computes its eigenvalues.

We compute the Koopman modes ${\bf g}_j$ appearing in \cref{KMD_example} as follows. Given a collection of $n$ approximate eigenfunctions (computed, e.g., by EDMD or our algorithms), let $\Psi\in\mathbb{C}^{M\times n}$ denote the matrix corresponding to their time series across the training data $x^{(m)}$ ($m=1,\ldots,M$) in \cref{eq:snapshot_data}. For a vector of observations ${\bf g} \in \mathbb{C}^N$, let ${\bf O}\in\mathbb{C}^{M\times N}$ be their collected data over the same training data. The Koopman modes ${\bf g}_j$ are computed by
\begin{equation}\setlength\abovedisplayskip{6pt}\setlength\belowdisplayskip{6pt}
\begin{pmatrix}
{\bf g}_1& {\bf g}_2 &\cdots & {\bf g}_n
\end{pmatrix}^\top=\Psi^\dagger{\bf O},
\end{equation}
where $\dagger$ denotes the Moore--Penrose inverse (solution of the least squares problem) and $\top$ the matrix transpose.

In \cref{fig:sea_ice_evals,fig:sea_ice_decay}, we have plotted the absolute values of Koopman modes since these describe spatial presence of the dynamics described by eigenvalues (as explained in the discussion surrounding \cref{KMD_example}). The error bars for these Koopman modes and eigenvalues correspond to $\|\mathcal{K} \phi_\varepsilon-\lambda \phi_\varepsilon\|$ (computed using the function in \cref{h_fun}) for the corresponding approximate eigenfunction $\phi_\varepsilon$ (normalized so $\|\phi_\varepsilon\|=1$). These provide an error bound for coherency in \cref{eq:approx_coherency}.

\subsubsection*{Upper bounds: A suite of convergent algorithms}

There are several algorithms, each corresponding to different classes of dynamical systems in \cref{fig:SCI1}, and full pseudocode is provided in the Supplementary Information. Recall from the Duffing oscillator example that we consider a dictionary of observables $\{g_j\}_{j=1}^N$ and that we have access to the snapshot data in \cref{eq:snapshot_data}.

We assume the state space is a compact metric space \((\mathcal{X}, d_{\mathcal{X}})\) and consider observables \(g\) in \(L^2(\mathcal{X}, \omega)\), the space of functions whose squared values can be averaged (integrated) over \(\mathcal{X}\) using a probability measure \(\omega\), which defines how different parts of the state space are weighted. We also assume that $F$ is nonsingular with respect to $\omega$ (if $\omega(E)=0$ then $\omega(\{x:F(x)\in E\})=0$) and $\mathcal{K}_F$ is bounded. This is the standard setting in Koopman analysis, often referred to as the \textit{spectral study} of the system. Notably, our methods apply to \textit{any} compact metric space \(\mathcal{X}\) and probability measure \(\omega\). To assess how well algorithms capture the spectrum of the Koopman operator, we use the Hausdorff metric, which measures how far two spectral sets are from each other by quantifying the greatest distance one must travel from a point in one set to reach the other. This metric ensures that the computed spectra converge accurately, avoiding errors such as including false eigenvalues or missing important spectral regions.

We first describe the algorithm for $\Omega_{\mathcal{X}}^{\alpha,m}$ (Supplementary Algorithm 1). Traditional methods for analyzing the dynamics of complex systems (e.g., EDMD) often rely on estimating the Koopman operator by computing eigenvalues of finite-dimensional matrices. However, these approximations can be unreliable and fail to converge, as shown in \cref{fig:EDMD_not_converge,fig:other_examples}. Our approach avoids this pitfall by taking a different route. Instead of directly computing eigenvalues of an approximate matrix, we use trajectory data from the system to construct \textit{three} matrices that capture the essential correlations between observables and their evolution. These matrices approximate inner products involving the Koopman operator $\mathcal{K}_F$ and its adjoint $\mathcal{K}_F^*$ (which encodes how observables change in \textit{reverse time}).

We form a correlation matrix based on how different observables behave across the dataset:
\begin{equation}\setlength\abovedisplayskip{6pt}\setlength\belowdisplayskip{6pt}
\hat{L}_{ij} =\frac{1}{M}\sum_{m=1}^M g_j(y^{(m)})\overline{g_i(y^{(m)})},\quad i,j=1,\ldots,N,
\end{equation}
which estimates how the time-evolved observables overlap. It approximates $\langle \mathcal{K}g_j,\mathcal{K}g_i\rangle$ in an analogous fashion to how \cref{EDMD1,EDMD2} approximate $\langle g_j,g_i\rangle$ and $\langle \mathcal{K}g_j,g_i\rangle$, respectively. Using these matrices, we define a function 
\begin{equation}\setlength\abovedisplayskip{5pt}\setlength\belowdisplayskip{5pt}\label{h_fun}
\smash{h_{N}(z,F)=\sqrt{\sigma_{\mathrm{inf}}(\hat{L}-\overline{z}\hat{A}-z\hat{A}^*+|z|^2\hat{G})}.}
\end{equation}
Here, $\sigma_{\mathrm{inf}}$ denotes the smallest singular value of a matrix.
This method has a key advantage: If $F$ has a known modulus of continuity, we adaptively select $N$ based on the number of data points $M$ (amounting to controlling errors in the approximations of the correlations) to ensure that
\begin{equation}\setlength\abovedisplayskip{6pt}\setlength\belowdisplayskip{6pt}
\lim_{N\rightarrow\infty}h_{N}(z,F)=\sigma_{\mathrm{inf}}(\mathcal{K}_F-zI),
\end{equation}
with convergence from above. Here, $I$ denotes the identity operator. Moreover, this specific construction guarantees convergence, in contrast to approaches using $\smash{\sigma_{\mathrm{inf}}(\hat{A}-z\hat{G})}$, which lack this property. It establishes the rigorous error bounds demonstrated, for example, in \cref{fig:EDMD_not_converge,fig:other_examples}.

We evaluate this function over an initial grid of $z$-points that is adaptively refined with increasing $N$ (a good choice is $\{z\in\frac{1}{n}\mathbb{Z}+\frac{i }{n}\mathbb{Z}:|z|\leq n\}$). The computed function $h_{N}(z,F)$ yields an upper bound $d_{N,z}$ on $\mathrm{dist}(z,\spec_{\mathrm{ap}}(\mathcal{K}_F))$, provided the non-normality of $\mathcal{K}_F$ is suitably controlled (e.g., if the system is measure-preserving, see also Equation (16) of the Supplementary Information for the general resolvent-bounded case). We then find local minimizers of $h_{N}(z,F)$ in a radius $d_{N,z}$ neighborhood of each grid point. These minimizers provide approximations to the spectrum near each point. The union of these local approximations is guaranteed to converge to the true spectrum as $N\rightarrow\infty$. By computing the singular vectors corresponding to the smallest singular values, we simultaneously obtain approximate eigenfunctions \(\phi_\varepsilon\), which satisfy the condition \(\|\mathcal{K}_F \phi_\varepsilon - \lambda \phi_\varepsilon\| \leq \varepsilon\) (with $\varepsilon=\sigma_{\mathrm{inf}}(\mathcal{K}_F-\lambda I)$ for a spectral parameter $\lambda$) and hence the coherency bound in \cref{eq:approx_coherency}. In addition to providing error bounds, the algorithm is local, trivially parallelizable, and stable.

For the class $\Omega_{\mathcal{X}}^{\alpha}$, we cannot directly convert $h_{N}(z,F)$ into the distance bound $d_{N,z}$ as described above. This limitation reflects the challenge \textbf{(C2)} and requires an additional limiting process to achieve the conversion (Supplementary Algorithm 3). Similarly, for the class $\Omega_{\mathcal{X}}^{m}$, we are unable to adaptively choose $N$ based on the number of data points $M$, due to the challenge \textbf{(C1)}. As a result, we must evaluate $h_{N}(z,F)$ using another limiting procedure (Supplementary Algorithm 4). In the most general case, $\Omega_{\mathcal{X}}$, both challenges arise simultaneously, necessitating three successive limits (Supplementary Algorithm 5). This phenomenon of several successive limits occurs in all algorithms for Koopman operators that provably converge. In particular, the above argument using the matrices $L$ is a generalization of the ResDMD algorithm \cite{colbrook2021rigorousKoop}.

To separate eigenvalues from continuous spectra, we use a mathematical result known as the RAGE theorem. It states for unitary $\mathcal{K}$ that we can isolate the contribution of eigenfunctions by averaging time-evolved observables after projecting onto an increasing sequence of finite-dimensional subspaces. Specifically, if $\{\mathcal{P}_n\}$ are finite-rank orthogonal projections converging to the identity, then for any observable \(g\), the projection $\mathcal{P}_{\mathrm{pp}}$ onto the closure of the space spanned by eigenfunctions of $\mathcal{K}$ satisfies:
\begin{equation}\label{RAGE_statement}\setlength\abovedisplayskip{6pt}\setlength\belowdisplayskip{6pt}
\|\mathcal{P}_{\mathrm{pp}}g\|^2=\lim_{n\rightarrow\infty}\lim_{L\rightarrow\infty}
\frac{1}{2L+1}\sum_{\ell=-L}^L\left\|\mathcal{P}_n\mathcal{K}^{\ell}g\right\|^2.
\end{equation}
This formula separates observables into parts that are quasiperiodic over time and parts corresponding to continuous spectra (decay of correlations). This provides a principled way to identify and extract the parts of an observable associated with eigenfunctions. The procedure is implemented in Supplementary Algorithms 6 and 7.

\cref{RAGE_statement} involves two consecutive limits: one over the increasing rank of the projections or number of observables (\(n\rightarrow\infty\)) and another over the time window length (\(L\rightarrow\infty\)). 
This method enables accurate spectral computations using only finite data, without requiring an explicit approximation of the Koopman operator (which must be formed for EDMD). We also point out that this technique is substantially different from partitions of the full spectral measure (distribution of the system's behavior across different frequencies) over intervals, which have previously been computed using the ergodic theorem, e.g., \cite{korda2020data}. Moreover, the RAGE theorem does not require the system to be ergodic.

\subsubsection*{Duffing oscillator}
 
The Duffing oscillator is the system of equations:
\begin{equation}\setlength\abovedisplayskip{6pt}\setlength\belowdisplayskip{6pt}
\frac{dx}{dt}=y,\quad \frac{dy}{dt}=-\gamma y +x(1-x^2).
\end{equation}
The state \({\bf{x}}=(x, y) \in \mathbb{R}^2\) evolves in a two-dimensional state space. To analyze this system in discrete time, we use a time step of \(\Delta t = 0.3\). We consider two cases based on the parameter \(\gamma\). \textit{Conservative case (\(\gamma = 0\)):} This is a Hamiltonian system: it conserves energy and, by Liouville's theorem, preserves phase-space area.
\textit{Dissipative case (\(\gamma = 0.3\)):} In this scenario, the system has energy dissipation, leading to two stable spirals at \((\pm 1, 0)\) and a saddle point at the origin. These two cases highlight distinct dynamical behaviors. In the Hamiltonian case, trajectories exhibit long-term behavior without dissipation, while in the dissipative case, trajectories eventually converge to stable attractors. EDMD struggles to reliably approximate the Koopman operator spectrum in both cases, illustrating its non-convergence under varying dynamical conditions.

We first generate an initial dataset by uniformly sampling $10^4$ initial conditions ${\bf{x}}_0=(x_0,y_0)$ in the square \([-2, 2]^2\) and recording trajectories of 5 time steps for each sample. To construct a dictionary of observables, we apply \(k\)-means clustering to this initial dataset and use the resulting cluster centers $\{c_j\}_{j=1}^N$ to define $N$ radial basis functions:
\begin{equation}
g_j({\bf{x}})=\exp(-2\|{\bf{x}}-c_j\|_{l^2}/\sigma),\quad j= 1,\ldots, N.
\end{equation}
The trajectory length 5 is chosen to reduce the condition number of the resulting basis (Supplementary Figure 4)
and $k$-means clustering is a standard way to ensure that the centers are well distributed, preventing over-concentration and improving the representativeness and conditioning of the dictionary. The scaling parameter $\sigma$ is set to the average  $l^2$-norm of the snapshot data after it is shifted to mean zero, which we have found empirically to work well across a range of examples. Other dictionary choices are certainly possible.
The only requirement is that, as we use more basis functions, the projections acting on any fixed observable converge to that observable.

Once the dictionary is constructed, we further resample \(M\) snapshots of the system.
This corresponds to Monte Carlo integration, where the snapshots approximate the correlations required in our analysis. Supplementary Figure 5 illustrates the convergence of this approach as the number of snapshots increases, as well as the convergence of  $h_{N}(z,F)$ with increasing $N$.

The level curves of $\varepsilon$ in \cref{fig:EDMD_not_converge} were computed using $N=500$ basis functions and $M=50000$ snapshots, and Supplementary Algorithm 2. To compute the spectrum, we used the above adaptive procedure (Algorithm 1 in the S.I). We repeated experiments 10 times with different random seeds for the trajectory data to ensure robustness.

\subsubsection*{Further examples comparing EDMD and algorithms}

\begin{SCfigure*}[\sidecaptionrelwidth]
\centering
\includegraphics[width=12cm,trim={4mm 1mm 35mm 4mm},clip]{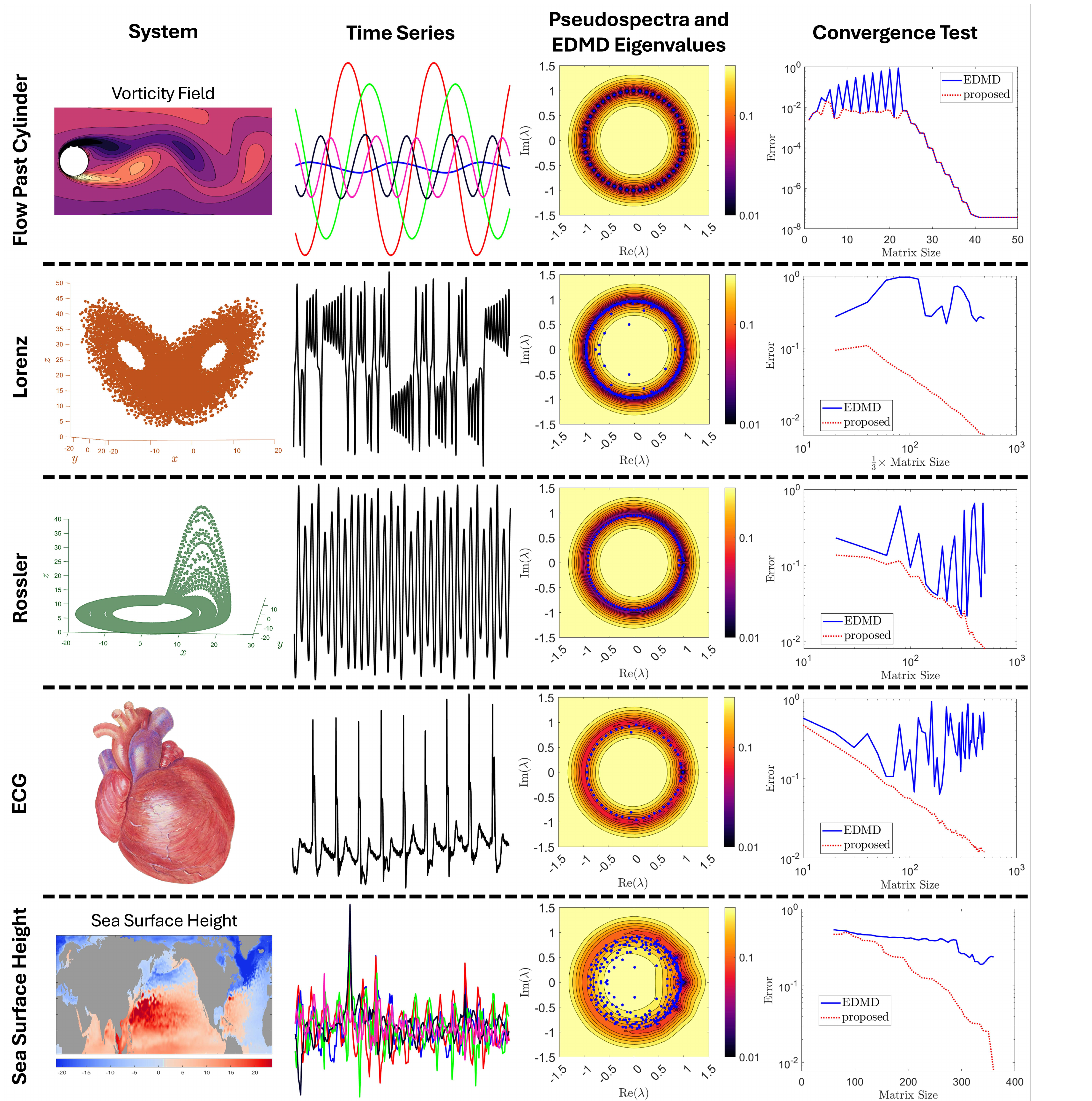}
\caption{\scriptsize\textbf{Spectral analysis of a range of analytic and real-world dynamical systems using our proposed algorithms.} 
Each row corresponds to a different system: (i) periodic flow past a cylinder at $Re=100$, (ii) Lorenz system, (iii) Rössler system, (iv) electrocardiogram (ECG) data (Image: \emph{Heart} by H. G. Wetselaar, Leiden University Libraries / Europeana, Public Domain), and (v) monthly mean sea surface height in the Northern Hemisphere (1950–present). Columns show: the system or dataset; a representative time series of observables; the computed level curves of $\varepsilon$ for $\varepsilon$-approximate eigenfunctions (color scale) termed ``pseudospectra'' overlaid with EDMD eigenvalues (blue dots); and a convergence comparison between EDMD (blue) and our method (red, using Supplementary Algorithm 1). The pseudospectrum shows where approximate, near-eigenvalue behavior occurs, providing a more robust picture of dynamics. In this final column, the error metric is the same as in \cref{fig:EDMD_not_converge}. Except for the periodic flow, EDMD yields spurious eigenvalues and lacks convergence. In contrast, our approach produces qualitatively accurate and convergent spectral approximations. Notably, the level curves of $\varepsilon$ reveal distinct spectral structures across systems: continuous spectra for chaotic systems (Lorenz, R\"ossler), spectral clustering near $\lambda = 1$ for ECG, and non-normal features and seasonal modes around $\lambda \approx \exp(2\pi i/12)$ in sea surface height data. Further experimental details are provided in the Supplementary Information.}
\label{fig:other_examples}
\end{SCfigure*}

Our algorithms are further applied to a variety of analytic and real-world systems, as shown in \cref{fig:other_examples}, using trajectory data with qualitatively different characteristics. Further details about each system and experiment are provided in the Supplementary Information, and code for all examples is publicly available. The examples span a broad range of dynamics, including periodic flow past a cylinder and canonical chaotic systems such as the Lorenz and Rössler attractors, which are two of the simplest systems that exhibit chaotic motion. To demonstrate applicability to more realistic scenarios, we also include data collected from an electrocardiogram (ECG), and monthly mean sea surface height in the Northern Hemisphere from 1950 to the present. Data sources are listed in the Supplementary Information. In all cases, our algorithms and EDMD use the same data and the same dictionary to ensure a fair comparison.

In each case, we computed both the pseudospectra (sublevel sets of $\varepsilon$ for approximate eigenfunctions) using Supplementary Algorithm 2 and the EDMD eigenvalues (blue dots). Except for the periodic flow case, EDMD produces spurious eigenvalues, as it did for the Duffing oscillator in \cref{fig:EDMD_not_converge}. \cref{fig:other_examples} demonstrates this phenomenon over a range of dictionary choices, data collections, and types of dynamical systems.

The pseudospectra exhibit different structures. For the cylinder flow, the multiplicative structure of the eigenvalues (see the discussion of combination modes in the Arctic sea ice example) is clearly visible in the first row. The Lorenz and Rössler systems display continuous spectra concentrated on the unit circle. The ECG data yields a tight spectral cluster near $\lambda = 1$ and near the frequency of the recorded heartbeat. The pseudospectra of the sea surface height data show strong non-normal features (the pseudospectra differ from spectral distances), capturing transient dynamics and nonstationary trends in data, along with dominant spectral regions corresponding to seasonal variations around $\lambda \approx \exp(2\pi i / 12)$.

In all cases, our spectral approximation algorithm (Supplementary Algorithm 1) converges as expected, whereas EDMD does not converge (except for the periodic flow case).

\subsubsection*{High Reynolds number fluid flows}

As an example of detecting eigenfunctions using an optimal two-limit procedure (Supplementary Algorithm 6), we consider two-dimensional lid-driven cavity flow \cite{arbabi2017study}. This system involves the motion of an incompressible, viscous fluid at a high Reynolds number (\(Re\)), which characterizes the dominance of inertial forces over viscous forces, leading to complex flow patterns. This setup provides a challenging example for identifying eigenfunctions of the underlying dynamics.

The domain is the cavity $B=[-1,1]^2$,  with stationary solid boundaries on all sides except the top. The top boundary moves with a regularized velocity profile $u_{\mathrm{top}} = (1-x^2)^2$. This standard boundary condition ensures both continuity and incompressibility, even at the corners of the top boundary. 
Using the streamfunction $\psi$, the incompressible Navier--Stokes equations for this flow can be reformulated as:
\begin{gather}\setlength\abovedisplayskip{6pt}\setlength\belowdisplayskip{6pt}
\frac{\partial }{\partial t} \nabla^2 \psi+ \frac{\partial \psi}{\partial y}\frac{\partial}{\partial x}\nabla^2 \psi - \frac{\partial  \psi}{\partial x}\frac{\partial}{\partial y}\nabla^2 \psi = \frac{1}{Re}\nabla^4 \psi,\\\psi|_{\partial B}=0,\, \frac{\partial \psi}{\partial n}(y\!=\!-1)=\frac{\partial \psi}{\partial n}(x\!=\!\pm 1)=0,\, \frac{\partial \psi}{\partial n}(y\!=\!1)=u_{\mathrm{top}}.
\end{gather}
These equations have a unique solution, and the flow dynamics converge to a universal attractor as time progresses \cite{temam2012infinite}. As discussed in the Results, the spectral structure of the Koopman operator tells us about the geometry of this attractor. To compute $\psi$, we use a Chebyshev spectral collocation method with an adaptive grid resolution \cite{trefethen2000spectral}, which depends on \(Re\), ensuring accurate computation for a range of flow conditions. For our analysis, we utilize $M=20000$ snapshots of the flow sampled at time intervals of $0.1$s to capture its dynamics, after an initial burn-in time to ensure data is collected from the attractor.

We apply \cref{RAGE_statement} to analyze the mean-subtracted total kinetic energy as our observable \(g\). To construct the projections \(\mathcal{P}_n\), we use time-delay embedding, with \(n\) time delays. This approach captures the temporal structure of the system by embedding the observable in a higher-dimensional space. Specifically, the terms \(\|\mathcal{P}_n \mathcal{K}^\ell g\|^2\) are calculated as follows: First, the Koopman operator \(\mathcal{K}^\ell\) is applied to the trajectory data of \(g\), which acts by shifting the time series by \(\ell\) steps. Then, the Moore--Penrose inverse is used to apply the projection \(\mathcal{P}_n\), ensuring consistency with the chosen time-delay embedding. Finally, the squared norm is computed by averaging as in \cref{EDMD2}, which allows us to extract long-term statistical averages from the trajectory data.

\begin{figure*}
\centering
\includegraphics[width=1\textwidth]{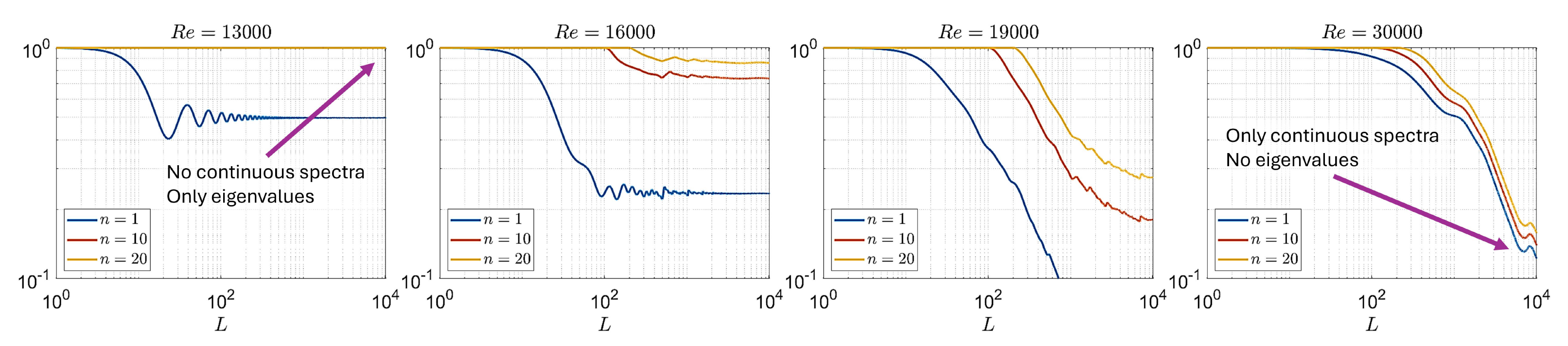}
\caption{\scriptsize\textbf{Application of \cref{RAGE_statement} to the kinetic energy of a lid-driven cavity flow.} The plots show $\frac{1}{2L+1}\sum_{\ell=-L}^L\|\mathcal{P}_n\mathcal{K}^{\ell}g\|^2$ (for normalized $g$), which in the double limit $\lim_{n\rightarrow\infty}\lim_{L\rightarrow\infty}$ converge to the fraction of $g$ made up of eigenfunctions (Supplementary Algorithm 6). This double limit procedure is used to prove upper bounds. As we move from left to right, the Reynolds number of the flow increases, and the spectral type becomes more dominated by continuous spectra. The structure of the spectrum reveals a sequence of bifurcations described in the main text.
}
\label{fig:RAGE_figure}
\end{figure*}

\cref{fig:RAGE_figure} shows the results for various choices of $Re$. We observe the double limit $\lim_{n\rightarrow\infty}\lim_{L\rightarrow\infty}$ at play and the structure of the spectrum reveals a sequence of bifurcations as the Reynolds number increases. For $Re \leq 10 000$, the flow converges to a steady laminar solution, corresponding to a fixed point in the state space. Just above $Re=10 000$, this steady solution becomes unstable, and the system transitions to a time-periodic flow, which remains stable up to $Re = 15 000$. The fundamental frequency of the periodic flow decreases with increasing Reynolds number. At $Re \geq 15 000$, a second bifurcation occurs, and the flow becomes quasiperiodic. The basic frequencies of the quasiperiodic motion also decrease with Reynolds number, until around $Re = 18 000$, where a third bifurcation is observed: the portion of kinetic energy associated with the continuous spectrum rises sharply to a few percent. This continuous component continues to grow, and by $Re \geq 22 000$, the state space dynamics exhibit no quasiperiodic structure: the Koopman spectrum consists solely of continuous components, indicating fully chaotic dynamics.
Our results are consistent with the findings of \cite{arbabi2017study,colbrook2024rigged}, where the authors computed spectral measures for this system.

The bottom of \cref{fig:EDMD_not_converge} illustrates the extraction of the eigenvalues for $Re=19000$. The top part of panel (e) shows the smoothed spectral measure (distribution of the kinetic energy across different frequencies) with a second-order smoothing kernel (taken from \cite{colbrook2024rigged})
 and smoothing parameter  $\epsilon=10^{-3}$ \cite{colbrook2024rigged}. The bottom part of panel (e) displays the extracted eigenvalues, computed using \cref{RAGE_statement} (Supplementary Algorithm 6). These are plotted against the Strouhal number, a dimensionless quantity that measures how often vortices are shed from a body relative to the speed of the flow and the size of the body.

\subsubsection*{Arctic sea ice}

Satellites have measured sea ice for decades using passive microwave sensors, which are then used to compute sea ice concentration using retrieval algorithms. We used data from the European Organisation for the Exploitation of Meteorological Satellites' (EUMETSAT) Ocean and Sea Ice Satellite Application Facilities (OSI-SAF) data record, comprising retrieval algorithms OSI-450 (1979–2015) and OSI-430-b (2016 onwards). These algorithms have been shown to be more accurate than other retrieval algorithms. Some portions of the data surrounding the North Pole are missing, and we use bilinear interpolation to fill these gaps. There are also missing data for three months in the 1980s due to satellite malfunctions, and we use linear interpolation to fill these gaps.

To construct our approximation of the Koopman operator, we consider observables $\mathbf{x}\in\mathbb{R}^{97877}$ representing sea ice concentration at each sea grid point. We then apply time-delay embedding: with $\tau-1$ delays, the augmented system state at time $n$ is given by (we use $(\cdot)$ to indicate time dependence instead of a subscript since $\mathbf{x}$ is a vector):
\begin{equation}\setlength\abovedisplayskip{6pt}\setlength\belowdisplayskip{6pt}
\widehat{\bf{x}}(n)=
\begin{pmatrix}
\mathbf{x}(n)\\
\mathbf{x}(n-1) \\
  \vdots\\
	\mathbf{x}(n-\tau+1)
\end{pmatrix}\in\mathbb{R}^{97877\cdot \tau}.
\end{equation}
That is, we include the sea ice concentration at the current and $\tau-1$ previous time steps. We choose $\tau=6$ to help capture semiannual patterns, though other choices also yield good performance.
As our dictionary, we use Gaussian radial basis functions of the form
$
\exp(-\tfrac{100}{81}\|{\widehat{\bf{x}}}-{\widehat{\bf{x}}}^{(m)}\|_{l^2}^2/\sigma^2),
$
where the centers ${\widehat{\bf{x}}}^{(m)}$ correspond to the $M$ snapshot data of the augmented system state (in contrast to the Duffing oscillator where we used $k$-means to choose centers). As in the Duffing oscillator example, the scaling parameter $\sigma$ is set to the average  $l^2$-norm of the snapshot data after centering it to have zero mean. For the three-year forecast in the left panel of \cref{fig:sea_ice_forecast2}, we use a Lorentzian radial-basis-function dictionary; for the binary accuracy problem, we use a poly-exponential dictionary with up to 60 adaptively chosen time delays.

A key strength of our algorithm is its rigorous error bounds (e.g., \cref{fig:EDMD_not_converge,fig:other_examples}), which enable direct evaluation of dictionary performance by tracking the error—without needing held-out data or forecasts. This allows us to verify that our chosen dictionary yields small errors when approximating the Koopman spectrum. Future work will explore integrating neural network embeddings \cite{lusch2018deep} with our error-bound framework.

To compute the periodic benchmark in \cref{fig:sea_ice_forecast2}, let $I_k$ denote the time indices for month $k$ ($=1,2,\ldots,12$) in the training data when we make a forecast. For forecasting month $k$, $\mathbf{x}_{\mathrm{per}}$ is the average of $\mathbf{x}$ over the times in $I_k$. The difference $\mathbf{x}-\mathbf{x}_{\mathrm{per}}$ then represents the sea ice anomaly. Error metrics are computed over active grid points defined separately for each calendar month. This region expands in winter and contracts in summer according to the maximum observed sea ice extent for that month, following \cite{andersson2021seasonal}. We define $\sigma^2$ as the average value of $\|\mathbf{x}-\mathbf{x}_{\mathrm{per}}\|_{l^2}^2$, evaluated over the active grid points, at a lead time of one month and averaged over the period 2005–2015. A one-month lead time is used to avoid the trivial zero-error baseline at initialization.
Letting $\mathbf{x}$ be the true sea ice concentration and $\mathbf{x}_{\mathrm{rec}}$ the forecast, we define the relative error shown in \cref{fig:sea_ice_forecast2} (a) as
$
\mathrm{Error} = {\|\mathbf{x}-\mathbf{x}_{\mathrm{rec}}\|^2_{l^2}}\big/\sigma^2,
$
where the $l^2$ norm is computed over the active grid points (and averaged over the initializations in the years 2005--2015).
This normalization removes the trivially predictable seasonal cycle, enabling a more meaningful assessment of forecast skill. We also include a monthly persistence model as a baseline for comparison, in which the sea ice concentration is forecast by repeating its value from the previous year for the same calendar month.

To compute forecasts, we get rid of spurious modes in the decomposition in \cref{KMD_example} (where ${\bf g}$ is the vector of sea ice concentrations $\bf x$). The evolution is predicted forward in time for $\bf x$ using approximate eigenfunctions $\phi_\varepsilon^{(j)}$ for errors $\varepsilon$ below a threshold $\varepsilon_0$:
\begin{equation}\setlength\abovedisplayskip{6pt}\setlength\belowdisplayskip{6pt}
\label{KMD_example2}
{\bf x}(n)=\sum_{\varepsilon\leq \varepsilon_0} \lambda_j^n \phi_\varepsilon^{(j)}(0) {\bf g}_j.
\end{equation}
Here, the parameter $j$ ranges from $1$ to $M$ (corresponding to the space generated by the radial basis functions). The errors associated with the approximate eigenfunctions $\phi_\varepsilon$ are ordered by $\varepsilon$, and we identify the ``elbow'' in the error curve to determine a principled truncation point (Supplementary Figure 12). The Koopman modes ${\bf g}_j$ correspond to the vector ${\bf x}$ of observables. After this model has been built, forecasts are produced by increasing $n$. The DMD forecast in \cref{fig:sea_ice_forecast2} (b) is computed in the same manner, but now does not get rid of spurious modes and uses the augmented state space $\widehat{\bf{x}}$ as the dictionary of observables. IceNet and SEAS5 data in \cref{fig:sea_ice_forecast2} are taken directly from \cite{andersson2021seasonal}.

\subsubsection*{Lower bounds: The method of adversarial systems}

\begin{figure}
\centering
\includegraphics[width=1\linewidth,trim={2mm 2mm 2mm 2mm},clip]{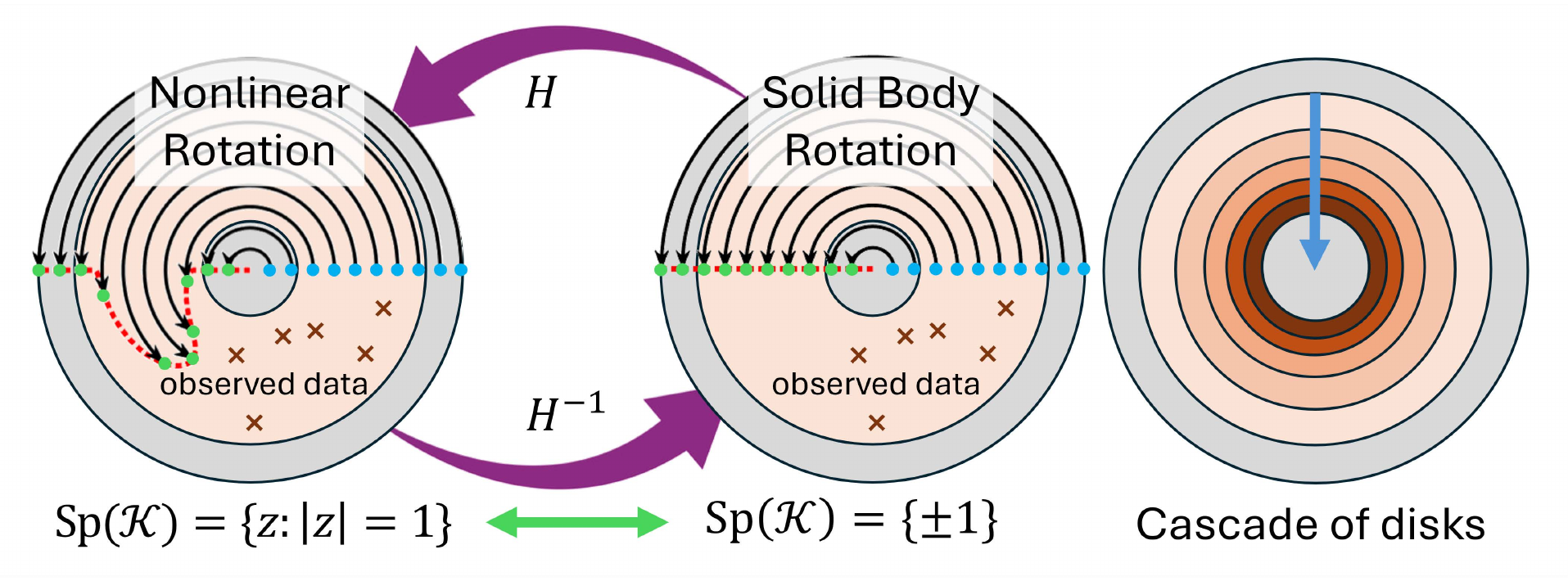}
\caption{\scriptsize\textbf{Proof idea of the impossibility result for $\Omega_{\mathbb{D}}$.} 
At each stage, we modify the system consistently with the observed data (``x''), ensuring it is related to a rotation, thereby drastically altering the spectrum (see green arrow). This alteration is executed such that the cascade of dynamical systems converges to an underlying limit, providing the adversarial family of systems.}
\label{fig:proof1}
\end{figure}

We establish each lower bound (impossibility result) using simple examples of state spaces \(\mathcal{X}\) (e.g., disk, interval, or torus) where \(\omega\) represents the standard Lebesgue measure.  The techniques are general and can be extended to other state spaces \(\mathcal{X}\) and function spaces. Snapshot data is subject to noise and finite precision. To establish robust results, we use a measurement device $\mathcal{T}_F$ that allows arbitrarily accurate sampling:
\begin{equation}\setlength\abovedisplayskip{6pt}\setlength\belowdisplayskip{6pt}
\mathcal{T}_F=\left\{\hat{y}_{j,n}\in\mathcal{X}:d_\mathcal{X}(F(\hat{x}_j),\hat{y}_{j,n})\leq 2^{-n},n\in\mathbb{N}\right\},
\end{equation}
where \(\{\hat{x}_j\}_{j=1}^\infty\) is a dense subset of the metric space \((\mathcal{X}, d_{\mathcal{X}})\), corresponding physically to ``measurement points''. This formulation assumes that measurements can approximate the mapping \(F\) with arbitrarily high precision. This is a strong assumption, and hence it allows us to derive correspondingly strong impossibility results, highlighting fundamental limits even under idealized conditions. Limitations under idealized conditions imply that they hold under more realistic ones. (It can also be significantly relaxed for our upper bounds.)

To establish the lower bounds, we construct families of adversarial dynamical systems. These systems are carefully designed to embed sudden changes in the spectral properties of the Koopman operator $\mathcal{K}_F$ directly into the dynamics, while remaining consistent with the observed trajectory data.

\cref{fig:proof1} illustrates this idea for the class $\Omega_{\mathbb{D}}$. The construction uses a homeomorphism (a continuous, reversible transformation that stretches or bends a space without tearing, gluing, or creating holes) to deform the map while preserving the sampled data. This sudden change lemma (Lemma 2.7 in the Supplementary Information) alters the spectral behavior of the Koopman operator in a controlled way. By applying this construction recursively on a cascade of nested disks, we effectively force any proposed algorithm to fail to converge. Each adversarial construction is tailored to a specific sudden change lemma, with full details provided in the Supplementary Information. This strategy bridges computational techniques with classical ergodic theory, offering a framework for analyzing dynamical systems. Moreover, the method is flexible and can be adapted to tackle a broader range of problems beyond those considered in this study.

To prove a learning problem cannot be solved in one limit (i.e., it has $\mathrm{SCI} > 1$), we assume, for contradiction, that a convergent sequence of algorithms exists. We then construct an adversarial family that causes the algorithm to fail, ensuring convergence occurs with probability no greater than 50\%. This involves tricking the algorithm into oscillating between two different outputs as more trajectory data is collected (see the green arrow in \cref{fig:proof1}). To prove that a problem cannot be solved in two limits (i.e., $\mathrm{SCI} > 2$), we embed complex combinatorial problems into the system's dynamics. These problems, involving sets of numbers with an inherent complexity in their description, are embedded into the dynamics, lifting the lower bound from combinatorics to dynamics.

\vspace{2mm}
\noindent{}\textbf{Data availability:} The observational sea ice concentration data are provided by OSI-SAF (\textcolor[rgb]{0,0,1}{\url{https://osi-saf.eumetsat.int/products/sea-ice-products}}). The results of IceNet and SEAS5 in \cref{fig:sea_ice_forecast2} are reported in \cite{andersson2021seasonal}. All other datasets are produced by the code listed below.\vspace{2mm}

\noindent{}\textbf{Code availability:} Code for the examples of this paper can be found at \textcolor[rgb]{0,0,1}{\url{https://github.com/MColbrook/Adversarial-Dynamical-Systems}}.\vspace{2mm}

%% MOVE

\noindent{}\textbf{Acknowledgements}\\
M.C. would like to thank Gary Froyland, Nathan Kutz, and Geoffrey Vasil for discussions. We would also like to thank Benjamin Erichson, James Hogg, Paula Lorenzo Sánchez and Valerio Lucarini for discussions about the sea ice example. We would like to thank Hassan Arbabi for providing the cavity flow dataset.\vspace{2mm}

\noindent{}\textbf{Funding statement}\\
M.C. was supported by INT/UCAM grant LEAG/929. I.M. was supported by ONR grant N00014-21-1-2384 and AFOSR award number FA9550-22-1-0531.\vspace{2mm}

\noindent{}\textbf{Author contributions}\\
M.J.C. conceived and designed the research, developed the methodology, proved the theorems, performed the numerical experiments, interpreted the results, and wrote and revised the manuscript and Supplementary Information. I.M. contributed through proposing dynamical systems useful in the adversarial lemmas and examples, discussion of the results and exposition, including connections with ergodic theory, contributions to the text, and review and editing of the manuscript. A.S. contributed to early-stage discussions and preliminary work on the transition lemmas.\vspace{2mm}

\noindent{}\textbf{Competing interests statement}\\
We declare no competing interests. \vspace{2mm}

\onecolumn

\clearpage

% Supplementary equations
\setcounter{equation}{0}
\renewcommand{\theequation}{S\arabic{equation}}
\renewcommand{\theHequation}{supp.\arabic{equation}}

\crefname{equation}{Supplementary Equation}{Supplementary Equations}
\Crefname{equation}{Supplementary Equation}{Supplementary Equations}

% Supplementary figures
\setcounter{figure}{0}
\renewcommand{\figurename}{Supplementary Figure}
\renewcommand{\thefigure}{S\arabic{figure}}
\renewcommand{\theHfigure}{supp.\arabic{figure}}

\crefname{figure}{Supplementary Figure}{Supplementary Figures}
\Crefname{figure}{Supplementary Figure}{Supplementary Figures}

% Supplementary tables
\setcounter{table}{0}
\renewcommand{\tablename}{Supplementary Table}
\renewcommand{\thetable}{S\arabic{table}}
\renewcommand{\theHtable}{supp.\arabic{table}}

\crefname{table}{Supplementary Table}{Supplementary Tables}
\Crefname{table}{Supplementary Table}{Supplementary Tables}

% Supplementary algorithms
\setcounter{algorithm}{0}
\floatname{algorithm}{Supplementary Algorithm}
\renewcommand{\thealgorithm}{S\arabic{algorithm}}

\crefname{algorithm}{Supplementary Algorithm}{Supplementary Algorithms}
\Crefname{algorithm}{Supplementary Algorithm}{Supplementary Algorithms}

\section*{Appendix of Supplementary Material}

This section provides further details for constructing our adversarial dynamical systems and classifying problems. We introduce the Solvability Complexity Index (SCI), which, as discussed in the main text, provides a general framework across data-driven dynamical system problems, and consider Koopman operators and their spectral properties. We end this section by discussing the setup for the theorems.

\subsection{Koopman operators -- linearising nonlinear systems}

Throughout, we consider discrete-time dynamical systems:
\begin{equation}
\label{eq:DynamicalSystem} 
x_{n+1} = F(x_n), \qquad n= 0,1,2,\ldots.
\end{equation}
Here, $x\in\mathcal{X}$ denotes the state of the system, and the metric space $(\mathcal{X},d_{\mathcal{X}})$ denotes the state space. Often, $\mathcal{X}\subset\mathbb{R}^d$, though this is not required in what follows. The function $F:\mathcal{X} \rightarrow \mathcal{X}$ governs the evolution of the dynamical system and is generally nonlinear. (Here, we mean that $F$ is linear if $\mathcal{X}$ is a vector space and $F$ is a linear map. Otherwise, we call $F$ nonlinear.) We lift the system \eqref{eq:DynamicalSystem} into a (typically infinite-dimensional) vector space of observable functions using a Koopman operator to deal with the nonlinearity.
A Koopman operator \cite{koopman1931hamiltonian,koopman1932dynamical} is defined on a Banach space $\mathcal{J}$ of functions $g: \mathcal{X} \rightarrow \mathbb{C}$, where the functions $g\in\mathcal{J}$ are referred to as \textit{observables} and measure the state of the system.
Koopman operators allow us to study the evolution of observables in $\mathcal{J}$ through a linear framework.
The Koopman operator is defined via the composition formula:
\begin{equation} 
[\mathcal{K}g](x) = [g\circ F](x)=g(F(x)), \qquad g\in \mathcal{D}(\mathcal{K}),
\label{eq:KoopmanOperator2} 
\end{equation}
where $\mathcal{D}(\mathcal{K}) \subset \mathcal{J}$ is a suitable domain. This definition means that $[\mathcal{K}g](x_n)= g(F(x_n))=g(x_{n+1})$ represents the measurement of the state one time-step ahead of $g(x_n)$, and hence that $\mathcal{K}$ effectively captures the dynamic progression of the system.

The critical property of the Koopman operator $\mathcal{K}$ is its \textit{linearity}. This linearity holds irrespective of whether the map $F$ in \cref{eq:DynamicalSystem} is linear or nonlinear. Consequently, the spectral properties of $\mathcal{K}$ become a powerful tool in analyzing the dynamical system's behavior. The Koopman operator is not defined uniquely by the dynamical system in \eqref{eq:DynamicalSystem}, but fundamentally depends on the space $\mathcal{J}$. Throughout this paper, we focus on the choice
$$
\mathcal{J}=\mathcal{D}(\mathcal{K})=L^2(\mathcal{X},\omega)\quad \text{with inner product}\quad \langle g_1,g_2 \rangle=\int_{\mathcal{X}} g_1(x)\overline{g_2(x)}\ \mathrm{d}\omega(x)\quad \text{and norm}\quad \|g\|=\sqrt{\langle g,g \rangle},
$$
for some positive measure $\omega$. We do not assume that this measure is invariant. For Hamiltonian systems, a common choice of $\omega$ is the standard Lebesgue measure, for which the Koopman operator is unitary on $L^2(\mathcal{X},\omega)$. For other systems, we can select $\omega$ according to the region where we wish to study the dynamics, for example, by using a Gaussian density. In many applications, $\omega$ corresponds to an unknown ergodic measure on an attractor.

In going from a pointwise definition in \eqref{eq:KoopmanOperator2} to the space $L^2(\mathcal{X},\omega)$, care is needed since $L^2(\mathcal{X},\omega)$ consists of equivalence classes of functions. We assume that the map $F$ is nonsingular with respect to $\omega$, meaning that
$$
\omega(E)=0\quad \text{implies that}\quad \omega(\{x:F(x)\in E\})=0.
$$
This condition ensures that the Koopman operator is well-defined since $g_1(x)=g_2(x)$ for $\omega$-almost every $x$ implies that $g_1(F(x))=g_2(F(x))$ for $\omega$-almost every $x$. The pushforward measure is defined as $F\#\omega(E)=\omega(F^{-1}(E))$, and the fact that $F$ is nonsingular with respect to $\omega$ is equivalent to saying that $F\#\omega$ is absolutely continuous with respect to $\omega$. We assume that $\mathcal{K}$ is a bounded linear operator on the Hilbert space $L^2(\mathcal{X},\omega)$. This assumption is equivalent to saying that the Radon--Nikodym derivative $dF\#\omega/d\omega$ lies in $L^\infty(\mathcal{X},\omega)$. The above Hilbert space setting is standard in the Koopman literature, though our results can be extended to other function spaces such as those studied in \cite{mezic2020spectrum}. Once $(\mathcal{X},\omega)$ are specified, we let $\mathcal{K}_F$ denote the corresponding Koopman operator on the corresponding Hilbert space $L^2(\mathcal{X},\omega)$.

Since $\mathcal{K}_F$ acts on an \textit{infinite-dimensional} function space, we have exchanged the nonlinearity in \eqref{eq:DynamicalSystem} for an infinite-dimensional linear system. This means that the spectral properties of $\mathcal{K}_F$ can be significantly more complex than those of a finite matrix, making them more challenging to compute. A message of this paper is that, in most cases, unless strong assumptions are made regarding the system, the spectral properties of $\mathcal{K}_F$ are impossible to compute in a single limit, even if we had a perfect measurement device to sample trajectories of the dynamical system.

\subsubsection{Koopman spectrum}

If $g\in L^2(\mathcal{X},\omega)$ is an \textit{eigenfunction} of $\mathcal{K}_F$ with \textit{eigenvalue} $\lambda$, then $g$ exhibits perfect coherence\footnote{Coherence here is meant in the sense of an observation for which all the points in state space exhibit the same, (complex) exponential, time-dependence.} since
\begin{equation}
\label{eq:perfectly_coherent}
g(x_n)=[\mathcal{K}_F^ng](x_0)=\lambda^n g(x_0)\quad \forall n\in\mathbb{N}.
\end{equation}
The oscillation and decay/growth of the observable $g$ are dictated by the complex argument and absolute value of the eigenvalue $\lambda$, respectively. In infinite dimensions, the appropriate generalization of the set of eigenvalues of $\mathcal{K}_F$ is the \textit{spectrum}:
$$
\spec(\mathcal{K}_F)=\left\{z\in\mathbb{C} :(\mathcal{K}_F - zI)^{-1}\text{ does not exist as a bounded operator}\right\}\subset\mathbb{C}.
$$
Here, $I$ denotes the identity operator. In contrast to finite matrices, the spectrum $\spec(\mathcal{K}_F)$ may contain points that are not eigenvalues. This phenomenon occurs because there are more ways for $(\mathcal{K}_F - zI)^{-1}$ to not be a linear bounded operator in infinite dimensions than in finite dimensions. For example, the standard Lorenz system on the Lorenz attractor has a Koopman operator with no eigenvalues except $\lambda=1$ \cite{luzzatto2005lorenz}. The \textit{approximate point spectrum} is
$$
\spec_{\mathrm{ap}}(\mathcal{K}_F)=\left\{\lambda\in\mathbb{C}:\exists\{g_n\}_{n\in\mathbb{N}}\subset L^2(\mathcal{X},\omega)\text{ such that }\|g_n\|=1,\lim_{n\rightarrow\infty}\|(\mathcal{K}_F-\lambda I)g_n\|=0\right\}\subset\spec(\mathcal{K}_F)\subset\mathbb{C}
$$
and for $\epsilon>0$, the \textit{approximate point pseudospectrum} is
$$
\spec_{\mathrm{ap},\epsilon}(\mathcal{K}_F)=\left\{\lambda\in\mathbb{C}:\exists\{g_n\}_{n\in\mathbb{N}}\subset L^2(\mathcal{X},\omega)\text{ such that }\|g_n\|=1,\lim_{n\rightarrow\infty}\|(\mathcal{K}_F-\lambda I)g_n\|\leq\epsilon\right\}\subset\spec_\epsilon(\mathcal{K}_F)\subset\mathbb{C}.
$$
An observable $g$ with $\|g\|=1$ and $\|(\mathcal{K}_F-\lambda I)g\|\leq\epsilon$ for $\lambda\in\mathbb{C}$ is known as an $\epsilon$-approximate eigenfunction. Such observables satisfy
$$
\|\mathcal{K}_F^ng-\lambda^n g\|= \mathcal{O}(n\epsilon)\text{ as }\epsilon\downarrow0\quad \forall n\in\mathbb{N}.
$$
In other words, $\lambda$ describes an \textit{approximate} coherent oscillation and decay/growth of the observable $g$ with time. The approximate eigenfunctions and $\spec_{\mathrm{ap}}(\mathcal{K}_F)$ encode information about the underlying dynamical system \cite{mezicAMS}. For example, the level sets of certain eigenfunctions determine ergodic partitions \cite{mezic1994geometrical,budivsic2012applied}, invariant manifolds \cite{mezic2015applications}, isostables \cite{mauroy2013isostables}, and the global stability of equilibria \cite{mauroy2016global}.

In this paper, we will focus on the computation of $\spec_{\mathrm{ap}}(\mathcal{K}_F)$. We anticipate that further foundational results can be proven on the computation of other spectral properties of Koopman operators, such as spectral type. For example, see \cref{thm:torus} regarding the detection of non-trivial eigenfunctions.

Two special classes of Koopman operators are defined as follows:
\begin{itemize}
	\item Measure-preserving systems: The dynamical system preserves $\omega$ if and only if $\mathcal{K}_F$ is an isometry, that is $\mathcal{K}_F^*\mathcal{K}_F=I$.
	\item Measure-preserving invertible systems: The dynamical system preserves $\omega$ and is invertible modulo $\omega$-null sets \cite[Chapter 7]{eisner2015operator} if and only if $\mathcal{K}_F$ is unitary, that is $\mathcal{K}_F^*\mathcal{K}_F=\mathcal{K}_F\mathcal{K}_F^*=I$. 
\end{itemize}
The Wold--von Neumann decomposition \cite[Theorem I.1.1]{nagy2010harmonic} states that any isometry on a Hilbert space is unitarily equivalent to the direct sum of a unitary operator and a direct sum of unilateral shifts. Hence, if $\mathcal{K}_F$ is an isometry but not unitary, then the spectrum of $\mathcal{K}_F$ is the unit disk and $\spec_{\mathrm{ap}}(\mathcal{K}_F)=\mathbb{T}=\{z\in\mathbb{C}:|z|=1\}$. If $\mathcal{K}_F$ is unitary, then the spectrum of $\mathcal{K}_F$ is equal to $\spec_{\mathrm{ap}}(\mathcal{K}_F)$ and is a subset of $\mathbb{T}$.

\subsection{The Solvability Complexity Index -- classifying the difficulty of problems}

We now outline the fundamentals of the Solvability Complexity Index Hierarchy. This tool allows us to precisely classify the difficulty of computational problems and prove that algorithms are optimal, realizing the boundaries of what is possible. We give the definitions of the hierarchy before specializing to the computational setup of this paper, where we give a precise formulation of a \textit{perfect measuring device}, which we use to prove our lower bounds (impossibility results).

\subsubsection{Computational problems and general algorithms}

Before classifying the difficulty of computational problems, we must precisely define a computational problem. Precision here is essential since altering the information an algorithm is permitted to use can significantly affect the difficulty of a problem or even whether the problem can be solved at all. The following definition of a computational problem is deliberately general, designed to encompass all types of problems encountered in computational mathematics. For example, as well as spectral problems, the SCI hierarchy has been applied to other areas of mathematics, including PDEs \cite{colbrook2022computing,becker2020computing}, the limits of AI, and Smale's 18th problem \cite{colbrook2022difficulty}, and optimization \cite{SCI_optimization}.

\begin{definition}[Computational problem]
\label{def:comp_prob}
The basic objects of a computational problem are:
\begin{itemize}
	\item A \textit{primary set}, $\Omega$, that describes the input class;
	\item A \textit{metric space} $(\mathcal{M},d)$;
	\item A \textit{problem function} $\Xi:\Omega\rightarrow\mathcal{M}$;
	\item An \textit{evaluation set}, $\Lambda$, of functions on $\Omega$.
\end{itemize}
The problem function $\Xi$ is the object we want to compute, with the notion of convergence captured by the metric space $(\mathcal{M},d)$. The evaluation set $\Lambda$ describes the information that algorithms can read. We require that $\Lambda$ separates elements of $\Omega$ to the degree of separation achieved by $\Xi$:
\begin{equation}
\label{eq:Lambda_determines_it}
\text{if }A,B\in\Omega\text{ with }\Xi(A)\neq \Xi(B),\text{ then } \exists f\in\Lambda\text{ with }f(A)\neq f(B).
\end{equation}
In other words, any $\Xi(A)\in\mathcal{M}$ is uniquely determined by the set of evaluations $\{f(A):f\in\Lambda\}$ (otherwise it is impossible to recover $\Xi$ from $\Lambda$). We refer to the collection $\{\Xi,\Omega,\mathcal{M},\Lambda\}$ as a \textit{computational problem}.
\end{definition}

\begin{example}
In the above setting of dynamical systems, we fix the metric space $(\mathcal{X},d_{\mathcal{X}})$ and the measure $\omega$. The primary set $\Omega$ could be a class of functions $F:\mathcal{X}\rightarrow\mathcal{X}$, each of which induces a dynamical system with bounded $\mathcal{K}_F$ on $L^2(\mathcal{X},\omega)$. The problem function could describe a spectral property of $\mathcal{K}_F$. For example, we could consider the computation of the approximate point spectrum with $\Xi(F)=\spec_{\mathrm{ap}}(\mathcal{K}_F)$. Since $\mathcal{K}_F$ is bounded, $\Xi(F)$ is a compact, non-empty subset of $\mathbb{C}$ \cite[Proposition VII.6.7]{conway2019course}. Hence, we let $(\mathcal{M},d)$ be the Hausdorff metric space, $(\MH,\dH)$, which is the collection of non-empty compact subsets of $\mathbb{C}$ equipped with the Hausdorff metric:
$$
\dH(X,Y) = \max\left\{\sup_{x \in X} \inf_{y \in Y} |x-y|, \sup_{y \in Y} \inf_{x \in X} |x-y| \right\},\quad X,Y\in\MH.
$$
Convergence to the spectrum in this metric means our algorithms converge without spectral pollution (persistent spurious eigenvalues) or spectral invisibility (missing parts of the spectrum). As our evaluation set, we could consider maps $f_x(F)=F(x)$ for $x\in\mathcal{X}$ or a subset of such $x$. For example, if $\mathcal{X}\subset\mathbb{R}^d$, then $f_x$ is a real vector-valued function.
\end{example}

With the definition of a computational problem $\{\Xi,\Omega,\mathcal{M},\Lambda\}$ established, we now define what we mean by an algorithm. An algorithm is a function
$
\Gamma:\Omega\to \mathcal{M}
$
that, unlike the problem function $\Xi$, utilizes the evaluation set $\Lambda$ in some manner. The specifics of how $\Lambda$ is used (or even which sets of $\Lambda$ are permitted) depend on the computational model. We adopt a general definition for proving lower bounds (impossibility results). This approach not only yields stronger results but also significantly simplifies the proofs. Specifically, we aim to establish lower bounds that are valid in \textit{any model of computation}.

\begin{definition}[General algorithm]
\label{def:Gen_alg}
A general algorithm for a computational problem $\{\Xi,\Omega,\mathcal{M},\Lambda\}$ is a map $\Gamma:\Omega\to \mathcal{M}$ with the following property. For any $A\in\Omega$, there exists a non-empty finite subset of evaluations $\Lambda_\Gamma(A) \subset\Lambda$ such that if $B\in\Omega$ with $f(A)=f(B)$ for every $f\in\Lambda_\Gamma(A)$, then $\Lambda_\Gamma(A)=\Lambda_\Gamma(B)$ and $\Gamma(A)=\Gamma(B)$.
\end{definition}

\cref{def:Gen_alg} outlines the most fundamental properties of any reasonable \textit{deterministic} computational device. It says that $\Gamma$ can only use a finite amount of information, though it can adaptively choose this information as it processes the input. Moreover,  the output of $\Gamma$ depends consistently on the information it has accessed. Specifically, if $\Gamma$ sees the same information for two different inputs, it must behave identically for those two inputs. A general algorithm has no restrictions on the operations allowed. It is more powerful than a Turing machine \cite{turing1937computable} or BSS machine\footnote{One should think of a Blum--Shub--Smale (BSS) machine as akin to an algorithm that deals with exact arithmetic. They are a model of computation designed to work over any ring or field, most notably over the real numbers. This distinguishes them from Turing machines, which are based on discrete values. A BSS machine extends the concept of computation to include computation with real numbers and other continuous data.} \cite{BCSS} and serves two main purposes:
\begin{itemize}
\item[(i)] {\it A focus on what really matters:} \cref{def:Gen_alg} significantly simplifies the process of proving lower bounds. The non-computability results we present stem from the intrinsic non-computability of the problems themselves, not from the type of operations allowed being too restrictive. Specifically, the limitation lies in the algorithmic input $\Lambda$ being inadequate for solving the problem.
\item[(ii)] {\it Classifications with the strongest possible lower and upper bounds:} The generality of \cref{def:Gen_alg} implies that a lower bound established for general algorithms also applies to any computational model. Furthermore, the algorithms we provide can be executed using only arithmetic operations, both in the Turing and BSS models. Therefore, we derive the strongest possible lower and upper bounds simultaneously.
\end{itemize}

\subsubsection{Towers of algorithms}

Having established precise definitions for a computational problem and a general algorithm, we now introduce the concept of a tower of algorithms. This captures the observation in the main text that algorithms for data-driven Koopmanism depend on several parameters that must be taken to successive limits to ensure convergence.

\begin{definition}[Tower of algorithms]
\label{def:tower_funct}
Let $k\in\mathbb{N}$.
A tower of algorithms of height $k$ for a computational problem $\{\Xi,\Omega,\mathcal{M},\Lambda\}$ is a collection of functions
$$
\Gamma_{n_k},\,
\Gamma_{n_k, n_{k-1}},\, \ldots,\,\Gamma_{n_k, \ldots, n_1}:\Omega \rightarrow \mathcal{M}, \quad n_k,\ldots,n_1 \in \mathbb{N},
$$
where $\{\Gamma_{n_k, \ldots, n_1}\}$ are general algorithms (\cref{def:Gen_alg}) and for every $A \in \Omega$, the following convergence holds in $(\mathcal{M},d)$:
$$
\Xi(A) = \lim_{n_k \rightarrow \infty} \Gamma_{n_k}(A), \quad\Gamma_{n_k}(A) =
  \lim_{n_{k-1} \rightarrow \infty} \Gamma_{n_k, n_{k-1}}(A),\quad\ldots\quad
\Gamma_{n_k, \ldots, n_2}(A) =
  \lim_{n_1 \rightarrow \infty} \Gamma_{n_k, \ldots, n_1}(A).
$$
We shall use the term ``tower'' even if $k=1$.
\end{definition}

When we prove upper bounds (i.e., provide algorithms that solve a problem), we can specify the type of tower by imposing conditions on the functions $\{\Gamma_{n_k, \ldots, n_1}\}$ at the lowest level. In essence, the type is the toolbox allowed:
\begin{itemize}
\item A \textit{general tower}, denoted by $\alpha=G$, refers to \cref{def:tower_funct} with no further restrictions.
\item An \textit{arithmetic tower}, denoted by $\alpha=A$, refers to \cref{def:tower_funct} where each $\Gamma_{n_k, \ldots, n_1}(A)$ can be computed using $\Lambda$ and finitely many arithmetic operations and comparisons. More precisely, if $\Lambda$ is countable, each output $\Gamma_{n_k, \ldots, n_1}(A)$ is a finite string of numbers (or encoding) that can be identified with an element in $\mathcal{M}$, and the following function is recursive:
$
(n_k, \ldots, n_1,\{f(A)\}_{f \in \Lambda})\mapsto \Gamma_{n_k, \ldots, n_1}(A).
$
\end{itemize}

We can now define the Solvability Complexity Index (SCI).

\begin{definition}[Solvability Complexity Index]
\label{def:complex_ind}
A computational problem $\{\Xi,\Omega,\mathcal{M},\Lambda\}$ has Solvability Complexity Index $k\in\mathbb{N}$ with respect to type $\alpha$, written $\mathrm{SCI}(\Xi,\Omega,\mathcal{M},\Lambda)_{\alpha} = k$, if $k$ is the smallest integer for which there exists a tower of algorithms of type $\alpha$ and height $k$ that solves the problem. If no such tower exists, then $\mathrm{SCI}(\Xi,\Omega,\mathcal{M},\Lambda)_{\alpha} = \infty.$ If there exists an algorithm $\Gamma$ of type $\alpha$ with $\Xi = \Gamma$, then $\mathrm{SCI}(\Xi,\Omega,\mathcal{M},\Lambda)_{\alpha} = 0$.
\end{definition}

The SCI induces the SCI hierarchy as follows.

\begin{definition}[SCI hierarchy]
\label{def:1st_SCI}
Consider a collection $\mathcal{C}$ of computational problems and let $\mathcal{T}_\alpha$ be the collection of all towers of algorithms of type $\alpha$. We define the following subclasses of $\mathcal{C}$: 
\begin{align*}
\Delta^{\alpha}_0 &= \{\{\Xi,\Omega,\mathcal{M},\Lambda\} \in \mathcal{C} :   \mathrm{SCI}(\{\Xi,\Omega,\mathcal{M},\Lambda\})_{\alpha} = 0\},\\
\Delta^{\alpha}_{1} &= \{\{\Xi,\Omega,\mathcal{M},\Lambda\}  \in \mathcal{C}  : \exists \{\Gamma_n\}_{n=1}^\infty \in \mathcal{T}_\alpha\text{ s.t. } \forall A\in\Omega, d(\Gamma_n(A),\Xi(A)) \leq 2^{-n}\},\\
\Delta^{\alpha}_{m+1} &= \{\{\Xi,\Omega,\mathcal{M},\Lambda\}  \in \mathcal{C} :   \mathrm{SCI}(\{\Xi,\Omega,\mathcal{M},\Lambda\})_{\alpha} \leq m\},  \quad\text{ for } m \in \mathbb{N}.
\end{align*}
\end{definition}

In summary, a $\Delta^{\alpha}_{m+1}$ problem can be computed in $m$ successive limits, and a $\Delta^{\alpha}_{1}$ problem can be computed in one limit with complete error control. The $2^{-n}$ in the definition of $\Delta^{\alpha}_{1}$ is arbitrary; replacing it with any sequence converging to zero that is computable from $\Lambda$ does not alter the definition.

\subsubsection{Inexact input}
So far, we have only discussed algorithms with exact input from the evaluation set $\Lambda$. However, in practice, we may only have access to input of a certain accuracy. Suppose we are given a computational problem $\{\Xi, \Omega, \mathcal{M}, \Lambda\}$ with evaluation set $\Lambda = \{f_j:\Omega\rightarrow\mathcal{M}_\Lambda\}_{j \in \mathcal{I}}$, for some index set $\mathcal{I}$ and metric space $(\mathcal{M}_\Lambda,d_\Lambda)$. Obtaining $f_j$ may be a computational task in its own right. For instance, $f_j(A)$ could be the number $\cos(\sin(1))$ or an inner product that is approximated using quadrature. Alternatively, it may be the case that we can only measure $f_j(A)$ to a certain accuracy due to effects such as noise. In the context of Koopman operators, any physical measurement device will have a non-zero measurement error.

We may view the problem of obtaining $f_j(A)$ as a problem in the SCI hierarchy. A $\Delta_1$-classification corresponds to access to $f_{j,n}: \Omega \rightarrow \mathcal{M}_\Lambda$
such that 
\begin{equation}\label{eq:Lambda_limits2}
 d_\Lambda(f_{j,n}(A),f_j(A)) \leq 2^{-n} \quad \forall A \in \Omega.
 \end{equation}
We want algorithms that can handle all possible choices of such inexact input. We can make this precise by replacing the class $\Omega$ by the class of suitable evaluation functions $\{f_{j, n_1}(A)\}_{j, n_1 \in \mathcal{I} \times \mathbb{N}}$ that satisfy \cref{eq:Lambda_limits2}. This viewpoint is well-defined since \cref{eq:Lambda_determines_it} holds. The following definition captures the notion of algorithms being robust to noise in input data.

\begin{definition}[Computational problems with $\Delta_1$-information]
\label{def:inexact_def_need}
Given a computational problem $\{\Xi,\Omega,\mathcal{M},\Lambda\}$, the corresponding computational problem with $\Delta_{1}$-information is denoted by 
$
\{\Xi,\Omega,\mathcal{M},\Lambda\}^{\Delta_{1}} = \{\tilde \Xi,\tilde \Omega,\mathcal{M},\tilde \Lambda\},
$
and defined as follows:
\begin{itemize}[leftmargin=0.3cm]
	\item The primary set $\tilde \Omega$ is the class of tuples
	$
	\tilde A = \left\{f_{j,n_1}(A):j\in\mathcal{I},n_1\in\mathbb{N}\right\},
	$
	where $A\in\Omega$, $\{f_j\}_{j \in \mathcal{I}}$ and \cref{eq:Lambda_limits2} holds;
	\item The problem function is $\tilde \Xi(\tilde A) = \Xi(A)$, which is well-defined by \cref{eq:Lambda_determines_it};
	\item The evaluation set is $\tilde \Lambda = \{\tilde f_{j,n_1}\}_{j,n_1 \in \mathcal{I} \times \mathbb{N}}$, where $\tilde f_{j,n_1}(\tilde A) = f_{j,n_1}(A)$.
\end{itemize}
The SCI hierarchy given $\Delta_1$-information is then defined in an obvious manner.
\end{definition}

\subsubsection{Refinements that capture error control}

When performing numerical computations, particularly in many spectral applications, determining the accuracy of the results is essential. The importance of error bounds extends beyond science and engineering and holds in pure mathematics, especially when using spectral problems in computer-assisted proofs. For instance, even when a discretization method for computing spectra through eigenvalues converges, typically, only a subset of the numerically computed eigenvalues is reliable. Note that such a problem is not computable in the classical Turing sense but instead verifiable. Most infinite-dimensional spectral problems do not lie in $\Delta_1$ \cite{colbrook2020PhD}, but many lie in the following refinements that capture error control \cite{colbrook2019compute,colbrook3}. Classifying when error bounds can or cannot be obtained is a fundamental challenge in dealing with infinite-dimensional spectral problems.

Sufficient structure in $(\mathcal{M},d)$ enables two types of verification or error control: convergence from above and below. In this paper, there are two metric spaces that we use for computational problems $\{\Xi,\Omega,\mathcal{M},\Lambda\}$:
\begin{itemize}
	\item  If $\mathcal{M} = \{0,1\}$ with the discrete topology, we call the problem a decision problem and denote this space by $\Md$. For an input $A\in\Omega$, we interpret the output $\Xi(A)=1$ as ``Yes'' and the output $\Xi(A)=0$ as ``No''. 
	\item The Hausdorff metric space, $(\MH,\dH)$, is suitable for computing spectra of bounded operators. It is the collection of non-empty compact subsets of $\mathbb{C}$ equipped with the Hausdorff metric:
\begin{equation}\label{eq:Hausdorff}
\dH(X,Y) = \max\left\{\sup_{x \in X} \inf_{y \in Y} |x-y|, \sup_{y \in Y} \inf_{x \in X} |x-y| \right\},\quad X,Y\in\MH.
\end{equation}
As noted above, we are interested in the Hausdorff metric since convergence to the spectrum in this metric means that our algorithms converge without spectral pollution or spectral invisibility.
\end{itemize}

We now define notions of error control for these two metric spaces. When $\mathcal{M}$ is a totally ordered set with relation $\leq$, such as $\mathbb{R}$ or $\mathbb{N}$, convergence from above or below is straightforward to define.

\begin{definition}[$\Sigma$ and $\Pi$ classes for totally ordered sets]
\label{def:tot_ord}
Consider a collection $\mathcal{C}$ of computational problems and let $\mathcal{T}_\alpha$ be the collection of all towers of algorithms of type $\alpha$. Suppose $\mathcal{M}$ is a totally ordered set. Set $\Sigma^{\alpha}_0 = \Pi^{\alpha}_0 = \Delta^{\alpha}_0$ and for $m \in \mathbb{N}$, define
\begin{equation*}
\begin{split}
\Sigma^{\alpha}_{m} &{=} \left\{\{\Xi,\Omega,\mathcal{M},\Lambda\} \in \Delta_{m+1}^\alpha :  \exists \ \{\Gamma_{n_{m},\ldots, n_1}\} \in \mathcal{T}_\alpha \text{ s.t. }\Gamma_{n_{m}}(A) \uparrow \Xi(A)  \forall A \in \Omega\right\}, \\
\Pi^{\alpha}_{m} &{=} \left\{\{\Xi,\Omega,\mathcal{M},\Lambda\} \in \Delta_{m+1}^\alpha :  \exists \ \{\Gamma_{n_{m},\ldots, n_1}\} \in \mathcal{T}_\alpha \text{ s.t. }\Gamma_{n_{m}}(A) \downarrow \Xi(A)   \forall A \in \Omega\right\},
\end{split}
\end{equation*}
where $\uparrow$ and $\downarrow$ denote convergence from below and above, respectively. In other words, we have convergence from below or above in the final limit of the tower of algorithms.
\end{definition}

The following two examples discuss $(\mathcal{M},d)=\mathbb{R}$ and $\Md$, respectively.

\begin{example}[Spectral radius of normal operators]
\label{exam_spec_rad}
Let $\Omega$ be the class of bounded normal operators on $l^2(\mathbb{N})$, $\Lambda=\{A\mapsto \langle Ae_j,e_i \rangle:i,j\in\mathbb{N}\}$, and consider the spectral radius problem function
$
\Xi(A)=\sup_{z\in\spec(A)}|z|.
$
For normal operators, $\sup_{z\in\spec(A)}|z|=\|A\|$. Hence, we let $\Gamma_{n}(A)$ be an approximation of $\|\mathcal{P}_nA\mathcal{P}_n^*\|$ to accuracy $1/n$ from below, where $\mathcal{P}_n$ is the orthogonal projection onto $\mathrm{span}\{e_1,\ldots,e_n\}$. $\Gamma_{n}(A)\uparrow \|A\|$ and hence $\{\Xi,\Omega,\mathbb{R},\Lambda\}\in\Sigma_1^A$. This classification means that for any finite $n$, we obtain a lower bound for the value $\Xi(A)$. However, we may not know how close $\Gamma_{n}(A)$ is to $\|A\|$ and one can also show that $\{\Xi,\Omega,\mathbb{R},\Lambda\}\notin\Delta_1^G$. To see why, suppose for a contradiction that $\{\Xi,\Omega,\mathbb{R},\Lambda\}\in\Delta_1^G$. Hence, there exists a general algorithm $\Gamma$ such that $|\Gamma(A)-\|A\||\leq 1$ for all $A\in\Omega$. We may choose $A=\mathrm{diag}(0,\ldots,0,3,3,\ldots)$ such that the number of zeros in the diagonal of $A$ ensures that $\Gamma(A)=\Gamma(0)$ (this follows from the consistency requirement in the definition of a general algorithm). But then $1\geq |\Gamma(A)-3|=|\Gamma(0)-3|\geq 3-|\Gamma(0)-0|\geq 2$, a contradiction. The point is that we cannot compute an upper bound on $\|A\|$ from a finite amount of information, and hence, we cannot get full error control in the metric space $\mathbb{R}$.
\end{example}

\begin{example}[Is the spectral radius strictly larger than one?]
Consider the setup of \cref{exam_spec_rad}, but now let $\Xi$ be the decision problem, `Is $\sup_{z\in\spec(A)}|z|>1$?' Let $\Gamma_{n}(A)$ be as before, then if $\Xi(A)=1$ (yes), $\Gamma_{n}(A)>1$ for some $n$, otherwise $\Gamma_{n}(A)\leq 1$ for all $n$. Note that we have used the fact that $\{\Gamma_n\}$ is a $\Sigma_1^A$-tower for the problem in \cref{exam_spec_rad}. It follows that
$$
\tilde{\Gamma}_n(A)=\begin{cases}
1,\quad&\text{if }\Gamma_{n}(A)>1,\\
0,\quad &\text{otherwise.}
\end{cases}
$$
provides a $\Sigma_1^A$-tower for $\{\Xi,\Omega,\Md,\Lambda\}$. Again, one can show that $\{\Xi,\Omega,\Md,\Lambda\}\notin\Delta_1^G$. If we changed the decision problem to `Is $\sup_{z\in\spec(A)}|z|\leq 1$?', we would obtain a $\Pi_1^A$ classification instead.
\end{example}

More generally, the classes $\Sigma_1^A$ and $\Pi_1^A$ in \cref{def:tot_ord} allow \textit{verification}. For example, suppose that we have a problem function $\Xi:\Omega\rightarrow\mathbb{R}$ and we wish to verify a theorem $\Xi(A)<0$ for some $A\in\Omega$. If there exists a $\Pi_1^A$-tower $\{\Gamma_n\}$ for the problem and the theorem is true for the given $A$, then $\Gamma_n(A)<0$ for sufficiently large $n$. We can compute $\Gamma_n(A)$ for various $n$ and as soon as $\Gamma_n(A)<0$, we know that $\Xi(A)\leq\Gamma_n(A)<0$ and have verified the theorem. Note, however, that we cannot use a $\Pi_1^A$-tower to negate such a theorem (but can, instead, use a $\Sigma_1^A$-tower if it exists). We can only verify one way using a $\Pi_1^A$-tower. Similar remarks hold for $\Sigma_1^A$.

While \cref{def:tot_ord} is straightforward, it does not carry over to the more complicated Hausdorff metric. To define convergence of $\Gamma_n(A)$ to $\Xi(A)$ in the Hausdorff metric ``from below'', a first attempt may be to require that $\Gamma_n(A)\subset\Xi(A)$. However, this is severely restrictive. For example, when computing $\spec(A)\subset\mathbb{C}$, we can rarely ensure that a point $z$ is exactly in $\spec(A)$. Nevertheless, we can often ensure that $z$ is close to $\spec(A)$ and measure how close. Hence, it is natural to relax the condition $\Gamma_n(A)\subset\Xi(A)$ to
$
\sup_{z\in\Gamma_n(A)}\dist(z,\Xi(A))\leq 2^{-n}.
$
The exact form of the sequence $\{2^{-n}\}$ does not matter. What matters is that we can control the proximity of $\Gamma_n(A)$ to being contained within $\Xi(A)$ using a \textit{known} sequence that converges to zero as $n\rightarrow\infty$. Based on this discussion, the following provides the generalization of \cref{def:tot_ord}.

\begin{definition}[$\Sigma$ and $\Pi$ classes for Hausdorff metric]
\label{def_sig_pi_spec}
Consider a collection $\mathcal{C}$ of computational problems and let $\mathcal{T}_\alpha$ be the collection of all towers of algorithms of type $\alpha$. Suppose that $(\mathcal{M},d)$ is the Hausdorff metric. We set $\Sigma^{\alpha}_0 = \Pi^{\alpha}_0 = \Delta^{\alpha}_0$ and for $m \in \mathbb{N}$, we define
\begin{equation*}
\begin{split}
\Sigma^{\alpha}_{m}&= \Big\{\{\Xi,\Omega,\mathcal{M},\Lambda\} \in \Delta_{m+1}^\alpha  :  \exists \{\Gamma_{n_{m},\ldots,n_1}\} \in \mathcal{T}_\alpha,  \{X_{n_{m}}(A)\}\subset\mathcal{M} \text{ s.t. }\forall A \in\Omega\\
&\quad\quad\quad\quad\quad\quad\quad\quad\quad\quad\Gamma_{n_{m}}(A)  \subset X_{n_{m}}(A),\lim_{n_{m}\rightarrow\infty}\Gamma_{n_{m}}(A)=\Xi(A), d(X_{n_{m}}(A),\Xi(A))\leq 2^{-n_{m}}\Big\},\\
\Pi^{\alpha}_{m} &= \Big\{\{\Xi,\Omega,\mathcal{M},\Lambda\} \in \Delta_{m+1}^\alpha  :  \exists \{\Gamma_{n_{m},\ldots,n_1}\} \in \mathcal{T}_\alpha,  \{X_{n_{m}}(A)\}\subset\mathcal{M} \text{ s.t. }\forall A \in \Omega\\
&\quad\quad\quad\quad\quad\quad\quad\quad\quad\quad\Xi(A)  \subset X_{n_{m}}(A),\lim_{n_{m}\rightarrow\infty}\Gamma_{n_{m}}(A)=\Xi(A),
 d(X_{n_{m}}(A),\Gamma_{n_{m}}(A))\leq 2^{-n_{m}} \Big\}.
\end{split}
\end{equation*}
\end{definition}

These classes capture convergence from below or above, up to a small error parameter $2^{-n}$. It is precisely the classes $\Sigma_1^{\alpha}$ and $\Pi_1^{\alpha}$ that allow computations with verification, used, for example, in computer-assisted proofs. For example, to build a $\Sigma_1^{\alpha}$ algorithm in the case of the Hausdorff metric, it is enough to construct a convergent tower $\{\Gamma_n\}$ such that $\Gamma_{n}(A) \subset \Xi(A)+B_{{E_n}}(0)$ with some computable $E_n$ that converges to zero.

\subsubsection{Randomized algorithms}

We also consider sequences of probabilistic general algorithms. Randomized algorithms are commonly used in machine learning and optimization. For example, in the context of Koopman operators, Monte Carlo methods that randomly sample the snapshots are common. We consider the following definition.

\begin{definition}[Coin model]
Let $\mathcal{C} = \{0,1\}^\mathbb{N}$, which we interpret as the set of possible outcomes of a countably infinite number of coin flips. We equip $\mathcal{C}$ with the product topology, where each flip's outcome set, $\{0,1\}$, is given the discrete topology. Let $p_j$ denote the projection map $p_j(a) = a_j$ for $a \in \mathcal{C}$ and $j \in \mathbb{N}$. We define $\mathbb{P}$ as the standard probability measure corresponding to an unbiased coin. That is, for any $n \in \mathbb{N}$ and any $a_1, \ldots, a_n \in \{0,1\}$, we have $\mathbb{P}(\{b\in\mathcal{C}:b_j=a_j,j=1,\ldots,n\})=2^{-n}.$
\end{definition}

When an algorithm is given an input $A$ from an input class $\Omega$, we want it to be able to act on both the outcome of coin flips and $A$. This motivates the following definition of a probabilistic computational problem.

\begin{definition}[Probabilistic computational problem]
\label{def:prob_comp_prob}
Let $\{\Xi,\Omega,\mathcal{M},\Lambda\}$ be a computational problem, as defined in \cref{def:comp_prob}. The corresponding probabilistic computational problem is $\{\Xi^\mathbb{P},\Omega^\mathbb{P},\mathcal{M}^\mathbb{P},\Lambda^\mathbb{P}\}$, where:
\begin{itemize}[leftmargin=0.7cm]
	\item $\Omega^\mathbb{P}=\Omega\times\mathcal{C}=\{(A,a):A\in\Omega,a\in\mathcal{C}\}$;
	\item $\Xi^\mathbb{P}(A,a)=\Xi(A)$ for $(A,a)\in\Omega^\mathbb{P}$;
	\item $\mathcal{M}^\mathbb{P}=\mathcal{M}\cup\{\mathrm{NH}\}$, where $\mathrm{NH}$ is added as an isolated point to $(\mathcal{M},d)$;
	\item $\Lambda^\mathbb{P}=\{f:f(A,a)=f(A),f\in\Lambda\}\cup\{p_j:p_j(A,a)=p_j(a),j\in\mathbb{N}\}$.
\end{itemize}
\end{definition}

The set $\Omega^\mathbb{P}$ is interpreted as an infinite sequence of coin flips for each input $A \in \Omega$. The evaluation set $\Lambda^\mathbb{P}$ consists of the original evaluation set and the ability to read the outcomes of the coin flips. The point $\mathrm{NH}$ added to $\mathcal{M}$ is interpreted as ``non-halting'', meaning that the algorithm never finishes computing to produce an output.

\begin{example}[Why we include $\mathrm{NH}$]
Suppose an algorithm repeatedly flips an unbiased coin. After each flip, if it sees ``heads'', it outputs the answer $1$ and halts. Otherwise, it flips the coin again. With probability $1$, this algorithm will halt and output $1$. However, there is an outcome -- an infinite sequence of tails -- upon which the algorithm does not halt.
\end{example}

\begin{definition}[Probabilistic general algorithm]
\label{def:prob_Gen_alg}
Given a computational problem $\{\Xi,\Omega,\mathcal{M},\Lambda\}$, a probabilistic general algorithm is a map $\Gamma:\Omega^\mathbb{P}\to \mathcal{M}\cup\{\mathrm{NH}\}$ with the following property. For any $(A,a)\in\Omega^\mathbb{P}$, there exists a non-empty subset of evaluations $\Lambda_\Gamma(A,a) \subset\Lambda^\mathbb{P}$ such that:
\begin{itemize}[leftmargin=0.7cm]
	\item If $(B,b)\in\Omega^\mathbb{P}$ with $f(A,a)=f(B,b)$ for every $f\in\Lambda_\Gamma(A,a)$, then $\Lambda_\Gamma(A,a)=\Lambda_\Gamma(B,b)$ and $\Gamma(A,a)=\Gamma(B,b)$;
	\item If $\Gamma(A,a)\neq \mathrm{NH}$, then $\Lambda_\Gamma(A,a)$ is finite.
\end{itemize}
We will refer to a sequence of probabilistic general algorithms as an SPGA.
\end{definition}

One can easily show that if $\Gamma$ is a probabilistic general algorithm for a computational problem $\{\Xi,\Omega,\mathcal{M},\Lambda\}$, then for any fixed $A\in\Omega$, the map $a\mapsto \Gamma(A,a)$ is measurable (and hence a random variable). Hence, we obtain an interpretation of a probabilistic general algorithm as a randomized algorithm. For $A\in\Omega$, we consider the possible outputs $\{\Gamma(A,a):a\in\mathcal{C}\}$ with the probability measure $\mathbb{P}$. For example, given a sequence of probabilistic general algorithms (SPGA) $\{\Gamma_n\}$, we drop $a\in\mathcal{C}$ from the notation when there is no ambiguity and define
$$
\mathbb{P}\left(\lim_{n\rightarrow\infty}\Gamma_n(A)=\Xi(A)\right)=\mathbb{P}\left(\left\{a\in\mathcal{C}:\lim_{n\rightarrow\infty}\Gamma_n(A,a)=\Xi^\mathbb{P}(A,a)\right\}\right).
$$
The conditions of a probabilistic general algorithm hold for any standard probabilistic machine that flips coins (e.g., probabilistic Turing \cite[Ch. 7]{arora2009computational} or BSS \cite[Ch. 17]{BCSS} machines).\footnote{One could also consider other, even continuous, probability distributions. In the case of BSS machines, machines that can pick numbers uniformly at random in $[0,1]$ are no more computationally powerful \cite[Section 17.5]{BCSS}. Hence, we do not consider such scenarios, which are also unrealistic in practice.} 
The critical point is that we have replaced consistency in the output of deterministic algorithms with consistency in the probability law of randomized algorithms.

\begin{example}[Consistency]\label{exa:consistency_prob}
Let $\Gamma$ be a probabilistic general algorithm for a computational problem $\{\Xi,\Omega,\mathcal{M},\Lambda\}$ and fix $A\in\Omega$. Let $S\subset\Lambda^\mathbb{P}$ and $B\in\Omega$ be such that $f(A)=f(B)$ for all $f\in S\cap \Lambda$. Then the consistency requirement in \cref{def:prob_Gen_alg} implies that
$$
\{a\in\mathcal{C}:\Lambda_{\Gamma}(A,a)\subset S\}=\{a\in\mathcal{C}:\Lambda_{\Gamma}(B,a)\subset S\}.
$$
Moreover, for any $a$ in this set, we must have $\Gamma(B,a)=\Gamma(A,a)$
\end{example}

\begin{example}[Random guess for decision problems]
Suppose $\{\Xi,\Omega,\mathcal{M},\Lambda\}$ is a decision problem with $\mathcal{M} = \Md = \{0,1\}$. We can define a probabilistic general algorithm by setting $\Gamma(A, a) = p_1(a)$. If we set $\Gamma_n = \Gamma$, we find that $\mathbb{P}(\lim_{n \rightarrow \infty} \Gamma_n(A) = \Xi(A)) = 1/2$. However, if we instead set $\Gamma_n(A, a) = p_n(a)$, then $\mathbb{P}(\Gamma_n(A) \text{ does not converge in } \Md) = 1$.
\end{example}

\begin{example}[Independent sequence of probabilistic algorithms]
For certain problems, we may want to consider an SPGA $\{\Gamma_n\}$, where the coin flips of $\Gamma_j$ and $\Gamma_k$ are independent for $j \neq k$. This can be easily achieved by expressing the set of positive integers as a disjoint union of countably infinite sets, $\mathbb{N} = \cup_{j=1}^\infty T_j$. We then restrict $\Lambda_{\Gamma_j}$ to be a subset of $\Lambda \cup \{p_k : k \in T_j\}$. Hence, we see that the definition of an SPGA captures computations that allow for independence or dependence between the different $\Gamma_n$'s.
\end{example}

\begin{example}[Random sampling of trajectories]
Suppose we sample snapshots randomly from $\mathcal{X}$ according to a probability distribution. Any random sampling produced by a digital computer and subsequent use of the data to produce an output is an example of a probabilistic general algorithm. Hence, the lower bounds we prove in this paper are universal.
\end{example}

A very useful property of probabilistic general algorithms is the following covering lemma. This tool enables us to reason similarly to deterministic algorithms with only an arbitrarily small loss in probability.

\begin{lemma}[Finite $\epsilon$-covering lemma]\label{lem:prob_covering_lemma}
Let $\Gamma$ be a probabilistic general algorithm for a computational problem $\{\Xi,\Omega,\mathcal{M},\Lambda\}$ and fix $A\in\Omega$. For any $\epsilon>0$, there exists a finite subset $S\subset\Lambda^\mathbb{P}$ such that
$$
\mathbb{P}(\Lambda_{\Gamma}(A,a)\subset S\text{ \rm and }\Gamma(A,a)\neq\mathrm{NH})\geq \mathbb{P}(\Gamma(A,a)\neq \mathrm{NH})-\epsilon.
$$
\end{lemma}

\begin{proof}
For any $S\subset\Lambda^\mathbb{P}$, define the set
$$
U_S=\{a\in\mathcal{C}:\Lambda_{\Gamma}(A,a)\subset S,\Gamma(A,a)\neq\mathrm{NH}\}.
$$
If $a \in U_S$, then there exists an open neighborhood of $a$ such that any $b$ in this neighborhood satisfies $\Lambda_{\Gamma}(A, a) = \Lambda_{\Gamma}(A, b)$. It follows that $U_S$ is open.
If $a\in\mathcal{C}$ has $\Gamma(A,a)\neq \mathrm{NH}$, then $\Lambda_{\Gamma}(A,a)$ is finite. It follows that
$$
\{a\in\mathcal{C}:\Gamma(A,a)\neq \mathrm{NH}\}=\bigcup_{S\subset\Lambda^\mathbb{P},S\text{ finite}}U_S.
$$
Let $U$ denote this set. Since $U$ is a union of open sets, it is open. We shall prove that there exists a countable subcover of $U$.

Let $V\subset \mathcal{C}$ be the set of eventually constant coin flips. That is, the set of all $a\in\mathcal{C}$ such that the sequence $\{p_j(a)\}_{j=1}^\infty$ is eventually constant. The set $V$ is countable and dense in $\mathcal{C}$. Since $U$ is open, $V\cap U=\{v_1,v_2,\ldots\}$ is dense in $U$. For each $j\in\mathbb{N}$ and $n\in\mathbb{N}$, consider the open ball $D_{1/n}(v_j)$. If there exists a finite set $S\subset\Lambda^\mathbb{P}$ with $D_{1/n}(v_j)\subset U_S$, let $U_{j,n}=U_S$ for a choice of such an $S$. Otherwise, let $U_{j,n}=\emptyset$. Let $a\in U$. There exists a finite set $S\subset\Lambda^\mathbb{P}$ and $\delta>0$ such that $D_\delta(a)\subset U_S$. We may choose $j,n\in\mathbb{N}$ such that $a\in D_{1/n}(v_j)\subset D_\delta(a)$. Hence, $U_{j,n}\neq \emptyset$ and $a\in U_{j,n}$. Since $a\in U$ was arbitrary, $U=\cup_{j,n=1}^\infty U_{j,n}$. If $S_1\subset S_2$, then $U_{S_1}\subset U_{S_2}$. It follows that there exists an increasing sequence of finite sets $S_1\subset S_2\subset S_3\subset \cdots\subset \Lambda^\mathbb{P}$ with
$$
\{a\in\mathcal{C}:\Gamma(A,a)\neq \mathrm{NH}\}=\bigcup_{n=1}^\infty U_{S_n}.
$$
Hence, $\mathbb{P}(\Gamma(A,a)\neq \mathrm{NH})=\lim\limits_{n\rightarrow\infty}\mathbb{P}(U_{S_n})$ and the lemma follows.
\end{proof}

We can now make sense of probabilistic classes in the SCI hierarchy.

\begin{definition}[Probabilistic classes in the SCI hierarchy]
\label{def:probabilistic_classes}
A computational problem $\{\Xi,\Omega,\mathcal{M},\Lambda\}$ does not belong to $\Delta_1^{\mathbb{P},1/2}$ if for any SPGA $\{\Gamma_n\}$,
$$
\inf_{A\in\Omega}\mathbb{P}\left(d(\Gamma_{n}(A),\Xi(A))\leq 2^{-n}\text{ for all }n\in\mathbb{N}\right)\leq 1/2.
$$
A computational problem $\{\Xi,\Omega,\mathcal{M},\Lambda\}$ does not belong to $\Delta_2^{\mathbb{P},1/2}$ if for any SPGA  $\{\Gamma_n\}$,
$$
\inf_{A\in\Omega}\mathbb{P}\left(\lim_{n\rightarrow\infty}\Gamma_{n}(A)=\Xi(A)\right)\leq 1/2.
$$
\end{definition}

Since a sequence of general algorithms is an SPGA, if $\{\Xi,\Omega,\mathcal{M},\Lambda\}\not\in\Delta_1^{\mathbb{P},1/2}$, then $\{\Xi,\Omega,\mathcal{M},\Lambda\}\not\in\Delta_1^{G}$. Similarly, if $\{\Xi,\Omega,\mathcal{M},\Lambda\}\not\in\Delta_2^{\mathbb{P},1/2}$, then $\{\Xi,\Omega,\mathcal{M},\Lambda\}\not\in\Delta_2^{G}$.

\subsection{The setup of this paper -- a perfect measurement device (which implies stronger results)}
In this paper, we consider computational problems where:
\begin{itemize}
	\item $\Omega$ is a class of dynamical systems, or $F$, on some state space $\mathcal{X}$ as in \eqref{eq:DynamicalSystem};
	\item $\Lambda$, the evaluation set, will be pointwise evaluations of the function $F$:
	$$
\Lambda=\Lambda_{\mathcal{X}}=\left\{F\mapsto F(\hat{x}_j):j=1,2,\ldots\right\},
$$
where $\{\hat{x}_j\}_{j=1}^\infty$ is a dense subset of $(\mathcal{X}, d_{\mathcal{X}})$.
	\item The metric space $(\mathcal{M},d)$ will be either $\Md$, in the case of decision problems, or $\MH$, when we compute spectral sets. 
\end{itemize}
In general, the choice of evaluation set matters \cite{chandler2024spectral}. The above setup agrees with the usual ``snapshot'' setting of data-driven Koopmanism, where one is given access to a finite collection of pairs
$$
\left\{\left(x^{(m)},y^{(m)}=F(x^{(m)})\right):m=1,\ldots,M\right\}.
$$
However, our lower bounds become \textit{stronger}. Specifically, this strengthening occurs when we prove lower bounds for a computational problem $\{\Xi,\Omega,\mathcal{M},\Lambda_{\mathcal{X}}\}^{\Delta_{1}}$, which we remind the reader, corresponds to allowing arbitrary precision of the evaluation set according to \cref{def:inexact_def_need}. Hence, our lower bound holds even if we allow algorithms arbitrarily many point samples of $F$ to arbitrary precision. When proving lower bounds, we will always deal with the case that $\omega$ is the natural Lebesgue measure on $\mathcal{X}\subset\mathbb{R}^d$, though many of our results can be easily extended to other measures.

\begin{remark}[Simplified notation]
To simplify notation, we drop the superscript $\Delta_{1}$ from the notation throughout the results below. We also drop the superscript $\alpha$ from the computational classes, e.g., writing $\Sigma_1$. Our lower bounds are proven for $\alpha=G$ (general algorithms), and our upper bounds can generally be made to work with $\alpha=A$ (arithmetic algorithms). Hence, we obtain the strongest possible results regarding computational models.
\end{remark}

\section{Theorems that tell us what is possible and what is not possible}

We now provide statements of our theorems. These theorems precisely state the results written in the main text, corresponding to classifications in the SCI hierarchy. Whenever we prove an upper bound in the SCI hierarchy (i.e., construct a convergent tower of algorithms), we shall reference the corresponding pseudocode given in \cref{section:pseudocodes}.

\subsection{A general computational result -- upper bounds or ``possibility results''}
\label{sec:general_ResDMD_method}

In this section, we let $(\mathcal{X},d_{\mathcal{X}})$ be a compact metric space and $\omega$ a finite Borel measure on $(\mathcal{X},d_{\mathcal{X}})$. Under these assumptions, there are two key properties of $L^2(\mathcal{X},\omega)$ that we shall make use of:
\begin{itemize}
	\item $L^2(\mathcal{X},\omega)$ is a separable Hilbert space \cite[Proposition 3.4.5]{cohn2013measure};
	\item The space of continuous functions on $\mathcal{X}$, denoted $C(\mathcal{X})$, is dense in $L^2(\mathcal{X},\omega)$ \cite[Proposition 7.9]{folland1999real}.
\end{itemize}
In particular, by the Gram--Schmidt process, there exists an orthonormal basis $\{g_1,g_2,\ldots\}\subset C(\mathcal{X})$ of $L^2(\mathcal{X},\omega)$.

\begin{definition}
Let $\alpha:\mathbb{R}_{\geq0}\rightarrow\mathbb{R}_{\geq0}$ be an increasing continuous function with $\alpha(0)=0$. We say that a continuous function $F:\mathcal{X}\rightarrow\mathcal{X}$ has a modulus of continuity $\alpha$ if
\begin{equation}
\label{mod_cty}
d_{\mathcal{X}}(F(x),F(y))\leq \alpha(d_{\mathcal{X}}(x,y))\quad \forall x,y\in \mathcal{X}.
\end{equation}
\end{definition}

Since $\mathcal{X}$ is compact, any continuous function $F:\mathcal{X}\rightarrow\mathcal{X}$ is uniformly continuous and hence has a choice of $\alpha$ so that \cref{mod_cty} holds. However, there is no $\alpha$ such that \cref{mod_cty} holds universally for all continuous functions. We set
\begin{align*}
\Omega_{\mathcal{X}}&=\left\{F:\mathcal{X}\rightarrow\mathcal{X}\text{ such that $F$ is continuous and non-singular, $\mathcal{K}_F$ is bounded}\right\},\\
\Omega_{\mathcal{X}}^{\alpha}&=\left\{F:\mathcal{X}\rightarrow\mathcal{X}\text{ such that $F$ has a modulus of continuity $\alpha$ and is non-singular, $\mathcal{K}_F$ is bounded}\right\},\\
\Omega_{\mathcal{X}}^{m}&=\left\{F:\mathcal{X}\rightarrow\mathcal{X}\text{ such that $F$ is measure-preserving}\right\},\\
\Omega_{\mathcal{X}}^{\alpha,m}&=\Omega_{\mathcal{X}}^{\alpha}\cap\Omega_{\mathcal{X}}^{m}.
\end{align*}
We consider the problem functions $\Xi_{\spec_{\mathrm{ap}}}(F)=\spec_{\mathrm{ap}}(\mathcal{K}_F)$ and $\Xi_{\spec_{\mathrm{ap},\epsilon}}(F)=\spec_{\mathrm{ap},\epsilon}(\mathcal{K}_F)$ for any $\epsilon>0$.

\begin{example}[EDMD does not work for $\Omega_{\mathcal{X}}^{\alpha,m}$, from \cite{mezic2020numerical}]
Let $\mathcal{X}=\mathbb{T}$ (the unit circle), equipped with the usual measure $\omega$, and consider the doubling map
$
F(x)=x^2.
$
To apply the algorithm EDMD, we use the Fourier basis $\psi_j(x)={x^j}/{\sqrt{2\pi}}$, $j\in \mathbb{Z}$. Note that $\mathcal{K}_F\psi_j=\psi_{2j}$ and $\spec(\mathcal{K}_F)=\{z\in\mathbb{C}:|z|\leq 1\}$ ($\mathcal{K}_F$ is an isometry whose range is a strict subspace of $L^2(\mathcal{X},\omega)$). We may split the space $L^2(\mathcal{X},\omega)$ into invariant subspaces as follows. Let $n\in\mathbb{N}$ be odd, then $\mathcal{K}_F$ acts as a unilateral shift on $\mathrm{span}\{\psi_{n2^k}:k=0,1,\ldots\}$ and $\mathrm{span}\{\psi_{-n2^k}:k=0,1,\ldots\}$. Hence, $\mathcal{K}_F$ acts as a direct sum of unilateral shifts. If we use a finite number of Fourier basis functions as our dictionary, the large data limit of EDMD is a direct sum of finite sections of unilateral shifts. These finite matrices have spectrum $\{0\}$, and hence, we completely miss regions of the spectrum.
\end{example}

The above example shows that methods such as EDMD do not converge, even for the class $\Omega_{\mathcal{X}}^{\alpha,m}$. This kind of argument can also be extended to systems in $\Omega_{\mathcal{X}}^{\alpha,m}$ whose Koopman operator is not just an isometry, but also unitary (e.g., Arnold's cat map). Nevertheless, part of the following theorem says we can use a different algorithm to ensure convergence.

\begin{theorem}
\label{thm:general_koopman_computation}
Given the above setup, for $\epsilon>0$, we have the following classifications:
\begin{gather*}
\{\Xi_{\spec_{\mathrm{ap},\epsilon}}, \Omega_{\mathcal{X}},\MH, \Lambda_{\mathcal{X}}\} \in \Sigma_2,\qquad
\{\Xi_{\spec_{\mathrm{ap}}}, \Omega_{\mathcal{X}},\MH, \Lambda_{\mathcal{X}}\} \in \Pi_3\\
\{\Xi_{\spec_{\mathrm{ap},\epsilon}}, \Omega_{\mathcal{X}}^\alpha,\MH, \Lambda_{\mathcal{X}}\} \in \Sigma_1,\qquad
\{\Xi_{\spec_{\mathrm{ap}}}, \Omega_{\mathcal{X}}^\alpha,\MH, \Lambda_{\mathcal{X}}\} \in \Pi_2\\
\{\Xi_{\spec_{\mathrm{ap},\epsilon}}, \Omega_{\mathcal{X}}^m,\MH, \Lambda_{\mathcal{X}}\} \in \Sigma_2,\qquad
\{\Xi_{\spec_{\mathrm{ap}}}, \Omega_{\mathcal{X}}^m,\MH, \Lambda_{\mathcal{X}}\} \in \Sigma_2\\
\{\Xi_{\spec_{\mathrm{ap},\epsilon}}, \Omega_{\mathcal{X}}^{\alpha,m},\MH, \Lambda_{\mathcal{X}}\} \in \Sigma_1,\qquad
\{\Xi_{\spec_{\mathrm{ap}}}, \Omega_{\mathcal{X}}^{\alpha,m},\MH, \Lambda_{\mathcal{X}}\} \in \Sigma_1.
\end{gather*}
\end{theorem}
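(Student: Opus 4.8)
The plan is to establish all eight classifications with a single construction — a data‑driven Galerkin approximation of the residual function
$$
\tau_F(z)\;:=\;\|(\mathcal{K}_F-zI)^{-1}\|^{-1}\;=\;\inf_{\|g\|=1}\|(\mathcal{K}_F-zI)g\|,
$$
with the convention $\tau_F(z)=0$ for $z\in\spec(\mathcal{K}_F)$ — and then to read off the SCI heights by counting which of three ``degrees of freedom'' (a quadrature/inexactness parameter, the dictionary size $N$, and a pseudospectral cut‑off $\epsilon$) can be controlled with an \emph{a priori} error under the hypotheses defining each class. Fix once and for all an orthonormal basis $\{g_j\}_{j\in\mathbb{N}}\subset C(\mathcal{X})$ of $L^2(\mathcal{X},\omega)$ obtained by Gram--Schmidt from a countable dense family of Lipschitz functions; this exists by separability of $L^2(\mathcal{X},\omega)$ and density of $C(\mathcal{X})$, and it has the bookkeeping benefit that each $g_j$ carries an explicitly computable modulus of continuity $\beta_j$. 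For $N\in\mathbb{N}$ put $V_N=\mathrm{span}\{g_1,\dots,g_N\}$ and $\tau_{F,N}(z)=\min_{g\in V_N,\,\|g\|=1}\|(\mathcal{K}_F-zI)g\|$, computed as the square root of the smallest generalized eigenvalue of the $N\times N$ pencil assembled from the Gram matrix and the numbers $\langle g_i\circ F,g_j\rangle$, $\langle g_i\circ F, g_j\circ F\rangle$; these are integrals over $\mathcal{X}$ that depend on $F$ only through point evaluations, i.e.\ through $\Lambda$.

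I would isolate two facts. (A) \emph{Monotone locally uniform convergence.} Each $\tau_{F,N}$ and $\tau_F$ is $1$‑Lipschitz in $z$; $\tau_{F,N}(z)$ is nonincreasing in $N$ (since $V_N\subseteq V_{N+1}$) and converges pointwise to $\tau_F(z)$ (density of $\bigcup_N V_N$ and boundedness of $\mathcal{K}_F$), hence by Dini's theorem $\tau_{F,N}\downarrow\tau_F$ uniformly on compacts; and $\{z:\tau_F(z)=0\}=\spec_{\mathrm{ap}}(\mathcal{K}_F)$. In particular $\tau_{F,N}\ge\tau_F$, so every sublevel set $\{\tau_{F,N}\le\epsilon\}$ lies inside $\spec_{\mathrm{ap},\epsilon}(\mathcal{K}_F)$ — this is the engine of all $\Sigma$‑type containment. (B) \emph{Isometry identity for measure‑preserving $F$.} If $F$ preserves $\omega$ then $\mathcal{K}_F$ is an isometry, and the Wold decomposition writes it as $U\oplus\bigoplus_\kappa S$ with $U$ unitary and $S$ the unilateral shift; computing $\tau_S(z)=\dist(z,\mathbb{T})$ from the numerical range of $S$ and combining summands gives $\tau_F(z)=\dist\!\big(z,\spec_{\mathrm{ap}}(\mathcal{K}_F)\big)$ for every $z$. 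Hence $\spec_{\mathrm{ap},\epsilon}(\mathcal{K}_F)=\spec_{\mathrm{ap}}(\mathcal{K}_F)+\overline{B_\epsilon(0)}$, so sending $\epsilon\downarrow0$ costs no limit in the measure‑preserving classes.

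Step two is quadrature and inexactness. In the relevant case ($\mathcal{X}\subseteq\mathbb{R}^d$, $\omega$ Lebesgue), partitioning into cells of diameter $h$ with known volumes and using the known moduli $\alpha$ (of $F$) and $\beta_j$ (of $g_j$) bounds the quadrature error by an explicit function of $h$, and simultaneously absorbs the $\Delta_1$‑inexactness of samples since $|g_i(f_n(x))-g_i(F(x))|\le\beta_i(2^{-n})$. Thus for $F\in\Omega_{\mathcal{X}}^{\alpha}$ (a fortiori $\Omega_{\mathcal{X}}^{\alpha,m}$) the map $(N,z)\mapsto\tau_{F,N}(z)$ is available with full a priori error control, whereas for $F\in\Omega_{\mathcal{X}}$ (or $\Omega_{\mathcal{X}}^{m}$) one still gets a quadrature sequence converging to $\tau_{F,N}(z)$ as $h\downarrow0$ (the integrands $g_i\circ F$ are continuous, hence Riemann integrable) but with no a priori rate — one extra limit. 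Assembling: on a grid of spacing $2^{-n}$ in $\overline{B_n(0)}$, output a sublevel set of the computed approximation of $\tau_{F,N}$ with a threshold slightly below $\epsilon$; by (A) this sits in $\spec_{\mathrm{ap},\epsilon}$ and fills it up in the limit, giving $\Sigma_1^G$ for $\Xi_{\spec_{\mathrm{ap},\epsilon}}$ over $\Omega_{\mathcal{X}}^{\alpha},\Omega_{\mathcal{X}}^{\alpha,m}$ (one limit $N\to\infty$) and $\Sigma_2^G$ over $\Omega_{\mathcal{X}}^{m},\Omega_{\mathcal{X}}$ (quadrature limit $+$ $N$). For the genuine spectrum $\spec_{\mathrm{ap}}$: over the measure‑preserving classes identity (B) makes $\epsilon\downarrow0$ free, so one uses a locally adaptive threshold and \emph{reports} the runtime‑computed error bound $E_n\to0$ (permitted by the remark following \cref{def_sig_pi_spec}), obtaining $\Sigma_1^G$ over $\Omega_{\mathcal{X}}^{\alpha,m}$ and $\Sigma_2^G$ over $\Omega_{\mathcal{X}}^{m}$; over $\Omega_{\mathcal{X}}^{\alpha}$ and $\Omega_{\mathcal{X}}$, lacking (B), one keeps $\epsilon=\epsilon_{n_2}\downarrow0$ (resp.\ $\epsilon_{n_3}\downarrow0$) as the outermost parameter, lets the inner tower converge to $\spec_{\mathrm{ap},\epsilon_{n_2}}$, and takes $X_{n_2}:=\spec_{\mathrm{ap},\epsilon_{n_2}}\supseteq\spec_{\mathrm{ap}}$ with $\dH(X_{n_2},\spec_{\mathrm{ap}})\to0$ as the required $\Pi$‑envelope, obtaining $\Pi_2^G$ over $\Omega_{\mathcal{X}}^{\alpha}$ ($N$ and $\epsilon$) and $\Pi_3^G$ over $\Omega_{\mathcal{X}}$ (quadrature, $N$, $\epsilon$). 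All towers use only arithmetic operations, so type $G$ suffices.

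The main obstacle is extracting the zero‑level set $\spec_{\mathrm{ap}}(\mathcal{K}_F)=\{\tau_F=0\}$ with error control — the $\Sigma_1/\Sigma_2$ spectrum statements — rather than a fixed $\epsilon$‑pseudospectrum: the rate of $\tau_{F,N}\downarrow\tau_F$ is governed by how well $V_N$ approximates approximate eigenfunctions of $\mathcal{K}_F$, which need not be smooth even for smooth $F$, so no a priori rate exists and one must verify that the data‑dependent threshold simultaneously guarantees containment in $\spec_{\mathrm{ap}}(\mathcal{K}_F)+B_{E_n}(0)$ and loses no spectral point in the limit; here identity (B) and the locally uniform convergence of (A) do the work, and there is a standard technical wrinkle about flat values of $\tau_F$ to be dispatched by passing to a suitable sequence $\epsilon_n\downarrow0$. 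Everything else — Dini's argument, the Wold‑decomposition computation, and the explicit quadrature and inexactness estimates — is routine once set up; the matching lower bounds and the sharpness of this theorem are proved separately (see \cref{thm:disc,thm:interval} and \cref{thm1,thm3}).
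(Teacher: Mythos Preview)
Your approach is essentially the paper's: approximate the residual $\tau_F$ from above via a growing Galerkin basis, invoke Dini for locally uniform convergence, use the Wold decomposition for the measure-preserving identity $\tau_F(z)=\dist(z,\spec_{\mathrm{ap}}(\mathcal{K}_F))$, and count limits according to which of the three parameters (quadrature, dictionary size $N$, cutoff $\epsilon$) carries a priori error control. One small gap: your quadrature step is written only for $\mathcal{X}\subseteq\mathbb{R}^d$ with Lebesgue measure (``cells of diameter $h$'', ``Riemann integrable''), whereas the theorem is stated for a general compact metric space and finite Borel measure; the paper handles this by taking a finite $\eta$-cover $\{x_{p,\eta}\}$ with a subordinate continuous partition of unity $\{\rho_{p,\eta}\}$ and approximating $\int g_j(F(x))\overline{g_i(x)}\dd\omega$ by $\sum_p g_j(y_{p,\eta})\int\rho_{p,\eta}\overline{g_i}\dd\omega$, which gives the same $\alpha_j(\alpha(\eta))+\alpha_j(\eta)$ error bound and slots into your argument without further change.
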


\begin{proof}
\textbf{Step 1:} Classifications for $\Omega_{\mathcal{X}}^\alpha$.
Given $F\in\Omega_{\mathcal{X}}^\alpha$, we consider the \textit{infinite} matrices $A$ and $L$ acting on $l^2(\mathbb{N})$ with
$$
A_{i,j}=
\langle \mathcal{K}_Fg_j,g_i \rangle=\int_{\mathcal{X}} g_j(F(x))\overline{g_i(x)}\dd \omega(x),\quad
L_{i,j}=
\langle \mathcal{K}_F^*\mathcal{K}_Fg_j,g_i \rangle=\langle \mathcal{K}_Fg_j,\mathcal{K}_Fg_i \rangle
=\int_{\mathcal{X}} g_j(F(x))\overline{g_i(F(x))}\dd \omega(x),\quad i,j\in\mathbb{N}.
$$
We first claim that if $F\in\Omega_{\mathcal{X}}^\alpha$, then given any $i,j\in\mathbb{N}$ and $\delta>0$, there exists a general algorithm using $\Delta_1$-information that computes an approximation of $A_{i,j}$ and $L_{i,j}$ within an error bounded by $\delta$. We show this for $A_{i,j}$, and the case of $L_{i,j}$ is similar.

Since $\mathcal{X}$ is a compact metric space, given any $\eta>0$, there exists a finite subset $\{x_{1,\eta},\ldots,x_{N_{\eta},\eta}\}\subset\mathcal{X}$ and continuous functions $\rho_{p,\eta}:\mathcal{X}\rightarrow [0,1]$ such that
$$
\sum_{p=1}^{N_\eta} \rho_{p,\eta}(x)=1\quad\forall x\in\mathcal{X},\quad \mathrm{supp}(\rho_{p,\eta})\subset \{x\in\mathcal{X}:d_{\mathcal{X}}(x,x_{p,\eta})<\eta\}\text{ for }p=1,\ldots,N_{\eta}.
$$
By slightly changing the points $\{x_{j,\eta}\}$ if necessary, we may assume that $\{x_{1,\eta},\ldots,x_{N_{\eta},\eta}\}\subset\{\hat{x}_j\}_{j=1}^\infty$.
Let $y_{p,\eta}$ be an approximation of $F(x_{p,\eta})$ to accuracy $\eta$, which can be computed from the given $\Delta_1$-information.
We then approximate the integral $A_{i,j}$ by
$$
\Gamma_{\eta}(F)=\sum_{p=1}^{N_\eta}\int_{\mathcal{X}}\rho_{p,\eta}(x)g_j(y_{p,\eta})\overline{g_i(x)}\dd \omega(x).
$$
To bound the error in this approximation, note that if $x\in\mathrm{supp}(\rho_{p,\eta})$, then
$d_{\mathcal{X}}(x,x_{p,\eta})<\eta$. Recall that $\alpha$ is a modulus of continuity for $F$. Let $\alpha_j$ be a modulus of continuity for $g_j$, then
\begin{align*}
|g_j(F(x))-g_j(y_{p,\eta})|&\leq |g_j(F(x))-g_j(F(x_{p,\eta}))|+|g_j(F(x_{p,\eta}))-g_j(y_{p,\eta})|\\
&\leq \alpha_j(d_{\mathcal{X}}(F(x),F(x_{p,\eta})))+\alpha_j(d_{\mathcal{X}}(F(x_{p,\eta}),y_{p,\eta}))\leq \alpha_j(\alpha(\eta))+\alpha_j(\eta).
\end{align*}
It follows that
$$
\left|A_{i,j}-\Gamma_{\eta}(F)\right|\leq \int_{\mathcal{X}}\sum_{p=1}^{N_\eta}\rho_{p,\eta}(x)|g_j(F(x))-g_j(y_{p,\eta})||g_i(x)|\dd \omega(x)\leq \left[\alpha_j(\alpha(\eta))+\alpha_j(\eta)\right]\int_{\mathcal{X}}|g_i(x)|\dd \omega(x).
$$
We can make this bound smaller than a given $\delta>0$ by choosing $\eta$ sufficiently small.

For a given $n\in\mathbb{N}$, let $\mathcal{P}_n$ be the orthogonal projection onto the span of the first $n$ canonical basis vectors of $l^2(\mathbb{N})$. We define the function
$$
h_n(z,F)=\sigma_{\mathrm{inf}}((A-zI)\mathcal{P}_n^*),\quad\text{where}\quad \sigma_{\inf}(T)=\inf\{\|Tx\|:\|x\|=1\},
$$
and we view $(A-zI)\mathcal{P}_n^*$ as an operator from the range of $\mathcal{P}_n$ to $l^2(\mathbb{N})$.
Since $A^*A=L$, we can rewrite $h_n$ as
$$
h_n(z,F)=\sqrt{\sigma_{\mathrm{inf}}(\mathcal{P}_n(A-zI)^*(A-zI)\mathcal{P}_n^*)}=\sqrt{\sigma_{\mathrm{inf}}(\mathcal{P}_n[L-\overline{z}A-zA^*+|z|^2I]\mathcal{P}_n^*)}.
$$
In particular, the operator $\mathcal{P}_n[L-\overline{z}A-zA^*+|z|^2I]\mathcal{P}_n^*$ is built from a finite matrix truncation of $A$ and $L$. It follows that we may compute $h_n(z,F)$ to any desired accuracy using finitely many evaluations of $F$ to a given precision. By Dini's theorem, $h_n(z,F)$ converges locally uniformly to the function $\sigma_{\mathrm{inf}}(A-zI)$. We may now apply the general construction of \cite{colbrook3} to see that $\{\Xi_{\spec_{\mathrm{ap},\epsilon}}, \Omega_{\mathcal{X}}^\alpha,\MH, \Lambda_{\mathcal{X}}\} \in \Sigma_1$. Namely, we let $\tilde{h}_n$ be an approximation of $h_n$ computed to accuracy $1/n$ and set
$$
\gamma_n(F)=\left\{z\in\frac{1}{n}(\mathbb{Z}+i\mathbb{Z})\cap B_{n}(0):\tilde{h}_n(z,F)+\frac{1}{n}<\epsilon\right\}\subset \spec_{\mathrm{ap},\epsilon}(\mathcal{K}_F),
$$
which converges to $\spec_{\mathrm{ap},\epsilon}(\mathcal{K}_F)$ as $n\rightarrow\infty$. The method is summarized in \cref{pseudo_code_easy_pseudospec}. Note that $\spec_{\mathrm{ap}}(\mathcal{K}_F)\subset\spec_{\mathrm{ap},\epsilon}(\mathcal{K}_F)$ and that $\lim_{\epsilon\downarrow0}\spec_{\mathrm{ap},\epsilon}(\mathcal{K}_F)=\spec_{\mathrm{ap}}(\mathcal{K}_F)$. It follows from the classification for $\Xi_{\spec_{\mathrm{ap},\epsilon}}$ that $\{\Xi_{\spec_{\mathrm{ap}}}, \Omega_{\mathcal{X}}^\alpha,\MH, \Lambda_{\mathcal{X}}\} \in \Pi_2$. This method is summarized in \cref{pseudo_code_medium_spec}.

\textbf{Step 2:} Classifications for $\Omega_{\mathcal{X}}$. The proof is similar to the $\Omega_{\mathcal{X}}^\alpha$ case, but we do not assume access to $\alpha$, a modulus of continuity for $F$. It follows that we can compute any matrix element $A_{i,j}$ or $L_{i,j}$ in one limit without error control. In particular, we let
$
h_{n_2,n_1}(z,F)
$
be functions that we compute with $\lim_{n_1\rightarrow\infty}h_{n_2,n_1}(z,F)=h_{n_2}(z,F)$. However, the set
$$
\left\{z\in\frac{1}{n_2}(\mathbb{Z}+i\mathbb{Z})\cap B_{n_2}(0):h_{n_2,n_1}(z,F)<\epsilon\right\}
$$
need not converge as $n_1\rightarrow\infty$, since the convergence $\lim_{n_1\rightarrow\infty}h_{n_2,n_1}(z,F)=h_{n_2}(z,F)$ need not be monotonic. To fix this, we define $\Gamma_{n_2,n_1}(F)$ as follows. Let $z\in\frac{1}{n_2}(\mathbb{Z}+i\mathbb{Z})\cap B_{n_2}(0)$ and consider the separated intervals
$$
I_{n_2}^1(\epsilon)=[0,\epsilon-1/n_2],\quad I_{n_2}^2(\epsilon)=[\epsilon+1/(2n_2),\infty).
$$
Given $h_{n_2,j}(z,F)$ for $j=1,\ldots,n_1$, let $k$ be the largest such $j$ with $h_{n_2,j}(z,F)\in I_{n_2}^1(\epsilon)\cup I_{n_2}^2(\epsilon)$. If such a $k$ exists with $h_{n_2,k}(z,F)\in I_{n_2}^1(\epsilon)$, then $z\in \Gamma_{n_2,n_1}(F)$. Otherwise, $z\notin \Gamma_{n_2,n_1}(F)$. Since the sequence $h_{n_2,j}(z,F)$ cannot visit both intervals $I_{n_2}^1(\epsilon)$ and $I_{n_2}^2(\epsilon)$ infinitely often as $n_1\rightarrow\infty$, it follows that the limit $\lim_{n_1\rightarrow\infty}\Gamma_{n_2,n_1}(F)=\Gamma_{n_2}(F)$ exists. Moreover,
$$
\left\{z\in\frac{1}{n_2}(\mathbb{Z}+i\mathbb{Z})\cap B_{n_2}(0):h_{n_2}(z,F)<\epsilon-\frac{1}{2n_2}\right\}\subset \Gamma_{n_2}(F)\subset
\left\{z\in\frac{1}{n_2}(\mathbb{Z}+i\mathbb{Z})\cap B_{n_2}(0):h_{n_2}(z,F)<\epsilon\right\}\subset \spec_{\mathrm{ap},\epsilon}(\mathcal{K}_F).
$$
Hence, $\{\Gamma_{n_2,n_1}\}$ is a $\Sigma_2$-tower for $\{\Xi_{\spec_{\mathrm{ap},\epsilon}}, \Omega_{\mathcal{X}},\MH, \Lambda_{\mathcal{X}}\}$. The method is summarized in \cref{pseudo_code_medium_pseudospec}. Again by taking $\epsilon\downarrow 0$, we see that $\{\Xi_{\spec_{\mathrm{ap}}}, \Omega_{\mathcal{X}},\MH, \Lambda_{\mathcal{X}}\}\in\Pi_3$.
This method is summarized in \cref{pseudo_code_hard_spec}.

\textbf{Step 3:} Classifications for $\Omega_{\mathcal{X}}^m$. Since $\Omega_{\mathcal{X}}^m\subset\Omega_{\mathcal{X}}$, we need only prove the classification for the problem function $\Xi_{\spec_{\mathrm{ap}}}$. Note that if $F\in\Omega_{\mathcal{X}}^m$, then $\mathcal{K}_F$ is an isometry. It follows from the Wold--von Neumann decomposition \cite[Theorem I.1.1]{nagy2010harmonic} that $\mathcal{K}_F$ can be written as a direct sum of copies of the unilateral shift and a unitary operator. In particular,
$$
\sigma_{\mathrm{inf}}(\mathcal{K}_F-zI)=\mathrm{dist}(z,\spec_{\mathrm{ap}}(\mathcal{K}_F)),\quad \spec_{\mathrm{ap},\epsilon}(\mathcal{K}_F)=\spec_{\mathrm{ap}}(\mathcal{K}_F)+B_\epsilon(0).
$$
We can convert the $\Sigma_2$ tower for $\Xi_{\spec_{\mathrm{ap},\epsilon}}$ to a $\Sigma_2$ tower for $\Xi_{\spec_{\mathrm{ap}}}$ following the same argument for self-adjoint operators in \cite{ben2015can}.

\textbf{Step 4:} Classifications for $\Omega_{\mathcal{X}}^{\alpha,m}$. Again, if $F\in\Omega_{\mathcal{X}}^m$, then
$$
\sigma_{\mathrm{inf}}(\mathcal{K}_F-zI)=\mathrm{dist}(z,\spec_{\mathrm{ap}}(\mathcal{K}_F)),\quad \spec_{\mathrm{ap},\epsilon}(\mathcal{K}_F)=\spec_{\mathrm{ap}}(\mathcal{K}_F)+B_\epsilon(0).
$$
We can argue as in step 1 for $\Xi_{\spec_{\mathrm{ap},\epsilon}}$. We can convert the $\Sigma_1$ tower for $\Xi_{\spec_{\mathrm{ap},\epsilon}}$ to a $\Sigma_1$ tower for $\Xi_{\spec_{\mathrm{ap}}}$ as in \cite{colbrook3}. The method is summarized in \cref{pseudo_code_easy_spec}.
\end{proof}

\begin{remark}
It is remarkable that there is a constructive computational procedure for $\{\Xi_{\spec_{\mathrm{ap}}}, \Omega_{\mathcal{X}},\MH, \Lambda_{\mathcal{X}}\}$. The procedure involves three successive limits. The first limit is the large data limit, collecting more snapshots of the dynamical system. The second limit is the large subspace limit, observing the action of the Koopman operator on more observables. The final limit is the computation of spectra through pseudospectra, which is also how the classical computational spectral problem was solved \cite{Hansen_JAMS}. This phenomenon of several successive limits occurs in all algorithms for Koopman operators that provably converge. In particular, the above argument using the matrices $L$ is a generalization of the ResDMD algorithm \cite{colbrook2021rigorousKoop,colbrook2023residualJFM}.
We shall see below that several successive limits are necessary unless we can control the large data limit (e.g., using a modulus of continuity) and the pseudospectra limit (e.g., by assuming that the system is measure-preserving so that $\sigma_{\mathrm{inf}}(\mathcal{K}_F-zI)=\mathrm{dist}(z,\spec_{\mathrm{ap}}(\mathcal{K}_F))$, see \cref{def:gs} for a generalization).
\end{remark}

\vspace{3mm}

\begin{remark}
In the above proof, we used a basis constructed for $\mathcal{X}$ and assumed that certain system-independent integrals with respect to the measure $\omega$ could be computed. Once $\mathcal{X}$ and $\omega$ are fixed, this still defines a general algorithm. To obtain an arithmetic algorithm, the basis construction must depend on the state space and measure. Practical examples of this are discussed in \cref{SI_sec:others}. Methods for learning a well-conditioned dictionary include, for example, those in \cite{das2021reproducing,valva2023consistent,berry2015nonparametric}. Proving the SCI of basis construction lies beyond the scope of this paper, but represents an interesting direction for future work.
\end{remark}

\subsection{Measure-preserving maps on the unit disk}

In \cref{thm:general_koopman_computation}, we saw that the approximate point spectrum of Koopman operators associated with measure-preserving dynamical systems can be computed in one limit if we have a bound on the modulus of continuity of $F$, and two limits otherwise. We now show that this classification is sharp by proving a lower bound. We consider invertible, measure-preserving dynamical systems on the closed unit disk $\mathcal{X}=\cl{\mathbb{D}} \subset\mathbb{R}^2$, equipped with the Euclidean metric and standard Lebesgue measure. We set
$$
\Omega_{\mathbb{D}}=\left\{F:\cl{\mathbb{D}}\rightarrow \cl{\mathbb{D}}\text{ such that } F\text{ is continuous, measure-preserving and invertible}\right\}
$$
and also consider maps with a priori known Lipschitz constant,
$$
\Omega_{\mathbb{D}}^{L}=\left\{F\in\Omega_{\mathbb{D}}:\mathrm{Lip}(F)\leq L\right\},
$$
where $\mathrm{Lip}(F)$ is the (optimal) Lipschitz constant if $F$ is Lipschitz and $+\infty$ otherwise.

\begin{theorem}\label{thm:disc}
Given the above setup, we have the following classifications for the spectrum:
\begin{gather*}
\Delta^{\mathbb{P},1/2}_2 \not\owns\{\Xi_{\spec}, \Omega_{\mathbb{D}},\MH, \Lambda_{\cl{\mathbb{D}}}\} \in \Sigma_2,\qquad
\Delta^{\mathbb{P},1/2}_1 \not\owns \{\Xi_{\spec}, \Omega_{\mathbb{D}}^L, \MH, \Lambda_{\cl{\mathbb{D}}}\}  \in \Sigma_1.
\end{gather*}
The same classifications hold for the pseudospectrum $\spec_\epsilon$ (and the approximate point spectrum and approximate point pseudospectrum).
In other words, to compute the spectral sets in one limit, we must be able to bound the variability of $F$.
\end{theorem}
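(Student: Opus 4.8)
These follow from \cref{thm:general_koopman_computation} with no new analysis. With $\mathcal X=\overline{\mathbb D}$ and normalised Lebesgue measure $\omega$, one has $\Omega_{\mathbb D}\subseteq\Omega_{\mathcal X}^{m}$ and $\Omega_{\mathbb D}^{L}\subseteq\Omega_{\mathcal X}^{\alpha,m}$ with $\alpha(t)=Lt$, so \cref{thm:general_koopman_computation} gives $\{\Xi_{\spec_{\mathrm{ap}}},\Omega_{\mathbb D},\MH,\Lambda\}^{\Delta_1}\in\Sigma_2^G$ and $\{\Xi_{\spec_{\mathrm{ap}}},\Omega_{\mathbb D}^{L},\MH,\Lambda\}^{\Delta_1}\in\Sigma_1^G$ (a tower for a class works for any subclass). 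Every $F\in\Omega_{\mathbb D}$ is measure-preserving and invertible, hence $\mathcal K_F$ is unitary, so $\spec(\mathcal K_F)=\spec_{\mathrm{ap}}(\mathcal K_F)$ and $\spec_\epsilon(\mathcal K_F)=\spec_{\mathrm{ap},\epsilon}(\mathcal K_F)=\spec(\mathcal K_F)+B_\epsilon(0)$; this reduces all four spectral problems in the statement to the one already classified. To replace type $G$ by $A$ I would fix a concrete orthonormal basis of $L^2(\overline{\mathbb D},\omega)$ (Gram--Schmidt on Cartesian monomials) and a quadrature with rational nodes and weights, so that the only quantities entering the tower of \cref{thm:general_koopman_computation} -- finite truncations built from $\langle\mathcal K_Fg_j,g_i\rangle$ and $\langle\mathcal K_Fg_j,\mathcal K_Fg_i\rangle$, and smallest singular values of $\mathcal P_n[L-\bar zA-zA^{*}+|z|^{2}I]\mathcal P_n^{*}$ -- are computed from finitely many rational point samples of $F$ by finitely many arithmetic operations and comparisons, giving $\Sigma_2^A$ and $\Sigma_1^A$.

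\textbf{The impossibility for $\Omega_{\mathbb D}^{L}$ (single switch, $\notin\Delta_1^{\mathbb P}$).} This mimics Step 1 of the proof of \cref{thm:rotation}. The phase transition is $\mathrm{id}\mapsto$ a \emph{localised twist}: fix a small ball $B(c,\rho)$ and let $F$ equal $c+se^{i(\theta+\beta(s))}$ in polar coordinates about $c$ on $B(c,\rho)$ and $\mathrm{id}$ elsewhere, with $\beta$ continuous, constant near $s=0$, vanishing at $s=\rho$, and with $\sup_s|s\beta'(s)|$ as small as we please (spread $\beta$'s small variation over $[0,\rho]$), so $\mathrm{Lip}(F)\le1+\sup_s|s\beta'(s)|\le L$ whenever $L>1$. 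Splitting $L^2(\overline{\mathbb D})=L^2(B(c,\rho))\oplus L^2(\overline{\mathbb D}\setminus B(c,\rho))$ and expanding in the angular mode $e^{ik\theta}$ inside the ball, $\mathcal K_F$ acts on the $k$-th mode as multiplication by $e^{ik\beta(s)}$, whose essential range is an arc of $\mathbb T$ of length $|k|$ times the oscillation of $\beta$; unioning over $k$ gives $\spec(\mathcal K_F)=\mathbb T$, so $\dH(\spec(\mathcal K_{\mathrm{id}}),\spec(\mathcal K_F))=\dH(\{1\},\mathbb T)=2$. Given an SPGA $\{\Gamma_n\}$ with $\inf_F\mathbb P(\dH(\Gamma_n(F),\spec(\mathcal K_F))\le2^{-n}\ \forall n)=\varepsilon>1/2$, run $\Gamma_1$ on $\mathrm{id}$: with probability $\ge1-\delta$ it reads only evaluations in a fixed finite set of point/precision pairs; choose $B(c,\rho)$ missing those points and the $\Delta_1$-data of $F$ to agree with that of $\mathrm{id}$ at those pairs (legitimate since $F=\mathrm{id}$ off $B(c,\rho)$). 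By consistency of general algorithms, $\Gamma_1(\mathrm{id})$ and $\Gamma_1(F)$ have the same law on an event of probability $\ge1-2\delta$, on which at most one can be $2^{-1}$-close to its target ($\dH$-distance $2>1$ apart); the bookkeeping of Step 1 of \cref{thm:rotation} then yields $1<2\varepsilon\le1+4\delta$, a contradiction for $\delta$ small. The same works for $\spec_\epsilon$ with targets $\overline{B_\epsilon(1)}$ and $\mathbb T+\overline{B_\epsilon(0)}$ once $\epsilon$ is small.

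\textbf{The impossibility for $\Omega_{\mathbb D}$ (cascade, $\notin\Delta_2^{\mathbb P}$).} I would iterate the switch diagonally, as in Step 3 of \cref{thm:rotation} and the first row of \cref{fig:proof_ideas}. Assume an SPGA $\{\Gamma_n\}$ with $\inf_F\mathbb P(\lim_n\Gamma_n(F)=\spec(\mathcal K_F))=\varepsilon>1/2$. Build $F^{(0)}=\mathrm{id}$ and inductively $F^{(k)}\in\Omega_{\mathbb D}$ obtained from $F^{(k-1)}$ by composing with a measure-preserving homeomorphism $\psi_k$ of $\overline{\mathbb D}$: at stage $k$ first choose $n_k>n_{k-1}$ and a finite set $Q_k\supseteq Q_{k-1}$ of point/precision pairs so that $\Gamma_{n_k}$ run on $F^{(k-1)}$ reads only evaluations in $Q_k$ and outputs a set within $1/10$ of $\spec(\mathcal K_{F^{(k-1)}})$ with probability $>1/2$ (possible by the assumed convergence and almost-sure halting); then choose $\psi_k$ supported away from the finitely many points occurring in $Q_k$, with $\|\psi_k-\mathrm{id}\|_\infty$ summable so that $F^{(k)}\to F^{(\infty)}\in\Omega_{\mathbb D}$ uniformly, and such that the Koopman spectra toggle drastically: a twist localised in a thin annulus not yet probed -- conjugate, after a coordinate change, to an irrational rotation on an invariant sub-region -- sends $\spec(\mathcal K_{F^{(k)}})$ to $\mathbb T$, while the construction keeps the penultimate spectra at $\dH$-distance $\ge1/3$ from $\mathbb T$ along a subsequence of stages, this being the content of the phase-transition lemma. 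Since $F^{(\infty)}$ agrees with $F^{(k-1)}$ on all of $Q_k$ and the $\Delta_1$-data can be matched there, coupling coin flips gives $\Gamma_{n_k}(F^{(\infty)})=\Gamma_{n_k}(F^{(k-1)})$ with probability $>1/2$, so along $n_k\to\infty$ the output $\Gamma_{n_k}(F^{(\infty)})$ stays within $1/10$ of a set $\dH$-far from $\spec(\mathcal K_{F^{(\infty)}})$, contradicting $\lim_n\Gamma_n(F^{(\infty)})=\spec(\mathcal K_{F^{(\infty)}})$ with probability $>1/2$; the pseudospectral variants follow with the perturbed target sets.

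\textbf{Main obstacle.} The crux is the engineering of the cascade $\{\psi_k\}$. On the circle this machinery fails -- continuous measure-preserving invertible maps are rotations, are rigid, and have their spectra read off one evaluation, so the spectrum is $\Sigma_1^A$ -- whereas on the disk one must move the Koopman spectrum between a finite set and all of $\mathbb T$ by homeomorphisms that (a) are supported away from every previously read sample, (b) accumulate to a genuinely \emph{continuous} measure-preserving invertible limit (continuity is indispensable: dropping it collapses the problem to $\mathrm{SCI}=\infty$, as in part (c) of \cref{thm:rotation}), and (c) leave the limit's spectrum provably far from the approximants' along a subsequence. Verifying that the twists hidden in ever-thinner annuli neither break continuity of $F^{(\infty)}$ nor accidentally force $\spec(\mathcal K_{F^{(\infty)}})=\mathbb T$ is where the real work lies; the remainder is the now-standard SPGA/consistency bookkeeping used already in \cref{thm:rotation}.
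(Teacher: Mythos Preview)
Your upper bounds and the $\Omega_{\mathbb D}^{L}\notin\Delta_1^{\mathbb P}$ argument are essentially the paper's (the paper uses Dirichlet Laplacian eigenfunctions and a twist $F_2(r,\theta)=(r,\theta+\delta_0\phi(r/r_0))$ near the origin rather than a twist in a small off-centre ball, but the mechanism is identical).

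The cascade argument for $\Omega_{\mathbb D}\notin\Delta_2^{\mathbb P}$ has a genuine gap, and it is exactly the one you flag in your final paragraph without resolving. If you build $F^{(k)}$ by composing with twists $\psi_k$ supported in ever-thinner annuli, then as soon as one nontrivial twist is present the spectrum is $\mathbb T$, every subsequent $F^{(k)}$ still has spectrum $\mathbb T$, and so does the uniform limit $F^{(\infty)}$. There is no alternation: the algorithm outputs sets close to $\mathbb T$ on each $F^{(k)}$, and $\spec(\mathcal K_{F^{(\infty)}})=\mathbb T$ too, so no contradiction arises. Your sentence about ``penultimate spectra at $\dH$-distance $\ge 1/3$ from $\mathbb T$ along a subsequence'' has no mechanism behind it --- composing with a further area-preserving homeomorphism supported away from previous samples cannot remove the spectral contribution of an existing twist.

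The paper's resolution is structurally different and avoids this trap entirely. Fix once and for all $F_0(r,\theta)=(r,\theta+\pi)$, whose Koopman operator is unitary with $\spec(\mathcal K_{F_0})=\{\pm 1\}$. At stage $k$ one first feeds the algorithm a genuinely twisted map $\tilde F_k$ (spectrum $\mathbb T$) to force $\Gamma_{n_k}$ to output something near $i$ with probability $>1/2$, recording the finite sample set $X_k$ and $\Delta_1$-approximate images $Y_k$. Then, rather than keeping $\tilde F_k$, one invokes a geometric lemma (\cref{technical_lemma}): given disjoint finite sets $X_k,Y_k$ in an annulus $\mathcal A_k$, there is a measure-preserving homeomorphism $H_k$, equal to the identity off $\mathcal A_k$, with $H_k^{-1}\circ F_0\circ H_k(x_j)=y_j$. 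Define $F_k=H_k^{-1}\circ F_0\circ H_k$ on $\mathcal A_k$ and $F_k=F_{k-1}$ outside. Now $F_k$ matches the $\Delta_1$-data the algorithm saw for $\tilde F_k$, yet $F_k$ is \emph{conjugate to $F_0$}, so $\spec(\mathcal K_{F_k})=\{\pm1\}$. The annuli $\mathcal A_k$ shrink to the origin, the $H_k$ compose to a limiting measure-preserving homeomorphism $H$, and $F=\lim F_k=H^{-1}\circ F_0\circ H$ has $\spec(\mathcal K_F)=\{\pm1\}$ as well. The algorithm, seeing data consistent with each $\tilde F_k$, outputs near $i$ along $n_k\to\infty$ with probability $>1/2$, contradicting convergence to $\{\pm1\}$. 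The point you are missing is that the adversarial sequence is built from \emph{conjugates of a single map with small spectrum}, not from accumulating twists; the twists $\tilde F_k$ are decoys that never enter the limit.
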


Let $F_0\in\Omega_{\mathbb{D}}$ be the map $(r,\theta)\mapsto (r,\theta+\pi)$, where we use polar coordinates. Using the eigenfunctions of the Dirichlet Laplacian, we see that $\spec(\mathcal{K}_{F_0})=\{\pm 1\}$. To prove \cref{thm:disc}, we will need the following technical lemma regarding $F_0$.

\begin{lemma}
\label{technical_lemma}
Let $\mathcal{A}=\{x\in\mathbb{R}^2:0<R\leq |x|\leq r<1\}$ be an annulus and $X=\{x_1,\ldots,x_N\}$ and $Y=\{y_1,\ldots,y_N\}$ two sets of $N$ distinct points in $\mathcal{A}$ with $X\cap Y=\emptyset$. Then there exists a measure-preserving homeomorphism $H$ that acts as the identity on $\cl{\mathbb{D}}\backslash \mathcal{A}$ such that $H^{-1}\circ F_0\circ H(x_{j})=y_{j}$ for $j=1,\ldots,N$.
\end{lemma}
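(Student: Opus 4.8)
The plan is to reduce the lemma to a homogeneity property of measure-preserving homeomorphisms. Since $H$ must be continuous and equal to the identity on $\cl{\mathbb D}\setminus\mathcal A$, it necessarily fixes $\partial\mathcal A$ pointwise; accordingly I assume, as holds in the application, that the $2N$ points $x_1,\dots,x_N,y_1,\dots,y_N$ all lie in the open annulus $\mathcal A^\circ=\{x:R<|x|<r\}$. First I would fix radii $R<\rho_1<\dots<\rho_N<r$ and set $a_j=(\rho_j,0)$ in polar coordinates, so that $a_1,\dots,a_N,F_0(a_1),\dots,F_0(a_N)$ (with $F_0(a_j)=(\rho_j,\pi)$) are $2N$ pairwise distinct points of $\mathcal A^\circ$. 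It then suffices to build a Lebesgue-measure-preserving homeomorphism $H$ of $\cl{\mathbb D}$, equal to the identity outside a compact subset of $\mathcal A^\circ$, with $H(x_j)=a_j$ and $H(y_j)=F_0(a_j)$ for all $j$: indeed then $H^{-1}\circ F_0\circ H(x_j)=H^{-1}(F_0(a_j))=H^{-1}(H(y_j))=y_j$, and such an $H$, being the identity on $\cl{\mathbb D}\setminus\mathcal A^\circ\supseteq\cl{\mathbb D}\setminus\mathcal A$, meets every requirement. So the remaining task is to carry the ordered $2N$-tuple of distinct points $P=(x_1,\dots,x_N,y_1,\dots,y_N)$ of $\mathcal A^\circ$ to the ordered $2N$-tuple $Q=(a_1,\dots,a_N,F_0(a_1),\dots,F_0(a_N))$ by such a homeomorphism.

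For this I would invoke the classical fact that the group of Lebesgue-measure-preserving homeomorphisms of a connected surface that are the identity outside a compact set acts transitively on ordered tuples of distinct points. Its proof rests on an ``area-preserving finger move'': given distinct $p,q\in\mathcal A^\circ$ and a finite set $Z\subset\mathcal A^\circ\setminus\{p,q\}$, join $p$ to $q$ by an embedded arc in $\mathcal A^\circ\setminus Z$ --- possible because deleting finitely many points from a connected surface leaves it path-connected and any path may be upgraded to an embedded arc in dimension two --- take a thin tubular neighbourhood $U$ of this arc with $\overline U$ a compact subset of $\mathcal A^\circ\setminus Z$, and construct inside $U$ a homeomorphism that slides $p$ onto $q$ and is the identity near $\partial U$. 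Such a homeomorphism is easy to write down; to make it measure-preserving one either corrects it using the Oxtoby--Ulam theorem (two non-atomic Borel measures of equal total mass, positive on open sets, on a closed cell are intertwined by a homeomorphism fixing the boundary) or, equivalently, realizes the slide as the time-one map of the flow of a compactly supported divergence-free (Hamiltonian) vector field on $U$.

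To assemble the full map from single moves while avoiding collisions between a moving point and the target points, I would pick $2N$ auxiliary points $r_1,\dots,r_{2N}\in\mathcal A^\circ$ distinct from every entry of $P$ and of $Q$; move $P$ coordinatewise onto $(r_1,\dots,r_{2N})$, at the $k$-th step routing the arc from $p_k$ to $r_k$ so as to avoid the finite set $\{p_{k+1},\dots,p_{2N},r_1,\dots,r_{k-1}\}$, and likewise move $Q$ onto $(r_1,\dots,r_{2N})$; call the resulting homeomorphisms $g$ and $g'$. Then $H=(g')^{-1}\circ g$ satisfies $H(P)=Q$, and being a finite composition of measure-preserving homeomorphisms each supported in a compact subset of $\mathcal A^\circ$, it is itself measure-preserving and supported in a compact subset of $\mathcal A^\circ$; hence it extends by the identity to a measure-preserving homeomorphism of $\cl{\mathbb D}$ that is the identity on $\cl{\mathbb D}\setminus\mathcal A$, as required.

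The one genuinely delicate ingredient is the area-preserving finger move itself --- turning the elementary ``push along an arc'' into a measure-preserving homeomorphism that stays supported in a small tube and equals the identity near the tube boundary --- which is exactly where Oxtoby--Ulam (or an explicit compactly supported Hamiltonian isotopy) is needed. Everything else (general position for the arcs, connectedness of punctured surfaces, the auxiliary-point bookkeeping, and the gluing across $\partial\mathcal A$) is routine. I would also remark that the $H$ produced has no control on its Lipschitz constant, which is precisely what makes the conjugated maps $H^{-1}\circ F_0\circ H$ useful adversaries lying in $\Omega_{\mathbb D}$ but outside every $\Omega_{\mathbb D}^{L}$ in \cref{thm:disc}.
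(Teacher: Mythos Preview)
Your proof is correct and follows a genuinely different route from the paper's. Both arguments begin with the same reduction: it suffices to build a measure-preserving homeomorphism $H$, equal to the identity off $\mathcal A$, sending each $x_j$ to $(\rho_j,0)$ and each $y_j$ to $(\rho_j,\pi)$. From there the paper appeals to a classical result of Brown (1935): one chooses a family of nested smooth closed curves in $\cl{\mathbb D}$ (concentric circles outside $\mathcal A$) so that $x_j$ and $y_j$ lie on the same curve, together with a transversal Jordan arc through the $x_j$'s, and Brown's theorem supplies a measure-preserving homeomorphism straightening the curves to circles and the arc to a radius; a final adjustment of the level function fixes the angular positions of the $H(y_j)$. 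You instead prove $2N$-transitivity of compactly supported area-preserving homeomorphisms directly, via area-preserving finger moves (Oxtoby--Ulam or Hamiltonian isotopies) along embedded arcs that avoid the remaining points, together with the standard auxiliary-point bookkeeping. Your argument is more modular and relies only on tools that are nowadays standard in surface topology, whereas the paper's route is shorter once Brown's somewhat specialized theorem is granted and yields an explicit integral formula for $H$. Both require, as you correctly observe, that the $2N$ points lie in the open annulus $\mathcal A^\circ$, which is exactly how the lemma is applied in the proof of \cref{thm:disc}.
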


\begin{proof}
It suffices to show there exists a measure-preserving homeomorphism $H$ that acts as the identity on $\cl{\mathbb{D}}\backslash \mathcal{A}$ with
$$
H(x_j) = (r_j, 0) ,\quad
H(y_j) = (r_j, \pi), \quad j=1,\ldots,N,
$$
where we employ polar coordinates and
$
r > r_1 > \cdots > r_N > R.  
$
Consider a family $\mathcal{G}$ of nested, smooth, closed simple curves filling $\cl{\mathbb{D}}$ such that $\mathcal{G}$ consists of concentric circles on $\cl{\mathbb{D}}\backslash \mathcal{A}$ and otherwise all curves enclose $R\cdot\overline{\mathbb{D}}$. We can construct such a family $\mathcal{G}$ so that $x_j$ and $y_j$ lie on the same curve in $\mathcal{G}$ for all $j$. Let $\mathfrak{c}$ be a Jordan arc from $\partial\mathbb{D}$ to the origin, such that $\mathfrak{c}$ meets each curve in $\mathcal{G}$ at a single point. Assume that $\mathfrak{c}$ passes through $x_j$ for all $j$, starting with $x_1$ then passing through $x_2,\ldots, x_N$ consecutively on the way to the origin  (after reordering the pairs $(x_j,y_j)$ if necessary). Without loss of generality, $\mathcal{G}$ may be represented implicitly by a smooth positive function on $\cl{\mathbb{D}}$ via the formula 
$
    f(x) = a$, $0 < a < 1$
with $f(x_1) > \cdots > f(x_N).$ By \cite[Theorem 1]{brown1935certain}, there exists a measure-preserving homeomorphism $H$, acting as the identity on $\cl{\mathbb{D}}\backslash \mathcal{A}$, which maps each family of curves $\mathcal{G}$ to a family of nested concentric circles in $\cl{\mathbb{D}}$ about the origin. Furthermore, the theorem ensures that $H$ maps the chord $\mathfrak{c}$  onto the radius of $\cl{\mathbb{D}}$ corresponding to zero angular coordinates. It follows that $H(x_j) = (r_j, 0)$ and $|H(y_j)| = r_j$ for all $j$. 

It remains to prove that we can choose $f$ so that the angular coordinate of $H(y_j)$ is $\pi$. We employ the formula for $H$ provided in \cite[Equations 3 and 4]{brown1935certain}, 
\begin{align}
    |H(x)|  = f(x), \qquad \theta & = \frac{1}{f(x)}\int_{T(x)}^x \frac{\dd s}{f_n} = \frac{1}{f(x)}\int_{T(x)}^x \frac{\dd s}{\sqrt{[\partial_1 f(s)]^2 + [\partial_2 f(s)]^2}},
\end{align}
where $T(x)$ is the point of intersection with $\mathfrak{c}$ of the curve of $\mathcal{G}$ through
$x$, $f_n$ is the directional derivative in the outer normal direction, and the integral is evaluated along the curve of $\mathcal{G}$ with $s$ measured in the sense for which the interior is on the left. Let $\theta_j$ denote the angular coordinate of $H(y_j)$. By adjusting $f$ only in a small neighborhood of the curve passing through $x_j$ and $y_j$ (avoiding the curves passing through the other points) and making $f_n$ small or large along the path of integration from $T(y_j) = x_j$ to $y_j$, we can ensure that $\theta_j=\pi$.
\end{proof}

\begin{proof}[Proof of \cref{thm:disc}]
We split the proof into three parts: upper bounds, and proving lower bounds for $\Omega_{\mathbb{D}}$ and $\Omega_{\mathbb{D}}^L$.

\textbf{Step 1: Upper bounds.} To prove the upper bounds, we must alter the proof of \cref{thm:general_koopman_computation} to arithmetic algorithms. We let $\{g_n\}_{n=1}^\infty$ be an orthonormal basis of $L^2(\mathbb{D})$ made up of the eigenfunctions of the Dirichlet Laplacian. These functions can be explicitly expressed in terms of Fourier basis functions in the angle coordinate and Bessel functions of the first kind in the radial coordinate. Let $F\in\Omega_{\mathbb{D}}^L$ and consider the two types of integrals
$$
A_{i,j}=
\langle \mathcal{K}_Fg_j,g_i \rangle=\int_{\mathbb{D}} g_j(F(x))\overline{g_i(x)}\dd x,\quad
L_{i,j}=
\langle \mathcal{K}_F^*\mathcal{K}_Fg_j,g_i \rangle=\langle \mathcal{K}_Fg_j,\mathcal{K}_Fg_i \rangle
=\int_{\mathbb{D}} g_j(F(x))\overline{g_i(F(x))}\dd x,\quad i,j\in\mathbb{N}.
$$
We can bound the modulus of continuity and modulus of the products $ g_j(F(x))\overline{g_i(x)}$ and $g_j(F(x))\overline{g_i(F(x))}$. It follows that the integrals can be computed to any desired accuracy using quadrature, for example, Riemann sums. (In practice, other choices may be better.) Similarly, if $F\in \Omega_{\mathbb{D}}$, we can construct arithmetic algorithms that converge to these inner products without error control. The rest of the proofs of upper bounds follow those of the upper bounds in \cref{thm:general_koopman_computation}.

\textbf{Step 2:} $\{\Xi_{\spec}, \Omega_{\mathbb{D}},\MH, \Lambda_{\cl{\mathbb{D}}}\}\notin \Delta^{\mathbb{P},1/2}_2$.
Suppose for a contradiction that $\{\Gamma_n\}$ is an SPGA for $\{\Xi_{\spec}, \Omega_{\mathbb{D}},\MH, \Lambda_{\cl{\mathbb{D}}}\}$ such that
\begin{equation}
\label{_prob_cotradiction_mp1}
\inf_{F\in\Omega_{\mathbb{D}}}\mathbb{P}\left(\lim_{n\rightarrow\infty}\Gamma_{n}(\tilde{F})=\spec(\mathcal{K}_F)\right)=\epsilon>1/2.
\end{equation}
Here, $\tilde{F}$ represents inexact input for $F$. That is, an admissible $\tilde{F}$ is of the form $\{f_{j,n}(F):j,n\in\mathbb{N}\}$, where $\|f_{j,n}(F)-F(\hat{x}_j)\|\leq 2^{-n}$. We will obtain a contradiction by constructing an $F\in\Omega_{\mathbb{D}}$ and corresponding $\tilde{F}$ so that \cref{_prob_cotradiction_mp1} cannot hold. Throughout the proof, we work in polar coordinates $x=(r,\theta)$ in $\cl{\mathbb{D}}$. We also fix a smooth surjective bump function $\phi:[0,1]\rightarrow[0,1]$ with $\mathrm{supp}(\phi)\subset[1/4,3/4]$. All of the elements of $\Omega_{\mathbb{D}}$ we consider act as the identity on $\{0\}\cup\partial\mathbb{D}$, and hence we may remove any $\hat{x}_j\in \{0\}\cup\partial\mathbb{D}$ from the evaluation set $\Lambda_{\cl{\mathbb{D}}}$ without loss of generality. We construct $F$ from a sequence of functions $\{F_k\}_{k=1}^\infty\subset\Omega_{\mathbb{D}}$ that are defined inductively as follows.

\textbf{Base case:} For $k=1$, we begin by defining the perturbed rotation
$$
Z_1(r,\theta)=(r,\theta+\pi+\phi(r)).
$$
A simple argument using Jacobians shows that $Z_1$ is measure-preserving. It is also smooth and invertible. Let $r_0$ be any point in $[0,1]$ with $\phi(r_0)/\pi\not\in\mathbb{Q}$ and $f_n:[0,1]\rightarrow[0,1]$ be a non-vanishing continuous function supported in $I_n=[\max\{r_0-1/n,0\},\min\{r_0+1/n,1\}]$.  For $j\in\mathbb{Z}$, set $f_{n,j}(r,\theta)=c_{n}f_n(r)e^{ij\theta}$, where $c_{n}$ is a normalization constant so that $\|f_{n,j}\|_{L^2}=1$. Then
$$
\left\|\mathcal{K}_{Z_1}f_{n,j}{-}e^{ij(\pi+\phi(r_0))}f_{n,j}\right\|_{L^2}^2=|c_{n}|^2\int_{0}^1\int_{0}^{2\pi} f_n(r)^2\left|e^{ij\phi(r)}{-}e^{ij\phi(r_0)}\right|^2r\dd \theta \dd r\leq \sup_{r\in I_n}\left|e^{ij\phi(r)}{-}e^{ij\phi(r_0)}\right|^2.
$$
This bound converges to zero as $n\rightarrow\infty$ and hence $e^{ij\pi(1+\phi(r_0)/\pi)}\in \spec(\mathcal{K}_{Z_1})$. Since $j\in\mathbb{Z}$ was arbitrary and $\phi(r_0)/\pi\notin\mathbb{Q}$, $\spec(\mathcal{K}_{Z_1})=\mathbb{T}$.

Let $\tilde{Z}_1=\{f_{j,n}(Z_1):j,n\in\mathbb{N}\}$ be a set of $\Delta_1$-information for $Z_1$ such that all of the points $f_{j,n}(Z_1)$ are distinct, $\tilde{Z}_1\cap\{\hat{x}_j:j\in\mathbb{N}\}=\emptyset$, and $\|f_{j,n}(Z_1)-Z_1(\hat{x}_j)\|\leq 2^{-(n+1)}$. The assumed convergence in \cref{_prob_cotradiction_mp1} implies that there exists $n_1\in\mathbb{N}$ such that
$$
\mathbb{P}\left(\mathrm{dist}(i,\Gamma_{n_1}(\tilde{Z}_1))\leq 1\right)\geq\frac{1}{2}\left(\epsilon+\frac{1}{2}\right)=\frac{\epsilon}{2}+\frac{1}{4}>1/2.
$$
Let $E_1$ be the event $\mathrm{dist}(i,\Gamma_{n_1}(\tilde{Z}_1))\leq 1$. Let $E_2$ be the event that $\Gamma_{n_1}(\tilde{Z}_1)\neq \mathrm{NH}$. Note that $E_1\subset E_2$. Given $\eta>0$, we may apply the covering lemma (\cref{lem:prob_covering_lemma}) to deduce the existence of a finite subset $S\subset\Lambda_{\cl{\mathbb{D}}}^{\mathbb{P}}$ such that
$$
\mathbb{P}\left(\{a\in\mathcal{C}:\Lambda_{\Gamma_{n_1}}(\tilde{Z}_1,a)\subset S\}\cap E_2\right)\geq \mathbb{P}(E_2)-\eta.
$$
It follows that
$$
\mathbb{P}(\{\Lambda_{\Gamma_{n_1}}(\tilde{Z}_1,a)\subset S\}\cap E_1)= \mathbb{P}(E_1)-\mathbb{P}(E_1\backslash \{\Lambda_{\Gamma_{n_1}}(\tilde{Z}_1,a)\subset S\})\geq \mathbb{P}(E_1)-\mathbb{P}(E_2\backslash \{\Lambda_{\Gamma_{n_1}}(\tilde{Z}_1,a)\subset S\})\geq \mathbb{P}(E_1)-\eta.
$$
We choose $\eta$ sufficiently small so that $\mathbb{P}(E_1)-\eta\geq {\epsilon}/{4}+{3}/{8}$.
Hence, there exists a finite index set $\mathcal{I}_1\subset\mathbb{N}^2$ so that
$$
\mathbb{P}\left(\mathrm{dist}(i,\Gamma_{n_1}(\tilde{Z}_1))\leq 1\text{ and }C_1(\tilde{Z}_1)\right)\geq\frac{\epsilon}{4}+\frac{3}{8}>1/2,
$$
where $C_1(\tilde{F})$ is the event that $\Gamma_{n_1}$ only samples from $\{f_{j,n}(F):(j,n)\in \mathcal{I}_1\}\subset \tilde{F}$ and outputs $\Gamma_{n_1}(\tilde{F})\neq \mathrm{NH}$. Let $J_1$ be the set of $j$ for which there exists $n$ with $(j,n)\in\mathcal{I}_1$ and for each such $j$, let $p_1(j)$ be the maximal such $n$. Define the sets
$$
X_1=\{\hat{x}_j:j\in J_1\}\quad\text{and}\quad Y_1=\{f_{j,p_1(j)}(Z_1):j\in J_1\}.
$$
Our assumptions on $\tilde{Z}_1$ imply that $X_1$ and $Y_1$ are sets of distinct points with $X_1\cap Y_1=\emptyset$.

Let $\mathcal{A}_1=\{R_1\leq r\leq r_1\}$ be a closed annulus in $\mathbb{D}\backslash\{0\}$, whose interior contains $X_1\cup\{f_{j,n}(Z_1):(j,n)\in \mathcal{I}_1\}$. Using \cref{technical_lemma}, there is a measure-preserving homeomorphism $H_1$ that acts as the identity on $\cl{\mathbb{D}}\backslash \mathcal{A}_1$ such that $[H_1^{-1}\circ F_0\circ H_1](\hat{x}_j)=f_{j,p_1(j)}(Z_1)$ for $j\in J_1$. We define $F_1=H_1^{-1}\circ F_0\circ H_1$ so that $F_1(\hat{x}_j)=f_{j,p_1(j)}(Z_1)$. If $(j,n)\in \mathcal{I}_1$, then
\begin{align*}
\|f_{j,n}(Z_1)-F_1(\hat{x}_j)\|&\leq \|f_{j,n}(Z_1)-Z_1(\hat{x}_j)\|+\|Z_1(\hat{x}_j)-F_1(\hat{x}_j)\|\\
&\leq 2^{-(n+1)}+2^{-(p_1(j)+1)}\leq 2^{-n}. 
\end{align*}
In particular, we may extend $\{f_{j,n}(Z_1):(j,n)\in \mathcal{I}_1\}$ to admissible $\Delta_1$-information for $F_1$, $\tilde{F}_1$, by setting $f_{j,n}(F_1)=F_1(\hat{x}_j)$ for $(j,n)\notin \mathcal{I}_1$. By consistency of probabilistic general algorithms,
\begin{align*}
\mathbb{P}\left(\mathrm{dist}(i,\Gamma_{n_1}(\tilde{F}_1))\leq 1\text{ and }C_1(\tilde{F}_1)\right)=\mathbb{P}\left(\mathrm{dist}(i,\Gamma_{n_1}(\tilde{Z}_1))\leq 1\text{ and }C_1(\tilde{Z}_1)\right)\geq\frac{\epsilon}{4}+\frac{3}{8}>1/2.
\end{align*}
This completes the base case.

\textbf{Inductive step:} For the inductive step, suppose that $F_k\in \Omega_{\mathbb{D}}$, $\tilde{F}_k=\{f_{j,n}(F_k):j,n\in\mathbb{N}\}$, $n_k\in\mathbb{N}$ and $R_k\in(0,1)$ have been defined. To define the next function $F_{k+1}$ and set of $\Delta_1$-information $\tilde{F}_{k+1}$, we first choose a positive $r_{k+1}<\min\{(k+1)^{-1},R_k\}$ and define the following function in $\Omega_{\mathbb{D}}$:
$$
Z_{k+1}(r,\theta)=\begin{cases}
(r,\theta+\pi+\phi(r/r_{k+1})),\quad &\text{if $r\leq r_{k+1}$},\\
F_k(r,\theta),\quad &\text{otherwise}.
\end{cases}
$$
The argument for determining $\spec(\mathcal{K}_{Z_{1}})$ extends to show that $\spec(\mathcal{K}_{Z_{k+1}})=\mathbb{T}$.

Let $\tilde{Z}_{k+1}=\{f_{j,n}(Z_{k+1}):j,n\in\mathbb{N}\}$ be a set of $\Delta_1$-information for $Z_{k+1}$ with the following properties. If $|\hat{x}_j|\geq r_{k+1}$, then $f_{j,n}(Z_{k+1})=f_{j,n}(F_k)$. If $|\hat{x}_j|< r_{k+1}$, then $|f_{j,n}(Z_{k+1})|<r_{k+1}$, $\|f_{j,n}(Z_{k+1})-Z_{k+1}(\hat{x}_j)\|\leq 2^{-(n+1)}$, and the points $\{f_{j,n}(Z_{k+1}):|\hat{x}_j|< r_{k+1}\}$ are distinct with $\{f_{j,n}(Z_{k+1}):|\hat{x}_j|< r_{k+1}\}\cap\{\hat{x}_j:j\in\mathbb{N}\}=\emptyset$. Arguing as above, there exists $n_{k+1}\in\mathbb{N}$ with $n_{k+1}>n_k$, a finite set $\mathcal{I}_{k+1}\subset\mathbb{N}^2$ such that the following two conditions hold. First, letting $J_{k+1}$ be the set of $j$ for which there exists $n$ with $(j,n)\in\mathcal{I}_{k+1}$, it holds that $|\hat{x}_j|< r_{k+1}$ whenever $j\in J_{k+1}$. Second,
$$
\mathbb{P}\left(\mathrm{dist}(i,\Gamma_{n_{k+1}}(\tilde{Z}_{k+1}))\leq 1\text{ and }C_{k+1}(\tilde{Z}_{k+1})\right)\geq\frac{\epsilon}{4}+\frac{3}{8}>1/2,
$$
where $C_{k+1}(\tilde{F})$ is the event that $\Gamma_{n_{k+1}}$ only samples from $\{f_{j,n}(F):(j,n)\in \mathcal{I}_{k+1}\}\cup\{f_{j,n}(F):|\hat{x}_j|\geq r_{k+1}\}\subset\tilde{F}$ and outputs $\Gamma_{n_{k+1}}(\tilde{F})\neq \mathrm{NH}$. For each $j\in J_{k+1}$, let $p_{k+1}(j)$ be the maximal $n$ such that $(j,n)\in\mathcal{I}_{k+1}$. We define the two sets
$$
X_{k+1}=\{\hat{x}_j:j\in J_{k+1}\}\quad\text{and}\quad Y_{k+1}=\{f_{j,p_{k+1}(j)}(Z_{k+1}):j\in J_{k+1}\}.
$$
Similar to the base case, our assumptions on $\tilde{Z}_{k+1}$ imply that $X_{k+1}$ and $Y_{k+1}$ are sets of distinct points with $X_{k+1}\cap Y_{k+1}=\emptyset$.

Let $\mathcal{A}_{k+1}=\{R_{k+1}\leq r\leq r_{k+1}\}$ for some $0<R_{k+1}< r_{k+1}$ be a closed annulus in $\mathbb{D}\backslash\{0\}$, whose interior contains $X_{k+1}\cup\{f_{j,n}(Z_{k+1}):(j,n)\in \mathcal{I}_{k+1}\}$. Using \cref{technical_lemma}, there is a measure-preserving homeomorphism $H_{k+1}$ that acts as the identity on $\cl{\mathbb{D}}\backslash \mathcal{A}_{k+1}$ such that $[H_{k+1}^{-1}\circ F_0\circ H_{k+1}](\hat{x}_j)=f_{j,p_{k+1}(j)}(Z_{k+1})$ for $j\in J_{k+1}$. We then define
$$
F_{k+1}(r,\theta)=\begin{cases}
[H_{k+1}^{-1}\circ F_0\circ H_{k+1}](r,\theta),\quad &\text{if $r\leq r_{k+1}$},\\
F_k(r,\theta),\quad &\text{otherwise},
\end{cases}
$$
so that $F_{k+1}(\hat{x}_j)=f_{j,p_{k+1}(j)}(Z_{k+1})$ for $j\in J_{k+1}$. If $(j,n)\in \mathcal{I}_{k+1}$, then
$$
\|f_{j,n}(Z_{k+1})-F_{k+1}(\hat{x}_j)\|\leq \|f_{j,n}(Z_{k+1})-Z_{k+1}(\hat{x}_j)\|+\|Z_{k+1}(\hat{x}_j)-F_{k+1}(\hat{x}_j)\|\leq 2^{-(n+1)}+2^{-(p_{k+1}(j)+1)}\leq 2^{-n}. 
$$
In particular, we may extend $\{f_{j,n}(Z_{k+1}):(j,n)\in \mathcal{I}_{k+1}\}\cup\{f_{j,n}(F_k):|\hat{x}_j|\geq r_{k+1}\}$ to admissible $\Delta_1$-information for $F_{k+1}$, $\tilde{F}_{k+1}$, by setting $f_{j,n}(F_{k+1})=F_{k+1}(\hat{x}_j)$ if $|\hat{x}_j|< r_{k+1}$ and $(j,n)\notin \mathcal{I}_{k+1}$. By consistency of probabilistic general algorithms,
$$
\mathbb{P}\left(\mathrm{dist}(i,\Gamma_{n_{k+1}}(\tilde{F}_{k+1}))\leq 1\text{ and }C_{k+1}(\tilde{F}_{k+1})\right)=\mathbb{P}\left(\mathrm{dist}(i,\Gamma_{n_{k+1}}(\tilde{Z}_{k+1}))\leq 1\text{ and }C_{k+1}(\tilde{Z}_{k+1})\right)\geq\frac{\epsilon}{4}+\frac{3}{8}>1/2.
$$
This completes the inductive step.

\textbf{Limit argument:} We now let 
$$
F=\lim_{k\rightarrow\infty}F_k,\qquad H=\lim_{k\rightarrow\infty} H_k\circ H_{k-1}\circ\cdots\circ H_1,
$$
and define the $\Delta_1$-information $\tilde{F}$ by
$f_{j,n}(F)=f_{j,n}(F_k)$ if $|\hat{x}_j|\geq r_{k+1}$ for $k=1,2,\ldots$. Since $f_{j,n}({F}_{k+1})=f_{j,n}({F}_{k})$ if $|\hat{x}_j|\geq r_{k+1}$ and $F_{k+1}(x)=F_k(x)$ for $|x|\geq r_{k+1}$, $\tilde{F}$ is well-defined and admissible. By consistency of probabilistic general algorithms, for any $k\in\mathbb{N}$,
$$
\mathbb{P}\left(\mathrm{dist}(i,\Gamma_{n_{k}}(\tilde{F}))\leq 1\text{ and }C_{k}(\tilde{F})\right)=
\mathbb{P}\left(\mathrm{dist}(i,\Gamma_{n_{k}}(\tilde{Z}_{k}))\leq 1\text{ and }C_{k}(\tilde{Z}_{k})\right)>1/2.
$$
Let $T_k$ be the random variable $\mathrm{dist}(i,\Gamma_{n_{k}}(\tilde{F}))$, then
$$
\mathbb{P}\left(T_k\leq 1\right)\geq \mathbb{P}\left(\mathrm{dist}(i,\Gamma_{n_{k}}(\tilde{F}))\leq 1\text{ and }C_{k}(\tilde{F})\right)>1/2\quad \forall k\in\mathbb{N}.
$$
Let $A_m$ be the event
$
\cap_{k= m}^\infty\{T_k>1 \text{ and }\Gamma_{n_{k}}(\tilde{F})\neq \mathrm{NH}\},
$
and note that
\begin{equation}
\label{eq:butAmlessthanhalf}
\mathbb{P}(A_k)\leq 1-\mathbb{P}(T_k\leq 1)<1/2.
\end{equation}
The function $H$ is a measure-preserving homeomorphism and $F=H^{-1}\circ F_0 \circ H$. Hence, $\mathcal{K}_{H}$ is unitary and $\mathcal{K}_{F}=\mathcal{K}_{H}^*\mathcal{K}_{F_0}\mathcal{K}_{H}$ so that $\spec(\mathcal{K}_{F})=\spec(\mathcal{K}_{F_0})=\{\pm 1\}$. In particular, $\mathrm{dist}(i,\spec(\mathcal{K}_{F}))>1$, so by our initial assumption in \cref{_prob_cotradiction_mp1},
$$
\mathbb{P}\left(\cup_{m=1}^\infty A_m\right)\geq \mathbb{P}\left(\lim_{n\rightarrow\infty}\Gamma_{n}(\tilde{F})=\spec(\mathcal{K}_F)\right)\geq \epsilon>1/2.
$$
Since $A_1\subset A_2\subset A_3 \subset \cdots$, there exists $A_M$ with $\mathbb{P}(A_M)>1/2$, which contradicts the bound in \cref{eq:butAmlessthanhalf}.

\textbf{Step 3:} $\{\Xi_{\spec}, \Omega_{\mathbb{D}}^L,\MH, \Lambda_{\cl{\mathbb{D}}}\}\notin \Delta^{\mathbb{P},1/2}_1$. 
We prove the slightly stronger result for exact input. Suppose for a contradiction that $\{\Gamma_n\}$ is an SPGA for $\{\Xi_{\spec}, \Omega_{\mathbb{D}}^L,\MH, \Lambda_{\cl{\mathbb{D}}}\}$ such that
\begin{equation}
\label{prob_cotradiction_mp1_easy}
\inf_{F\in\Omega_{\mathbb{D}}^L}\mathbb{P}\left(\dH(\Gamma_{n}(F),\spec(\mathcal{K}_{F}))\leq 2^{-n}\text{ for all }n\in\mathbb{N}\right)=\epsilon>1/2.
\end{equation}
Let $F_1$ be the identity map. We may argue as above to show that there exists a finite set $X\subset\mathcal{X}=\cl{\mathbb{D}}$ such that
$$
\mathbb{P}\left(\dH(\Gamma_1(F_1),\{1\})\leq 2^{-1}\text{ and }C(F_1)\right)> 1/2,
$$
where $C(F)$ is the event that $\Gamma_{1}$ samples $F$ within the fixed finite set of points $X$ to output $\Gamma_{1}(F)$. As before, let $\phi:[0,1]\rightarrow[0,1]$ be a smooth surjective bump function with $\mathrm{supp}(\phi)\subset[1/4,3/4]$. There exists $r_0\in[0,1]$ and $\delta_0>0$ such that with 
$$
F_2(r,\theta)=(r,\theta+\delta_0\phi(r/r_0)),
$$
we have $F_2\in\Omega_{\mathbb{D}}^L$, $F_1=F_2$ on $X$, and hence
$$
\mathbb{P}\left(\dH(\Gamma_1(F_2),\{1\})\leq 2^{-1}\text{ and }C(F_2)\right)=\mathbb{P}\left(\dH(\Gamma_1(F_1),\{1\})\leq 2^{-1}\text{ and }C(F_1)\right)>1/2.
$$
Given $\delta>0$, we may also assume that $\mathbb{P}(C(F_1))=\mathbb{P}(C(F_2))\geq 1-\delta$. 

We run two independent instances of $\Gamma_1$ for the two inputs $F_1$ and $F_2$. Let $T_1=\Gamma_1(F_1)$ and $T_2=\Gamma_1(F_2)$ be the corresponding outputs, which are random variables. Let $B_j$ be the event $\dH(\Gamma_{1}(F_j),\spec(\mathcal{K}_{F_j}))\leq 2^{-1}$ for $j=1,2$, and let $E=C(F_1)\cap C(F_2)$. If $E$ occurs, then the laws of $T_1$ and $T_2$ are the same. Moreover, $\spec(\mathcal{K}_{F_1})=\{1\}$, whereas $\spec(\mathcal{K}_{F_2})=\mathbb{T}$. Hence,
$$
\mathbb{P}(B_1\cap E)\!+\!\mathbb{P}(B_2\cap E)\!=\!\mathbb{P}(\{\dH(T_1,\{1\})\leq 2^{-1}\}\cap E)\!+\!\mathbb{P}(\{\dH(T_1,\mathbb{T})\leq 2^{-1}\}\cap E).
$$
Since
$
\dH(\{1\},\mathbb{T})>1,
$
it follows from the triangle inequality that
$$
\mathbb{P}(\{\dH(T_1,\{1\})\leq 2^{-1}\}\cap E)+\mathbb{P}(\{\dH(T_1,\mathbb{T})\leq 2^{-1}\}\cap E)\leq 1.
$$
Note that $\mathbb{P}(E)\geq 1-2\delta$. Upon combining with the assumed convergence in \cref{prob_cotradiction_mp1_easy}, we see that
$$
1<2\epsilon\leq\mathbb{P}(B_1)+\mathbb{P}(B_2)\leq  \mathbb{P}(B_1\cap E)+\mathbb{P}(B_2\cap E)+2\mathbb{P}(E^c)\leq 1+4\delta.
$$
This is a contradiction for sufficiently small $\delta$.
\end{proof}

\begin{remark}
We can alter the argument in step 2 of the above proof to smooth functions $F$ on the punctured disk $\cl{\mathbb{D}}\backslash\{0\}$.
\end{remark}

\subsection{Maps on the unit interval}

We now complement \cref{thm:disc} and show that, even for smooth, invertible systems, where we can bound the variability of $F$, the spectrum cannot be computed in one limit if we drop the measure-preserving assumption from \cref{thm:general_koopman_computation}. We consider $\mathcal{X}=[0,1]$ equipped with the Euclidean metric and Lebesgue measure.

Let $c=\{c_n\}_{n=0}^\infty$ be a strictly increasing sequence in $[1/2,1)$ with $c_0=1/2$ and $
\lim_{n\rightarrow\infty}c_n=1$. We set $c_{-n}=1-c_n$ for $n\in\mathbb{N}$ and define the intervals
$$
I_n=[c_n,c_{n+1}),\quad n\in\mathbb{Z}.
$$
We consider a continuous bijection $F:[0,1]\rightarrow[0,1]$ with $F(0)=0$ and $F(1)=1$ such that
$$
F(c_n)=c_{n+1},\quad F(I_n)=I_{n+1}.
$$
In other words, $F$ acts as a bijection between $I_n$ and $I_{n+1}$.
To study the spectrum of $\mathcal{K}_F$, we define the ratios
$$
a_n=\frac{|I_{n+1}|}{|I_n|},\quad n\in\mathbb{Z}.
$$
We assume that $F$ and $F^{-1}$ are smooth on each subinterval $I_n$ and for $\delta>0$, we define
$$
\Omega_{[0,1]}^{\delta}=\left\{F:[0,1]\rightarrow[0,1]\text{ such that the above holds for a sequence $c=\{c_n\}$ with }\max\{\|F'\|_{L^\infty},\|(F^{-1})'\|_{L^\infty}\}\leq1+\delta\right\}.
$$
(The 1 in the $1+\delta$ here comes from the fact that $F$ is a bijection on the whole interval $[0,1]$.)
Note that if the ratios $a_n$ are bounded and bounded below away from zero, then the piecewise affine function $F_c$ (uniquely) defined by acting as an affine function on each $I_n$ is a particular case for which
$$
\max\{\|F_c'\|_{L^\infty},\|(F_c^{-1})'\|_{L^\infty}\}\leq \max\left\{\sup_{n\in\mathbb{N}}a_n,\sup_{n\in\mathbb{N}}a_n^{-1}\right\}.
$$
The map $F_c$ is an example of an affine interval exchange map \cite{levitt1982feuilletages}, which are well-studied in dynamical systems theory.

\begin{theorem}\label{thm:interval}
Given the above setup and $\delta>0$, we have the following classifications for the approximate point spectrum
\begin{gather*}
\Delta^{\mathbb{P},1/2}_2 \not\owns\{\Xi_{\spec_{\mathrm{ap}}}, \Omega_{[0,1]}^{\delta},\MH, \Lambda_{[0,1]}\} \in \Pi_2.
\end{gather*}
The same classifications hold when restricting to smooth functions in $\Omega_{[0,1]}^{\delta}$, or to piecewise affine functions in $\Omega_{[0,1]}^{\delta}$.
\end{theorem}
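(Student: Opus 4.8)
The plan is to obtain the upper bound from \cref{thm:general_koopman_computation} and the lower bound from an adversarial family that makes $\spec_{\mathrm{ap}}(\mathcal{K}_F)$ oscillate between two fixed annuli via modifications of $F$ localised at ever-finer scales near the attracting fixed point $x=1$. For the upper bound, observe that each $F\in\Omega_{[0,1]}^{\delta}$ is smooth, hence $(1+\delta)$-Lipschitz, on every $S_n$; since the $S_n$ tile $[0,1]$ and $F$ is continuous with $F(c_n)=c_{n+1}$, $F$ is globally $(1+\delta)$-Lipschitz, so $\Omega_{[0,1]}^{\delta}\subset\Omega_{[0,1]}^{\alpha}$ with $\alpha(t)=(1+\delta)t$, and $F^{-1}$ being $(1+\delta)$-Lipschitz makes $\mathcal{K}_F$ bounded and non-singular. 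Thus \cref{thm:general_koopman_computation} gives $\{\Xi_{\spec_{\mathrm{ap}}},\Omega_{[0,1]}^{\alpha},\MH,\Lambda\}^{\Delta_1}\in\Pi_2^G$, and we upgrade to an arithmetic tower exactly as in Step~1 of the proof of \cref{thm:disc}: fix an explicit orthonormal basis of $L^2([0,1])$ of computable functions with known moduli of continuity (e.g.\ $\sqrt2\sin(k\pi x)$), and compute the matrix entries $A_{i,j},L_{i,j}$ by quadrature with error bounds coming from the Lipschitz constant. Since the smooth and piecewise-affine families are subclasses of $\Omega_{[0,1]}^{\delta}$, the $\Pi_2^A$ classification descends to them.

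\noindent\textbf{The phase-transition lemma.} The lower bound rests on identifying $\spec_{\mathrm{ap}}(\mathcal{K}_F)$. When $F$ is affine on each $S_n$, rescaling each $L^2(S_n)$ affinely onto $L^2(S_0)$ with norm-compatible constants yields an isometric identification $L^2([0,1])\cong\ell^2(\mathbb{Z})\otimes L^2(S_0)$ under which $\mathcal{K}_F=W\otimes I$, where $W$ is the scalar bilateral weighted shift $We_n=(|S_{n-1}|/|S_n|)^{1/2}\,e_{n-1}$. Write $p_1:=\lim_{n\to+\infty}|S_{n+1}|/|S_n|\in[\tfrac1{1+\delta},1]$ and $p_0:=\lim_{n\to-\infty}|S_{n+1}|/|S_n|\in[1,1+\delta]$ when these limits exist (these are the one-sided slopes $\lim_{x\to1}F'(x)$ and $\lim_{x\to0}F'(x)$). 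Then $\|W^{\pm n}\|^{1/n}$ tends to $p_1^{-1/2}$ and $p_0^{1/2}$ respectively, so $\spec(W)\subset\{p_0^{-1/2}\le|z|\le p_1^{-1/2}\}$, while the explicit eigenvectors $c_n=\lambda^n\prod_{j=1}^n w_j^{-1}$ (and their reflections) lie in $\ell^2(\mathbb{Z})$ for every $\lambda$ in the open annulus, so together with $\partial\spec(W)\subset\spec_{\mathrm{ap}}(W)$ we get $\spec_{\mathrm{ap}}(\mathcal{K}_F)=\spec(\mathcal{K}_F)=\{p_0^{-1/2}\le|z|\le p_1^{-1/2}\}$. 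For the smooth subclass one uses maps that are affine outside a fixed compact set, absorbing the non-affine middle as a modification on finitely many blocks that does not alter the essential spectrum, which already fills the annulus. Since summability of $\sum|S_n|$ is compatible with $p_1=1$ (polynomially shrinking blocks), we may realise within either subclass the two annuli $A=\{1\le|z|\le\sqrt{1+\delta}\}$ (take $p_1=\tfrac1{1+\delta}$, $p_0=1$) and $B=\mathbb{T}$ (take $p_0=p_1=1$), with $\dH(A,B)=\sqrt{1+\delta}-1>0$.

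\noindent\textbf{Lower bound.} Suppose an SPGA $\{\Gamma_n\}$ achieves $\inf_F\mathbb{P}(\lim_n\Gamma_n(F)=\spec_{\mathrm{ap}}(\mathcal{K}_F))=\epsilon>1/2$ for the problem with $\Delta_1$-information. Build $F=\lim_kF^{(k)}$ inductively so that $\spec_{\mathrm{ap}}(\mathcal{K}_{F^{(k)}})=A$ for odd $k$ and $=B$ for even $k$: starting from $F^{(1)}$ realising $A$, given $F^{(k-1)}$ use convergence of $\{\Gamma_n\}$ on $F^{(k-1)}$ together with the finiteness and finite precision of the evaluations read by $\Gamma_{n_k}$ to choose $n_k\uparrow\infty$ and $\eta_k>0$ so that, with probability $\ge\epsilon/4+3/8>1/2$, $\Gamma_{n_k}$ samples $F^{(k-1)}$ only on $[0,1-\eta_k]$ and returns a set within $\tfrac13\dH(A,B)$ of $\spec_{\mathrm{ap}}(\mathcal{K}_{F^{(k-1)}})$; then let $F^{(k)}$ agree with $F^{(k-1)}$ on $[0,1-\eta_k]$ (where the samples and $\Delta_1$-data stay valid) and be re-routed smoothly (or affinely) on $[1-\eta_k,1]$ with derivative in $[\tfrac1{1+\delta},1+\delta]$ and area under the derivative fixed so that $F^{(k)}(1)=1$, chosen so that only the germ of $F^{(k)}$ at $x=1$ changes and, by the phase-transition lemma, $\spec_{\mathrm{ap}}(\mathcal{K}_{F^{(k)}})$ is $A$ or $B$ as required. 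Since $F$ agrees with $F^{(k)}$ on $[0,1-\eta_k]$, consistency of general algorithms and \cref{def:SPGA} give $\mathbb{P}(\dH(\Gamma_{n_k}(F),\spec_{\mathrm{ap}}(\mathcal{K}_{F^{(k)}}))\le\tfrac13\dH(A,B))\ge\epsilon/4+3/8$ for all $k$. On the event that $\{\Gamma_n(F)\}$ converges, $\Gamma_{n_k}(F)$ is eventually within $\tfrac13\dH(A,B)$ of a single compact set, which can be that close to at most one of $A,B$; a monotone-event bookkeeping as in Step~3 of the proof of \cref{thm:rotation} then bounds $\mathbb{P}(\lim_n\Gamma_n(F)=\spec_{\mathrm{ap}}(\mathcal{K}_F))\le1-(\epsilon/4+3/8)<1/2$, contradicting the hypothesis. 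Running the same construction entirely inside the smooth (resp.\ affine) family handles those cases.

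\noindent\textbf{Main obstacle.} I expect the crux to be the phase-transition lemma for the smooth subclass: showing that for smoothed interval-exchange maps $\spec_{\mathrm{ap}}(\mathcal{K}_F)$ is \emph{exactly} the annulus $\{p_0^{-1/2}\le|z|\le p_1^{-1/2}\}$. This needs the (operator-)weighted-shift picture, the fact that in the invertible bilateral case there is no gap between $\spec$ and $\spec_{\mathrm{ap}}$ (unlike the unilateral shift), a perturbation estimate absorbing the finitely many non-affine central blocks, and the verification that the annulus depends on $F$ only through its endpoint germs, so that it can be switched ``self-similarly'' near $x=1$ while keeping $F,F^{-1}$ smooth (or affine) with derivatives confined to $[\tfrac1{1+\delta},1+\delta]$ and the block-length symmetry $c_{-n}=1-c_n$ intact.
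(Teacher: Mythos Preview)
Your overall architecture matches the paper's: upper bound from \cref{thm:general_koopman_computation}, lower bound by an adversarial sequence of maps whose tail block ratios alternate so that $\spec_{\mathrm{ap}}(\mathcal{K}_F)$ oscillates between two annuli, with the final contradiction extracted via the same monotone-event argument. Your weighted-shift identification of $\mathcal{K}_{F_c}$ is the same computation as the paper's \cref{lem:annulus_phase}, just in different language.

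There is, however, a concrete gap stemming from the symmetry $c_{-n}=1-c_n$ that is \emph{built into the definition} of $\Omega_{[0,1]}^{\delta}$. This forces $|S_{-n}|=|S_{n-1}|$ and hence $p_0=1/p_1$; you cannot choose $p_0$ and $p_1$ independently. Consequently your target set $A=\{1\le|z|\le\sqrt{1+\delta}\}$, which requires $p_0=1$ while $p_1=\tfrac{1}{1+\delta}$, is not the approximate point spectrum of any map in the class. The realizable annuli are all of the symmetric form $\{\sqrt{\alpha}\le|z|\le1/\sqrt{\alpha}\}$ for $\alpha\in(0,1]$. The paper therefore takes two values $r_1<r_2$ in $(0,1)$ satisfying derivative and separation constraints (its inequalities \eqref{deriv_bound}--\eqref{sep_bound}), and alternates between the annuli $\{\sqrt{r_j}\le|z|\le1/\sqrt{r_j}\}$; the infimum of $|z|$ over $\Gamma_{n_k}(F)$ then serves as the oscillating test statistic.

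The same symmetry constraint breaks your localization argument. Changing $c_n$ for large $n$ (i.e.\ modifying $F$ near $x=1$) automatically changes $c_{-n}$ and hence $F$ near $x=0$, so you cannot arrange $F^{(k)}\equiv F^{(k-1)}$ on $[0,1-\eta_k]$ while staying in $\Omega_{[0,1]}^{\delta}$. The correct invariant region is an interval of the form $[c_{-N_k},c_{N_k}]=[1-c_{N_k},c_{N_k}]$, symmetric about $1/2$ and bounded away from \emph{both} endpoints; the paper tracks this by recording the event $\Lambda_{\Gamma_{n_k}}(F_{c^{(k)}})\subset\bigcup_{j=-N_k}^{N_k-1}S_j^{(k)}$ and then modifying only the intervals $S_n^{(k)}$ with $|n|>N_k$. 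Once you replace $A,B$ by two symmetric annuli and localize as above, your argument goes through and coincides with the paper's.
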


\begin{remark}
Since $\Omega_{[0,1]}^{\delta}\subset \Omega_{\mathcal{X}}^\alpha$ for $\alpha(x)=(1+\delta)x$, \cref{thm:general_koopman_computation} and \cref{thm:interval} demonstrate that computing $\spec_{\mathrm{ap}}$ is strictly harder than computing $\spec_{\mathrm{ap},\epsilon}$ for this class of Koopman operators.
\end{remark}

\begin{figure}
\centering
\raisebox{-0.5\height}{\includegraphics[width=0.8\textwidth,trim={0mm 0mm 5mm 0mm},clip]{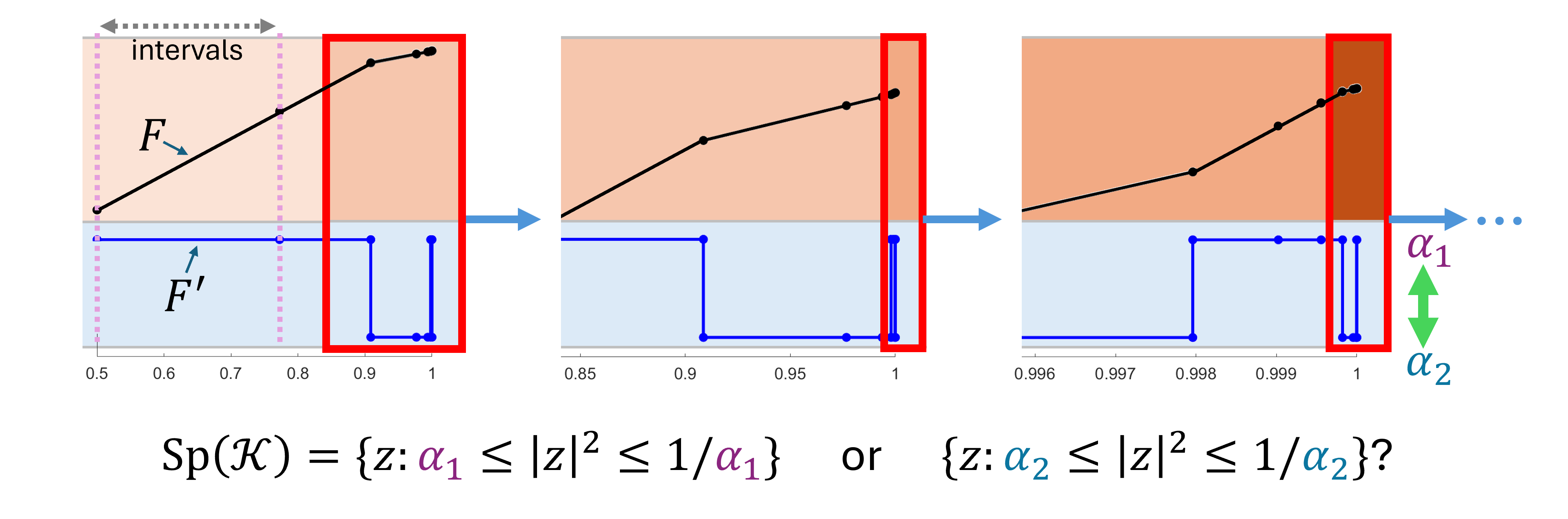}}
\caption{The idea of the proof of the lower bound in \cref{thm:interval}. The plots show successive zoomed-in views of $F$ as we move from left to right. We consider (smoothed) interval exchange maps, whose derivatives switch an infinite number of times as we approach the interval endpoints, causing the approximated spectrum to alternate between different annuli.}
\label{fig:proof2}
\end{figure}

We need the following technical lemma to prove \cref{thm:interval} and the idea is summarized in \cref{fig:proof2}. The first part of this lemma states that any $F\in\Omega_{[0,1]}^{\delta}$ induces a bounded Koopman operator $\mathcal{K}_F$ on $L^2([0,1])$. The second part of the lemma gives conditions for the spectrum to be contained in (or equal to) certain annuli.

\begin{lemma}
\label{lem:annulus_phase}
If $F\in\Omega_{[0,1]}^{\delta}$, then $\|\mathcal{K}_F\|\leq \sqrt{1+\delta}$. Furthermore, suppose that $\lim_{n\rightarrow\infty}a_n=\alpha\in(0,1]$ and that
$$
(1-\delta_F)a_n\leq\frac{\mathrm{d}F}{\mathrm{d}x}(x)\leq (1+\delta_F)a_n\quad \quad \text{and}\quad
(1-\delta_F)/a_{n-1}\leq\frac{\mathrm{d}F^{-1}}{\mathrm{d}x}(x)\leq (1+\delta_F)/a_{n-1}\quad \forall x\in I_n
$$
for some $\delta_F\geq 0$.
Then
$$
\spec(\mathcal{K}_F)\subset\left\{z\in\mathbb{C}:\sqrt{\alpha/(1+\delta_F)}\leq |z|\leq \sqrt{(1+\delta_F)/\alpha}\right\}.
$$
If $\alpha<1$, then
$$
\left\{z\in\mathbb{C}:\sqrt{\alpha}\leq |z|\leq 1/\sqrt{\alpha}\right\}\subset \spec_{\mathrm{ap}}(\mathcal{K}_F).
$$
and hence
$
\spec(\mathcal{K}_{F_c})=\spec_{\mathrm{ap}}(\mathcal{K}_{F_c})=\{z\in\mathbb{C}:\sqrt{\alpha}\leq |z|\leq 1/\sqrt{\alpha}\}.
$
\end{lemma}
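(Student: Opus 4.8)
The plan is to realize $\mathcal{K}_F$ as an operator-weighted bilateral shift with respect to the orthogonal decomposition $L^2([0,1])=\bigoplus_{n\in\mathbb{Z}}L^2(S_n)$, which is valid since the $S_n$ partition $(0,1)$ up to a Lebesgue-null set. Because $F$ restricts to a bijection of $S_n$ onto $S_{n+1}$, the operator $\mathcal{K}_F$ carries $L^2(S_{n+1})$ boundedly and invertibly onto $L^2(S_n)$; call this block $W_n$, so that $\mathcal{K}_F^m$ is the ``shift by $m$'' whose only nonzero blocks are $W_nW_{n+1}\cdots W_{n+m-1}\colon L^2(S_{n+m})\to L^2(S_n)$. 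Part~1 is then a change of variables: $\|\mathcal{K}_Fg\|^2=\int_0^1|g(F(x))|^2\,\mathrm{d}x=\int_0^1|g(y)|^2(F^{-1})'(y)\,\mathrm{d}y\le\|(F^{-1})'\|_{L^\infty}\|g\|^2\le(1+\delta)\|g\|^2$, legitimate since $F^{-1}$ is Lipschitz hence absolutely continuous. The same substitution on $S_n$ gives $\|W_ng\|^2=\int_{S_{n+1}}|g(y)|^2(F^{-1})'(y)\,\mathrm{d}y$, and intersecting the two pointwise hypotheses on $F'$ and $(F^{-1})'$ (the latter restricted to $S_{n+1}$, the former transported via $(F^{-1})'(y)=1/F'(F^{-1}(y))$) yields $\tfrac{1}{(1+\delta_F)a_n}\|g\|^2\le\|W_ng\|^2\le\tfrac{1+\delta_F}{a_n}\|g\|^2$, i.e.\ $\|W_n\|\le\sqrt{(1+\delta_F)/a_n}$ and $\|W_n^{-1}\|\le\sqrt{(1+\delta_F)a_n}$.

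For the upper bound on the spectrum I would bound the spectral radii of $\mathcal{K}_F$ and of $\mathcal{K}_F^{-1}=\mathcal{K}_{F^{-1}}$. Since the blocks of $\mathcal{K}_F^m$ have pairwise orthogonal domains and pairwise orthogonal ranges, $\|\mathcal{K}_F^m\|=\sup_n\|W_n\cdots W_{n+m-1}\|\le(1+\delta_F)^{m/2}\sup_n\prod_{k=0}^{m-1}a_{n+k}^{-1/2}$, and dually $\|\mathcal{K}_F^{-m}\|\le(1+\delta_F)^{m/2}\sup_n\prod_{k=0}^{m-1}a_{n+k}^{1/2}$. The elementary ingredient is that for any bi-infinite positive sequence $\{b_n\}$ with $\lim_{n\to+\infty}b_n$ and $\lim_{n\to-\infty}b_n$ both existing, $\limsup_{m\to\infty}\bigl(\sup_n\prod_{k=0}^{m-1}b_{n+k}\bigr)^{1/m}=\max\{\lim_{n\to+\infty}b_n,\lim_{n\to-\infty}b_n\}$, the nontrivial ``$\le$'' holding because only finitely many indices $j$ have $b_j$ exceeding this maximum by more than a prescribed $\epsilon$, and those contribute a bounded factor whose $m$-th root tends to $1$. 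Applying this with $b_n=a_n^{\mp1/2}$, using $a_n\to\alpha$ together with the forced relation $a_{-n}=1/a_{n-2}\to1/\alpha$ and $\alpha\le1$ (so $\alpha^{-1/2}\ge\alpha^{1/2}$), gives $r(\mathcal{K}_F)\le\sqrt{(1+\delta_F)/\alpha}$ and $r(\mathcal{K}_F^{-1})\le\sqrt{(1+\delta_F)/\alpha}$. The claimed inclusion $\spec(\mathcal{K}_F)\subseteq\{z:\sqrt{\alpha/(1+\delta_F)}\le|z|\le\sqrt{(1+\delta_F)/\alpha}\}$ then follows from $\spec(\mathcal{K}_F)\subseteq\{|z|\le r(\mathcal{K}_F)\}$ and $\spec(\mathcal{K}_F^{-1})=\{z^{-1}:z\in\spec(\mathcal{K}_F)\}$.

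For the lower bound when $\alpha<1$ I would exhibit genuine eigenfunctions. Put $\chi_n=|S_n|^{-1/2}\mathbf 1_{S_n}$; since $\mathbf 1_{S_{n+1}}\circ F=\mathbf 1_{S_n}$ identically, one gets the \emph{exact} relation $\mathcal{K}_F\chi_{n+1}=a_n^{-1/2}\chi_n$, valid for every $F$ in the class. Fix $z_0$ with $\sqrt{\alpha}<|z_0|<1/\sqrt{\alpha}$ and define $c_n$ by $c_0=1$ and $c_{n+1}=z_0a_n^{1/2}c_n$. As $n\to+\infty$, $|c_{n+1}/c_n|=|z_0|\sqrt{a_n}\to|z_0|\sqrt{\alpha}<1$; as $n\to-\infty$, $|c_{n-1}/c_n|=1/(|z_0|\sqrt{a_{n-1}})\to\sqrt{\alpha}/|z_0|<1$; hence $\{c_n\}\in\ell^2(\mathbb{Z})$ and $\xi=\sum_nc_n\chi_n$ is a nonzero element of $L^2([0,1])$. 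Using $\mathcal{K}_F\chi_n=a_{n-1}^{-1/2}\chi_{n-1}$ and $c_{n+1}a_n^{-1/2}=z_0c_n$ one checks $\mathcal{K}_F\xi=z_0\xi$, so the open annulus lies in the point spectrum of $\mathcal{K}_F$; taking closures (the approximate point spectrum is closed) gives $\{z:\sqrt{\alpha}\le|z|\le1/\sqrt{\alpha}\}\subseteq\spec_{\mathrm{ap}}(\mathcal{K}_F)$. Finally, for $F_c$ one may take $\delta_F=0$ in Parts~1--2, so the upper bound from Part~2 and the lower bound just obtained both equal $\{z:\sqrt{\alpha}\le|z|\le1/\sqrt{\alpha}\}$, yielding $\spec(\mathcal{K}_{F_c})=\spec_{\mathrm{ap}}(\mathcal{K}_{F_c})=\{z:\sqrt{\alpha}\le|z|\le1/\sqrt{\alpha}\}$.

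The main obstacle is the upper-bound step: setting up the operator-weighted-shift picture rigorously (in particular the identity $\|\mathcal{K}_F^m\|=\sup_n\|W_n\cdots W_{n+m-1}\|$, which rests on the mutual orthogonality of the $m$-fold blocks) and establishing the uniform product estimate $\limsup_m(\sup_n\prod_{k<m}b_{n+k})^{1/m}=\max\{b_{+\infty},b_{-\infty}\}$ with two-sided limits. The bookkeeping that intersects the hypotheses on $F'$ and $(F^{-1})'$ into clean bounds on $\|W_n\|$ and $\|W_n^{-1}\|$ is routine but should be done carefully. Parts~1 and~3 are short once the decomposition and the relation $\mathcal{K}_F\chi_{n+1}=a_n^{-1/2}\chi_n$ are in hand.
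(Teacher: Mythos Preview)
Your proposal is correct and follows essentially the same route as the paper: change of variables for boundedness, Gelfand's spectral radius formula applied to $\mathcal{K}_F$ and $\mathcal{K}_F^{-1}$ for the annulus containment, and the explicit eigenfunction $\sum_n c_n\chi_n$ (equivalently the paper's $\sum_n z^n\chi_{S_n}$) for the lower bound. The only technical difference is in the product estimate for $\|\mathcal{K}_F^m\|^{1/m}$: the paper passes through AM--GM to bound $\sup_n\prod_{j}b_{n+j}^{1/l}\le\sup_n\frac{1}{l}\sum_j b_{n+j}$ and then uses the decreasing rearrangement of $\{b_n\}$, whereas you bound the geometric mean directly via the ``only finitely many indices exceed $\max\{b_{+\infty},b_{-\infty}\}+\epsilon$'' argument; both yield the same limit and yours is arguably slightly more direct.
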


\begin{proof}
For ease of notation, let
$
b_n=\sup_{t\in I_{n}}|(F^{-1})'(t)|.
$
If $g\in L^2([0,1])$ with $\|g\|=1$, then
$$
\|\mathcal{K}g\|^2=\sum_{n\in\mathbb{Z}}\int_{I_n} |g(F(x))|^2 \dd x\leq\sum_{n\in\mathbb{Z}}\int_{I_{n+1}} |g(x)|^2 b_{n+1} \dd x\leq
(1+\delta)\sum_{n\in\mathbb{Z}}\int_{I_{n+1}} |g(x)|^2  \dd x=(1+\delta) \|g\|^2.
$$
This bound proves the first part of the lemma, i.e., that $\|\mathcal{K}_F\|\leq \sqrt{1+\delta}$.

Suppose in addition that $\lim_{n\rightarrow\infty}a_n=\alpha\in(0,1]$ and that
$$
(1-\delta_F)a_n\leq\frac{\mathrm{d}F}{\mathrm{d}x}(x)\leq (1+\delta_F)a_n\quad \quad \text{and}\quad
(1-\delta_F)/a_{n-1}\leq\frac{\mathrm{d}F^{-1}}{\mathrm{d}x}(x)\leq (1+\delta_F)/a_{n-1}\quad \forall x\in I_n
$$
for some $\delta_F\geq 0$. We may apply the above argument $l\in\mathbb{N}$ times to see that
$$
\|\mathcal{K}^lg\|^{1/l}\leq\left[\sum_{n\in\mathbb{Z}}\int_{I_{n+l}} |g(x)|^2 \dd x\prod_{j=1}^lb_{n+j}\right]^{1/(2l)}\leq \sqrt{\sup_{n\in\mathbb{Z}}\prod_{j=1}^lb_{n+j}^{1/l}}\leq\sqrt{\sup_{n\in\mathbb{Z}}\frac{1}{l}\sum_{j=1}^lb_{n+j}},
$$
where we have used the generalized AM-GM inequality for the last inequality. For any $L\in\mathbb{N}$,
$$
\limsup_{l\rightarrow\infty}\sup_{n\in\mathbb{Z}}\frac{1}{l}\sum_{j=1}^lb_{n+j}\leq \limsup_{l\rightarrow\infty}\frac{1}{l}\sum_{j=1}^l\sup_{|n|>L}b_n=\sup_{|n|>L}b_n.
$$
Taking $L\rightarrow\infty$, we have
$$
\limsup_{l\rightarrow\infty}\sup_{n\in\mathbb{Z}}\frac{1}{l}\sum_{j=1}^lb_{n+j}\leq\limsup_{|l|\rightarrow\infty}b_l\leq (1+\delta_F)\max\left\{\lim_{n\rightarrow-\infty}1/a_n,\lim_{n\rightarrow+\infty}1/a_n\right\}\leq\frac{(1+\delta_F)}{\alpha}.
$$
It follows that $\lim_{l\rightarrow\infty}\|\mathcal{K}^l\|^{1/l}\leq \sqrt{(1+\delta_F)/\alpha}$. Gelfand's formula for the spectral radius implies that if $z\in \spec(\mathcal{K}_F)$, then $|z|\leq \sqrt{(1+\delta_F)/\alpha}$. We can argue in the same manner, using the fact that $\lim_{n\rightarrow-\infty}a_n=1/\alpha\in[1,\infty)$, to see that $\lim_{l\rightarrow\infty}\|\mathcal{K}^{-l}\|^{1/l}\leq \sqrt{(1+\delta_F)/\alpha}$ and hence that
$$
\inf_{z\in\spec(\mathcal{K}_F)}|z|= \frac{1}{\sup_{z\in\spec(\mathcal{K}_{F^{-1}})}|z|}\geq \frac{1}{\sqrt{(1+\delta_F)/\alpha}}=\sqrt{\alpha/(1+\delta_F)}.
$$
It follows that
$$
\spec(\mathcal{K}_F)\subset\left\{z\in\mathbb{C}:\sqrt{\alpha/(1+\delta_F)}\leq |z|\leq \sqrt{(1+\delta_F)/\alpha}\right\}.
$$
To prove the inclusion for the approximate point spectrum, suppose that $\alpha<1$, let $z\in\mathbb{C}$ with $\sqrt{\alpha}<|z|<1/\sqrt{\alpha}$ and set
$$
g=\sum_{n\in\mathbb{Z}} z^n \chi_{I_n}.
$$
Given $\eta>0$ with $\sqrt{\alpha}<\sqrt{\eta}<|z|<1/\sqrt{\eta}<1/\sqrt{\alpha}$, let $\rho=\max\{|z|\sqrt{\eta},\sqrt{\eta}/|z|\}<1$. For sufficiently large $n$ we have $a_n\leq \eta.$ It follows that there exists a constant $C>0$ such that if $n\geq 0$, then
$$
|z|^{2n}|I_{n}|=|I_0||z|^{2n}\prod_{j=0}^{n-1} a_j\leq C (|z|\sqrt{\eta})^{2n}\leq C\rho^{2n},
$$
and if $n>1$ then
$$
|z|^{-2n}|I_{-n}|=|z|^{-2n}|I_{n-1}|=|I_0||z|^{-2n}\prod_{j=0}^{n-2} a_j\leq C (\sqrt{\eta}/|z|)^{2n}\leq C\rho^{2n}.
$$
It follows that
$$
\|g\|^2=\sum_{n\in\mathbb{Z}}|z|^{2n} |I_n|\leq 2C\sum_{n=0}^\infty \rho^{2n}<\infty.
$$
Since $F$ acts as a bijection between $I_n$ and $I_{n+1}$, $F^{-1}(I_{n+1})=I_n$ and hence $\mathcal{K}\chi_{I_n}=\chi_{I_{n-1}}$. It follows that $\mathcal{K}g=zg$ and hence $z$ is an eigenvalue. Since $z$ with $\sqrt{\alpha}<|z|<1/\sqrt{\alpha}$ was arbitrary, the closed annulus $\{z\in\mathbb{C}:\sqrt{\alpha}\leq |z|\leq 1/\sqrt{\alpha}\}$ lies in $\spec_{\mathrm{ap}}(\mathcal{K}_F)$. The final statement follows from the fact that we can take $\delta_{F_c}= 0$.
\end{proof}

\begin{proof}[Proof of \cref{thm:interval}]
The upper bound immediately follows from \cref{thm:general_koopman_computation}, so we only need to prove the lower bounds. We prove this for piecewise affine functions with exact input (which implies the result for $\Delta_1$-information), and the proof is almost identical when restricting to smooth functions in $\Omega_{[0,1]}^{\delta}$.

Suppose, for a contradiction, that $\{\Gamma_n\}$ is an SPGA for $\{\Xi_{\spec_{\mathrm{ap}}}, \Omega_{[0,1]}^{\delta},\MH, \Lambda_{[0,1]}\}$ with
\begin{equation}
\label{prob_cotradiction_smooth1}
\inf_{F\in\Omega_{[0,1]}^{\delta}}\mathbb{P}\left(\lim_{n\rightarrow\infty}\Gamma_{n}(F)=\spec_{\mathrm{ap}}(\mathcal{K}_F)\right)=\epsilon>1/2.
\end{equation}
We consider $F=F_c$ defined by acting as an affine function on each subinterval $I_n$, and choose the sequence $c=\{c_n\}_{n=0}^\infty$ to contradict \cref{prob_cotradiction_smooth1}. The probabilistic part of the proof is similar to step 2 of the proof of \cref{thm:disc}. 
Let $r_1\in(0,1)$ be such that
$
r_1^{-2}<1+\delta.
$
Set
$$
r_2=(1-r_1)^2+r_1>r_1,
$$
so that $(1-r_2)=r_1(1-r_1)$.
We may choose $\tau>0$ such that
$$
(1+2\tau)\sqrt{r_1}< (1-2\tau)\sqrt{r_2}.
$$
The numbers $r_1$, $r_2$, and $\tau$ are fixed throughout the proof. We use a superscript $(j)$ to denote choices of the sequences $\{c_n\}$ and corresponding objects such as the intervals $I_n^{(j)}$.

We begin with the sequence $c^{(1)}=\{c_n^{(1)}\}_{n=0}^\infty$ chosen so that
$$
|I_n^{(1)}|=\beta^{(1)}r_1^n,\quad n=0,1,2,\ldots.
$$
Here, the constant $\beta^{(1)}$ is chosen such that
$$
\sum_{n=-\infty}^\infty|I_n^{(1)}|=2\sum_{n=0}^\infty|I_n^{(1)}|=\frac{2\beta^{(1)}}{1-r_1}=1.
$$
Since $\max\{r_1,r_1^{-1}\}=r_1^{-1}<1+\delta$, $F_{c^{(1)}}\in \Omega_{[0,1]}^{\delta}$. The intervals $I_n^{(1)}$ have constant ratio $a_n^{(1)}=r_1$ and hence \cref{lem:annulus_phase} implies that
$$
\spec(\mathcal{K}_{F_{c^{(1)}}})=\spec_{\mathrm{ap}}(\mathcal{K}_{F_{c^{(1)}}})=\left\{z\in\mathbb{C}:\sqrt{r_1}\leq |z|\leq 1/\sqrt{r_1}\right\}.
$$
Applying the covering lemma (\cref{lem:prob_covering_lemma}), it follows that there exists $n_1,N_1\in\mathbb{N}$ such that
$$
\mathbb{P}\left(\inf\{|z|:z\in \Gamma_{n_1}(F_{c^{(1)}})\}\leq (1+\tau)\sqrt{r_1}\text{, }\Lambda_{\Gamma_{n_1}}(F_{c^{(1)}})\subset \cup_{j=-N_1}^{N_1-1}I^{(1)}_{j} \right)>1/2,
$$
where the notation $\Lambda_{\Gamma}(F)$ means that $\Gamma$ samples $F$ only within the set $\Lambda_{\Gamma}(F)\subset\mathcal{X}$ before producing its output. Moreover, we use the convention that $\inf\{|z|:z\in \Gamma_{n_1}(F_{c^{(1)}})\}\leq (1+\tau)\sqrt{r_1}$ necessarily implies that $\Gamma_{n_1}(F_{c^{(1)}})\neq \mathrm{NH}$.

Next, we define the sequence $c^{(2)}$ implicitly by
$$
|I_n^{(2)}|=\begin{cases}
|I_n^{(1)}|,\quad&\text{if }0\leq n\leq N_1,\\
\beta^{(2)}r_2^{n-(N_1+1)},\quad&\text{if }n> N_1.
\end{cases}
$$
Here, the factor $\beta^{(2)}$ is chosen so that
$$
\frac{\beta^{(2)}}{1-r_2}=\sum_{n={N_1}+1}^\infty|I_n^{(2)}|=\sum_{n={N_1}+1}^\infty|I_n^{(1)}|=|I_{N_1}^{(1)}|\sum_{n=1}^\infty r_1^n=|I_{N_1}^{(1)}|\frac{r_1}{1-r_1}.
$$
In particular,
$$
\frac{|I_{N_1+1}^{(2)}|}{|I_{N_1}^{(2)}|}=\frac{\beta^{(2)}}{|I_{N_1}^{(1)}|}=\frac{r_1(1-r_2)}{1-r_1}=r_1^2\leq 1< 1+\delta.
$$
Similarly, the reciprocal of this ratio is $r_1^{-2}<1+\delta$.
Hence, $F_{c^{(2)}}\in \Omega_{[0,1]}^{\delta}$ and
\cref{lem:annulus_phase} implies that
$$
\spec(\mathcal{K}_{F_{c^{(2)}}})=\spec_{\mathrm{ap}}(\mathcal{K}_{F_{c^{(2)}}})=\left\{z\in\mathbb{C}:\sqrt{r_2}\leq |z|\leq 1/\sqrt{r_2}\right\}.
$$
Applying the covering lemma (\cref{lem:prob_covering_lemma}), it follows that there exists $n_2,N_2\in\mathbb{N}$ such that $n_2>n_1$, $N_2>N_1$, and
$$
\mathbb{P}\left(\inf\{|z|:z\in \Gamma_{n_2}(F_{c^{(2)}})\}\geq (1-\tau)\sqrt{r_2}\text{, }\Lambda_{\Gamma_{n_2}}(F_{c^{(2)}})\subset \cup_{j=-N_2}^{N_2-1}I^{(2)}_{j}\right)>1/2.
$$
Again, we use the convention that $\inf\{|z|:z\in \Gamma_{n_2}(F_{c^{(2)}})\}\geq (1-\tau)\sqrt{r_2}$ necessarily implies that $\Gamma_{n_2}(F_{c^{(2)}})\neq \mathrm{NH}$.

We continue this process inductively. Let $\hat r_j=r_2$ if $j$ is even and $\hat r_j=r_1$ if $j$ is odd. Suppose that the intervals $I_n^{(k-1)}$ have been defined for $n\in\mathbb{Z}$ and $n_{k-1},N_{k-1}\in\mathbb{N}$ are such that
$$
\mathbb{P}\left(\inf\{|z|:z\in \Gamma_{n_{k-1}}(F_{c^{({k-1})}})\}\begin{rcases}
    \begin{dcases}
\leq (1+\tau)\sqrt{r_1},&\text{if $k$ is even}\\
\geq (1-\tau)\sqrt{r_2},&\text{if $k$ is odd}
\end{dcases}
  \end{rcases}\text{ and }\Lambda_{\Gamma_{n_{k-1}}}(F_{c^{(k-1)}})\subset \cup_{j=-N_{k-1}}^{N_{k-1}-1}I^{(k-1)}_{j}\right)>1/2.
$$
We define the sequence $c^{(k)}$ implicitly by
$$
|I_n^{(k)}|=\begin{cases}
|I_n^{(k-1)}|,\quad&\text{if }0\leq n\leq N_{k-1},\\
\beta^{(k)}\hat{r}_k^{n-(N_{k-1}+1)},\quad&\text{if }n> N_{k-1},
\end{cases}
$$
Here, the factor $\beta^{(k)}$ is chosen so that
\begin{align*}
\frac{\beta^{(k)}}{1-\hat{r}_k}=\sum_{n={N_{k-1}}+1}^\infty|I_n^{(k)}|=\sum_{n={N_{k-1}}+1}^\infty|I_n^{(k-1)}|=|I_{N_{k-1}}^{(k-1)}|\sum_{n=1}^\infty \hat{r}_{k-1}^n=|I_{N_{k-1}}^{(k-1)}|\frac{\hat{r}_{k-1}}{1-\hat{r}_{k-1}}.
\end{align*}
In particular, a case by case analysis shows that
$$
\frac{|I_{N_{k-1}+1}^{(k)}|}{|I_{N_{k-1}}^{(k)}|}=\frac{\beta^{(k)}}{|I_{N_{k-1}}^{(k-1)}|}=\frac{\hat{r}_{k-1}(1-\hat{r}_k)}{1-\hat{r}_{k-1}}\leq 1+\delta,\quad 
\frac{1-\hat{r}_{k-1}}{\hat{r}_{k-1}(1-\hat{r}_k)}\leq 1+\delta.
$$
Hence, $F_{c^{(k)}}\in \Omega_{[0,1]}^{\delta}$.
Using \cref{lem:annulus_phase}, we select 
$n_{k},N_k\in\mathbb{N}$ so that
\begin{equation}\label{eq:key_prob_int}
\mathbb{P}\left(\inf\{|z|:z\in \Gamma_{n_{k}}(F_{c^{({k})}})\}\begin{rcases}
    \begin{dcases}
\leq (1+\tau)\sqrt{r_1},\quad\text{if $k+1$ is even}\\
\geq (1-\tau)\sqrt{r_2},\quad\text{if $k+1$ is odd}
\end{dcases}
  \end{rcases}\text{ and }\Lambda_{\Gamma_{n_{k}}}(F_{c^{(k)}})\subset \cup_{j=-N_{k}}^{N_{k}-1}I^{(k)}_{j}\right)>1/2.
\end{equation}
and this completes the inductive step.

We can now define the sequence $c=\{c_n\}$ by taking the limit $c_n=\lim_{j\rightarrow\infty}c_n^{(j)}$. For any fixed $n$, $c_n^{(j)}$ is constant for large $j$ and hence $F_c\in \Omega_{[0,1]}^{\delta}$. For each $k\in\mathbb{N}$, let $B_k$ be the event
$$
\inf\{|z|:z\in \Gamma_{n_{k}}(F_{c})\}\begin{cases}
\leq (1+\tau)\sqrt{r_1},\quad&\text{if $k+1$ is even},\\
\geq (1-\tau)\sqrt{r_2},\quad&\text{if $k+1$ is odd}.
\end{cases}
$$
Due to \cref{eq:key_prob_int}, and the consistency of probabilistic general algorithms (see the discussion in \cref{exa:consistency_prob}), $\mathbb{P}(B_k)>1/2$ for all $k$. From the bound
$(1+2\tau)\sqrt{r_1}< (1-2\tau)\sqrt{r_2}$, we see that either $\inf\{|z|:z\in \spec_{\mathrm{ap}}(F_{c})\}>(1+\tau)\sqrt{r_1}$ or $\inf\{|z|:z\in \spec_{\mathrm{ap}}(F_{c})\}<(1-\tau)\sqrt{r_2}$ (or both). Assume without loss of generality that the former holds and let $A_m=\cap_{k=m}^\infty B_{2k+1}^c$. Note that $\mathbb{P}(A_m)\leq 1- \mathbb{P}(B_{2m+1})<1/2$. However, from the assumed lower bound of the probability of convergence in \cref{prob_cotradiction_smooth1},
$$
\mathbb{P}\left(\cup_{m=1}^\infty A_m\right)\geq \mathbb{P}\left(\lim_{n\rightarrow\infty}\Gamma_{n}(F_c)=\spec_{\mathrm{ap}}(\mathcal{K}_{F_c})\right)\geq \epsilon>1/2.
$$
Since $A_1\subset A_2\subset A_3 \subset \cdots$, there must exist some $A_M$ with $\mathbb{P}(A_M)>1/2$, the required contradiction.
\end{proof}

\subsection{Computing the spectral type is impossible in one limit}

We now consider the problem of computing the pure point spectrum of the Koopman operator and determining if there are any eigenvalues except $1$. This last problem is important since one usually wishes to know if there is coherency in the sense of \cref{eq:perfectly_coherent} for $\lambda\neq 1$. Our results also show that one cannot determine whether the spectral measure away from $\lambda=1$ is continuous. We consider the torus $\mathcal{X}=[0,2\pi]_{\mathrm{per}}\times[0,2\pi]_{\mathrm{per}}=[0,2\pi]_{\mathrm{per}}^2$, equipped with the standard, normalized, Lebesgue measure. We set
$$
\Omega_{p}=\left\{F:[0,2\pi]_{\mathrm{per}}^2
\rightarrow [0,2\pi]_{\mathrm{per}}^2
\text{ s.t. } F\text{ is smooth, measure-preserving, invertible},\mathrm{Lip}(F)\leq 2,\mathrm{Lip}(F^{-1})\leq 2\right\}.
$$
In other words, $\Omega_{p}$ includes both assumptions needed to compute spectra in one limit in \cref{thm:general_koopman_computation}. We consider two problems. The first is computing the pure point spectrum (closure of the set of eigenvalues), i.e., the problem function:
$$
\Xi_p:\Omega_{p}\ni F\mapsto \mathrm{Sp}_{\mathrm{pp}}(\mathcal{K}_F)=\mathrm{Cl}\left(\{\lambda\in\mathbb{C}:\lambda\text{ is an eigenvalue of }\mathcal{K}_F\}\right)\in \MH.
$$
The second is the following decision problem:
$$
\Xi_p^{\mathrm{dec}}:\Omega_{p}\ni F\mapsto \begin{rcases}
    \begin{dcases}
0,\quad&\text{if there are no eigenvalues except $1$}\\
1,\quad&\text{otherwise}
\end{dcases}
\end{rcases} \in \Md.
$$
The following theorem precisely classifies these two computational problems.

\begin{theorem}\label{thm:torus}
Given the above setup, we have the following classifications for non-trivial eigenvalues
$$
\Delta^{\mathbb{P},1/2}_2 \not\owns\{\Xi_p, \Omega_p,\MH, \Lambda_{[0,2\pi]_{\mathrm{per}}^2}\} \in \Sigma_2,\qquad
\Delta^{\mathbb{P},1/2}_2 \not\owns \{\Xi_p^\mathrm{dec}, \Omega_p,\Md, \Lambda_{[0,2\pi]_{\mathrm{per}}^2}\}  \in \Sigma_2.
$$
\end{theorem}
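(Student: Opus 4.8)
The statement bundles two upper bounds ($\Sigma_2^A$ for both $\Xi_p$ and $\Xi_p^{\mathrm{dec}}$) with two lower bounds ($\not\in\Delta_2^{\mathbb{P}}$). For the upper bound the plan is to build a height-two arithmetic tower whose \emph{inner} limit $n_1$ drives a time lag (together with a quadrature accuracy chosen adaptively from $n_1$) and whose \emph{outer} limit $n_2$ refines a dyadic net on $\mathbb{T}$ and enlarges a fixed Fourier dictionary $\{g_j\}$ on $\mathcal{X}$. Since $F\in\Omega_p$ forces $\mathcal{K}_F$ to be unitary, every observable $g$ carries a scalar spectral measure $\mu_g$ on $\mathbb{T}$ with $\widehat{\mu_g}(n)=\langle \mathcal{K}_F^n g,g\rangle$, and by the discrete RAGE theorem \cite{fillman2017purely} (Wiener's lemma) $\tfrac1N\sum_{n=0}^{N-1}|\widehat{\mu_g}(n)|^2\to\sum_\lambda|\mu_g(\{\lambda\})|^2$. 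Localising this with a nonnegative Fej\'er-type trigonometric polynomial $\psi_\lambda$ concentrated near $\lambda$, the coefficients $\widehat{\psi_\lambda\,\mathrm d\mu_g}(n)=\sum_k c_k\langle\mathcal{K}_F^{n-k}g,g\rangle$ are finite combinations of autocorrelations, so their Ces\`aro average of squared modulus converges to $\sum_\mu\psi_\lambda(\mu)^2|\mu_g(\{\mu\})|^2$. Each autocorrelation $\langle\mathcal{K}_F^n g_j,g_i\rangle=\int g_j(F^n(x))\overline{g_i(x)}\,\mathrm d\omega$ is computable by quadrature to arbitrary precision because $\mathrm{Lip}(F)\le2$ controls the modulus of continuity of $g_j\circ F^n$, exactly as in the proof of \cref{thm:general_koopman_computation}; this is what makes the tower arithmetic.

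\textbf{The upper bound.} At level $n_2$ I would output the net points $\lambda$ for which $\max_{j\le n_2}$ of the localised Wiener limit at $\lambda$ exceeds a threshold tending to $0$, tuning the concentration of $\psi_\lambda$ (so that the tails of $\psi_\lambda$, against $\mu_{g_j}(\{\mu\})\le\|g_j\|^2=1$, cannot trigger a detection) and the decay of the threshold so that (i) a detection at $\lambda$ forces an eigenvalue of $\mathcal{K}_F$ within $2^{-n_2}$ of $\lambda$, and (ii) every fixed eigenvalue $\lambda^\star$ is eventually detected, since $\{g_j\}$ spans $L^2(\mathcal{X},\omega)$ and hence $\mu_{g_j}(\{\lambda^\star\})>0$ for some $j$. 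As the atoms of $\mu_{g_j}$ sit exactly at eigenvalues of $\mathcal{K}_F$, the output converges in $\dH$ to $\mathrm{Sp}_{\mathrm{pp}}(\mathcal{K}_F)$ while lying in a $2^{-n_2}$-neighbourhood of it, which is the $\Sigma_2$ classification; for $\Xi_p^{\mathrm{dec}}$ one runs the same test on the mean-subtracted $g_j$ and outputs $1$ the moment any localised Wiener limit is positive, yielding a nondecreasing output converging to $\Xi_p^{\mathrm{dec}}(F)$ from below. The only nuisance is that $\tfrac1N\sum|\cdot|^2$ need not converge monotonically, so the inner limit $n_1$ is stabilised with the separated-interval device from Step~2 of the proof of \cref{thm:general_koopman_computation}.

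\textbf{The lower bound.} For $\not\in\Delta_2^{\mathbb{P}}$ I would reuse the SPGA adversary scheme of Step~3 of \cref{thm:rotation} and Step~2 of \cref{thm:disc}; the new content is the family of systems, taken from an approximation-by-conjugation (Anosov--Katok) construction on $\mathcal{X}$ realised through skew-product conjugacies (\cref{fig:proof_ideas}, third row): $F_k=h_k\circ R_{\alpha_k}\circ h_k^{-1}$ with $\alpha_k=p_k/q_k$ rational in lowest terms and $h_k$ measure-preserving diffeomorphisms obtained by smoothing locally-defined piecewise-constant rearrangements. Each $F_k$ is periodic of period $q_k$, so $\mathcal{K}_{F_k}$ has pure point spectrum equal to the full group of $q_k$-th roots of unity; in particular $\Xi_p^{\mathrm{dec}}(F_k)=1$ and $\dH(\mathrm{Sp}_{\mathrm{pp}}(\mathcal{K}_{F_k}),\mathbb{T})\to0$. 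Choosing $\alpha_{k+1}\to\alpha^\star$ fast enough (depending on $\|h_{k+1}\|_{C^\infty}$) makes $F=\lim_k F_k$ a smooth measure-preserving diffeomorphism that is weakly mixing, so $\mathrm{Sp}_{\mathrm{pp}}(\mathcal{K}_F)=\{1\}$ and $\Xi_p^{\mathrm{dec}}(F)=0$. Given a putative SPGA with success probability $>1/2$, I would build $F$ inductively: at stage $k$ there is $n_k\uparrow\infty$ at which the algorithm fed $F_k$ returns the ``$F_k$-answer'' with probability $>1/2$ from finitely many finite-precision samples, and I would pick the next conjugacy $h_{k+1}$ supported away from those finitely many sampled points and close enough to the identity that $F_{k+1}$, hence the final $F$, agrees with $F_k$ there. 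Then the algorithm returns the wrong answer for $F$ at time $n_k$ with probability $>1/2$ for every $k$, and the increasing-events argument of the earlier proofs gives $\inf_F\mathbb{P}(\lim_n\Gamma_n(F)=\Xi_p^{\mathrm{dec}}(F))\le1/2$; the same $F_k/F$ pair settles $\Xi_p$, since $\mathrm{Sp}_{\mathrm{pp}}(\mathcal{K}_{F_k})$ is eventually within $1/10$ of $\mathbb{T}$ whereas $\mathrm{Sp}_{\mathrm{pp}}(\mathcal{K}_F)=\{1\}$ lies at $\dH$-distance $\ge1$.

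\textbf{Main obstacle.} The crux is the approximation-by-conjugation construction, which must simultaneously (a) keep $\mathrm{Lip}(F)$ and $\mathrm{Lip}(F^{-1})$ below $2$, feasible because each $F_k$ can be held $C^1$-close to the initial translation but constraining how far the conjugacies may stretch; (b) leave a growing — though at each stage finite — ``frozen'' set of sampled points untouched by all later conjugacies, which is the locality exploited in the adversary step; and (c) still force weak mixing of the limit via the usual AbC criterion, which demands that the conjugacies equidistribute preimages of a refining sequence of partitions. Reconciling (a)--(c), and checking that $F_k$ has period exactly $q_k$ so that $\mathrm{Sp}_{\mathrm{pp}}(\mathcal{K}_{F_k})$ is the full root-of-unity group rather than a proper subgroup, is where the real work lies; the probabilistic bookkeeping and the arithmetic-tower verification then follow the templates already set in \cref{thm:rotation,thm:disc,thm:interval} and \cref{thm:general_koopman_computation}.
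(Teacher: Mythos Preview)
Your upper bound is close in spirit to the paper's: both routes go through the discrete RAGE/Wiener machinery and the separated-interval stabilisation trick, and for $\Xi_p^{\mathrm{dec}}$ the two arguments are essentially the same. For $\Xi_p$ the paper takes a shortcut by invoking resolvent computations and a known $\Sigma_2^A$ classification of pure point spectra for unitaries, whereas your localised Fej\'er/Wiener detector is more self-contained; both are fine.

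The lower bound is where you diverge substantially. The paper does \emph{not} use Anosov--Katok. It works entirely inside the family of skew products $F_f(x,y)=(x,y+f(x))$ with $f:[-\pi,\pi]_{\mathrm{per}}\to[0,\pi]$ smooth, symmetric, non-decreasing on $[-\pi,0]$, $f(-\pi)=0$, $f(0)=\pi$, and uses the elementary dichotomy (\cref{technical_lemma3}): if $f$ is strictly increasing on $[-\pi,0]$ then $\mathcal{K}_{F_f}$ has no non-trivial eigenfunctions, while if $f$ is constant $=c$ on some open subinterval then $\{e^{ijc}:j\in\mathbb{Z}\}\subset\mathrm{Sp}_{\mathrm{pp}}(\mathcal{K}_{F_f})$. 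The adversary then alternates between these two regimes on shrinking intervals $(r_k,0)$, freezing $f$ on $(-\pi,r_{2m-1}]$ at odd stages, so the limit $f$ is automatically in $\Omega_p$ and the SPGA contradiction goes through exactly as in \cref{thm:interval}. This is dramatically simpler than AbC: the phase transition is explicit, the Lipschitz-$2$ bound is immediate (the Jacobian is triangular with $|f'|$ controlled by the average slope $1$), and there is no tension between ``wild'' conjugacies and the $C^1$ constraint.

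Your AbC plan can be made to work, but note a confusion in your obstacle (b): in the approximation-by-conjugation scheme the conjugacies $h_{k+1}$ are \emph{not} close to the identity and need not avoid the sampled points at all. What makes $F_{k+1}$ close to $F_k$ is choosing $\alpha_{k+1}$ close to $\alpha_k$ \emph{after} fixing $h_{k+1}$, via $\|F_{k+1}-F_k\|_{C^1}\lesssim\|H_{k+1}\|_{C^2}^2\,|\alpha_{k+1}-\alpha_k|$. With $\Delta_1$-information this $C^0$-closeness is all you need to make the sampled data consistent with the limit $F$, so (b) as stated is both unnecessary and, if taken literally, potentially incompatible with (c). Once (b) is dropped, (a) and (c) are simultaneously achievable by the standard AbC bookkeeping, and your argument goes through --- but at the cost of invoking a much heavier construction than the paper's two-line skew-product lemma.
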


The following technical lemma is used in the proof of \cref{thm:torus} and the idea is summarized in \cref{fig:proof3}.

\begin{figure}
\centering
\raisebox{-0.5\height}{\includegraphics[width=0.8\textwidth,trim={0mm 0mm 0mm 0mm},clip]{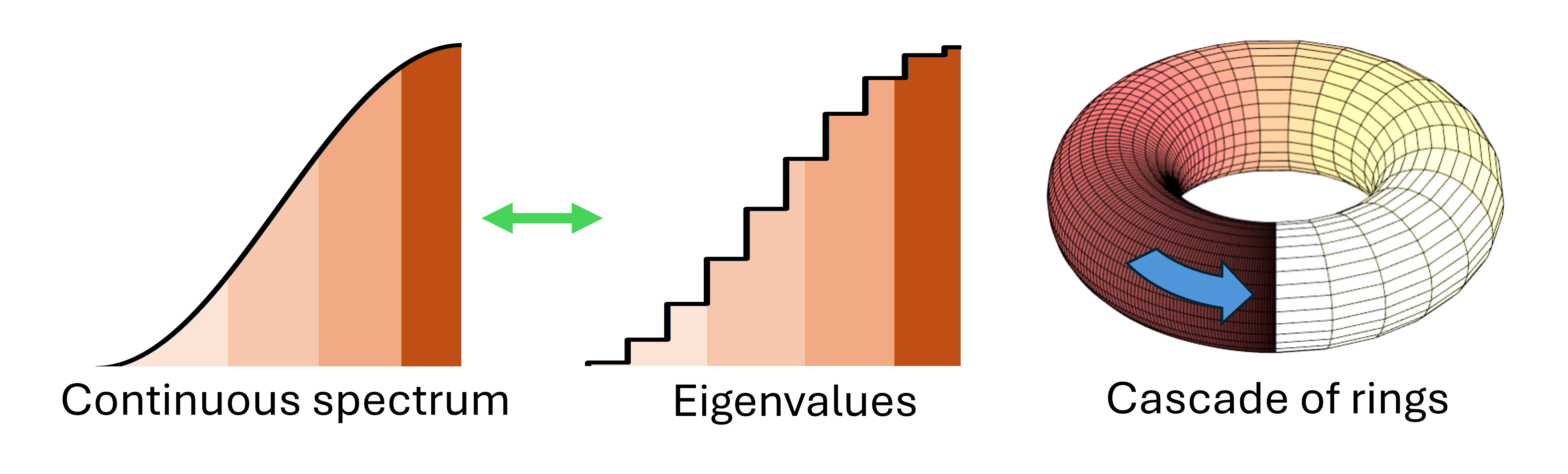}}
\caption{The idea of the proof of \cref{thm:torus}'s lower bound. We examine skew products on tori, where the spectral type depends on the existence of neighborhoods with constant cross-sections. The spectral change comes from a skew product constructed through iterations of locally piecewise constant and smooth approximations.}
\label{fig:proof3}
\end{figure}

\begin{lemma}\label{technical_lemma3}
Let $F_f\in\Omega_p$ be the skew product map
$
F_f(x,y)=(x,f(x)+y),
$
where $f:[0,2\pi]_{\mathrm{per}}\rightarrow [0,2\pi]_{\mathrm{per}}$ is a smooth function that is non-decreasing on $[0,2\pi)$ with $f(0)=0$. Then:
\begin{itemize}[leftmargin=0.7cm]
	\item[(a)] If $f$ is a bijection with $f'(x)>0$ for all $x\in[0,2\pi)$, $\mathrm{Sp}_{\mathrm{pp}}(\mathcal{K}_{F_f})=\{1\}$.
	\item[(b)] If there exists an open interval $(a,b)\subset[0,2\pi)$ upon which $f(x)=c$ is constant, then $\{e^{ijc}:j\in\mathbb{Z}\}\subset\mathrm{Sp}_{\mathrm{pp}}(\mathcal{K}_{F_f})$. 
\end{itemize}
\end{lemma}

\begin{proof}
For (a), suppose that $f$ is a bijection with $f'(x)>0$ for all $x\in[0,2\pi)$. Consider the smooth invertible map $\phi(x,y)=(f^{-1}(x),y)$. Then $(\phi^{-1}\circ F_f \circ \phi)(x,y)=(x,x+y)$ and 
$
\mathcal{K}_{\phi^{-1}\circ F_f \circ \phi}=\mathcal{K}_{\phi}\mathcal{K}_{F_f}[\mathcal{K}_{\phi}]^{-1}.
$
Hence, $\mathcal{K}_{F_f}$ is similar to the Koopman operator of the skew product map $(x,y)\mapsto(x,x+y)$. Similar operators have the same eigenvalues, so the statement in (a) now follows.
For (b), suppose that there exists an open interval $(a,b)\subset[0,2\pi)$ upon which $f(x)=c$ is constant. Let $g_j(x,y)=\chi_{(a,b)}(x)e^{ijy}$, where $\chi_S$ denotes the indicator function of a set $S$. Then $\mathcal{K}_{F_f}g_j=e^{ijc}g_j$ and the result follows.
\end{proof}

We also need the following discrete-time RAGE theorem \cite[Proof of Theorem A.2]{fillman2017purely}. The continuous-time RAGE theorem \cite[Theorem 2.6]{aizenman2015random}, named after Ruelle \cite{ruelle1969remark}, Amrein and Georgescu \cite{amrein1973characterization}, and Enss \cite{enss1978asymptotic}, is a classical dynamical characterization of the continuous spectrum of self-adjoint operators, and is a valuable tool in the study of Schr\"odinger operators.

\begin{theorem}
\label{thm:RAGE_discrete_time}
Let $A$ be a unitary operator acting on a separable Hilbert space $\mathcal{H}$, and $\{\mathcal{P}_n\}_{n\in\mathbb{N}}$ a sequence of increasing finite-rank orthogonal projections such that $\mathcal{P}_n^*\mathcal{P}_n$ converges strongly to the identity. (We view $\mathcal{P}_n$ as an operator from $\mathcal{H}$ to its range, hence the need for $\mathcal{P}_n^*$.) Let $\mathcal{P}_{\mathrm{c}}$ and $\mathcal{P}_{\mathrm{pp}}$ denote the orthogonal projections onto $\mathcal{H}_{\mathrm{c}}$ (continuous part) and $\mathcal{H}_{\mathrm{pp}}$ (pure point part), respectively. Then for any $g\in\mathcal{H}$,
$$
\|\mathcal{P}_{\mathrm{c}}g\|^2=\lim_{n\rightarrow\infty}\lim_{L\rightarrow\infty}
\frac{1}{2L+1}\sum_{{\ell}=-L}^L\left\|(I-\mathcal{P}_n^*\mathcal{P}_n)A^{\ell}g\right\|^2,\qquad
\|\mathcal{P}_{\mathrm{pp}}g\|^2=\lim_{n\rightarrow\infty}\lim_{L\rightarrow\infty}
\frac{1}{2L+1}\sum_{\ell=-L}^L\left\|\mathcal{P}_nA^{\ell}g\right\|^2.
$$
\end{theorem}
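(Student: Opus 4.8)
The plan is to derive both identities from the spectral theorem for the unitary operator $A$ together with Wiener's lemma, adapting the classical proof of the self-adjoint RAGE theorem to the discrete-time (unitary) setting. Write $A = \int_{\mathbb{T}} \lambda \,\dd E(\lambda)$ for the spectral resolution of $A$, and for $g,h \in \mathcal{H}$ let $\nu_{g,h} := \langle E(\cdot)g, h\rangle$ be the associated finite complex spectral measure on $\mathbb{T}$, so that $\langle A^\ell g, h\rangle = \int_{\mathbb{T}} \lambda^\ell \,\dd\nu_{g,h}(\lambda)$ for all $\ell \in \mathbb{Z}$. Put $K_n := \mathcal{P}_n^* \mathcal{P}_n$, a finite-rank orthogonal projection on $\mathcal{H}$ which is nondecreasing in $n$ and converges strongly to $I$. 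Fixing an orthonormal basis $e_1^{(n)}, \ldots, e_{r_n}^{(n)}$ of the range of $K_n$, one has $\|\mathcal{P}_n A^\ell g\|^2 = \langle K_n A^\ell g, A^\ell g\rangle = \sum_{j=1}^{r_n} |\langle A^\ell g, e_j^{(n)}\rangle|^2$, i.e. a finite sum of squared Fourier coefficients of the measures $\nu_{g, e_j^{(n)}}$.

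The analytic heart of the argument is Wiener's lemma: for any finite complex Borel measure $\mu$ on $\mathbb{T}$,
\[
\lim_{L\to\infty} \frac{1}{2L+1}\sum_{\ell=-L}^{L} \Bigl|\int_{\mathbb{T}} \lambda^\ell \,\dd\mu(\lambda)\Bigr|^2 \;=\; \sum_{t\in\mathbb{T}} |\mu(\{t\})|^2 .
\]
I would prove this by expanding $|\int \lambda^\ell \dd\mu|^2 = \iint (\lambda\overline{\lambda'})^\ell \,\dd\mu(\lambda)\,\dd\overline{\mu}(\lambda')$, observing that $\frac{1}{2L+1}\sum_{\ell=-L}^{L}(\lambda\overline{\lambda'})^\ell$ is bounded by $1$ in modulus and tends pointwise to $\mathbf{1}_{\{\lambda=\lambda'\}}$ on $\mathbb{T}\times\mathbb{T}$, and invoking dominated convergence against $|\mu|\times|\mu|$, which identifies the limit with $(\mu\times\overline{\mu})(\{\lambda=\lambda'\}) = \sum_t |\mu(\{t\})|^2$. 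Applying this to each $\nu_{g,e_j^{(n)}}$ — the sum over the finitely many $j$ commutes with the $L$-limit — and using that $\nu_{g,h}(\{t\}) = \langle E(\{t\})g, h\rangle$ vanishes unless $t$ is an eigenvalue of $A$ (a countable set $\{t_k\}$ by separability of $\mathcal{H}$), I obtain, writing $g_k := E(\{t_k\})g$ and exchanging the nonnegative double sum by Tonelli,
\[
\lim_{L\to\infty}\frac{1}{2L+1}\sum_{\ell=-L}^{L}\|\mathcal{P}_n A^\ell g\|^2 \;=\; \sum_{j=1}^{r_n}\sum_{k}|\langle g_k, e_j^{(n)}\rangle|^2 \;=\; \sum_{k}\langle K_n g_k, g_k\rangle .
\]

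To conclude, I let $n\to\infty$. The $g_k$ are pairwise orthogonal with $\sum_k \|g_k\|^2 = \|\mathcal{P}_{\mathrm{pp}} g\|^2 < \infty$, and for each $k$ the quantity $\langle K_n g_k, g_k\rangle = \|\mathcal{P}_n g_k\|^2$ is nondecreasing in $n$ with limit $\|g_k\|^2$ (since $K_n \to I$ strongly); hence the monotone convergence theorem (for the counting measure in $k$) gives $\lim_{n\to\infty}\sum_k \langle K_n g_k, g_k\rangle = \sum_k \|g_k\|^2 = \|\mathcal{P}_{\mathrm{pp}} g\|^2$, which is the second identity. For the first, $A$ unitary gives $\|A^\ell g\| = \|g\|$, and since $I - K_n$ is the orthogonal projection complementary to $K_n$ we have $\|(I - K_n)A^\ell g\|^2 = \|g\|^2 - \|\mathcal{P}_n A^\ell g\|^2$; Cesàro-averaging in $\ell$ and then taking $L\to\infty$ followed by $n\to\infty$ yields $\|g\|^2 - \|\mathcal{P}_{\mathrm{pp}} g\|^2 = \|\mathcal{P}_{\mathrm{c}} g\|^2$, using the orthogonal decomposition $\mathcal{H} = \mathcal{H}_{\mathrm{pp}} \oplus \mathcal{H}_{\mathrm{c}}$.

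None of these steps is deep; the one place deserving care is the iterated-limit bookkeeping. For fixed $n$ the inner $L$-limit is only a finite sum of Wiener limits and presents no difficulty, but the subsequent $n\to\infty$ passage relies on the monotone (or dominated) convergence argument over the infinite index set of eigenvalues, which in turn hinges on the remark that the off-diagonal spectral measures $\nu_{g,e_j^{(n)}}$ can place mass at a point only where $A$ has a genuine eigenvalue. The main obstacle is therefore simply to state that remark and the orthogonal decomposition $\mathcal{P}_{\mathrm{pp}}g = \sum_k g_k$ cleanly before invoking Tonelli, so that the interchange of the sum over $j$, the sum over $k$, and the limit in $n$ is fully justified.
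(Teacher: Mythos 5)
Your proof is correct and follows exactly the route the paper intends: the result is quoted from Fillman's discrete-time RAGE theorem, which the paper itself describes as ``a consequence of Wiener's classical lemma,'' and your argument (spectral resolution of the unitary $A$, Wiener's lemma applied to the finitely many off-diagonal spectral measures $\nu_{g,e_j^{(n)}}$, then monotone convergence in $n$ over the countable eigenvalue set) is precisely that standard proof. No gaps; the two points you flag as needing care --- that $\nu_{g,h}$ can only charge genuine eigenvalues, and the interchange of the sums over $j$ and $k$ with the limits --- are handled correctly.
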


\begin{proof}[Proof of \cref{thm:torus}]
We first prove the upper bounds and then prove the lower bounds.

\textbf{Step 1: Upper bounds.} Let $\{e_j\}_{j=1}^\infty$ be an orthonormal basis of $L^2(\mathcal{X},\omega)$ constructed using a tensor product of Fourier bases. Using the arguments in Step 1 of the proof of \cref{thm:general_koopman_computation} and those of \cite{colbrook2019computing}, for any $z\not\in\mathbb{T}$ and any $g\in L^2(\mathcal{X},\omega)$ that is a finite linear combination of the $\{e_j\}_{j=1}^\infty$, we can compute $(\mathcal{K}_F-zI)^{-1}g$ with error control using the given $\Delta_1$-information. The proof follows the arguments in \cite[Theorem 2.1]{colbrook2019computing}. It follows from the unitary version of \cite[Theorem 5.1.1]{colbrook2020PhD} that $\{\Xi_p, \Omega_p,\MH, \Lambda_{[0,2\pi]_{\mathrm{per}}^2}\} \in \Sigma_2$. To deal with $\Xi_p^{\mathrm{dec}}$, let $\mathcal{P}_n$ be the orthogonal projection onto $\mathrm{span}\{e_1,\ldots,e_n\}$. Let $g$ be a finite linear combination of the $\{e_j\}_{j=1}^\infty$. For $n\in\mathbb{N}$ and $\ell\in\mathbb{Z}$, using the unitary version of \cite[Theorem 4.3.3 \& Remark 4.3.4]{colbrook2020PhD} (which uses \cref{thm:RAGE_discrete_time}), we may compute $W_{n_2,n_1}(g)$ (see \cref{U_spec_meas}) such that
$$
\lim_{n_1\rightarrow\infty}W_{n_2,n_1}(g)=W_{n_2}(g),\quad \lim_{n_2\rightarrow\infty}W_{n_2}(g)=\mu_g^{(\mathrm{pp})}(\mathbb{T}\backslash\{1\}),
$$
where $\mu_g^{(\mathrm{pp})}$ is the pure point part of the spectral measure of $\mathcal{K}_F$ with respect to $g$ and the convergence as $n_2\rightarrow\infty$ is from below. Let $\{g_k\}_{k=1}^\infty$ be a set of such $g$ that form a dense subset of $L^2(\mathcal{X},\omega)$. We then set
$$
a_{n_2,n_1}(F)=\max_{1\leq k\leq n_2}W_{n_2,n_1}(g_k).
$$
Define the two separated intervals $I_1=[0,1/4]$ and $I_2=[1/2,\infty)$. As $n_1\rightarrow\infty$, $a_{n_2,n_1}(F)$ converges to $a_{n_2}(F)=\max_{1\leq k\leq n_2}W_{n_2}(g_k)$ and hence cannot visit both $I_1$ and $I_2$ infinitely often. For a given $n_1$, we set $\Gamma_{n_2,n_1}(F)=0$ if the largest $l=1,\ldots,n_1$ with $a_{n_2,l}\in I_1\cup I_2$ has $a_{n_2,l}\in I_1$. If no such $l$ exists, or $a_{n_2,l}\in I_2$, we set $\Gamma_{n_2,n_1}(F)=1$. If $\Xi_p^{\mathrm{dec}}(F)=0$, then $\mu_{g_k}^{(\mathrm{pp})}(\mathbb{T}\backslash\{1\})=0$ for all $k$ and hence $a_{n_2}(F)=0$ for all $n_2$. It follows that $\lim_{n_1\rightarrow\infty} \Gamma_{n_2,n_1}(F)=0$. If $\Xi_p^{\mathrm{dec}}(F)=1$, then there exists $g_k$ with $\mu_{g_k}^{(\mathrm{pp})}(\mathbb{T}\backslash\{1\})>1/2$ and hence $a_{n_2}(F)>1/2$ for sufficiently large $n_2$. It follows that $\lim_{n_1\rightarrow\infty} \Gamma_{n_2,n_1}(F)=1$ for sufficiently large $n_2$. Moreover, since $a_{n_2,n_1}(F)$ are increasing in $n_2$, $\Gamma_{n_2}(F)$ is increasing in $n_2$. It follows that $\{\Xi_p^{\mathrm{dec}}, \Omega_p,\Md, \Lambda_{[0,2\pi]_{\mathrm{per}}^2}\} \in \Sigma_2$. The process is summarized in \cref{U_spec_meas2}.

\textbf{Step 2: Lower bounds.}
Throughout the proof, we write $(x,y)\in[0,2\pi]^2_{\mathrm{per}}$ to denote the state. We prove the lower bound for the subclass of maps $F_q\in\Omega_p$ of the form $F_q(x,y)=(x,q(x)+y)$, where $q$ has the properties outlined in \cref{technical_lemma3} (smooth, non-decreasing on $[0,2\pi)$ with $q(0)=0$). This implies the lower bound for the full class $\Omega_p$. Without loss of generality and with a slight abuse of notation, we may restrict our argument to an evaluation set that samples $q$ at a dense set of points $\{\hat{x}_j\}_{j=1}^\infty\subset(0, 2\pi)$. Suppose, for a contradiction, that $\{\Gamma_n\}$ is an SPGA for $\{\Xi_p, \Omega_p,\MH, \Lambda_{[0,2\pi]_{\mathrm{per}}^2}\}$ with the property that
\begin{equation}
\label{prob_contradiction_type}
\inf_{F_q\in\Omega_p}\mathbb{P}\left(\lim_{n\rightarrow\infty}\Gamma_{n}(\tilde{q})=\mathrm{Sp}_{\mathrm{pp}}(\mathcal{K}_{F_q})\right)=\epsilon>1/2.
\end{equation}
Here, $\tilde{q}$ represents inexact input for $q$. That is, an admissible $\tilde{q}$ is of the form $\{f_{j,n}(q):j,n\in\mathbb{N}\}$, where $|f_{j,n}(q)-q(\hat{x}_j)|\leq 2^{-n}$. We will construct a function $q$ with inexact input $\tilde{q}$ so that the bound in \cref{prob_contradiction_type} cannot hold. The function $q$ will be a pointwise limit of functions $q^{(k)}$. Throughout, $q^{(k)}$ will be assumed to satisfy the properties outlined in \cref{technical_lemma3} and $F_{q^{(k)}}\in\Omega_p$, even if not explicitly stated.

\textbf{Base case:} We begin with a bijection $q^{(1)}$ such that $(q^{(1)})'(x)>0$ for all $x\in[0,2\pi)$. Case (a) of \cref{technical_lemma3} implies that $\Xi_p(F_{q^{(1)}})=\{1\}$. Let $\tilde{q}^{(1)}$ be an arbitrary set of $\Delta_1$-information for $q^{(1)}$. Applying the covering lemma (\cref{lem:prob_covering_lemma}), there exists $n_1\in\mathbb{N}$, a finite set $\mathcal{I}_1\subset\mathbb{N}^2$ and $r_1\in(0,2\pi)$ such that the following two conditions hold. First, letting $J_1$ be the set of $j$ for which there exists $n$ with $(j,n)\in\mathcal{I}_{1}$, it holds that $\hat{x}_j< r_1$ whenever $j\in J_{1}$. Second,
$$
\mathbb{P}\left(\mathrm{dist}(-1,\Gamma_{n_1}(\tilde{q}^{(1)}))\geq 1 \text{ and }C_1(\tilde{q}^{(1)})\right)>1/2,
$$
where $C_1(\tilde{q})$ is the event that $\Gamma_{n_1}$ samples from $\{f_{j,n}(q):(j,n)\in\mathcal{I}_1\}$ and outputs $\Gamma_{n_1}(\tilde{q})\neq\mathrm{NH}$. This completes the base case.

\textbf{Inductive step:} Suppose $q^{(k)}$, $\tilde{q}^{(k)}=\{f_{j,n}(q^{(k)}):j,n\in\mathbb{N}\}$, $n_k\in\mathbb{N}$ and $r_k\in(0,2\pi)$ are defined. We consider two cases, depending on the parity of $k$. 

If $k$ is odd, we choose $q^{(k+1)}$ so that $q^{(k+1)}(x)=q^{(k)}(x)$ for $x\in(0,r_k]$ and $q^{(k+1)}$ is constant on an open interval containing $r_k/2+\pi$, where it takes a value $c_{k+1}$ with $c_{k+1}/\pi\notin\mathbb{Q}$. We define $\tilde{q}^{(k+1)}$, the $\Delta_1$-information for $q^{(k+1)}$, as follows. If $\hat{x}_j\leq r_{k}$, then $f_{j,n}(q^{(k+1)})=f_{j,n}(q^{(k)})$, otherwise $f_{j,n}(q^{(k+1)})=q^{(k+1)}(\hat{x}_j)$. Case (b) of \cref{technical_lemma3} implies that $\Xi_p(F_{q^{(k+1)}})=\mathbb{T}$. Applying the covering lemma (\cref{lem:prob_covering_lemma}), there exists $n_{k+1}\in\mathbb{N}$ with $n_{k+1}\geq n_k$, a finite set $\mathcal{I}_{k+1}\subset\mathbb{N}^2$ with $\mathcal{I}_{k}\subset \mathcal{I}_{k+1}$, and $r_{k+1}\in(r_k/2+\pi,2\pi)$ such that the following two conditions hold. First, letting $J_{k+1}$ be the set of $j$ for which there exists $n$ with $(j,n)\in\mathcal{I}_{k+1}$, it holds that $\hat{x}_j< r_{k+1}$ whenever $j\in J_{k+1}$. Second,
$$
\mathbb{P}\left(\mathrm{dist}(-1,\Gamma_{n_{k+1}}(\tilde{q}^{(k+1)}))\leq 1/2\text{ and }C_{k+1}(\tilde{q}^{(k+1)})\right)>1/2,
$$
where $C_{k+1}(\tilde{q})$ is the event that $\Gamma_{n_{k+1}}$ samples from $\{f_{j,n}(q):(j,n)\in\mathcal{I}_{k+1}\}$ and outputs $\Gamma_{n_{k+1}}(\tilde{q})\neq\mathrm{NH}$.

If $k$ is even, we can choose $q^{(k+1)}$ with $(q^{(k+1)})'(x)>0$ for all $x\in[0,2\pi)$ and $\Delta_1$-information $\tilde{q}^{(k+1)}=\{f_{j,n}(q^{(k+1)}):j,n\in\mathbb{N}\}$ so that
$$
f_{j,n}(q^{(k+1)})=f_{j,n}(q^{(k)})\quad \forall (j,n)\in \mathcal{I}_k,\quad q^{(k+1)}(x)=q^{(k)}(x)=q^{(k-1)}(x) \quad\forall x\in(0,r_{k-1}].
$$
The first of these conditions can be achieved since $f_{j,n}(q^{(k)})=q^{(k)}(\hat{x}_j)$ if $\hat{x}_j> r_{k-1}$ and we may slightly perturb $q^{(k)}$ to achieve a strictly increasing function, whilst ensuring $f_{j,n}(q^{(k+1)})$ is still admissible for $q^{(k+1)}$. The second condition is consistent with how we selected the functions in the inductive step for odd $k$. Case (a) of \cref{technical_lemma3} implies that $\Xi_p(F_{q^{(k+1)}})=\{1\}$. Applying the covering lemma (\cref{lem:prob_covering_lemma}), there exists $n_{k+1}\in\mathbb{N}$ with $n_{k+1}\geq n_k$, a finite set $\mathcal{I}_{k+1}\subset\mathbb{N}^2$ with $\mathcal{I}_{k}\subset \mathcal{I}_{k+1}$, and $r_{k+1}\in(r_k/2+\pi,2\pi)$ such that the following two conditions hold. First, letting $J_{k+1}$ be the set of $j$ for which there exists $n$ with $(j,n)\in\mathcal{I}_{k+1}$, it holds that $\hat{x}_j< r_{k+1}$ whenever $j\in J_{k+1}$. Second,
$$
\mathbb{P}\left(\mathrm{dist}(-1,\Gamma_{n_{k+1}}(\tilde{q}^{(k+1)}))\geq 1\text{ and }C_{k+1}(\tilde{q}^{(k+1)})\right)>1/2,
$$
where $C_{k+1}(\tilde{q})$ is the event that $\Gamma_{n_{k+1}}$ samples from $\{f_{j,n}(q):(j,n)\in\mathcal{I}_{k+1}\}$ and outputs $\Gamma_{n_{k+1}}(\tilde{q})\neq\mathrm{NH}$. This completes the inductive step.

\textbf{Limit argument:} Since $q^{(2m+1)}(x)=q^{(2m)}(x)=q^{(2m-1)}(x)$ for $x\in(0,r_{2m-1}]$ for all $m\in\mathbb{N}$ and $\lim_{m\rightarrow\infty}r_{2m-1}=2\pi$, the following pointwise limit exists:
$$
q(x)=\lim_{m\rightarrow\infty}q^{(2m+1)}(x).
$$
The function $q$ satisfies the conditions in \cref{technical_lemma3}. Furthermore, we can perform the construction so that $F_q\in\Omega_p$. We define $\tilde{q}$ by setting $f_{j,n}(q)=f_{j,n}(q^{(k)})$ if $(j,n)\in\mathcal{I}_k$ for $k=1,2,\ldots$ and $f_{j,n}(q)=q(\hat{x}_j)$ if $(j,n)\not\in\cup_{k=1}^\infty\mathcal{I}_k$. For each $k\in\mathbb{N}$, let $B_k$ be the event
$$
\mathrm{dist}(-1,\Gamma_{n_k}(\tilde{q}))\begin{cases}
\geq 1,\quad&\text{if $k$ is odd},\\
\leq 1/2,\quad&\text{if $k$ is even}.
\end{cases}
$$
Due to our construction and the consistency of probabilistic general algorithms, we have $\mathbb{P}(B_k)>1/2$ for all $k$. We now argue as in the proof of \cref{thm:interval} to obtain the required contradiction.
\end{proof}

\subsection{Spectra of discrete-space systems}

We now consider the state space $\mathcal{X}=\mathbb{N}$, equipped with the usual counting measure $\omega=\sum_{j=1}^\infty \delta_j$, and consider the dynamical systems governed by a function $F:\mathbb{N}\rightarrow \mathbb{N}$. We define the Koopman operator on sequences $x:\mathbb{N}\rightarrow\mathbb{C}$ via
$$
[\mathcal{K}_Fx](j)=x(F(j)).
$$
We consider $\mathcal{K}_F$ as an operator on $L^2(\mathbb{N},\omega)\cong l^2(\mathbb{N})$, and assume that it is bounded. Our input class and evaluation sets are 
$$
\Omega_{\mathbb{N}}=\left\{F\text{ s.t. }F:\mathbb{N}\rightarrow\mathbb{N}\text{ and }\mathcal{K}_F\text{ is bounded}\right\},\quad
\Lambda_{\mathbb{N}}=\{F\mapsto F(j):j\in\mathbb{N}\}.
$$
We are interested in computing $\spec_{\mathrm{ap}}(\mathcal{K}_F)$.

\begin{theorem}\label{thm:discrete_space}
Given the above setup, we have the following classifications:
$$
\Delta_3\not\owns\{\Xi_{\spec_{\mathrm{ap}}}, \Omega_{\mathbb{N}},\MH, \Lambda_{\mathbb{N}}\}\in \Pi_3.
$$
In other words, we cannot compute the approximate point spectrum in two limits via any algorithm. However, we can obtain an arithmetic $\Pi_3$-tower.
\end{theorem}

To prove \cref{thm:discrete_space}, our strategy will be to embed a certain combinatorial problem into the spectral problem of interest. This problem will have a known lower bound complexity, allowing us to prove the lower bound in \cref{thm:discrete_space}. Specifically, let $\Omega_{\mathrm{Mat}}$ be the collection of all infinite matrices $a=\{a_{m_1,m_2}\}_{m_1,m_2\in\mathbb{N}}$ with entries $a_{m_1,m_2}\in\{0,1\}$ and $\Lambda_{\mathrm{Mat}}$ be the set of component-wise evaluation functions. We consider the formula
\begin{align*}
Q(a)&=\begin{cases}
1,\quad\text{if for all but a finite number of } i,\,\forall j\, \exists n>j \text{ s.t. }a_{n,i}=1,\\
0,\quad\text{otherwise}.
\end{cases}
\end{align*}
In other words, $Q$ decides whether the matrix has only finitely many columns with only finitely many $1$'s. It was proven in \cite{colbrook4} that $\{Q,\Omega_{\mathrm{Mat}},[0,1],\Lambda_{\mathrm{Mat}}\}\not\in\Delta_3$. Note that the metric space is not $\Md$, but $[0,1]$ with the usual metric. We shall also need the following technical lemma and the idea is summarized in \cref{fig:proof4}.

\begin{figure}
\centering
\raisebox{-0.5\height}{\includegraphics[width=0.6\textwidth,trim={0mm 0mm 0mm 0mm},clip]{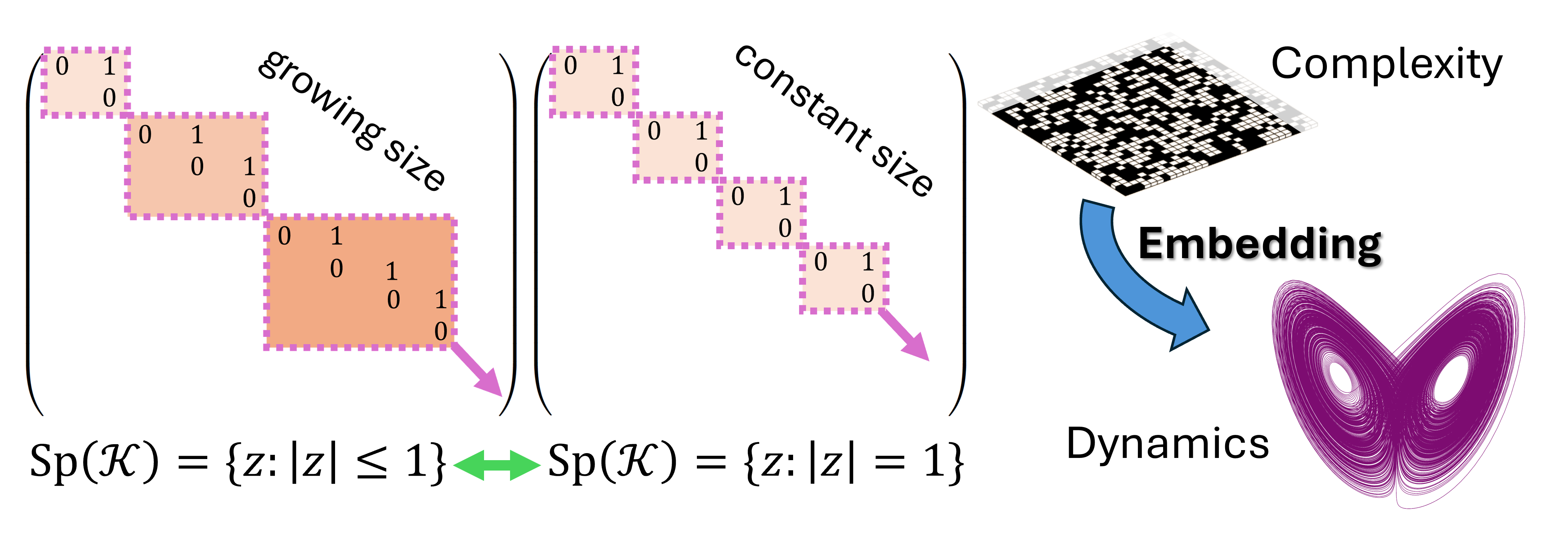}}
\caption{Outline of the proof of the lower bound in \cref{thm:discrete_space}. We embed a combinatorial problem involving matrices of $0$s and $1$s such that the spectrum fundamentally changes depending on the solution. This construction is analogous to symbolic dynamics, where dynamical systems are studied by encoding orbits as sequences of symbols from a finite alphabet. Many chaotic systems are semi-conjugate or conjugate to a shift on such sequences.}
\label{fig:proof4}
\end{figure}

\begin{lemma}\label{lem:shifts_phase}
For $n\in\mathbb{N}$, define the perturbed shift matrix
$$
B_n=
\begin{pmatrix}
1& & & \\
1  &0& & \\
  & \ddots&\ddots&\\
 & & 1&0\\
\end{pmatrix}\in\mathbb{C}^{n\times n}
$$
and let $B_{\infty}$ denote the corresponding limit operator on $l^2(\mathbb{N})$. Then $\spec_{\mathrm{ap}}(B_\infty)=\mathbb{T}$. For a sequence $\{l_r\}_{r=1}^\infty\subset\mathbb{N}$, define $B(\{l_r\})=\bigoplus_{r=1}^{\infty} B_{l_r}$. If $\{l_r\}$ is bounded, $\spec_{\mathrm{ap}}(B(\{l_r\}))\subset\{0,1\}$, otherwise $\spec_{\mathrm{ap}}(B(\{l_r\}))=\{z\in\mathbb{C}:|z|\leq 1\}$.
\end{lemma}

\begin{proof}
For the first part, recall that $\spec_{\mathrm{ap}}(B_\infty)$ is the set of points $z$ for which there exists a unit-norm sequence $\{u_n\}\subset l^2(\mathbb{N})$ with $\lim_{n\rightarrow\infty}\|(B_\infty-zI)u_n\|=0$. Let $S_\infty$ denote the operator on $l^2(\mathbb{N})$ defined by $e_{j}\mapsto e_{j+1}$. By considering the action of $B_\infty$ on $\mathrm{span}\{e_1\}^\perp$, we see that $\spec_{\mathrm{ap}}(S_\infty)\subset \spec_{\mathrm{ap}}(B_\infty)$. Since $S_\infty$ is a non-invertible isometry, $\spec_{\mathrm{ap}}(S_\infty)= \mathbb{T}$. Now suppose that $z\not\in\mathbb{T}$. Then for any unit-norm $u\in l^2(\mathbb{N})$
$$
\|(S_\infty-zI)u\|\leq \|(B_\infty-zI)u\| +\|u_1e_1\|\leq \|(B_\infty-zI)u\|+\frac{1}{|1-z|}\|(B_\infty-zI)u\|.
$$
Since $z\not\in\spec_{\mathrm{ap}}(S_\infty)$, $\|(S_\infty-zI)u\|$ is bounded below independently of the choice of $u$ and hence so is $\|(B_\infty-zI)u\|$. It follows that $z\not\in\spec_{\mathrm{ap}}(B_\infty)$.

Suppose that the sequence $\{l_r\}$ is bounded and that $z\not\in\{0,1\}$. Then $(B_{l_r}-zI)^{-1}$ exists for all $r$ and is uniformly bounded. Hence, $z\not\in \spec_{\mathrm{ap}}(B(\{l_r\}))$. 

For the final case, suppose that the sequence $\{l_r\}$ is unbounded. Let $S_n$ be the matrix $B_n$, but with the top-left entry set to zero, and define $S(\{l_r\})=\bigoplus_{r=1}^{\infty} S_{l_r}$. By arguing as in the first part of the proof, if $z\neq 1$ and $u\in l^2(\mathbb{N})$, then
$$
\|(S(\{l_r\})-zI)u\|\leq \|(B(\{l_r\})-zI)u\|+\frac{1}{|1-z|}\|(B(\{l_r\})-zI)u\|.
$$
Hence, $\spec_{\mathrm{ap}}(B(\{l_r\}))\subset \spec_{\mathrm{ap}}(S(\{l_r\}))\cup\{1\}\subset\{z\in\mathbb{C}:|z|\leq 1\}$. Let $z\in\mathbb{C}$ with $|z|<1$ and define $u_z=e_n+ze_{n-1}+\cdots +z^{n-1}e_1\in\mathbb{C}^n$. Then, $(B_n -zI)u_z= (1-z)z^{n-1}e_1$ and hence $\lim_{n\rightarrow\infty}\|(B_n -zI)u_z\|/\|u_z\|=0$. By considering $u_z$ for $n$ corresponding to an increasing subsequence of $\{l_r\}$, we see that $z\in \spec_{\mathrm{ap}}(B(\{l_r\}))$. Hence, $\spec_{\mathrm{ap}}(B(\{l_r\}))=\{z\in\mathbb{C}:|z|\leq 1\}$.
\end{proof}

\begin{proof}[Proof of \cref{thm:discrete_space}]
\textbf{Step 1: Upper bound.} As an orthonormal basis of $L^2(\mathbb{N},\omega)$, we consider $\{\phi_n\}_{n=1}^\infty$, where $\phi_n(j)=\delta_{n,j}$. Given $F\in\Omega_{\mathbb{N}}$, let $A_F$ be the matrix representing $\mathcal{K}_F$ with respect to this basis. We can evaluate any entry of $A_F$ using finitely many evaluations from $\Lambda_{\mathbb{N}}$. The upper bound now follows from the $\Pi_3$ towers of algorithms in \cite{ben2015can}.

\textbf{Step 2: Lower bound.} First, given a sequence $\{c_i\}_{i\in\mathbb{N}}$ with each $c_i\in\{0,1\}$, we define the bounded operator $C(\{c_i\})$ acting on $l^2(\mathbb{N})$ via its matrix entries:
$$
[C(\{c_i\})]_{k,l}=\begin{cases}
1,\quad& l<k,  c_l=c_k=1, \text{ and } c_n=0 \text{ for all }l<n<k,\\
1,\quad&l=k, c_k=1, \text{ and } c_n=0 \text{ for all }n<k,\\
1,\quad&l=k\text{ and }c_k=0,\\
0,\quad& \text{ otherwise.}
\end{cases}
$$
Then $C(\{c_i\})$ acts as $B_{\sum_i c_i}$ on the closure of the span of $\{e_i:c_i=1\}$ and the identity on the orthogonal complement of this subspace. Given $a\in\Omega_{\mathrm{Mat}}$, we consider the family of sequences
$$
c^{(j)}_i=c^{(j)}_i(a)=
\begin{cases}
1,\quad& i\leq j,\\
a_{i+1-j,j},\quad& \text{ otherwise}.
\end{cases}
$$
We then consider the following operator acting on $\oplus_{j=1}^\infty l^2(\mathbb{N})$:
$
A(a)= \oplus_{j=1}^\infty C(\{c^{(j)}_i\}).
$
\cref{lem:shifts_phase} shows that if $Q(a)=1$, then $\spec_{\mathrm{ap}}(A(a))\subset\{0\}\cup\mathbb{T}$. Otherwise, $\spec_{\mathrm{ap}}(A(a))=\{z\in\mathbb{C}:|z|\leq 1\}$.

Suppose for a contradiction that $\{\Gamma_{n_2,n_1}\}$ is a height-two tower of general algorithms for $\{\Xi_{\spec_{\mathrm{ap}}}, \Omega_{\mathbb{N}},\MH, \Lambda_{\mathbb{N}}\}$. Given $a\in\Omega_{\mathrm{Mat}}$, we consider the function $F=F_a$ defined as follows. We can identify $\oplus_{j=1}^\infty l^2(\mathbb{N})$ with $l^2(\mathbb{N})$ via a computable bijection between the relevant index sets, and, hence, consider $A(a)$ as a bounded operator on $l^2(\mathbb{N})$. When viewed as an infinite matrix, this matrix has exactly one $1$ in each row, and every other entry is $0$. Moreover, the operator is bounded. By considering this matrix acting with respect to the basis $\{\phi_n\}_{n=1}^\infty$, this matrix induces $F_a$ and a corresponding Koopman operator $\mathcal{K}_{F_a}$. Since we know that each row of $A(a)$ has exactly one non-zero entry, any evaluation ${F_a}(j)$ of $F_a$ can be evaluated in finitely many calls to $\Lambda_{\mathrm{Mat}}$ with input $a$. It follows that
$
\tilde{\Gamma}_{n_2,n_1}(a)=\min\left\{2\times\dist(1/2,\Gamma_{n_2,n_1}(F_a)),1\right\}
$
defines general algorithms that map into $[0,1]$ and use the evaluation set $\Lambda_{\mathrm{Mat}}$. Moreover, the above discussion shows that $\lim_{n_2\rightarrow\infty}\lim_{n_1\rightarrow\infty}\tilde{\Gamma}_{n_2,n_1}(a)=Q(a)$, the required contradiction.
\end{proof}

\section{Pseudocode for algorithms}
\label{section:pseudocodes}

Here, we provide pseudocode for the algorithms that form the upper bounds in our classifications. The proofs of convergence can be found in the proofs of \cref{thm:general_koopman_computation}. Each algorithm uses a dictionary $\{g_j\}_{j=1}^\infty$. For simplicity, we have given pseudocode for when these basis functions are orthonormal (to be consistent with the proofs), but they can be implemented for non-orthonormal systems.

\subsection{Algorithms for the approximate point spectrum and pseudospectrum}

We begin by computing the approximate point spectrum and pseudospectrum of Koopman operators. Recall from \cref{sec:general_ResDMD_method} that a modulus of continuity $\alpha$ for $F$ allows us to approximate the integrals $\langle \mathcal{K}_Fg_j,g_i \rangle$ and $\langle \mathcal{K}_Fg_j,\mathcal{K}_Fg_i \rangle$ with error bounds using snapshot data. By considering the pencil
$$
z\mapsto (\mathcal{K}_F-zI)^*(\mathcal{K}_F-zI),
$$
we can use these approximations of inner products to compute
\begin{equation}
\label{bdbd}
h_n(z,F)=\sqrt{\sigma_{\mathrm{inf}}(\mathcal{P}_n(\mathcal{K}_F-zI)^*(\mathcal{K}_F-zI)\mathcal{P}_n^*)},
\end{equation}
where $\mathcal{P}_n$ denotes the orthogonal projection onto $\mathrm{span}\{g_1,\ldots,g_n\}$. The functions $h_n(z,F)$ converge locally uniformly (i.e., uniformly on any compact subset of $\mathbb{C}$) to $\sigma_{\inf}(\mathcal{K}_F-zI)$. One always has
$$
\sigma_{\inf}(\mathcal{K}_F-zI)\leq \mathrm{dist}(z,\spec_{\mathrm{ap}}(\mathcal{K}_F)).
$$
To compute $\spec_{\mathrm{ap}}(\mathcal{K}_F)$, we need to compute an upper bound on $\mathrm{dist}(z,\spec_{\mathrm{ap}}(\mathcal{K}_F))$, so we require an inequality that is the `other way round'.
If the system is measure-preserving, then $\sigma_{\inf}(\mathcal{K}_F-zI)= \mathrm{dist}(z,\spec_{\mathrm{ap}}(\mathcal{K}_F))$. We can generalize this as follows.

\begin{definition}
\label{def:gs}
Let $\mathfrak{G}:\mathbb{R}_{\geq 0}\rightarrow\mathbb{R}_{\geq 0}$ be a strictly increasing function with $\mathfrak{G}(x)\geq x$ and $\lim_{x\rightarrow\infty}\mathfrak{G}(x)=\infty$. 
We say that $\mathcal{K}_F$ has its resolvent bounded by $\mathfrak{G}$ if
\begin{equation}\label{eq:comp_ball_gs}
\dist(z,\spec_{\mathrm{ap}}(\mathcal{K}_F))\leq\mathfrak{G}(\sigma_{\inf}(\mathcal{K}_F-zI))\quad \forall z\in \mathbb{C}.
\end{equation}
The set of all such systems with modulus of continuity $\alpha$ is denoted by $\Omega_{\mathcal{X}}^{\alpha,\mathfrak{G}}$.
\end{definition}

A simple compactness argument shows that, for any bounded $\mathcal{K}_F$, a suitable $\mathfrak{G}$ always exists so that $\mathcal{K}_F$ has resolvent bounded by $\mathfrak{G}$. However, given $\mathcal{K}_F$, we may not know a suitable $\mathfrak{G}$. As with the modulus of continuity $\alpha$, this uncertainty underlies some of our impossibility results. Typically, we want an algorithm that can handle a whole class of systems, such as $\Omega_{\mathcal{X}}^{\alpha,\mathfrak{G}}$, rather than a single system. Beyond measure-preserving systems, there are dissipative systems where $\mathfrak{G}$ is known \cite{gil2003operator} and for the systems in this paper, one can take $\mathfrak{G}(x)=cx$ for a constant $c$ that depends on the non-normality of the operator (approximated through pseudospectra).

\begin{algorithm}[t]
\textbf{Input:} Function $F$ (in the form of snapshots $\{F(\hat{x}_j):j=1,2,\ldots\}$), dictionary of functions $\{g_j\}_{j=1}^{\infty}$, modulus of continuity $\alpha$, resolvent bounding function $\mathfrak{G}$, $n\in\mathbb{N}$.\\
\vspace{-4mm}
\begin{algorithmic}[1]
\State Compute the $n\times n$ matrices $$
A_{i,j}=
\langle \mathcal{K}_Fg_j,g_i \rangle=\int_{\mathcal{X}} g_j(F(x))\overline{g_i(x)}\dd \omega(x),\quad
L_{i,j}=\langle \mathcal{K}_Fg_j,\mathcal{K}_Fg_i \rangle
=\int_{\mathcal{X}} g_j(F(x))\overline{g_i(F(x))}\dd \omega(x),\quad i,j=1,\ldots n,
$$
using quadrature (evaluation of $F$ at snapshots) and $\alpha$ to bound quadrature errors (see proof of \cref{thm:general_koopman_computation}).
\State Define the finite grid
$$
\mathrm{Grid}(n)=\left\{z\in\frac{1}{n}\mathbb{Z}+\frac{i }{n}\mathbb{Z}:|z|\leq n\right\}.
$$
\State For each $z\in \mathrm{Grid}(n)$, compute
$$
h_{n}(z,F)=\sqrt{\sigma_{\mathrm{inf}}(L-\overline{z}A-zA^*+|z|^2I)},\quad
d_{n,z}=\mathfrak{G}\left(h_{n}(z,F)\right).
$$
Compute approximate eigenfunctions (corresponding right-singular vectors) if desired.
\State For each $z\in \mathrm{Grid}(n)$, set $\Upsilon_{n,z}=\{w\in \mathrm{Grid}(n):|z-w|\leq d_{n,z}\}$. If $h_{n}(z,F)\leq 1/2$, compute the set of local minimizers
$$
M_z=\left\{w\in \Upsilon_{n,z} : h_{n}(w,F)={\min\limits_{v\in \Upsilon_{n,z}}}h_{n}(v,F) \right\}.
$$
Otherwise, set $M_z=\emptyset$.
\State Set $\Gamma_n(F)=\cup_{z\in \mathrm{Grid}(n)}M_z$ and $E_n(w)=d_{n,w}$ for $w\in\Gamma_n(F)$.
\end{algorithmic} \textbf{Output:} 
$\Gamma_n(F)$, an approximation of $\spec_{\mathrm{ap}}(\mathcal{K}_F)$, and the error function $E_n$.
\caption{A $\Sigma_1$-tower of algorithms for computing the spectrum of $\mathcal{K}_F$ for $F\in\Omega_{\mathcal{X}}^{\alpha,\mathfrak{G}}$. We have written the code for an orthonormal dictionary of observables, but the algorithm is easily adapted to the non-orthonormal case.}
\label{pseudo_code_easy_spec}
\end{algorithm}

\cref{pseudo_code_easy_spec} shows the $\Sigma_1^A$-tower of algorithms for the approximate point spectrum of systems in $\Omega_{\mathcal{X}}^{\alpha,\mathfrak{G}}$. The idea is to search for local minimizers of $h_n(z,F)$, where the bound
$$
d_{n,z}=\mathfrak{G}\left(h_{n}(z,F)\right)\geq \mathfrak{G}\left(\sigma_{\inf}(\mathcal{K}_F-zI)\right)
\geq \dist(z,\spec_{\mathrm{ap}}(\mathcal{K}_F))
$$
provides a local search radius and the sets $M_z$ are our best estimate of the approximate point spectrum locally near the point $z\in \mathrm{Grid}(n)$. In addition to providing error bounds, the algorithm is local, trivially parallelizable (owing to the separate computation for different $z$ points), and stable.

Without the function $\mathfrak{G}$, we can still compute the approximate point pseudospectrum with error control. This is due to the fact that
$$
\spec_{\mathrm{ap},\epsilon}(\mathcal{K}_F)=\left\{z\in\mathbb{C}:\sigma_{\inf}(\mathcal{K}_F-zI)\leq \epsilon\right\}
$$
is defined directly in terms of $\sigma_{\inf}(\mathcal{K}_F-zI)$. The process is summarized in \cref{pseudo_code_easy_pseudospec} and is a generalization of the ResDMD algorithm in \cite{colbrook2021rigorousKoop}. By taking $\epsilon=1/n_2$, we obtain a $\Pi_2$-tower for the approximate point spectrum, summarized in \cref{pseudo_code_medium_spec}.

\begin{algorithm}[t]
\textbf{Input:} Function $F$ (in the form of snapshots $\{F(\hat{x}_j):j=1,2,\ldots\}$), dictionary of functions $\{g_j\}_{j=1}^{\infty}$, modulus of continuity $\alpha$, $n\in\mathbb{N}$, $\epsilon>0$.\\
\vspace{-4mm}
\begin{algorithmic}[1]
\State Compute the $n\times n$ matrices $$
A_{i,j}=
\langle \mathcal{K}_Fg_j,g_i \rangle=\int_{\mathcal{X}} g_j(F(x))\overline{g_i(x)}\dd \omega(x),\quad
L_{i,j}=\langle \mathcal{K}_Fg_j,\mathcal{K}_Fg_i \rangle
=\int_{\mathcal{X}} g_j(F(x))\overline{g_i(F(x))}\dd \omega(x),\quad i,j=1,\ldots n,
$$
using quadrature (evaluation of $F$ at snapshots) and $\alpha$ to bound quadrature errors.
\State Define the finite grid
$$
\mathrm{Grid}(n)=\left\{z\in\frac{1}{n}\mathbb{Z}+\frac{i }{n}\mathbb{Z}:|z|\leq n\right\}.
$$
\State For each $z\in \mathrm{Grid}(n)$, compute
$
h_{n}(z,F)=\sqrt{\sigma_{\mathrm{inf}}(L-\overline{z}A-zA^*+|z|^2I)}.
$
\State Set $\Gamma_n^\epsilon(F)=\{z\in \mathrm{Grid}(n):h_{n}(z,F)<\epsilon\}$.
\end{algorithmic} \textbf{Output:} 
$\Gamma_n^\epsilon(F)$, an approximation of $\spec_{\mathrm{ap},\epsilon}(\mathcal{K}_F)$.
\caption{A $\Sigma_1$-tower of algorithms for computing the pseudospectrum of $\mathcal{K}_F$ for $F\in\Omega_{\mathcal{X}}^{\alpha}$. We have written the code for an orthonormal dictionary of observables, but the algorithm is easily adapted to the non-orthonormal case.}
\label{pseudo_code_easy_pseudospec}
\end{algorithm}

\begin{algorithm}[t]
\textbf{Input:} Function $F$ (in the form of snapshots $\{F(\hat{x}_j):j=1,2,\ldots\}$), dictionary of functions $\{g_j\}_{j=1}^{\infty}$, modulus of continuity $\alpha$, $n_1,n_2\in\mathbb{N}$.\\
\vspace{-4mm}
\begin{algorithmic}[1]
\State Set $\Gamma_{n_2,n_1}(F)$ to be the output of \cref{pseudo_code_easy_pseudospec} with $\epsilon=1/n_2$ and $n=n_1$.
\end{algorithmic} \textbf{Output:} 
$\Gamma_{n_2,n_1}(F)$, an approximation of $\spec_{\mathrm{ap}}(\mathcal{K}_F)$.
\caption{A $\Pi_2$-tower of algorithms for computing the spectrum of $\mathcal{K}_F$ for $F\in\Omega_{\mathcal{X}}^{\alpha}$. We have written the code for an orthonormal dictionary of observables, but the algorithm is easily adapted to the non-orthonormal case.}
\label{pseudo_code_medium_spec}
\end{algorithm}

To compute the approximate point pseudospectrum of systems in $\Omega_{\mathcal{X}}$, we can no longer approximate the inner products $\langle \mathcal{K}_Fg_j,g_i \rangle$ and $\langle \mathcal{K}_Fg_j,\mathcal{K}_Fg_i \rangle$ with explicit error bounds (or rates) but can compute them at the cost of a limit. This then requires an alteration, as laid out in the proof of \cref{thm:general_koopman_computation}, and summarized in \cref{pseudo_code_medium_pseudospec}, to ensure convergence. Again, we may take $\epsilon=1/n_3$ to obtain a $\Pi_3$-tower of algorithms for the approximate point spectrum, summarized in \cref{pseudo_code_hard_spec}.

\begin{algorithm}[t]
\textbf{Input:} Function $F$ (in the form of snapshots $\{F(\hat{x}_j):j=1,2,\ldots\}$), dictionary of functions $\{g_j\}_{j=1}^{\infty}$, $n_1,n_2\in\mathbb{N}$, $\epsilon>0$.\\
\vspace{-4mm}
\begin{algorithmic}[1]
\State Let $A'$ and $L'$ be $n_1$-point quadrature approximations of the $n_2\times n_2$ matrices $$
A_{i,j}=
\langle \mathcal{K}_Fg_j,g_i \rangle=\int_{\mathcal{X}} g_j(F(x))\overline{g_i(x)}\dd \omega(x),\quad
L_{i,j}=\langle \mathcal{K}_Fg_j,\mathcal{K}_Fg_i \rangle
=\int_{\mathcal{X}} g_j(F(x))\overline{g_i(F(x))}\dd \omega(x),\quad i,j=1,\ldots n_2,
$$
using evaluation of $F$ at snapshots.
\State Define the finite grid
$$
\mathrm{Grid}(n_2)=\left\{z\in\frac{1}{n_2}\mathbb{Z}+\frac{i }{n_2}\mathbb{Z}:|z|\leq n_2\right\}.
$$
\State For each $z\in \mathrm{Grid}(n_2)$, compute
$
h_{n_2,n_1}(z,F)=\sqrt{\sigma_{\mathrm{inf}}(L'-\overline{z}A'-zA'^*+|z|^2I)}.
$
\State Consider the separated intervals
$$
I_{n_2}^1(\epsilon)=[0,\epsilon-1/n_2],\quad I_{n_2}^2(\epsilon)=[\epsilon+1/(2n_2),\infty).
$$
Given $h_{n_2,j}(z,F)$ for $j=1,\ldots,n_1$, let $k$ be the largest such $j$ with $h_{n_2,j}(z,F)\in I_{n_2}^1(\epsilon)\cup I_{n_2}^2(\epsilon)$. If such a $k$ exists with $h_{n_2,k}(z,F)\in I_{n_2}^1(\epsilon)$, then $z\in \Gamma_{n_2,n_1}^\epsilon(F)$. Otherwise, $z\notin \Gamma_{n_2,n_1}^\epsilon(F)$.
\end{algorithmic} \textbf{Output:} 
$\Gamma_{n_2,n_1}^\epsilon(F)$, an approximation of $\spec_{\mathrm{ap},\epsilon}(\mathcal{K}_F)$.
\caption{A $\Sigma_2$-tower of algorithms for computing the pseudospectrum of $\mathcal{K}_F$ for $F\in\Omega_{\mathcal{X}}$. We have written the code for an orthonormal dictionary of observables, but the algorithm is easily adapted to the non-orthonormal case.}
\label{pseudo_code_medium_pseudospec}
\end{algorithm}

\begin{algorithm}[t]
\textbf{Input:} Function $F$ (in the form of snapshots $\{F(\hat{x}_j):j=1,2,\ldots\}$), dictionary of functions $\{g_j\}_{j=1}^{\infty}$, $n_1,n_2,n_3\in\mathbb{N}$.\\
\vspace{-4mm}
\begin{algorithmic}[1]
\State Set $\Gamma_{n_3,n_2,n_1}(F)$ to be the output of \cref{pseudo_code_medium_pseudospec} with $\epsilon=1/n_3$.
\end{algorithmic} \textbf{Output:} 
$\Gamma_{n_3,n_2,n_1}(F)$, an approximation of $\spec_{\mathrm{ap}}(\mathcal{K}_F)$.
\caption{A $\Pi_3$-tower of algorithms for computing the spectrum of $\mathcal{K}_F$ for $F\in\Omega_{\mathcal{X}}$. We have written the code for an orthonormal dictionary of observables, but the algorithm is easily adapted to the non-orthonormal case.}
\label{pseudo_code_hard_spec}
\end{algorithm}

\subsection{Algorithms for spectral type and pure point spectrum}

To compute the pure point part of the spectral measure, we apply the method of \cref{thm:torus}, which can be generalized to unitary Koopman operators on arbitrary $\mathcal{X}$. The method is based on the RAGE theorem (\cref{thm:RAGE_discrete_time}). Recall that $\{\mathcal{P}_n\}_{n\in\mathbb{N}}$ is a sequence of increasing finite-rank orthogonal projections such that $\mathcal{P}_n^*\mathcal{P}_n$ converges strongly to the identity. For example, we can take $\mathcal{P}_n$ to be the orthogonal projection onto $\mathrm{span}\{g_1,\ldots,g_n\}$ (as in the previous subsection). \cref{U_spec_meas} summarises the procedure, where we consider the pure point part of the spectral measure of an observable $g$ measured on an open set $U$. In practice, the indicator function $\chi_{U}(\mathcal{K}_F)$ is computed in steps 2 and 3 using the measure-preserving EDMD algorithm \cite{colbrook2023mpedmd}, which provides a unitary Galerkin approximation of $\mathcal{K}_F$. The approximation of the functional calculus is outlined in \cite[Section 5]{colbrook2023mpedmd}. To decide whether the pure point spectrum of $\mathcal{K}_F$ (away from $1$) is empty, we alter \cref{U_spec_meas} as outlined in the proof of \cref{thm:torus}. The idea is to set $U=\mathbb{T}\backslash\{1\}$ and consider a set $\{g_k\}_{k=1}^\infty$ that form a dense subset of $L^2(\mathcal{X},\omega)$. The process is summarized in \cref{U_spec_meas2}.

\begin{algorithm}[t]
\textbf{Input:} Function $F$ (in the form of snapshots $\{F(\hat{x}_j):j=1,2,\ldots\}$), dictionary of functions $\{g_j\}_{j=1}^{\infty}$, observable $g\in L^2(\mathcal{X},\omega)$, open set $U\subset \mathbb{T}$, $n_1,n_2\in\mathbb{N}$.
\begin{algorithmic}[1]
\State Compute the $n_2\times n_2$ matrix
$$
A_{i,j}=
\langle \mathcal{K}_Fg_j,g_i \rangle=\int_{\mathcal{X}} g_j(F(x))\overline{g_i(x)}\dd \omega(x),\quad i,j=1,\ldots,n_2.
$$
\State Compute an SVD of $A^*=U_1\Sigma U_2^*$.
\State Compute the eigendecomposition $U_2U_1^*=V\Lambda V^*$ and let $\Lambda_U$ and $V_{U}$ be the subset of eigenpairs with eigenvalues in $U$.
\State Compute $m_k=\|\Lambda_U^kV_U^*g\|^2/(2n_1+1)$ for $k=-n_1,\ldots,n_1$.
\State Set $\Gamma_{n_2,n_1}(\mathcal{K}_F,g,U)=\min\left\{\sum_{k=-n_1}^{n_1}m_k,\|g\|^2\right\}$.
\end{algorithmic} \textbf{Output:} $\Gamma_{n_2,n_1}(\mathcal{K}_F,g,U)$, an approximation of $\mu_g^{\mathrm{(pp)}}(U)$.
\caption{A $\Sigma_2$-tower of algorithms for computing the pure point part of spectral measures of a general unitary Koopman operator $\mathcal{K}_F$.}\label{U_spec_meas}
\end{algorithm}

\begin{algorithm}[t]
\textbf{Input:} Function $F$ (in the form of snapshots $\{F(\hat{x}_j):j=1,2,\ldots\}$), dense set $\{g_k\}_{k=1}^\infty$ of observables, $n_1,n_2\in\mathbb{N}$.
\begin{algorithmic}[1]
\State For $k=1,\ldots,n_2$, let $W_{n_2,n_1}(g_k)$ be the output of \cref{U_spec_meas} with $g=g_k$ and $U=\mathbb{T}\backslash\{1\}$.
\State Set
$$
a_{n_2,n_1}(F)=\max_{1\leq k\leq n_2}W_{n_2,n_1}(g_k).
$$
\State Define the two separated intervals $I_1=[0,1/4]$ and $I_2=[1/2,\infty)$. 
\State Set $\Gamma_{n_2,n_1}(F)=0$ if the largest $l=1,\ldots,n_1$ with $a_{n_2,l}\in I_1\cup I_2$ has $a_{n_2,l}\in I_1$. If no such $l$ exists, or $a_{n_2,l}\in I_2$, set $\Gamma_{n_2,n_1}(F)=1$. 
\end{algorithmic} \textbf{Output:} $\Gamma_{n_2,n_1}(F)$, an approximation of $\Xi_p^{\mathrm{dec}}(F)$.
\caption{A $\Sigma_2$-tower of algorithms for the decision problem  $\Xi_p^{\mathrm{dec}}(F)$, which decides whether the pure point spectrum of a unitary Koopman operator $\mathcal{K}_F$ is larger than $\{1\}$.}\label{U_spec_meas2}
\end{algorithm}

\section{Further experimental diagnostics}

\subsection{Further analysis for Duffing oscillator}

\begin{figure}[t]
\centering
\includegraphics[width=0.4\textwidth,clip,trim={0mm 0mm 0mm 0mm}]{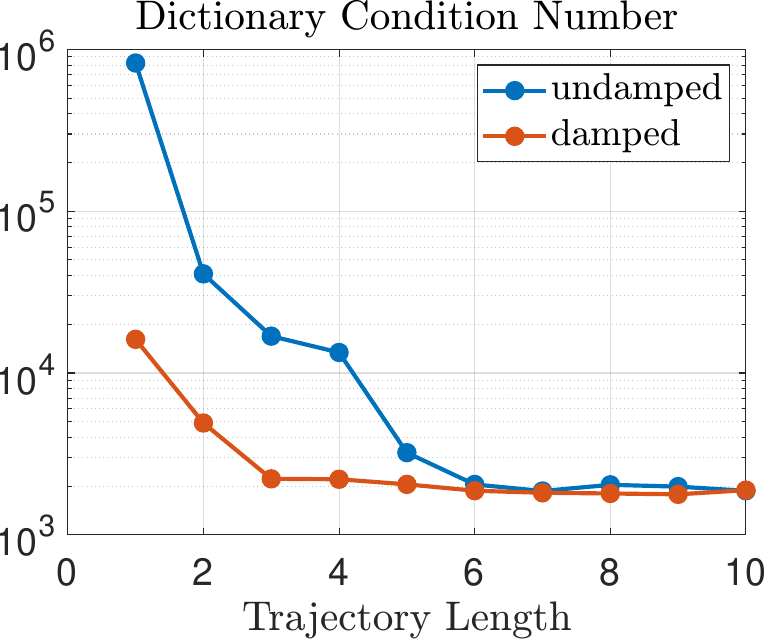}
\caption{Conditioning of the dictionary for $N=500$ and various trajectory lengths for the first set of data used to find the centers of the radial basis functions. The choice of trajectory length 5 produces a well-conditioned dictionary for the examples in this paper.}\label{duffing_condition}
\end{figure}

\cref{duffing_condition} shows the condition number of the dictionary for the Duffing oscillator example, computed using $k$-means clustering for different initial trajectory lengths (shown on the horizontal axis). The condition number is defined as follows: we evaluate the dictionary on a second, independent set of data points for Monte Carlo estimation, yielding an $M\times N$ matrix. The condition number of this matrix approximates how well-conditioned the dictionary is. A trajectory length of 5 produces a well-conditioned basis and is used for all examples in the paper.

\begin{figure}[t]
\centering
\includegraphics[width=0.4\linewidth,trim={0mm 0mm 0mm 0mm},clip]{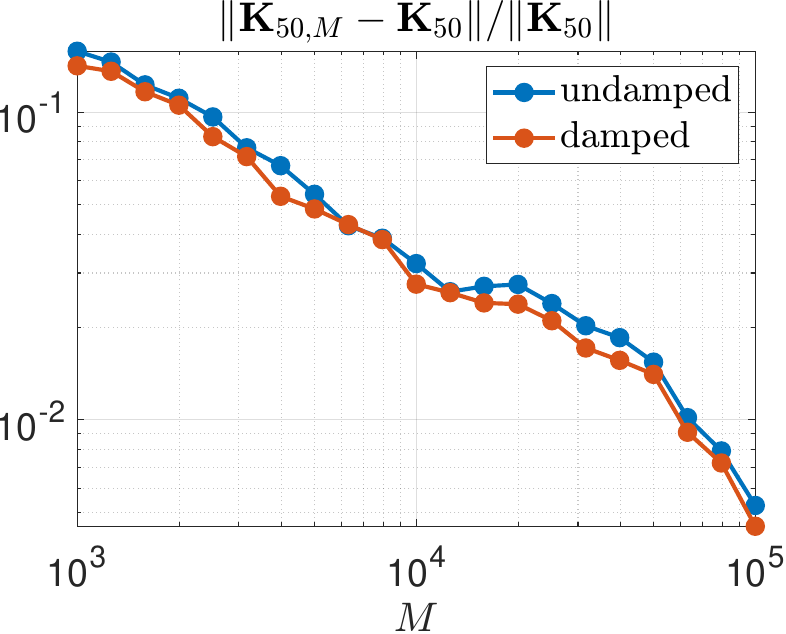}
\includegraphics[width=0.4\linewidth,trim={0mm 0mm 0mm 0mm},clip]{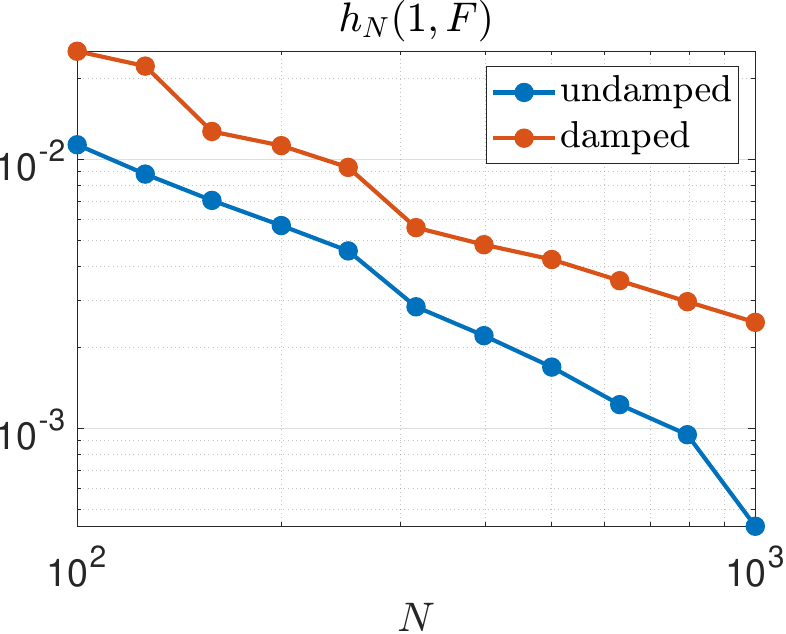}
\caption{Left: Convergence of the EDMD matrix to its large-data limit, with relative error measured in the Frobenius norm. The observed convergence rate is approximately $\mathcal{O}(M^{-1/2})$. Right: Convergence of the spectral distance function $h_{N}(z,F)$ at $z=1$, a representative spectral point. The results confirm the theoretical convergence guarantees of our method.}\label{duffing_convergence}
\end{figure}

\cref{duffing_convergence} illustrates convergence for the Duffing oscillator example. The left panel shows the convergence of the EDMD matrix ${\bf{K}}_{N,M}$ (computed as outlined in the main text with a dictionary of $N$ observables and $M$ snapshot data points) to its large-data limit (after taking $M\rightarrow\infty$ so that corresponding correlations have converged) ${\bf{K}}_{N}$ for $N=50$.
The error is measured as the relative Frobenius norm, the square root of the sum of the squared absolute values of the matrix entries. The convergence rate is approximately the Monte Carlo rate $\mathcal{O}(M^{-1/2})$ \cite{caflisch1998monte}. The right panel shows the convergence of the function $h_{N}(z,F)$ (see \cref{bdbd}) for $z=1$, which is chosen to demonstrate convergence since it lies in the spectrum.

\subsection{Further analysis for cavity flow}

Here, we plot the extracted eigenfunctions for the cavity flow, illustrating how their structure is related to the geometry of the attractor at different Reynolds numbers. To display the attractor, we use the total kinetic energy as one coordinate and the delayed kinetic energy with a time delay of 1 second. For $Re = 13,000$, the attractor forms a limit cycle, so we use one time delay (i.e., the attractor in a two-dimensional state space). For larger Reynolds numbers, we use two time delay coordinates (i.e., the attractor in a three-dimensional state space).

To extract the eigenfunctions, we begin with a 20-dimensional subspace of observables, constructed from time delays of the kinetic energy as described in the Methods section of the main text for this example. We then apply \cref{U_spec_meas} (the algorithmic realization of the RAGE theorem) to project these onto the eigenspaces of the Koopman operator, effectively removing any continuous spectral components. Next, we run \cref{pseudo_code_easy_spec} to compute the associated spectrum on this subspace, which yields the eigenvalues and eigenfunctions. We use the full $M=20000$ snapshots in all cases.

\begin{figure}[t]
\centering
\includegraphics[width=0.4\textwidth,clip,trim={0mm 0mm 0mm 0mm}]{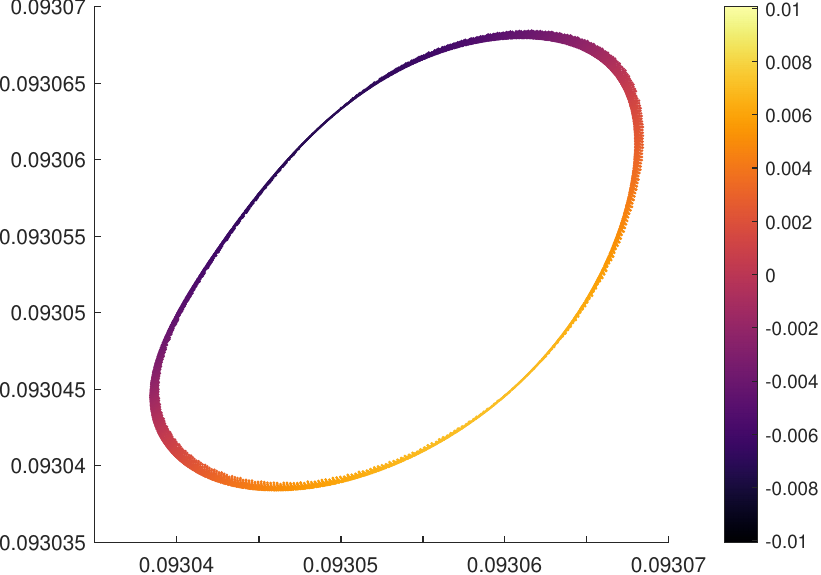}\hspace{5mm}
\includegraphics[width=0.4\textwidth,clip,trim={0mm 0mm 0mm 0mm}]{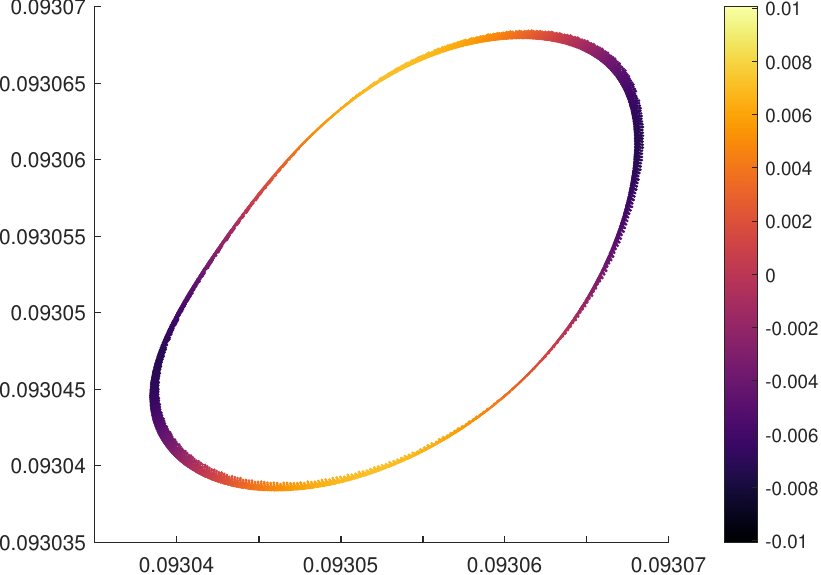}
\caption{Limit cycle of the cavity flow at $Re = 13,000$. The horizontal axis represents the kinetic energy, and the vertical axis represents the time-delayed kinetic energy. The plot shows the real part of the eigenfunction corresponding to the fundamental eigenvalue $\lambda$ (left) and its square $\lambda^2$ (right).}\label{cavity_extra1}
\end{figure}

\begin{figure}[t]
\centering
\includegraphics[width=0.4\textwidth,clip,trim={0mm 0mm 0mm 0mm}]{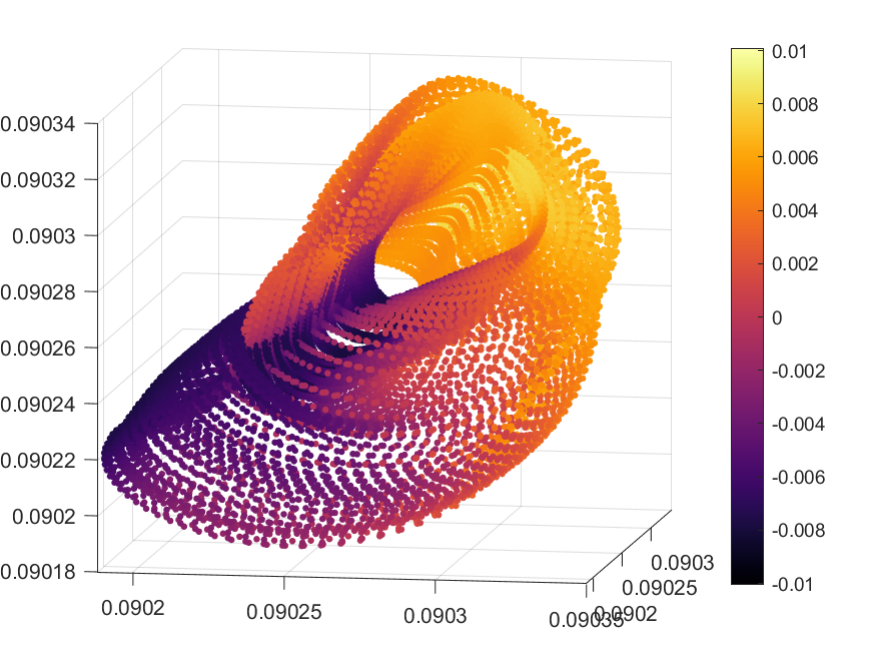}\hspace{5mm}
\includegraphics[width=0.4\textwidth,clip,trim={0mm 0mm 0mm 0mm}]{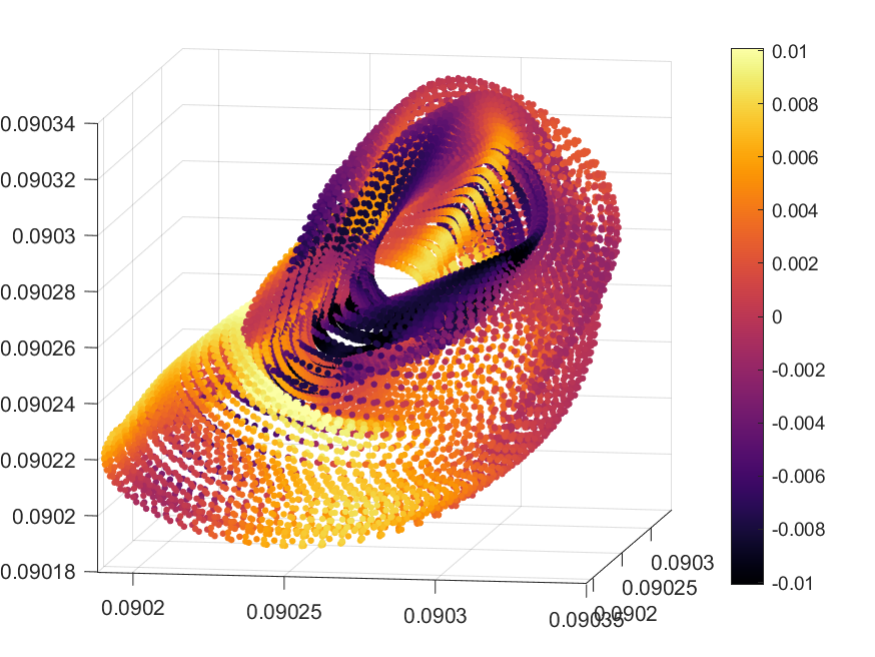}
\caption{Torus attractor of the cavity flow at $Re = 16,000$. The axes represent the kinetic energy and two time-delayed kinetic energies. The plot shows the real part of the eigenfunctions corresponding to the fundamental eigenvalues $\lambda$ and $\mu$.}\label{cavity_extra2}
\end{figure}

\begin{figure}[t]
\centering
\includegraphics[width=0.4\textwidth,clip,trim={0mm 0mm 0mm 0mm}]{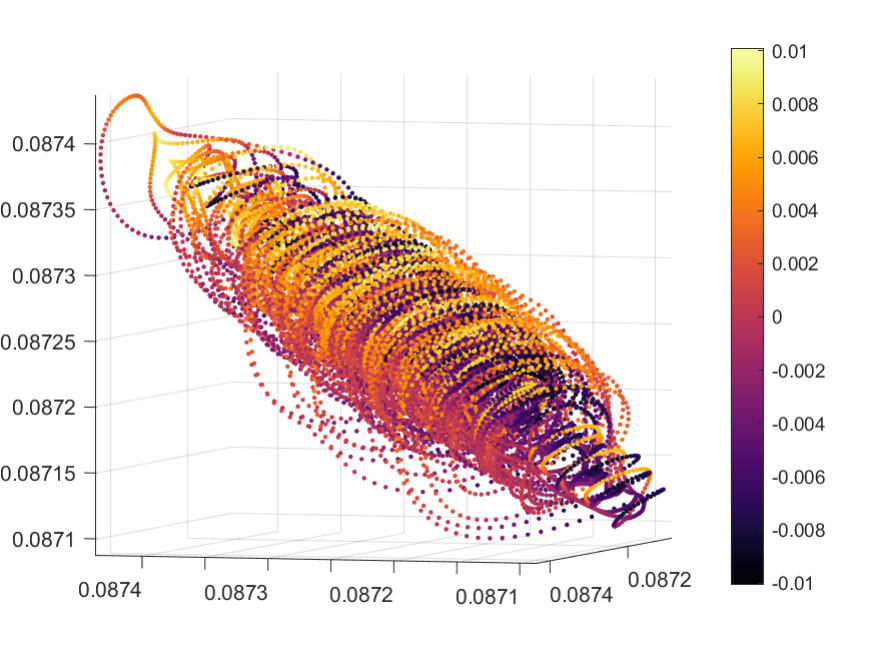}\hspace{5mm}
\includegraphics[width=0.4\textwidth,clip,trim={0mm 0mm 0mm 0mm}]{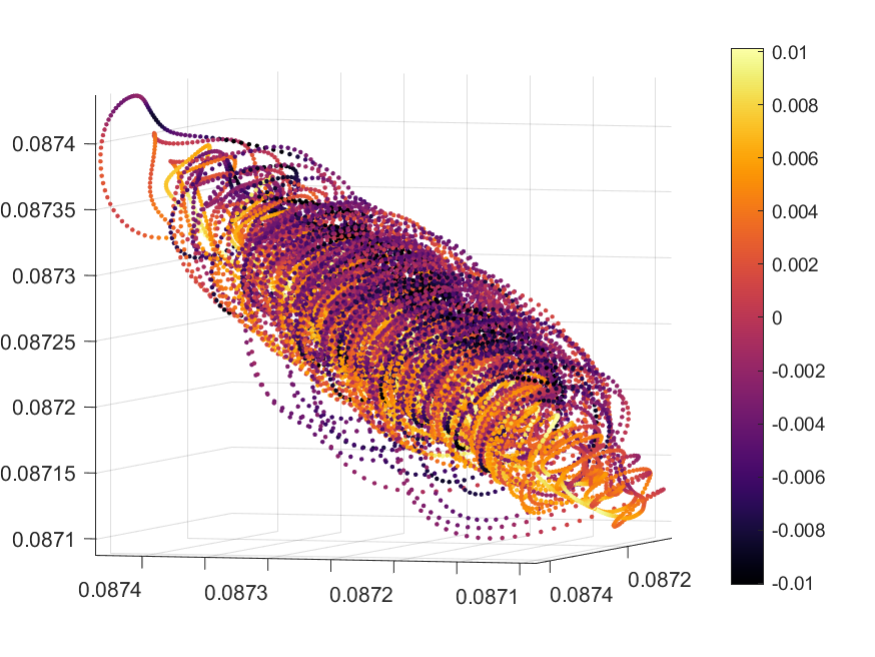}
\caption{Skew-periodic attractor of the cavity flow at $Re = 19,000$. The axes represent the kinetic energy and two time-delayed kinetic energies. The plot shows the real part of the eigenfunctions corresponding to the two basic frequencies of the discrete part of the spectrum.}\label{cavity_extra3}
\end{figure}

The results are shown in \cref{cavity_extra1,cavity_extra2,cavity_extra3} for various Reynolds numbers. For $Re = 13,000$, the attractor is a limit cycle. The plot displays the eigenfunction corresponding to the fundamental eigenvalue $\lambda$ (left) and its square $\lambda^2$ (right). The color represents the real part of the eigenfunction at each point. The doubling of frequency from left to right is consistent with a Koopman spectrum consisting of powers $\{\lambda^n\}$. For $Re = 16,000$, the attractor is a torus, corresponding to two fundamental eigenvalues, $\lambda$ and $\mu$ (quasiperiodic dynamics). The Koopman eigenfunctions reveal the directions on the torus where the evolution is linear and periodic. For $Re = 19,000$, the attractor is skew-periodic, with a strong discrete spectrum (in the energy sense) consisting of two basic frequencies and a relatively weak continuous component. The corresponding eigenfunctions are plotted as functions of the kinetic energy and two time delays, as before. These eigenfunctions provide geometric slices on which the motion is purely quasiperiodic.

\subsection{Further analysis for Arctic sea ice}

\begin{figure}[t]
\centering
\includegraphics[width=0.4\textwidth,clip,trim={0mm 0mm 0mm 0mm}]{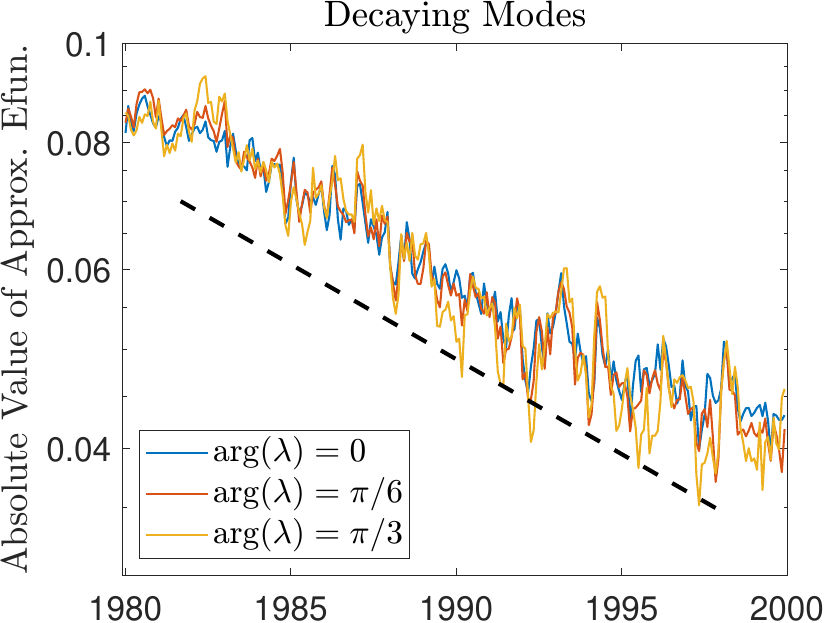}\hfill
\includegraphics[width=0.4\textwidth,clip,trim={0mm 0mm 0mm 0mm}]{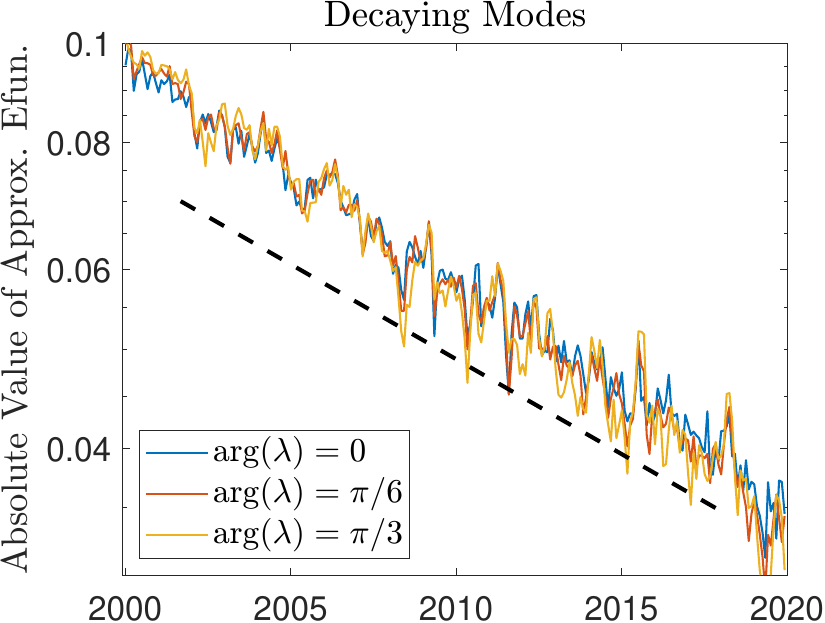}\\
\includegraphics[width=0.4\textwidth,clip,trim={0mm 0mm 0mm 0mm}]{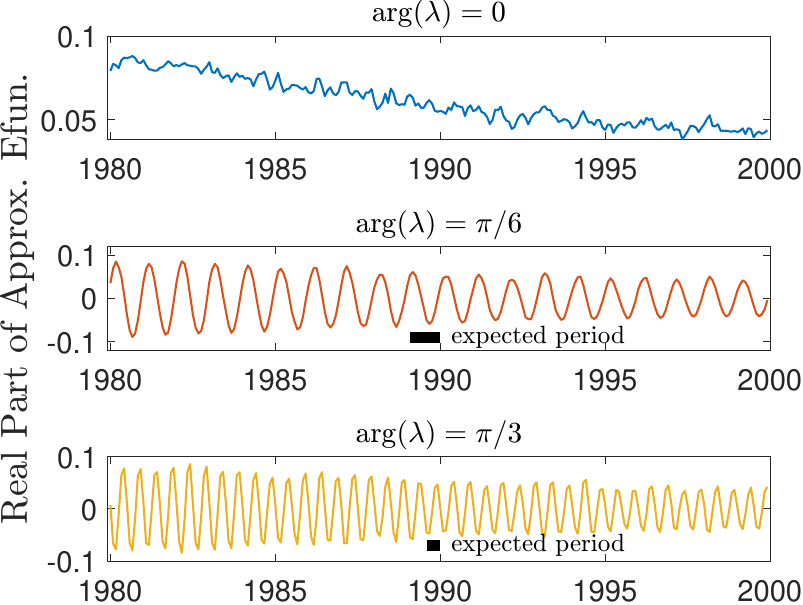}\hfill
\includegraphics[width=0.4\textwidth,clip,trim={0mm 0mm 0mm 0mm}]{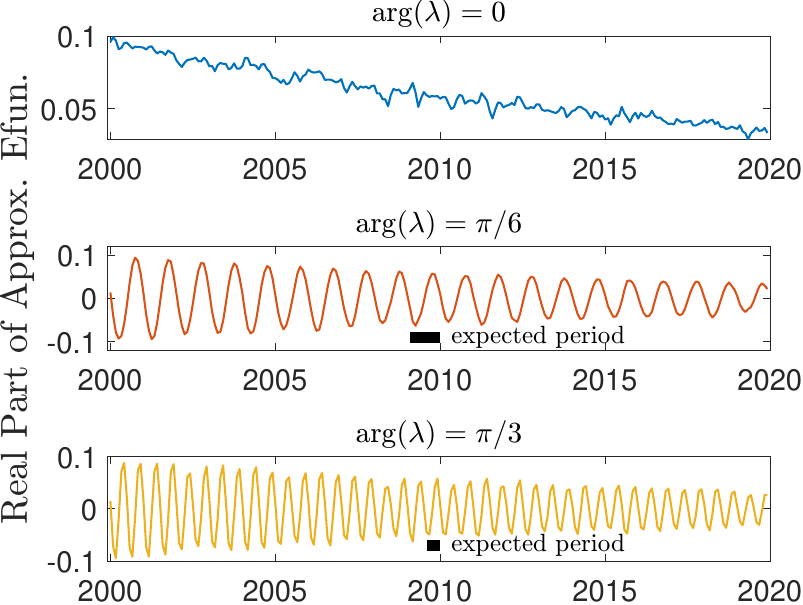}
\caption{Time evolution of the hidden decaying Koopman eigenfunction $\phi_{\mathrm{decay}}$ computed separately over the first and second halves of the dataset. We only show those whose eigenvalue have non-negative complex arguments due to complex-conjugate symmetry. The top row shows the absolute value $|\phi_{\mathrm{decay}}|$, and the bottom row shows the real part $\mathrm{Re}(\phi_{\mathrm{decay}})$. The left and right columns correspond to the first and second halves of the dataset, respectively. The dashed lines indicate the exponential decay rate predicted by the associated eigenvalue $\lambda$. Both halves exhibit near-monotonic decay with broadly consistent slopes, indicating robustness of the estimated long-term decay mode across the analysis interval.}\label{arctic_extra1}
\end{figure}

\cref{arctic_extra1} shows the time evolution of the hidden decaying modes from the main text computed over the first and second halves of the dataset. The magnitude decays approximately exponentially at approximately the rate expected from the eigenvalue $\lambda$ (dashed line), suggesting a connection to long-term sea-ice decline associated with climate change. This connection is likely nuanced: dissipative Koopman eigenfunctions exhibit near-monotonic reduction as time increases, whereas observed sea-ice loss rates over recent decades are known to be non-monotonic. Since the full sea-ice state is expressed as a sum of modes, the near-monotonic behavior of $\phi_{\mathrm{decay}}$ does not contradict the non-monotonic trend patterns seen in the data. In particular, the second half of the dataset produces an eigenfunction with a slightly faster rate of decay than the first, although the slopes are broadly consistent with those obtained from the full-interval analysis. The oscillations of the eigenfunctions follow the period expected from the complex argument of the associated eigenvalues.

\begin{figure}[t]
\centering
\includegraphics[width=0.32\linewidth]{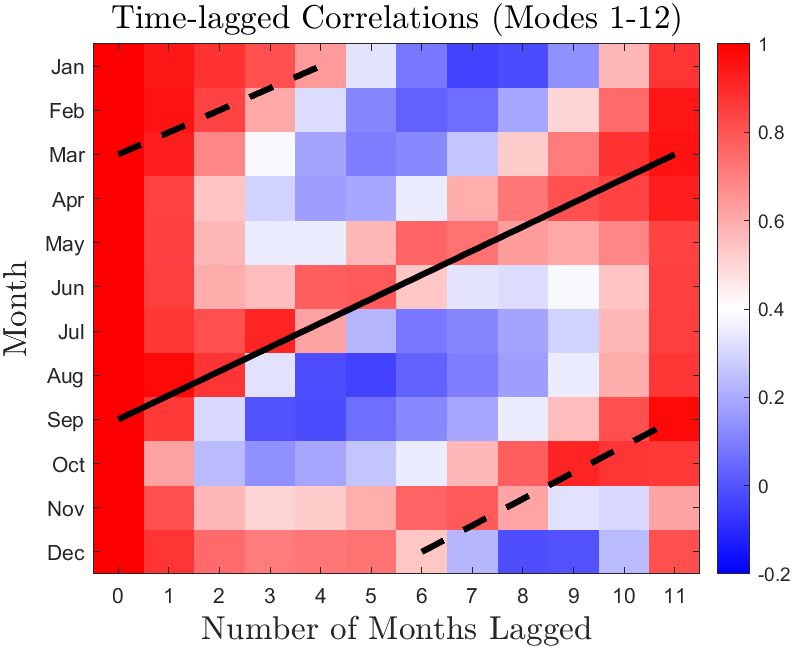}\hfill
\includegraphics[width=0.32\linewidth]{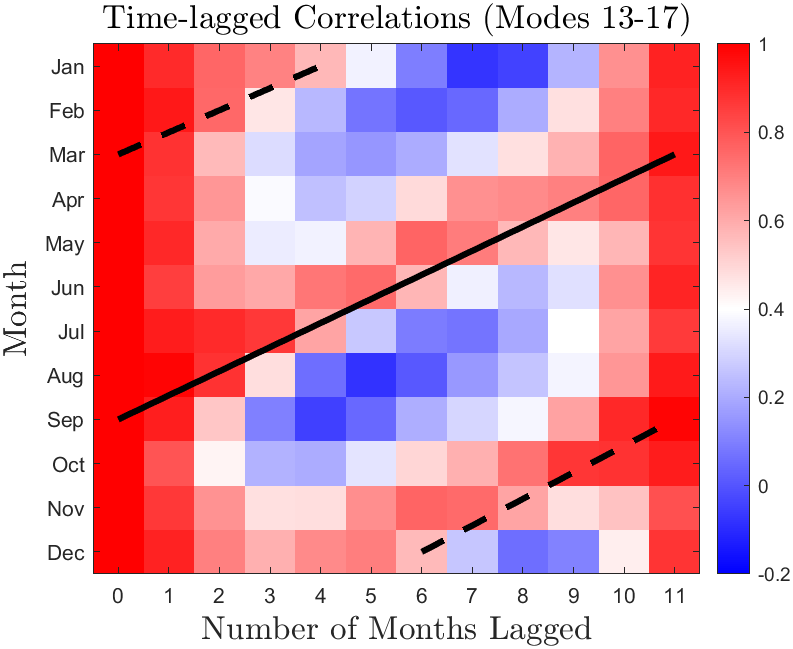}\hfill
\includegraphics[width=0.32\linewidth]{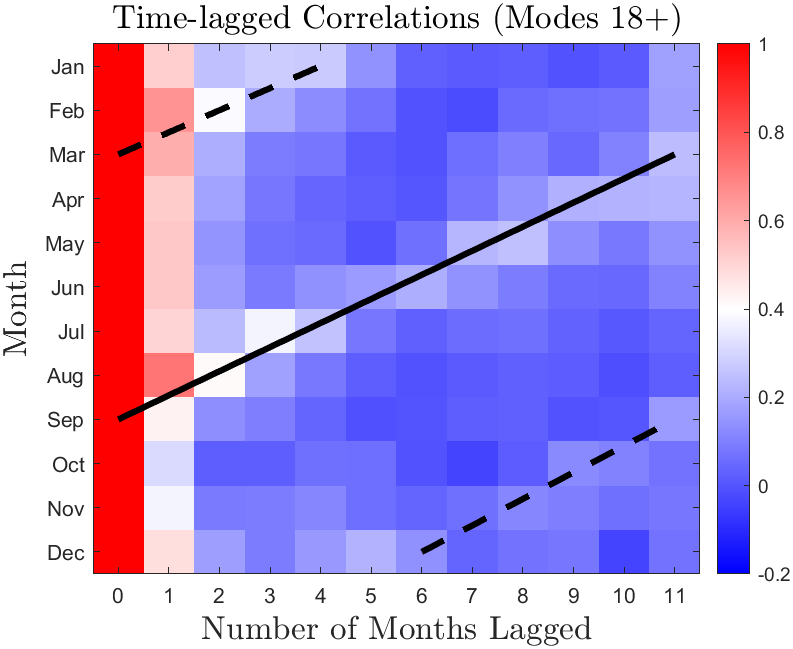}
\caption{Time-lagged pattern correlations of sea ice anomalies. Solid lines indicate months with increased correlation due to melt-to-growth reemergence, while dashed lines indicate increased correlation due to growth-to-melt reemergence. The modes are ordered by their approximation errors: the leftmost corresponds to the dominant annual variation, the middle group contains the five hidden decaying modes, and the rightmost includes the remaining modes.}\label{arctic_extra2}
\end{figure}

\cref{arctic_extra2} shows the autocorrelation functions of sea ice anomalies reconstructed from individual Koopman modes. Correlations are computed by treating the sea ice concentration at each grid point as a component of a vector. Notably, the hidden modes capture seasonally modulated reemergence of correlations, whereby sea ice anomalies that develop during the growth season reappear in the following melt season, despite a loss of correlation during the intervening winter months \cite{blanchard2011persistence}. This behavior highlights the Koopman decomposition's ability to capture meaningful long-term memory in sea ice dynamics.

\cref{arctic_extra3} shows the Koopman spectrum computed using data from 2012–2021 (and the same methods used to produce Figure 2 in the main text), the period used in the binary accuracy forecasting experiment. The resulting spectra and associated errors are broadly similar to those obtained from the full dataset. However, the precise locations of the hidden modes vary, and the error metric is generally higher when using a smaller training dataset, as expected. 

\begin{figure}[t]
\centering
\includegraphics[width=0.4\textwidth,clip,trim={0mm 0mm 0mm 0mm}]{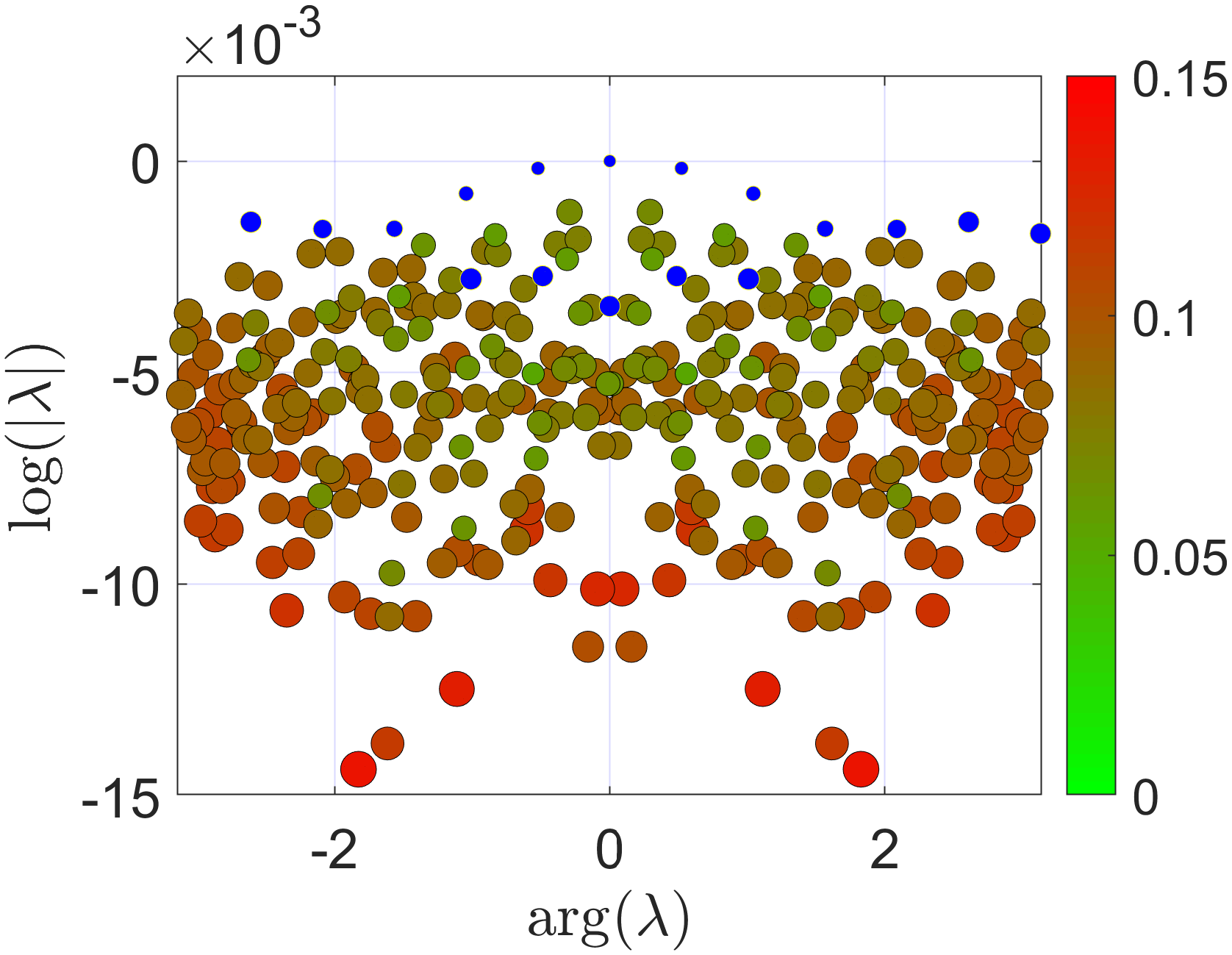}
\caption{Koopman spectrum for 2012--2021 Arctic sea ice data. The spectral features and reconstruction errors are similar to those obtained from the full dataset.}\label{arctic_extra3}
\end{figure}

\begin{figure}[t]
\centering
\includegraphics[width=0.4\textwidth,clip,trim={0mm 0mm 0mm 0mm}]{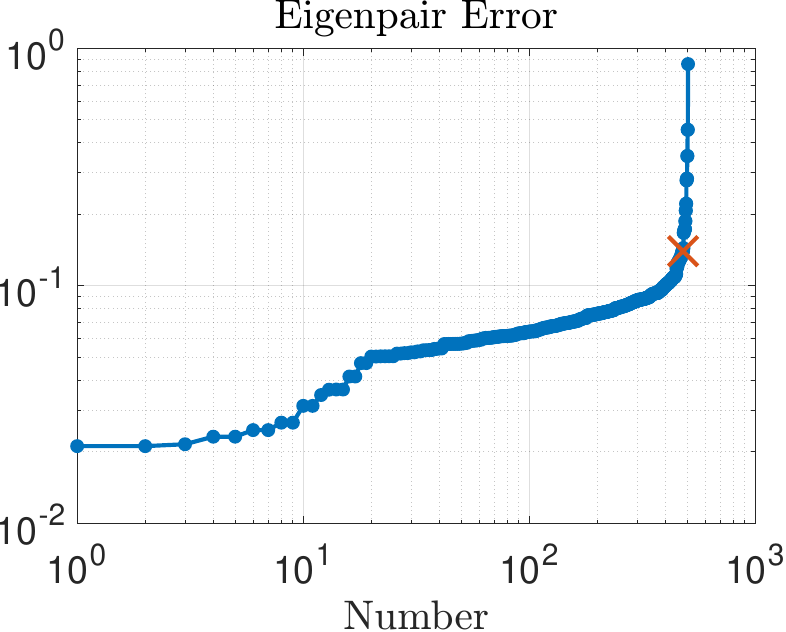}
\caption{Eigenpair errors for the Arctic sea ice example computed using the method outlined in the proof of \cref{thm:general_koopman_computation}. We have ordered the pairs according to their errors and the red cross shows a typical elbow point of the curve, which we use for optimal truncations in the main text.}\label{acrtic_elbow}
\end{figure}

We now plot the errors of the EDMD eigenvalues in Figure 2 of the main text. \cref{acrtic_elbow} presents the results, with eigenpairs ordered by their computed errors. To mitigate spurious eigenpairs in Koopman mode expansions, we truncate at the elbow of the curve, marked by the red cross. This provides a principled way to choose the truncation parameter $\varepsilon_0$ in the Koopman mode decomposition described in the Methods section.

\subsection{Other dynamical systems}
\label{SI_sec:others}

\begin{figure}[t]
\centering
\includegraphics[width=0.85\textwidth,trim={4mm 1mm 35mm 4mm},clip]{figure_7.pdf}
\caption{Spectral analysis of a range of analytic and real-world dynamical systems using our proposed algorithms. Each row corresponds to a different system: (i) periodic flow past a cylinder, (ii) Lorenz system, (iii) Rössler system, (iv) electrocardiogram (ECG) data (Image: \emph{Heart} by H. G. Wetselaar, Leiden University Libraries / Europeana, Public Domain), and (v) monthly mean sea surface height in the Northern Hemisphere (1950–present). Columns show: the system or dataset, a representative time series of observables, the computed pseudospectra (the sublevel sets of $\varepsilon$ for $\varepsilon$-approximate eigenfunctions) overlaid with EDMD eigenvalues (blue dots), and a convergence comparison between EDMD (blue) and our method. Except for the periodic flow, EDMD yields spurious eigenvalues and lacks convergence. In contrast, our approach produces qualitatively accurate and convergent spectral approximations. Notably, the pseudospectra reveal distinct spectral structures across systems: continuous spectra for chaotic systems (Lorenz, Rössler), spectral clustering near $\lambda = 1$ for ECG, and non-normal features and seasonal modes around $\lambda \approx \exp(2\pi i/12)$ in sea surface height data.\vspace{5mm}}
\label{fig:other_examples2}
\end{figure}

We now provide further details of the additional experiments in the main text, which we reproduce in \cref{fig:other_examples2} for the convenience of the reader. In each case, we specify the dynamical system, how the data were collected, and the choice of observables. This information fully specifies the input required for the EDMD algorithm (see main text) and our convergent algorithms, both of which use the same data and dictionary for each system.

\subsubsection{Flow past a cylinder}

We consider flow past a circular cylinder of diameter $D=1$ with Reynolds number $Re=100$, which exceeds the critical Reynolds number at which the flow undergoes a supercritical Hopf bifurcation, resulting in laminar vortex shedding \cite{jackson1987finite,zebib1987stability}.\index{Hopf bifurcation} The flow has a stable limit cycle representative of the three-dimensional flow \cite{noack1994global,noack2003hierarchy}. Due to its simplicity and relevance in engineering, this is one of the most studied examples in modal-analysis techniques \cite[Table 3]{rowley2017model}, \cite{chen2012variants,taira2020modal}. The Koopman operator of the post-transient flow is unitary with eigenvalues $\{\lambda_0^n:n\in\mathbb{Z}\}$ on the unit circle \cite{bagheri2013koopman}. We compute the vorticity field using an incompressible, two-dimensional lattice-Boltzmann solver \cite{jozsa2016validation,szHoke2017performance}, with a time step so that approximately 24 snapshots of the flow field correspond to the period of vortex shedding. The computational domain is $18D\times5D$, with a $800\times 200$ grid resolution and ambient state-space dimension $d=158624$. This is less than $800\times 200$ due to the presence of the cylinder, which is positioned $2D$ downstream of the inlet at the mid-height of the domain. The cylinder side walls are bounce-back and no-slip, with a parabolic velocity profile at the inlet of the domain and non-reflecting outflow at the outlet. After simulations converge to statistically stationary periodic vortex shedding, we collect $M=120$ snapshots.

Let $\Xv\in \mathbb{R}^{d\times 120}$ be the data matrix of the vorticity field at the snapshots and $\Yv\in \mathbb{R}^{d\times 120}$ the corresponding matrix after one time step. We compute a truncated SVD of the data matrix $\Xv \approx \Uv \mathbf{\Sigma}\Vv^*$, $\Uv\in\mathbb{R}^{d\times N}$, $\mathbf{\Sigma}\in\mathbb{R}^{N\times N}$, $\Vv\in\mathbb{R}^{M\times N}.$ The columns of $\Uv$ and $\Vv$ are orthonormal and $\mathbf{\Sigma}$ is diagonal. As our dictionary, we use the $N$ first left singular vectors of the data matrix of the vorticity field (POD modes \cite{berkooz1993proper}). The $j$th row of the POD matrix $\Uv^*\Xv$ is an affine function evaluated at the snapshot data. The pseudospectra plot was computed using $N=50$ observables. The trajectory data plotted in the figure in the paper (\cref{fig:other_examples2} in S.I.) correspond to the first five POD modes (note the periodic behavior).

\subsubsection{Lorenz system}

The famous Lorenz (63) system \cite{lorenz1963deterministic} is the following three coupled ordinary differential equations:
$$
\dot{u}_1=\sigma\left(u_2-u_1\right),\quad\dot{u}_2=u_1\left(\rho-u_3\right)-u_2,\quad \dot{u}_3=u_1u_2-\beta u_3.
$$
The system describes a truncated model of Rayleigh--B\'{e}nard convection, where the parameters $\sigma$, $\rho$, and $\beta$ are proportional to the Prandtl number, Rayleigh number, and the physical proportions of the fluid, respectively. We take the standard values $\sigma=10$, $\rho=28$, and $\beta=8/3$ and consider the dynamics of $x=(u_1,u_2,u_3)$ on the Lorenz attractor $\mathcal{X}\subset\mathbb{R}^3$. We consider a discrete-time dynamical system by sampling with a time-step $\Delta t=0.05$. Hence, $F(x)$ is the solution at $t=0.05$ to the above equations with initial value $x$. This system has a unique SRB measure $\omega$ on $\mathcal{X}$ \cite{Tucker2002}, meaning that for Lebesgue-almost every initial condition $x_0$ in the basin of attraction of $\mathcal{X}$ and for every compactly supported continuous function $g:\mathbb{R}^3\rightarrow\mathbb{C}$,
$$
\lim_{M\rightarrow\infty}\frac{1}{M}\sum_{m=0}^{M-1}\left[\mathcal{K}^{m}g\right](x_0)=\int_{\mathcal{X}}g(x)\dd \omega(x).
$$
We use a time-delay embedding dictionary. Specifically, for $j=1,2,3$, we define the observables
$$
g_{j,1}(x)=u_j,\quad g_{j,k+1}=\mathcal{K}g_{j,k},\quad k=1,\ldots,N-1.
$$
We consider $M$ snapshots $x^{(1)}=x_0$ and $x^{(m+1)}=F(x^{(m)})$ drawn from a single trajectory with quadrature weights $w_m=1/M$. Since $g_{j,k}(x^{(m)})=g_{j,1}(x^{(m+k-1)})$, we can evaluate the dictionary from the single trajectory. We use the \texttt{ode45} command in MATLAB to collect the data after an initial `burn-in' time to ensure that the initial point $x_0$ is (approximately) on the Lorenz attractor. The system is chaotic, so we cannot hope to numerically integrate accurately for long periods. However, convergence is still obtained as $M\rightarrow\infty$ due to an effect known as shadowing \cite{pilyugin2006shadowing}. The pseudospectra plot was computed using $N=100$ (300 observables) and $M=10^4$ snapshots, and the shown trajectory data is for $u_1$ (first coordinate).

\subsubsection{Rössler system}

The R\"ossler system \cite{rossler1976equation} is defined by the following three coupled ordinary differential equations:
$$
\dot{u}_1=-u_2-u_3,\quad\dot{u}_2=u_1+0.1u_2,\quad \dot{u}_3=0.1+u_3(u_1-14).
$$
We study the dynamics of $x=(u_1,u_2,u_3)$ on the R\"ossler attractor. Although often regarded as a simplified analogue of the Lorenz (63) system, the Rössler system—despite its relatively simple form—exhibits rich and complex dynamical behavior. The setup for data collection and dictionary construction is the same as in the Lorenz example, with one key difference: to illustrate the use of alternative dictionaries, we define the observables as
$$
g_{1}(x)=u_1,\quad g_{k+1}=\mathcal{K}g_{k},\quad k=1,\ldots,N-1,
$$
for our dictionary. The pseudospectra plot was computed using $N=100$ observables and $M=10^4$ snapshots, and the shown trajectory data is for $u_1$ (first coordinate).

\subsubsection{Electrocardiogram (ECG)}

An electrocardiogram (ECG) measures the electrical activity of the heart, producing the characteristic spiking pulses associated with each heartbeat. In this analysis, we use the ECG signal qtdb/sel102 from \cite{keogh2005hot} with $M=44499$ snapshots. We construct a dictionary consisting of $N-1$ time delays, yielding $N$ observables in total, following the same approach described in the Methods section for the Arctic sea ice example. This type of time-delay dictionary is commonly used in the analysis of ECG recordings \cite{schreiber1996nonlinear,richter1998phase}. The pseudospectra plot was computed using $N=50$ observables.

\subsubsection{Northern Hemisphere sea surface height}

This final example examines the dynamics of the monthly mean sea surface height in the Northern Hemisphere.
We use the OFES simulation of hindcast data from 1950 to the present day ($M=359$ snapshots), which was conducted on the Earth Simulator with the support of JAMSTEC \cite{jamstec_2009}. The dataset spans latitudes from $74.95$\textdegree S to $74.95$\textdegree N in intervals of $0.1$\textdegree (excluding Arctic regions), and longitudes from $0.05$\textdegree to $359.95$\textdegree, also in intervals $0.1$\textdegree. Since the Northern and Southern Hemispheres exhibit different dynamics, we restrict our analysis to latitudes between $0.05$\textdegree N and $74.95$\textdegree N. At each longitude-latitude point, the dataset provides monthly mean sea surface height, and we exclude any regions that are not ocean-covered.

We construct the dictionary using a kernelized version of the POD dictionary used for the cylinder flow. The description is given in \cite{williams2015kernel}, but amounts to kernel basis functions of the form
$$
\left(1+{\bf{x}}^\top{\bf{x}}^{(m)}\right)^{20}\exp(-\|{\bf{x}}-{\bf{x}}^{(m)}\|_{l^2}/\sigma),
$$
where the scaling parameter $\sigma$ is set to the average  $l^2$-norm of the snapshot data after centering it to have zero mean. The pseudospectra plot was computed using $N=250$ observables. The trajectory data plotted in the figure in the paper (\cref{fig:other_examples2} in S.I.) correspond to the first five functions.

\renewcommand{\arraystretch}{0.8}
\bibliographystyle{unsrt}

{\small\linespread{0.6}\selectfont{}
\bibliography{KoopFoundations}}

\end{document}